\newcommand{\RR}{\mathbb R}
\newcommand{\NN}{\mathbb N}
\newcommand{\ZZ}{\mathbb Z}
\newcommand{\QQ}{\mathbb Q}
\newcommand{\e}{\varepsilon}
\DeclareMathOperator{\argmin}{arg\,min}
\DeclareMathOperator{\tr}{tr}
\DeclareMathOperator{\dist}{dist}
\DeclareMathOperator{\USC}{USC}
\DeclareMathOperator{\LSC}{LSC}
\DeclareMathOperator{\UC}{UC}
\DeclareMathOperator{\DIV}{div}
\def\e{\varepsilon}
\theoremstyle{plain}
\theoremstyle{theorem}
\newtheorem{theorem}{Theorem}[section]
\newtheorem{corollary}[theorem]{Corollary}
\newtheorem{lemma}[theorem]{Lemma}
\newtheorem{proposition}[theorem]{Proposition}
\newtheorem{remark}[theorem]{Remark}
\theoremstyle{definition}
\newtheorem{definition}{Definition}[section]
\numberwithin{equation}{section}
\renewcommand{\leq}{\leqslant}
\renewcommand{\geq}{\geqslant}
\renewcommand{\setminus}{\smallsetminus}
\title[Head and tail speeds of mean curvature flow with forcing]{Head and tail speeds of mean curvature flow with forcing}
\author[Hongwei Gao]{Hongwei Gao}
\address{
Department of Mathematics \\ 
University of California, Los Angeles   \\ 
CA, 90095\\
USA}
\email{hwgao@math.ucla.edu}
\author[Inwon C. Kim]{Inwon Kim}
\address{
	Department of Mathematics \\ 
	University of California, Los Angeles   \\ 
	CA, 90095\\
	USA}
\email{ikim@math.ucla.edu}
\begin{document}

\begin{abstract}
	In this paper, we investigate the large time behavior 
	of interfaces moving with motion law $V = -\kappa + g(x)$, where $g$ is positive, Lipschitz and  $\ZZ^n$-periodic. It turns out that the behavior of the interface can be characterized by its head and tail speed, which depends continuously on its overall direction of propagation $\nu$. If head speed equals tail speed at a given direction $\nu$, the interface has a unique large-scale speed in that direction. In general the interface develops linearly growing ``long fingers" in the direction where the equality breaks down. We discuss these results in both general setting and in laminar setting, where further results are obtained due to regularity properties of the flow. 
\end{abstract}

\maketitle

\tableofcontents
\section{Introduction}

We consider the evolution of domains $(\Omega_{\varepsilon}(t))_{t>0}$ in $\RR^{n}$, where $\Gamma_{\varepsilon}(t) := \partial\Omega_{\varepsilon}(t)$ moves with the (outward) normal velocity
\begin{eqnarray}\label{the normal velocity of moving front in macro scale}
V = -\varepsilon\kappa + g(x/\e) &\text{on}& \Gamma_{\varepsilon}(t).
\end{eqnarray}
Here  $\kappa$ denotes the mean curvature of $\Gamma_{\varepsilon}(t)$, with positive sign when $\Omega_\e(t)$ is convex, $g$ is a $\ZZ^{n}$-periodic function in $\RR^{n}$. Note that $\Gamma_{\varepsilon}(t)$ is a zoomed-out version of $\Gamma_{1}(t)$ with scaling $(x,t) \rightarrow (\varepsilon x, \varepsilon t)$. The oscillation in the forcing term $g$ will be reflected in the oscillatory behavior of $\Gamma_{\varepsilon}$. 

\medskip

We are interested in the asymptotic behavior of $\Gamma_{\varepsilon}$ as $\varepsilon \rightarrow 0$, or equivalently, the large-scale behavior of $\Gamma_{1}$. $\Gamma_{\varepsilon}$ may go through topological changes and other singularities as it intersects with the oscillatory forcing. Thus the evolution \eqref{the normal velocity of moving front in macro scale} must be understood in a weak sense, while for our purpose the weak notation should still be able to describe the pointwise behavior of the solution. To this end we work with viscosity solution $u^\varepsilon$ of the corresponding level set equation (here $\widehat{p} = \frac{p}{|p|}$ for $p \in \RR^{n}\diagdown\left\lbrace 0 \right\rbrace $) with $\Gamma_\e = \left\lbrace u^{\varepsilon} = 0 \right\rbrace$,
\begin{equation}\label{the scaled forced mean curvature flow}
u_{t}^{\varepsilon} =  \mathscr{F}(\e D^{2}u^{\e}, Du^{\e}, x/\e) := \e\tr \left\lbrace  D^{2}u^\e\left( I - \widehat{Du^\e}\otimes \widehat{Du^\e}\right) \right\rbrace + g\left(x/\e\right) |Du^\e| \quad\text{ in }\RR^{n} \times (0,\infty),
\end{equation}
which is a degenerate viscous Hamilton-Jacobi equation. We say \textit{homogenization occurs} when $u^\e$ converges to a homogenization profile as $\e \to 0$. If not we say \textit{homogenization fails}. The study of \eqref{the scaled forced mean curvature flow} as $\varepsilon \rightarrow 0$ has attacted much attention in the past decade, for instance see  \cite{Lions and Souganidis},  \cite{Dirr Karali and Yip},  \cite{Cardaliaguet Lions and Souganidis}, \cite{Cesaroni and Novaga}, \cite{Caffarelli and Monneau ARMA}, \cite{Armstrong and Cardaliaguet} and the references therein.

\medskip
We investigate the case of positive, Lipschitz continuous $g$, where (\ref{the normal velocity of moving front in macro scale}) is most well understood. i.e., there exist $m_0, M_0, L_0 > 0$, such that 
\begin{equation}\label{the hypothesis}
\begin{cases}
g(x): \RR^{n} \rightarrow [m_0, M_0] \text{ is a } \ZZ^{n} \text{-periodic Lipschitz continuous function}\\ 
\text{ and } |g(x) - g(y)| \leq L_0 |x - y|, \hspace{2mm} (x,y) \in \RR^{n} \times \RR^{n}.
\end{cases} \tag{H}
\end{equation}
In this setting  Lions and Souganidis \cite{Lions and Souganidis} showed homogenization results with the condition $\frac{|Dg|}{g^{2}} < \frac{1}{n-1}$. This condition amounts to ensuring the existence of Lipschitz continuous solution $v$ of the corresponding cell problem (see below for further discussion of the cell problem), to ensure the existence of plane-like solutions of the form $u^\e \sim u^0 + \e v + o(\e)$, where $u^0$ is a linear profile. Here the regularity of $v$ is central to obtain Lipschitz continuity of the homogenized front velocity. For two space dimensions Caffarelli and Monneau \cite{Caffarelli and Monneau ARMA} shows that homogenization always occurs, with continuous homogenized velocity. The main step here is to show the exitence of a bounded solution of the cell problem, by a geometric argument that is particular to two dimensions. In general homogenization may fail when the oscillation of $g$ grows large, as we will discuss in the paper (sect. \ref{discussion of construction of travelling waves}). In three or higher dimensions,  \cite{Caffarelli and Monneau ARMA} gave an example in the laminar setting $g(x) = g(x^{\prime})$, $x = (x^{\prime}, x_{n})$, where the oscillation of $\Gamma_\e$ grows linearly as $\e \to 0$.

\medskip

Even when homogenization fails and $\Gamma_\e$ does not approach an asymptotic profile, it is still reasonable to expect its {\it head} and {\it tail}  speeds to homogenize, as the front propagates through the periodic media. Our goal is to describe this behavior of $\Gamma_\e$ as $\e\to 0$ in general setting.  As stated below, these speeds $\bar{s}$ and $\underline{s}$ only depend on the asymptotic direction of propagation $\nu$.

\begin{theorem}[Proposition \ref{the ordering relation in irrational directions}, \ref{the continuity of head and tail speeds}, \ref{the ordering relation in all directions}, Theorem \ref{the macro and micro scale description of head and tail speeds}]\label{main theorem 1}
	Let $\mathbb{S}^{n-1}$ denote the set of unit vectors in $\RR^{n}$. Then there exist two functions $\overline{s}, \underline{s} : \mathbb{S}^{n-1} \rightarrow \left[m_0, M_0\right]$ with the following properties:
	\begin{enumerate}
		\item [(a)] $\overline{s}$ and $\underline{s}$ are continuous in $\mathbb{S}^{n-1}$ and $\overline{s} \geq \underline{s}$. In particular if $\overline{s} > \underline{s}$ at a direction $\nu_{0}$, then the same holds for $\nu$ sufficiently close to $\nu_{0}$.
		\item [(b)] Let $\nu \in \mathbb{S}^{n-1}$ and $u^{\varepsilon}$ solve (\ref{the scaled forced mean curvature flow}) with its initial data $u^{\varepsilon}(x,0) = - (x - x_{0}) \cdot \nu$ for some $x_{0} \in \RR^{n}$. Then in micro-scale (for $\varepsilon = 1$)
		\begin{eqnarray*}
		\overline{s}(\nu) = \lim_{t \rightarrow \infty} \frac{\sup\left\lbrace x\cdot \nu : u^{1}(x,t) = 0\right\rbrace }{t}, && \underline{s}(\nu) = \lim_{t \rightarrow \infty} \frac{\inf\left\lbrace x\cdot \nu : u^{1}(x,t) = 0\right\rbrace }{t},
		\end{eqnarray*}
       and in macro-scale
       \begin{eqnarray*}
       \limsup_{\varepsilon \rightarrow 0}u^{\varepsilon}(x,t) = - (x - x_{0})\cdot \nu + \overline{s}(\nu) t, && \liminf_{\varepsilon \rightarrow 0}u^{\varepsilon}(x,t) = - (x - x_{0})\cdot \nu + \underline{s}(\nu) t.
       \end{eqnarray*}
	\end{enumerate}
    In particular, when $\overline{s}(\nu) > \underline{s}(\nu)$, the set $\{ u^{\varepsilon}(\cdot,t) = 0\} $ oscillates by unit size as $\varepsilon \rightarrow 0$ and thus homogenization fails.
\end{theorem}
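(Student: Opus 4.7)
The strategy is to take the micro-scale formulas in (b) as the \emph{definitions} of $\overline{s}(\nu)$ and $\underline{s}(\nu)$, establish their existence and independence of $x_0$, prove continuity in $\nu$, and then deduce the macro-scale identities and the failure-of-homogenization corollary by rescaling. Throughout, fix $\nu\in\mathbb{S}^{n-1}$ and $x_0\in\RR^n$, and let $v=u^1$ solve \eqref{the scaled forced mean curvature flow} at scale $\e=1$ with initial datum $-(x-x_0)\cdot\nu$. Since $g\in[m_0,M_0]$, the moving planes $P^\pm(x,t):=-(x-x_0)\cdot\nu+c_\pm t$ with $c_-=m_0$, $c_+=M_0$ are global sub- and supersolutions, hence by comparison $P^-\le v\le P^+$, and the quantities
\begin{equation*}
\rho^+(t)\,:=\,\sup\{x\cdot\nu : v(x,t)=0\},\qquad \rho^-(t)\,:=\,\inf\{x\cdot\nu : v(x,t)=0\}
\end{equation*}
are finite with $x_0\cdot\nu+m_0 t\le \rho^\pm(t)\le x_0\cdot\nu+M_0 t$.

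The first main step is to show $\rho^+(t)/t$ and $\rho^-(t)/t$ admit limits as $t\to\infty$ depending only on $\nu$. The plan is to exploit the $\ZZ^n$-periodicity of $g$ together with the comparison principle: for any $k\in\ZZ^n$ the $k$-translate of $v$ is again a solution of \eqref{the scaled forced mean curvature flow}, so one can place a lattice-translated planar supersolution of normal $\nu$, whose intercept is within $O(1)$ of $\rho^+(t_1)$ (using the a priori Lipschitz control of $v$ near its head), on top of $v(\cdot, t_1)$. Comparison then yields a subadditive-type relation for $\rho^+$ (and a super-additive one for $\rho^-$), to which a Fekete-type lemma applies, producing $\overline{s}(\nu),\underline{s}(\nu)\in[m_0,M_0]$. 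The ordering $\overline{s}\ge\underline{s}$ is immediate from $\sup\ge\inf$; independence of $x_0$ follows from the same lattice-translation plus $O(1)$-trapping argument.

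For continuity of $\overline{s},\underline{s}$ in $\nu$, the plan is to compare the solutions $v_\nu$ and $v_{\nu'}$ with planar initial data of directions $\nu,\nu'$. On a ball $B_R(x_0)$ the two initial data differ by at most $R|\nu-\nu'|$, and outside one dominates the tilted plane by a cone-shaped barrier built from $P^\pm$; global comparison then yields an $O(R|\nu-\nu'|)$ control of $|v_\nu-v_{\nu'}|$ on compacta for times bounded in terms of $R$. Choosing $R\to\infty$ sufficiently slowly with $t\to\infty$ and using the a priori bounds $\rho^\pm/t\in[m_0,M_0]$, one gets $|\overline{s}(\nu)-\overline{s}(\nu')|\to 0$ as $\nu'\to\nu$, and similarly for $\underline{s}$. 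This is the main technical obstacle, since the micro-scale definitions involve global suprema over the zero level set while nearby tilted planes never dominate one another everywhere; the ``in particular'' clause of (a) then follows from continuity because $\{\overline{s}>\underline{s}\}$ is open.

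Finally, the macro-scale statement follows by the rescaling $u^\e(x,t)=\e\,u^1(x/\e,\,t/\e;\,x_0/\e)$, which preserves \eqref{the scaled forced mean curvature flow} and the planar initial datum $-(x-x_0)\cdot\nu$. From $\e\,\rho^\pm(t/\e)\to\overline{s}(\nu)\,t$ respectively $\underline{s}(\nu)\,t$, together with the sandwich $P^-\le u^\e\le P^+$, a half-relaxed-limits argument identifies $\limsup_{\e\to 0}u^\e(x,t)=-(x-x_0)\cdot\nu+\overline{s}(\nu)t$ and the analogous $\liminf$. When $\overline{s}(\nu)>\underline{s}(\nu)$, the two macro-scale affine limits vanish on two distinct hyperplanes moving with speeds $\overline{s}(\nu)$ and $\underline{s}(\nu)$, so on the slab of macroscopic width $(\overline{s}(\nu)-\underline{s}(\nu))t$ between them one has $\liminf u^\e<0<\limsup u^\e$. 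This forces $\{u^\e(\cdot,t)=0\}$ to oscillate by unit macro-size and rules out homogenization.
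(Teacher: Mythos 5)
Your overall plan — take the micro-scale limits as the \emph{definition} of $\overline{s},\underline{s}$, prove their existence by (approximate) subadditivity via $\ZZ^n$-lattice translations of the moving positive set $\Omega(t)=\{v(\cdot,t)\ge 0\}$, establish independence of $x_0$ the same way, and then pass to the macro-scale by the rescaling $u^\e(x,t)=\e\,u^1(x/\e,t/\e)$ — is a genuinely different and more elementary route than the paper's, which formulates obstacle cell problems, proves a local comparison principle via discrepancy/Birkhoff arguments, and defines $\overline{s},\underline{s}$ through detachment. For the micro-scale existence and the macro-scale identities your approach is sound in outline, provided one works with the level-set comparison of sets rather than of function values, and uses the pulsating identity $u^1(y+k,s)=u^1(y,s)-k\cdot\nu$ to control all level sets simultaneously. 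That said, the macro-scale \emph{equality} (rather than one-sided inequality) also needs the fact that the head is actually attained up to $o(t)$, which the paper extracts from the detachment/local comparison machinery (Propositions \ref{the repeated patten regarding the head and tail of the oscillation of the real solution in a thin strip}, \ref{the long fingure occurs when head tail speed do not coincide in irrational direction}); in your route this is at least plausible from the subadditive limit, but you should say it.

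The genuine gap is the continuity of $\nu\mapsto\overline{s}(\nu),\underline{s}(\nu)$, which is the heart of item (a) and of the theorem. Your argument compares $v_\nu$ and $v_{\nu'}$ by bounding the initial data on a ball $B_R$ and invoking a cone barrier outside, then letting $R\to\infty$ ``sufficiently slowly.'' This does not close, for three reasons. First, the relevant comparison is between level sets, not function values, and for the curvature flow there is no finite speed of propagation: a narrow indentation in the complement of $\Omega(t)$ (which can occur outside $B_R$ where $\{x\cdot\nu'\le 0\}$ sticks out beyond any translate of $\{x\cdot\nu\le 0\}$) has large negative $\kappa$ and therefore large positive normal velocity, so information from outside any fixed cylinder can propagate inward arbitrarily fast. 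A paraboloid or cone barrier fails to be a supersolution of the curvature equation near its axis or vertex, as one checks by computing $\tr\{D^2W(I-\widehat{DW}\otimes\widehat{DW})\}$. Second, the intrinsic perturbation $R|\nu-\nu'|$ is \emph{not} small on the scale of the head location $\sim st$, and one must take $R\gtrsim M_0 t$ to even see the head; without a uniform-in-$\nu$ rate for $\rho^\pm(t)/t\to\overline{s}(\nu)$ there is no way to send $R,t\to\infty$ in a compatible order. Third, the paper's own continuity proof (Propositions \ref{the semicontinuity of the head and tail speed} and \ref{the continuity of head and tail speeds}) is precisely the place where the obstacle formulation earns its keep: upper semicontinuity is obtained by a delicate geometric argument in a slightly tilted cylinder whose aperture is tuned to the detachment time, and the reverse inequality is a blow-up argument using global obstacle solutions together with the detachment property in Proposition \ref{the detachment property for global obstacle subsolution in irrational directions}. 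Nothing in your sketch replaces this. So while your subadditivity route gives a clean proof that the micro-scale limits exist for each fixed $\nu$, as written the proposal does not prove item (a), and the ``in particular'' clause of (a) then also lacks foundation.
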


Above results state that the \textit{head speed} $\overline{s}$ and the \textit{tail speed} $\underline{s}$ provide a comprehensive description of the asymptotic behavior in the limit $\varepsilon \rightarrow 0$ for the motion law (\ref{the normal velocity of moving front in macro scale}) in all scenarios. Let us mention that if, in addition, we have local regularity properties in micro-scale $\e = 1$, our approach would yield the existence of localized ``pulsating travelling waves" with speeds $\overline{s}$ and $\underline{s}$. This is indeed the case in the laminar setting discussed below.

\medskip
	
For solutions with general initial data, it is more difficult to pinpoint the precise location of the heads and tails of the front in the asymptotic limit $\e\to 0$. However the following holds, which provides in particular the optimal upper and lower bound for the propagation of solutions with general geometry.
\begin{theorem}[Proposition \ref{the sub equation satisfied by the limit}, \ref{the super equation satisfied by the limit}, \ref{homogenization occurs when the head speed and tail speed coincide}, Corollary \ref{the head and tail of front with general initial profile}]\label{main theorem 2}
	Let $u^\e$ solve \eqref{the scaled forced mean curvature flow} with initial data $u_0$ that are uniformly continuous in $\RR^n$. Then 
	$u^{\star}:=\lim\limits_{\e\to 0}\sup^{*} u^\e$ is a viscosity subsolution of $u_t = \overline{s}(-\widehat{Du}) |Du|$. Similarly $u_{\star}:=\lim\limits_{\e\to 0}\inf_{*} u^\e$ is a viscosity supersolution of $u_t = \underline{s}(- \widehat{Du}) |Du|$. 
		
\medskip
		
In particular
		
\begin{itemize}
	\item Let us consider a  collection of points and directions $\mathcal{A}=\{(x_i, \nu_i)\}\subset \RR^n\times \mathbb{S}^{n-1}$, and define the associated convex sets $\mathcal{E} (t):= \inf\limits_{(x_i,\nu_i) \in \mathcal{A}} \{(x-x_i)\cdot\nu_i \leq \overline{s}(\nu)  t \}$. If initially $\{u_0 = 0\} \subset \mathcal{E}(0)$, then $\{u^{\star}(\cdot,t)=0\} \subset \mathcal{E}(t)$.			
	\item If $s=\bar{s}= \underline{s}$, then $u^\e$ uniformly converges to $u$, the unique viscosity solution of $u_t = s(-\widehat{Du})|Du|$ with initial data $u_0$.
\end{itemize}
\end{theorem}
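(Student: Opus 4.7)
The two bulleted consequences reduce, via standard viscosity comparison, to the sub/supersolution statements for $u^\star$ and $u_\star$; I focus on those first.

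\emph{Subsolution property of $u^\star$.} The plan is a perturbed test function method in which plane-like solutions play the role of barriers. Let $\phi$ be smooth with $u^\star - \phi$ attaining a strict local maximum at $(x_0, t_0)$, and write $p = D\phi(x_0, t_0) \ne 0$, $a = \phi_t(x_0, t_0)$, $\nu = -\widehat p$; the degenerate case $p = 0$ is handled via the usc envelope of the geometric Hamiltonian, which vanishes there. Supposing for contradiction $a > \overline s(\nu)|p|$, fix $\eta > 0$ with $a > \overline s(\nu)|p| + 3\eta|p|$, and for small $\rho > 0$ let $V^\e$ solve \eqref{the scaled forced mean curvature flow} starting at $t_0 - \rho$ from the linear profile
\[
V^\e(x, t_0 - \rho) \;=\; \phi(x_0, t_0) + p \cdot (x - x_0) - a\rho + \eta|p|\rho.
\]
Because the equation is invariant under $u \mapsto cu + d$ with $c > 0$, a rescaled and time-shifted version of Theorem \ref{main theorem 1}(b) yields
\[
\limsup_{\e \to 0}\nolimits^{*}\, V^\e(x_0, t_0) \;=\; \phi(x_0, t_0) + \bigl(\overline s(\nu)|p| - a\bigr)\rho + \eta|p|\rho \;\le\; \phi(x_0, t_0) - 2\eta|p|\rho .
\]
Strict maximality provides $\sigma > 0$ with $u^\e \le \phi - \sigma$ on the parabolic boundary of a small cylinder $Q = B_{\bar r}(x_0) \times (t_0 - \bar r, t_0)$ for all $\e$ small, while a Taylor expansion of $\phi$ gives $V^\e \ge \phi + \eta|p|\rho/2$ on the bottom face once $\bar r$ and $\rho$ are small enough. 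A local comparison argument, exploiting the finite-speed-of-propagation property of the level-set flow, then yields $u^\e \le V^\e$ in $Q$; evaluating at near-maximizers $(x_\e, t_\e) \to (x_0, t_0)$ along which $u^\e(x_\e, t_\e) \to \phi(x_0, t_0)$ and passing to $\limsup^*$ contradicts the displayed inequality. The supersolution property of $u_\star$ is entirely symmetric, using a plane-like lower barrier whose macroscopic $\liminf_*$ is governed by $\underline s$.

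\emph{Bulleted consequences.} For the convex-envelope bullet, each $w_i(x, t) := \overline s(\nu_i) t - (x - x_i) \cdot \nu_i$ is a classical solution of $u_t = \overline s(-\widehat{Du})|Du|$ whose zero super-level set is exactly $H_i(t) := \{(x - x_i) \cdot \nu_i \le \overline s(\nu_i) t\}$. The subsolution property of $u^\star$, combined with the level-set comparison principle for this geometric first-order equation, propagates the initial inclusion $\{u_0 = 0\} \subset H_i(0)$ to $\{u^\star(\cdot, t) = 0\} \subset H_i(t)$ for each $i$, and intersecting over $i$ yields $\{u^\star(\cdot, t) = 0\} \subset \mathcal{E}(t)$. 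For the convergence bullet, when $\overline s \equiv \underline s \equiv s$ both $u^\star$ and $u_\star$ are sub- and super-solutions of $u_t = s(-\widehat{Du})|Du|$ sharing the continuous initial datum $u_0$, so uniqueness for this geometric Hamilton--Jacobi equation forces $u^\star = u_\star$, giving locally uniform convergence to the unique solution $u$.

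\emph{Main obstacle.} The principal difficulty is the subsolution step: when $\overline s(\nu) > \underline s(\nu)$ the plane-like barrier $V^\e$ oscillates inside $Q$ with amplitude of order $(\overline s(\nu) - \underline s(\nu))\rho$, so its pointwise macroscopic limit does not directly control $V^\e$ on the lateral face of $Q$. Localizing the comparison---either via a finite-speed-of-propagation estimate for \eqref{the scaled forced mean curvature flow} or via a cutoff modification of the initial profile outside a large ball that preserves the macroscopic value at $(x_0, t_0)$---is the main technical point; once this is in place the remainder of the argument reduces to Taylor expansion and a direct invocation of Theorem \ref{main theorem 1}(b).
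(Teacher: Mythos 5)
Your strategy --- a perturbed test function argument comparing $u^\e$ against a plane-like barrier $V^\e$ in a fixed macroscopic cylinder $Q = B_{\bar r}(x_0)\times(t_0-\rho,t_0)$, then invoking the macroscopic formula of Theorem~\ref{main theorem 1}(b) --- differs from the paper's and has a genuine gap, namely the one you yourself flag as ``the main technical point'' and do not resolve: the lateral comparison. Quantitatively, the plan imposes two incompatible scalings. On the bottom face the Taylor error $O(\bar r^2+\rho^2)$ must be beaten by the offset $\eta|p|\rho$, forcing $\bar r\lesssim\sqrt{\rho}$. On the lateral face you have $u^\e\leq\phi-\sigma$ with $\sigma$ of order at most $\bar r^4$ (after the standard $\delta(|x-x_0|^4+(t_0-t)^2)$ modification), while $V^\e$ rises in time only at rate $\geq m_0|p|<a=\phi_t(x_0,t_0)$, so $V^\e-\phi$ drops along the cylinder's height by at least $(a-M_0|p|)\rho>2\eta|p|\rho$; absorbing this by $\sigma$ forces $\bar r\gtrsim\rho^{1/4}$. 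Together these force $\rho\gtrsim1$, so no contradiction is reached as $\rho\to0$. Finite speed of propagation does not rescue this: it is the level \emph{sets}, not the level-set function, that propagate at bounded speed for \eqref{the scaled forced mean curvature flow}, and in any case the obstruction above is about the relative growth rates of $V^\e$ and $\phi$, not about propagation. The cutoff modification you sketch would destroy the applicability of Theorem~\ref{main theorem 1}(b), which only covers affine initial data.

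The paper avoids this entirely by working at the $\e$-scale rather than a fixed macroscopic scale. In the proof of Proposition~\ref{the sub equation satisfied by the limit}, the shift is $h=H\e$ for a fixed dimensional constant $H$, and the comparison takes place in a tube of macroscopic radius $\e R$ around the moving obstacle, not in a full cylinder $Q$. The lateral boundary of the tube is handled automatically by the obstacle formulation (Lemma~\ref{the obstacle subsolution coincides with the obstacle on the boundary}: the obstacle subsolution coincides with the obstacle there), so no separate lateral estimate is needed. The contradiction then comes from the \emph{quantitative, uniform-in-$\e$} detachment estimate of Proposition~\ref{the detachment lemma subsolution} (a front displacement lower bound of order $(1.5\delta H - B)\e$), played against a Taylor upper bound of order $\delta H\e - B\e$. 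Your plan relies instead on Theorem~\ref{main theorem 1}(b), which is a purely asymptotic statement with no rate and which the paper only proves later (Section~\ref{the section of nonhomogenization}); without a rate it cannot supply the finite-$\e$ barrier needed in the cylinder. Your treatment of the two bulleted consequences and of the degenerate $p=0$ case is correct and matches the paper, but these rest entirely on the sub-/super-solution properties, which is where the gap sits.
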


\medskip

Stronger statements are available  in the {\it Laminar setting,} when $g(x)= g(x')$ for $x=(x', x_n)$. For $\e=1$, if we start from a Lipschitz and periodic graph $\Gamma_0=\{(x,x_n): x_n = U^0(x)\}$ that is bounded, then we can show that $\Gamma_1$ stays as a graph and moreover remains as $C^{1,\alpha}$ hypersurface in space, locally uniformly for all large times (see Proposition \ref{the regularity of front in laminar case}). This is sufficient regularity for $\Gamma_1$ to yield the following results.
\begin{theorem}[Theorem \ref{the existence of a travelling wave subsolution and its properties}, \ref{the existence of a travelling wave supersolution and its properties}]\label{main theorem 3}
	Let $g(x)=g(x')$ for $x=(x',x_n)$. Suppose that $\bar{s}(e_n) > \underline{s}(e_n)$. Then there are disjoint, open, non-empty sets $E_1, E_2$ in $\RR^{n-1}$ and functions $U_1: E_1 \to (-\infty, 0]$, $U_2: E_2 \to [0, \infty)$ such that the following is true:
	\begin{itemize}
		\item[(a)] The sets $E_i\times (-\infty, \infty)$ are stationary solutions of \eqref{the normal velocity of moving front in macro scale}.
		\item[(b)]  $U_1\to -\infty$ as $x \to \partial{E_1}$ and $U_2 \to +\infty$ as $x\to \partial{E_2}$.
		\item[(c)] The surfaces $\Gamma_i := \{x_n = U_i(x') + s_i t\}, i = 1,2,$  satisfy \eqref{the normal velocity of moving front in macro scale} with $\e=1$, away from the ``obstacle" $\{x_n = s_i t\}$. (here $s_{1} = \overline{s}(e_{n})$ and $s_{2} = \underline{s}(e_{n})$)
		\item[(d)] $\Gamma_1$ and $\Gamma_2$ are respectively a subsolution and a supersolution of \eqref{the normal velocity of moving front in macro scale} with $\e=1$.
	\end{itemize}
\end{theorem}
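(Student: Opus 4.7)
The plan is to construct $\Gamma_1$ as a long-time limit, in the moving frame at speed $s_1 = \bar s(e_n)$, of the flow starting from a flat plane, and to obtain $\Gamma_2$ by the symmetric construction. The essential ingredient is the laminar regularity result Proposition \ref{the regularity of front in laminar case}, which keeps the evolving front a uniform $C^{1,\alpha}$ graph and allows passage to the limit.

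Let $u^1$ solve \eqref{the scaled forced mean curvature flow} at $\e = 1$ with plane initial data $u^1(x,0) = -x_n$. By comparison with $\ZZ^{n-1}$-translates, $u^1$ is $\ZZ^{n-1}$-periodic in $x'$; by Proposition \ref{the regularity of front in laminar case}, for $t \geq 1$ the zero level set is a $C^{1,\alpha}$ graph $x_n = U(x',t)$, periodic in $x'$, with estimates uniform in $t$. Theorem \ref{main theorem 1}(b) applied in direction $e_n$ gives $\sup_{x'} U(\cdot, t)/t \to s_1$ and $\inf_{x'} U(\cdot, t)/t \to s_2 := \underline s(e_n) < s_1$. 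Set $V(x',t) := U(x', t) - s_1 t$. A key preliminary lemma is the uniform upper bound $V(x',t) \leq C_0$ for all $x',t$, obtained by a self-translation/comparison argument: were $\sup V$ to escape to $+\infty$, a suitable translate of $u^1$ in $x_n$ would overtake a level set propagating at speed $s_1$, contradicting the identity $s_1 = \lim_t \sup U(\cdot,t)/t$. Define
$$U_1(x') := \inf_{\tau>0} \sup\bigl\{V(y', t) : t \geq \tau,\ |y' - x'| \leq 1/\tau\bigr\},\qquad E_1 := \{U_1 > -\infty\}.$$
Then $U_1$ is upper semicontinuous, $U_1 \leq 0$ (a refinement of the self-translation bound using the head-speed identity), and $U_1 \to -\infty$ as $x' \to \partial E_1$ by construction. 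Non-emptiness of $E_1$ follows from the head-speed formula: picking $x'_k \to x'_\infty$ with $V(x'_k, t_k) \to 0$, the uniform $C^{1,\alpha}$ regularity of $U(\cdot, t_k)$ yields a ball $B_r(x'_\infty) \subset E_1$.

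In the open region $\{x_n < s_1 t\}$, $C^{1,\alpha}$ precompactness of $\{U(\cdot, t) - s_1 t\}_{t \geq 1}$ on compact subsets of $E_1$ lets us pass to the limit in the graph form of \eqref{the normal velocity of moving front in macro scale}, so $\Gamma_1 = \{x_n = U_1(x') + s_1 t : x' \in E_1\}$ is a viscosity solution there, giving (c); capping at the obstacle $\{x_n = s_1 t\}$ preserves the subsolution inequality, giving (d). As $x' \to \partial E_1$, $U_1 \to -\infty$ forces $|DU_1| \to \infty$, so the graph becomes vertical and tangent to the cylinder $\partial E_1 \times \RR$; passing to the blow-up limit in the stationary traveling wave equation yields $\kappa_{\partial E_1} = g$ pointwise on $\partial E_1$, which is precisely the stationarity of $E_1 \times \RR$ under \eqref{the normal velocity of moving front in macro scale}, giving (a) and (b). The tail wave $\Gamma_2$ at speed $s_2 = \underline s(e_n)$ arises from the mirror construction applied to $-u^1$, producing $U_2 : E_2 \to [0, \infty)$ with $U_2 \to +\infty$ at $\partial E_2$; the strict gap $s_1 > s_2$ forces $E_1 \cap E_2 = \emptyset$. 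The two most delicate steps I anticipate are the uniform bound $V \leq C_0$ (turning an asymptotic speed into a pointwise barrier) and the stationarity of $\partial E_1 \times \RR$ through the vertical blow-up, both of which rely essentially on the laminar-setting uniform $C^{1,\alpha}$ regularity.
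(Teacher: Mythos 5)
Your construction starts from the free flow $u^1$ with affine initial data, subtracts $s_1 t$ from the graph $U(\cdot,t)$ to get $V := U - s_1 t$, and takes the upper half-relaxed limit as $t\to\infty$. The paper's proof of Theorem~\ref{the existence of a travelling wave subsolution and its properties} is substantively different: it works with the obstacle subsolutions $\overline{U}^{s_\ell}$ at speeds $s_\ell := \bar s + 1/\ell^2$ \emph{strictly above} the head speed, time-shifted back by $t_\ell := \ell \leq T^*(s_\ell)$, so that the uniform upper bound and the traveling-wave structure of the limit are both built into the obstacle constraint and the Birkhoff monotonicity. This choice neutralises exactly the two points you flag as delicate, and I do not think your proposed handling of them goes through.

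First, the uniform bound $V(x',t)\leq C_0$. The head-speed identity of Theorem~\ref{main theorem 1}(b) gives only $\sup_{x'}U(\cdot,t)/t\to s_1$, i.e. $\sup V(\cdot,t)=o(t)$, not boundedness. The quantity $h(t):=\sup_{x'}U(\cdot,t)$ is subadditive (comparison principle plus $x_n$-translation invariance, available in the laminar case), so by Fekete $h(t)\geq s_1t$, which is the \emph{lower} bound $\sup V\geq 0$ and useful for non-emptiness of $E_1$, but not the upper bound you need. Your ``self-translation'' sketch never produces a concrete barrier moving at exactly speed $s_1$; and comparing $u^1$ against $\overline{\mathrm{U}}_{a}$ at speed $\bar s+\varepsilon$ via Lemma~\ref{comparison between the real solution and the obstacle solutions} yields a vertical shift $\xi_A\cdot e_n$ whose magnitude diverges as $\varepsilon\to 0$, giving only a sublinear, not a bounded, estimate for $V$. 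The bound is in fact true in the laminar case (Proposition~4.2 of \cite{Cesaroni and Novaga}), but its proof there is variational and is not a one-line consequence of the $C^{1,\alpha}$ spatial estimate, which controls the shape of the graph, not its vertical position. The paper's $\overline{U}^{s_\ell}$ avoids the issue: $\overline{U}^{s_\ell}\leq s_\ell t$ is automatic from Definition~\ref{the definitions of obstacle solutions}, and $T^*(s_\ell)\geq\ell^2\geq t_\ell$ from Lemma~8.2 keeps the top of the shifted graph $\tilde{U}^\ell$ in $[-1,0]$.

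Second, the traveling-wave structure of the limit. $C^{1,\alpha}$ precompactness lets you extract a limit profile $W(y,s)$ along subsequences $t_k\to\infty$, but does not by itself show that $W(y,s)=U_1(y)+s_1 s$ rather than a genuinely time-dependent solution. In the paper this identity comes from two Birkhoff inequalities applied to the obstacle subsolutions: $\overline{U}^{s_\ell}(y,t+k)\leq\overline{U}^{s_\ell}(y,t)+s_\ell k$ (Proposition~\ref{the birkhoff property for subsolution in an expanding domain} with the laminar extension of Remark~\ref{a remark on the birkhoff property in laminar case}), and a reverse inequality built from $\overline{U}^{s_{\ell+1}}\leq\overline{U}^{s_\ell}$ and the choice $(s_{\ell+1}-s_\ell)t_{\ell+1}\to 0$. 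These monotonicities come from the maximality of the obstacle subsolution, not from regularity, and are not available for $u^1$ directly. To salvage your route you would need an independent argument that the time oscillation of $V(\cdot,t)$ decays, which is another substantial lemma.

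The remaining steps of the proposal --- the definition of $E_1$ via the level set of $U_1$, the use of Corollary~\ref{the regularity of obstacle solutions} for compactness, the vertical blow-up at $\partial E_1$ to obtain $\kappa=g$ and hence stationarity of $\partial E_1\times\RR$, and the mirror construction for $\Gamma_2$ --- track the paper's argument and would be fine once the two issues above are closed. But as written the proposal leaves the heart of the matter open.
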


For $\nu \neq e_n$ a parallel argument should lead to the existence of pulsating traveling waves away from the obstacles, but we do not pursue this.

\medskip

Our results accompanies that of  Cesaroni and Novaga \cite{Cesaroni and Novaga}, where variational methods were adopted to yield the existence of the maximal traveling wave in the above laminar setting. While our approach allows to describe travelling waves both at maximal and minimal speed, we only recover partial travelling waves away from their highest and lowest positions, as described in $(c)$. In fact in the scenario where there exists multiple localized travelling waves at the same asymptotic speed, let's say $\overline{s}(e_n)$,  our method appears to capture the most external profile of these waves. 

%


In laminar setting, when the oscillation of $g$, $M_0-m_0$, is smaller than a dimensional constant, \cite{Cesaroni and Novaga} shows the existence of global traveling wave solution with a unique speed $\bar{s}(e_n)=\underline{s}(e_n)$, which provides the large-time behavior of graph solutions in the direction of $e_n$. When the oscillation of $g$ is allowed to be large, it is not hard to generate examples of $\bar{s}(e_n) >\underline{s}(e_n)$ following that of \cite{Caffarelli and Monneau ARMA}. We briefly discuss this in section \ref{discussion of construction of travelling waves}.

\medskip

{\bf Main challenges and new ingredients}

\medskip

The central difficulty in obtaining these results is the lack of regularity of the solutions, which comes naturally with the general scenario. In aforementioned literature regarding homogenization of \eqref{the normal velocity of moving front in macro scale}, one starts with an Ansatz $u^{\epsilon}(x,t) = u^0(x,t) + \epsilon v(\frac{x}{\epsilon}) + o(\epsilon)$, where $v$ solves a cell problem given by the limit profile $u^0$, which is, for \eqref{the scaled forced mean curvature flow}, a linear profile $x\cdot\nu - s t$.  The idea is then to look for  $s = s(\nu)$ for which there exists a $\ZZ^n$-periodic solution $v$  of the cell problem
\begin{equation*}
\mathscr{F}(D^2v, \nu + Dv, y) = s \hbox{ in } [0,1]^n, \hbox{ where } \mathscr{F} \hbox{ is as given in } \eqref{the scaled forced mean curvature flow}.
\end{equation*}
The existence of such $v$ is central in establishing homogenization results.

\medskip

In our setting this approach fails to apply for two reasons. First, in our general settings, there may be no global limit profile for $u^{\epsilon}$,  let alone an asymptotic planar profile. Indeed our goal is to look for profiles of limit supremum and limit infimum of $u^{\e}$, as stated above. To study these partial limits, we will introduce ``obstacle cell problems", which amounts to looking for the maximal subsolution and minimal supersolution of a ``cell problem". Second, our ``cell problem" is not the standard cell problem in the sense that the corresponding solutions are not periodic if $\nu$ is irrational. This necessitates formulation of the problem in a bounded domain instead, generating an ``approximate" sub- and super-cell problem (Definition \ref{the definitions of obstacle solutions}).

\medskip

The obstacle approach was first introduced by Caffarelli, Souganidis and Wang \cite{Caffarelli Souganidis and Wang CPAM} for random homogenization of uniformly elliptic PDEs, and later adopted by Kim \cite{Kim ARMA,Kim CPDE} and Po$\check{\text{z}}\acute{\text{a}}$r \cite{Pozar 2015} for free boundary problems. In both of these results the common feature is that there are no standard cell problems one can expect to solve, either due to the non-periodic environment or non-periodic evolution of the free boundaries. This corresponds to our second difficulty described above. However in all of the aforementioned results homogenization is expected to hold: indeed the obstacle solutions in these settings turn out to be asymptotically regular.  Our contribution in this paper is thus introducing a ``cell problem" type approach for a problem where homogenization is not expected to occur in general, or more precisely when large-scale regularity is missing for the $\e$-solutions.

\medskip

Roughly speaking the obstacle solutions  solve \eqref{the normal velocity of moving front in macro scale}  with the constraint for the solutions to be below or above the planar obstacle $x\cdot\nu - st$. For instance $\overline{s}(\nu)$ is then obtained as the largest speed for which the solutions put below the obstacle stay close to it, which is what is expected for the head speed of an oscillatory interface.  We observe that, when $\nu$ is {\it irrational} i.e. if $\nu \notin \RR\ZZ^{n}$, this approach has the advantage of introducing a fine-scale dynamic recurrence property to the problem (Proposition \ref{a lattice point that is close to a hyperplane}), which  compensates for the lack of regularity properties to study its large-scale behavior. A more precise form of this observation is formulated in the {\it local comparison} (Proposition \ref{LCP in the unit scale}), which is an important new ingredient in our analysis. This theorem, of independent interest, localizes obstacle solutions of the curvature flow \eqref{the normal velocity of moving front in macro scale} which are only continuous. Such localization procedure is central in showing qualitative properties of the head and tail speeds, such as linear detachment, continuity and fingering (see e.g. Propositions \ref{the detachment lemma supersolution}, \ref{the ordering relation in irrational directions}, \ref{the continuity of head and tail speeds}, \ref{the repeated patten regarding the head and tail of the oscillation of the real solution in a thin strip}). 

\medskip

Our framework is rather general, and we expect that it could be used to study other geometric flows where homogenization does not always hold. In particular we plan to pursue the case when $g$ changes sign, where there is an added feature of {\it a trapping zone}, where $u^{\epsilon}$ converges to its initial data as $\e \to 0$. See \cite{Cardaliaguet Lions and Souganidis} for illuminating discussions of this phenomena. Technically speaking there are added challenges. For instance when $g$ is positive, $u^{\epsilon}$ with affine initial data turns out to be monotone increasing in time. This adds additional stability in the evolution which is useful in our analysis. Still at the heuristic level our approach should apply to this case. In particular we believe that Theorem \ref{main theorem 4} should still apply to the general, sign-changing $g$. 

\medskip

{\bf Outline of the paper}

\medskip

We start with formulation of obstacle solutions in Section \ref{obstacle problems}, with their properties. In particular the recurrence property mentioned above is given as the Birkhoff property in Section \ref{basic properties of obstacle solutions}. In Section \ref{the inf convolution} we introduce a local perturbation of solutions that was inspired from its usage in free boundary problems (see \cite{Athanasopoulos Caffarelli and Salsa} and \cite{Choi Jerison and Kim}). Section \ref{the section of LCP} proves local comparison principle in terms of the obstacle semi-solutions with irrational directions. To show this, we use the discrepancy results in Section \ref{the subsection of discrepancy} to show that the Birkhoff property leads to a fine-scale recurrence property for irrational directions. Then we prove the local comparison principle (Proposition \ref{LCP in the unit scale} in Section \ref{the subsection of LCP}), using this property as well as the local perturbation introduced in Section \ref{the inf convolution}. Similar results are available in \cite{Kim ARMA, Kim CPDE, Pozar 2015}, however in our problem neither large scale regularity nor perturbation parameters exist.  Both of these facts lead to significant challenges in the proof.  In Section \ref{the section of head and tail speeds} we define $\bar{s}$ and $\underline{s}$ based on the detachment of solutions from the obstacles (Definition \ref{the head speed in an irrational direction} - \ref{the tail speed in an irrational direction} in Section \ref{the subsection of irrational directions}), and use approximation by irrational directions to show continuity of these functions at all directions, based both local comparison (Proposition \ref{the semicontinuity of the head and tail speed}) and a blow-up argument using global solutions (Proposition \ref{the continuity of head and tail speeds}). Section \ref{the section of homogenization} and \ref{the section of nonhomogenization} contains the proof our main results, Theorem \ref{main theorem 1} and Theorem \ref{main theorem 2}.  Lastly Section \ref{the section of laminar forcing term} discusses the Laminar case, where Theorem \ref{main theorem 3} is proved. We finish with Section \ref{discussion of construction of travelling waves} where some scenarios are discussed under which homogenization fails.

\section{Obstacle problems}\label{obstacle problems}
In this section, we introduce the obstacle problem associated to the forced mean curvature flow (\ref{the scaled forced mean curvature flow}) with $\varepsilon = 1$. In later sections, it allows us to analyze the homogenization in each direction independently. The role an obstacle problem plays here is similar to that of the usual cell problem in homogenization problems. Therefore, the obstacle problem here can be regarded as a variant version of the cell problem.

\subsection{Setup}

Let us denote by $\mathscr{F}$ the operator regarding space derivatives in the equation (\ref{the scaled forced mean curvature flow}) with $\varepsilon = 1$:
\begin{equation}\label{the differential operator regarding space derivatives}
\mathscr{F}\left( D^{2}u, Du, x\right) := \tr \left\lbrace  D^{2}u\left( I - \widehat{Du}\otimes \widehat{Du}\right) \right\rbrace + g\left(x\right) |Du|.
\end{equation}

\begin{definition}[c.f. \cite{Caffarelli and Monneau ARMA}]
	Let $\mathcal{S}^{n}$ be the set of all $n \times n$ symmetric matrices and denote $\mathscr{D}_{0} := \mathcal{S}^{n} \times \left( \RR^{n}\diagdown \left\lbrace 0\right\rbrace \right) \times \RR^{n}$. We define for all $\left( X, p, x\right) \in \mathcal{S}^{n} \times \RR^{n} \times \RR^{n}$:
	\begin{eqnarray*}
	\mathscr{F}^{*}(X,p,x) &:=& \limsup_{\eta \rightarrow 0}\left\lbrace \mathscr{F}\left( Y, q, y\right) \big| \left( Y, q, y\right) \in \mathscr{D}_{0}, \hspace{2mm} |X - Y|, |p - q|, |x - y| \leq \eta \right\rbrace, \\
	\mathscr{F}_{*}(X,p,x) &:=& \liminf_{\eta \rightarrow 0}\left\lbrace \mathscr{F}\left( Y, q, y\right) \big| \left( Y, q, y\right) \in \mathscr{D}_{0}, \hspace{2mm} |X - Y|, |p - q|, |x - y| \leq \eta \right\rbrace. 
	\end{eqnarray*}
    In particular, we have $\mathscr{F}^{*}(X,p,x) = \mathscr{F}_{*}(X,p,x) = \mathscr{F}(X,p,x)$ for $(X, p ,x) \in \mathscr{D}_{0}$.
\end{definition}

\begin{definition}[c.f. \cite{Crandall Ishii Lions BAMS, Caffarelli and Monneau ARMA}]\label{the definition of viscosity subsolution}
	Let $\Omega \subseteq \RR^{n} \times (-\infty,\infty)$ and $u(x,t) \in \USC(\Omega)$, the space of upper semicontinuous functions over $\Omega$. Then $u(x,t)$ is called a \textit{viscosity subsolution} in $\Omega$, which is denoted as follows
	\begin{eqnarray}\label{the viscosity subsolution}
	u_{t} \leq \mathscr{F}\left( D^{2}u, Du, x\right), && (x,t) \in \Omega,
	\end{eqnarray}
    if for any $(x_{0},t_{0}) \in \Omega$, $r > 0$ and $\phi(x,t) \in C^{2,1}\left( B_{r}(x_{0},t_{0})\right)$, such that
    \begin{eqnarray*}
    u(x,t) \leq \phi(x,t) \hspace{2mm} \text{in} \hspace{2mm} B_{r}(x_{0},t_{0}) &\text{and}& u(x_{0},t_{0}) = \phi(x_{0},t_{0}),
    \end{eqnarray*}
   then
   \begin{equation*}
   \phi_{t}(x_{0},t_{0}) \leq \mathscr{F}^{*}\left( D^{2}\phi(x_{0},t_{0}), D\phi(x_{0},t_{0}), x_{0}\right).
   \end{equation*}
\end{definition}

\begin{definition}\label{pseudo viscosity subsolution}
	Let $\Omega \subseteq \RR^{n} \times (-\infty, \infty)$ and $u(x,t) \in \USC(\Omega)$. Then $u(x,t)$ is called a \textit{pseudo viscosity subsolution} in $\Omega$, if for any $(x_{0},t_{0}) \in \Omega$, $r > 0$ and $\phi(x,t) \in C^{2,1}\left( B_{r}(x_{0},t_{0})\right)$, such that
	\begin{eqnarray*}
		u(x,t) \leq \phi(x,t) \hspace{2mm} \text{on} \hspace{2mm} B_{r}(x_{0},t_{0}), \hspace{2mm} u(x_{0},t_{0}) = \phi(x_{0},t_{0}) &\text{and}& |D\phi(x_{0},t_{0})| > 0,
	\end{eqnarray*}
	then
	\begin{equation*}
	\phi_{t}(x_{0},t_{0}) \leq \mathscr{F}\left( D^{2}\phi(x_{0},t_{0}), D\phi(x_{0},t_{0}), x_{0}\right). 
	\end{equation*}
\end{definition}

\begin{definition}[c.f. \cite{Crandall Ishii Lions BAMS, Caffarelli and Monneau ARMA}]
	Let $\Omega \subseteq \RR^{n} \times (-\infty, \infty)$ and $v(x,t) \in \LSC(\Omega)$, the space of lower semicontinuous functions. Then $u(x,t)$ is called a \textit{viscosity supersolution} in $\Omega$, which is denoted as follows
	\begin{eqnarray}\label{the viscosity supersolution}
		v_{t} \geq \mathscr{F}\left( D^{2}v, Dv, x\right), && (x,t) \in \Omega,
	\end{eqnarray}
	if for any $(x_{0},t_{0}) \in \Omega$, $r > 0$ and $\psi(x,t) \in C^{2,1}\left( B_{r}(x_{0},t_{0})\right)$, such that
	\begin{eqnarray*}
		v(x,t) \geq \psi(x,t) \hspace{2mm} \text{in} \hspace{2mm} B_{r}(x_{0},t_{0}) &\text{and}& v(x_{0},t_{0}) = \psi(x_{0},t_{0}),
	\end{eqnarray*}
	then
	\begin{equation*}
	\psi_{t}(x_{0},t_{0}) \geq \mathscr{F}_{*}\left( D^{2}\psi(x_{0},t_{0}), D\psi(x_{0},t_{0}), x_{0}\right). 
	\end{equation*}
\end{definition}

\begin{definition}\label{pseudo viscosity supersolution}
	Let $\Omega \subseteq \RR^{n} \times (-\infty, \infty)$ and $v(x,t) \in \LSC(\Omega)$. Then $u(x,t)$ is called a \textit{pseudo viscosity supersolution} in $\Omega$, if for any $(x_{0},t_{0}) \in \Omega$, $r > 0$ and $\psi(x,t) \in C^{2,1}\left( B_{r}(x_{0},t_{0})\right)$, such that
	\begin{eqnarray*}
		v(x,t) \geq \psi(x,t) \hspace{2mm} \text{in} \hspace{2mm} B_{r}(x_{0},t_{0}), \hspace{2mm} v(x_{0},t_{0}) = \psi(x_{0},t_{0}) &\text{and}& |D\psi(x_{0},t_{0})| > 0, 
	\end{eqnarray*}
	then
	\begin{equation*}
	\psi_{t}(x_{0},t_{0}) \geq \mathscr{F}\left( D^{2}\psi(x_{0},t_{0}), D\psi(x_{0},t_{0}), x_{0}\right). 
	\end{equation*}
\end{definition}

\begin{definition}[c.f. \cite{Crandall Ishii Lions BAMS, Caffarelli and Monneau ARMA}]
	Let $\Omega \subseteq \RR^{n} \times (-\infty, \infty)$ and $u(x,t): \Omega \rightarrow \RR$. Then $u(x,t)$ is called a \textit{viscosity solution} if $u^{*}(x,t)$ is a viscosity subsolution and $u_{*}(x,t)$ is a viscosity supersolution,
	where
	\begin{eqnarray*}
	u^{*}(x,t) := \limsup_{(y,\tau) \rightarrow (x,t)} u(y,\tau) &\text{and}& u_{*}(x,t) := \liminf_{(y,\tau) \rightarrow (x,t)} u(y,\tau).
	\end{eqnarray*}
	It is well-known that for any $\varepsilon > 0$ and $u_{0}(x) \in \UC(\RR^{n})$, the equation (\ref{the scaled forced mean curvature flow}) has a unique continuous viscosity solution.
\end{definition}

\begin{proposition}[Comparison principle, see \cite{Caffarelli and Monneau ARMA}]\label{the usual comparison principle}
	Let us consider $\Omega = \hat{\Omega} \times (0,T)$ with $T > 0$, where $\hat{\Omega} \subseteq \RR^{n}$. Assume that either $\hat{\Omega} = \RR^{n}$ or $\hat{\Omega}$ is a bounded open subset of $\RR^{n}$, assume that $u(x,t)$ is a viscosity subsolution of (\ref{the viscosity subsolution}) and $v(x,t)$ is a viscosity supersolution of (\ref{the viscosity supersolution}) such that
	\begin{equation*}
	\begin{cases}
	\limsup\limits_{\delta \rightarrow 0}\left\lbrace u(x,0) - v(y,0) \big||x - y| \leq \delta \right\rbrace \leq 0, & \text{ if } \hspace{2mm} \hat{\Omega} = \RR^{n}\\
	u \leq v \hspace{2mm} \text{ on } \hspace{2mm} \partial_{p}\left( \hat{\Omega} \times (0, T)\right),  & \text{ if } \hspace{2mm} \hat{\Omega} \text{ is bounded}, 
	\end{cases}
	\end{equation*}
    then
    \begin{eqnarray*}
    u(x,t) \leq v(x,t), && (x,t) \in \Omega.
    \end{eqnarray*}
\end{proposition}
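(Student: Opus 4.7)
The plan is to apply the classical viscosity doubling-of-variables scheme of Crandall--Ishii--Lions, adapted to the singular geometric operator $\mathscr{F}$. First I reduce to a bounded-in-space setting: when $\hat{\Omega} = \RR^{n}$, I subtract from $u - v$ a localizing penalty such as $\sigma(1 + |x|^{2})^{1/2}$, which is admissible because $g$ is bounded, so this penalty is a supersolution up to a controlled error and forces any putative positive supremum of $u - v$ to be attained in a bounded spatial region. I also work on $[0, T')$ with $T' < T$ and add $\sigma t$ to obtain a strict inequality. Assume for contradiction that the resulting modified difference still attains a positive maximum.

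Next I double variables: consider
\begin{equation*}
\Phi_{\alpha,\beta}(x,y,t,s) := u(x,t) - v(y,s) - \frac{|x-y|^{2}}{2\alpha} - \frac{(t-s)^{2}}{2\beta} - (\text{localizers}),
\end{equation*}
and let $(\hat{x}, \hat{y}, \hat{t}, \hat{s})$ be one of its maxima. Standard viscosity arguments give $|\hat{x} - \hat{y}|^{2}/\alpha,\; (\hat{t} - \hat{s})^{2}/\beta \to 0$ as $\alpha, \beta \to 0$, and that $(\hat{x}, \hat{t}), (\hat{y}, \hat{s})$ stay in a compact set. The Crandall--Ishii--Lions theorem of sums then yields a common spatial momentum $p := (\hat{x} - \hat{y})/\alpha$, scalars $a, b$ with $a - b \geq \sigma - o(1)$, and symmetric matrices $X, Y$ satisfying
\begin{equation*}
\begin{pmatrix} X & 0 \\ 0 & -Y \end{pmatrix} \;\leq\; \frac{3}{\alpha} \begin{pmatrix} I & -I \\ -I & I \end{pmatrix},
\end{equation*}
so in particular $X \leq Y$ in the matrix order; and such that the subsolution inequality gives $a \leq \mathscr{F}^{*}(X, p, \hat{x})$ while the supersolution inequality gives $b \geq \mathscr{F}_{*}(Y, p, \hat{y})$.

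It remains to close the argument, and this splits into two cases. In the regular regime $p \neq 0$, subtracting the two pointwise inequalities yields
\begin{equation*}
0 < a - b \;\leq\; \tr\bigl\{(X - Y)(I - \widehat{p} \otimes \widehat{p})\bigr\} + \bigl(g(\hat{x}) - g(\hat{y})\bigr)|p|.
\end{equation*}
The matrix $I - \widehat{p} \otimes \widehat{p}$ is positive semidefinite and $X - Y \leq 0$, so the first term is nonpositive; the second is bounded by $L_{0}|\hat{x} - \hat{y}| \cdot |p| = O(|\hat{x} - \hat{y}|^{2}/\alpha) \to 0$, contradicting $a - b \geq \sigma/2 > 0$ for small $\alpha, \beta$. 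The main obstacle is the degenerate regime $p = 0$, where $\mathscr{F}$ is singular and must be replaced by its envelopes $\mathscr{F}^{*}, \mathscr{F}_{*}$. I would handle this by the standard Evans--Spruck / Chen--Giga--Goto reduction: near a degenerate critical point of the test function, modify it by an additional small symmetric perturbation whose Hessian dominates $X$ and $Y$, so that the envelope evaluations reduce the subsolution/supersolution pair effectively to $a \leq 0 \leq b$, again contradicting $a - b > 0$. This singular-gradient analysis, while technically delicate, is by now routine in the theory of level-set geometric flows, which is why the statement is attributed to \cite{Caffarelli and Monneau ARMA} without proof.
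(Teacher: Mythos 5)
The paper does not prove this proposition; it is cited to Caffarelli--Monneau. So there is no paper proof to compare against verbatim, but the paper's Appendix (Proposition \ref{the comparison principle regarding pseudo viscosity solutions}) carries out the same machinery for a closely related statement, and this gives a useful yardstick.

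Your overall strategy (localize, double variables, theorem of sums, split into $p\neq 0$ and $p=0$) is the right one. However, there is a genuine gap in the degenerate regime, and it stems from a wrong choice at the doubling stage. With the quadratic penalty $|x-y|^{2}/(2\alpha)$, at a touching point with $\hat{x}=\hat{y}$ the penalty Hessian is nondegenerate, and the Crandall--Ishii--Lions theorem of sums only yields $X\leq Y$. That is \emph{not} sufficient at $p=0$: the semicontinuous envelopes of the MCF operator at a null gradient are
\begin{equation*}
\mathscr{F}^{*}(X,0,x)=\sup_{|\omega|=1}\tr\bigl\{X\bigl(I-\omega\otimes\omega\bigr)\bigr\},\qquad
\mathscr{F}_{*}(Y,0,y)=\inf_{|\omega|=1}\tr\bigl\{Y\bigl(I-\omega\otimes\omega\bigr)\bigr\},
\end{equation*}
and $X\leq Y$ does not force $\sup_{\omega}\tr\{X(I-\omega\otimes\omega)\}\leq \inf_{\omega}\tr\{Y(I-\omega\otimes\omega)\}$, since the two extremizing $\omega$'s may differ. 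Your proposed fix (``add a small symmetric perturbation whose Hessian dominates $X$ and $Y$'') is too vague to check and is not the route taken in Chen--Giga--Goto, Evans--Spruck, or in this paper's Appendix. The clean fix, used here in Proposition \ref{the comparison principle regarding pseudo viscosity solutions}, is to replace $|x-y|^{2}/(2\alpha)$ with the \emph{quartic} penalty $|x-y|^{4}/(4\alpha)$. Then in the regular case $\hat{x}\neq\hat{y}$ one still has $p\neq 0$ with the favorable matrix inequalities (as your regular-regime computation requires), while in the degenerate case $\hat{x}=\hat{y}$ the penalty Hessian vanishes, so the theorem of sums with $A=0$ yields $X\leq 0\leq Y$; hence $\mathscr{F}^{*}(X,0,\hat{x})\leq 0\leq \mathscr{F}_{*}(Y,0,\hat{y})$, and the contradiction $a-b\leq 0$ follows immediately. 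Fixing the penalty to quartic and writing out this dichotomy explicitly would make your argument correct and bring it in line with the paper's own treatment in the Appendix.
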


\begin{definition}
	Let us denote some frequently used sets throughout the paper. 
	\begin{equation}
	\mathbb{D} := \mathbb{S}^{n-1} \times (0,\infty) \times \RR \tag{i}
	\end{equation}
	\begin{equation}
	\mathbb{E} := \left\lbrace (\nu,q,s) \in \mathbb{S}^{n-1} \times \left( \RR^{n}\diagdown \left\lbrace 0\right\rbrace \right) \times \left[m_0, M_0\right] \Big| \nu = -\frac{q}{|q|}\right\rbrace  \tag{ii}
	\end{equation}
	\begin{equation}
	\mathbb{F} := \left\lbrace \left( r, \varphi\right) \big|r(t): [0, \infty) \rightarrow (0, \infty), \hspace{2mm} \varphi(x): \RR^{n} \rightarrow (0, \infty) \right\rbrace \tag{iii}
	\end{equation}
	\begin{equation}
	\mathbb{A} := \left\lbrace (\nu, R, \mathscr{R}, q, s) \in \mathbb{S}^{n-1} \times (0,\infty) \times \RR \times \left( \RR^{n}\diagdown \left\lbrace 0\right\rbrace \right) \times \left[m_0, M_0 \right]\Big| \nu = -\frac{q}{|q|} \right\rbrace  \tag{iv}
	\end{equation}
\end{definition}

\begin{definition}\label{the definition of the domain}
	Fix any $d := (\nu, R, \mathscr{R}) \in \mathbb{D}$, denote by $\mathrm{C}_{\mathbb{d}}(t)$ the $\nu$ directional cylinder with initial radius $R$ and expanding/shrinking speed $\mathscr{R}$ at time $t$. i.e.,
	\begin{equation*}
	\mathrm{C}_{d}(t) := \left\lbrace x \in \RR^{n} \big| |x - (x\cdot\nu)\nu| < R + \mathscr{R}t\right\rbrace,
	\end{equation*}
    where $R + \mathscr{R}t > 0$. Let us also denote the whole space-time domain by that
    \begin{equation}\label{the space time domain}
    \mathrm{C}_{d} := \left\lbrace (x, t) \in \RR^{n} \times [0, \infty) \big| x \in \mathrm{C}_{d}(t), \hspace{2mm} R + \mathscr{R}t > 0\right\rbrace. 
    \end{equation}
    In particular, let $(x,r,\nu) \in \RR^{n} \times (0,\infty) \times \mathbb{S}^{n-1}$ and denote a static region as follows,
    \begin{equation}\label{thin cylinder pointing to a specific direction}
    \Upomega(x,r;\nu) := \left\lbrace y \in \RR^{n} \big| \left| (y - x) - \left( (y - x)\cdot \nu\right)\nu \right| \leq r \right\rbrace.
    \end{equation}
\end{definition}
\begin{definition}
	Fix any $e := (\nu, q, s) \in \mathbb{E}$, we denote by $\mathrm{O}_{e}(x,t)$ the obstacle function with slope $q$ and speed $s$ in the $\nu$ direction. To be more precise,
	\begin{eqnarray}\label{the definition of the obstacle function}
	\mathrm{O}_{e}(x,t) := x \cdot q + st |q|, &\text{for}& x\in \RR^{n} \hspace{2mm}\text{and}\hspace{2mm} t \geq 0.
	\end{eqnarray}
\end{definition}

\begin{remark}
	Let $e = (\nu, q, s) \in \mathbb{E}$, then the zero level set of $\mathrm{O}_{e}(x,t)$ is a hyperplane moving with speed $s$ in the normal direction $\nu$.
\end{remark}

\begin{definition}\label{the definitions of obstacle solutions}
	Fix any $a := (\nu, R, \mathscr{R}, q, s) \in \mathbb{A}$, then set $d := (\nu, R, \mathscr{R}) \in \mathbb{D}$ and $e := (\nu, q, s) \in \mathbb{E}$, let us denote by $\overline{\mathscr{S}}_{a}$ (resp. $\underline{\mathscr{S}}_{a}$) the set of all subsolutions (resp. supersolutions) in $\mathrm{C}_{d}$ that is bounded from above (resp. below) by $\mathrm{O}_{e}(x,t)$. i.e.,
	\begin{eqnarray*}
	\overline{\mathscr{S}}_{a} &:=& \left\lbrace u(x,t) \in \USC(\mathrm{C}_{d})\big | \hspace{2mm} u_{t} \leq \mathscr{F}\left(D^{2}u, Du, x \right), \hspace{2mm} u(x,t) \leq \mathrm{O}_{e}(x,t) \right\rbrace, \\
	\underline{\mathscr{S}}_{a} &:=& \left\lbrace u(x,t) \in \LSC(\mathrm{C}_{d}) \big | \hspace{2mm} u_{t} \geq \mathscr{F}\left(D^{2}u, Du, x \right), \hspace{2mm} u(x,t) \geq \mathrm{O}_{e}(x,t) \right\rbrace.
	\end{eqnarray*}
	Let us also denote the obstacle subsolution/supersolution as follows.
	\begin{eqnarray*}
	\overline{\mathrm{U}}_{a}(x,t) := \left( \sup \left\lbrace u(x,t) \big| u\in \overline{\mathscr{S}}_{a}\right\rbrace\right)^{*} &\text{and}& \underline{\mathrm{U}}_{a}(x,t) := \left( \inf \left\lbrace u(x,t) \big| u\in \underline{\mathscr{S}}_{a}\right\rbrace \right)_{*}
	\end{eqnarray*}
\end{definition}

\subsection{Properties}\label{basic properties of obstacle solutions}

\begin{lemma}
	Fix any $a \in \mathbb{A}$, then
	\begin{eqnarray*}
		\overline{\mathrm{U}}_{a}(x,t) \in \overline{\mathscr{S}}_{a} &\text{and}&  \underline{\mathrm{U}}_{a}(x,t) \in \underline{\mathscr{S}}_{a}.
	\end{eqnarray*}
\end{lemma}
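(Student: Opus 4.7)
The plan is to establish this via the classical Perron stability argument for viscosity solutions, adapted to accommodate the singular nature of $\mathscr{F}$ (encoded in the envelopes $\mathscr{F}^{*}, \mathscr{F}_{*}$ from the definition of viscosity sub/supersolution) and the one-sided obstacle constraint. The two claims are dual, so I will describe only the subsolution claim $\overline{\mathrm{U}}_{a} \in \overline{\mathscr{S}}_{a}$; the supersolution claim follows by switching $\sup \leftrightarrow \inf$, USC $\leftrightarrow$ LSC, and $\mathscr{F}^{*} \leftrightarrow \mathscr{F}_{*}$.

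Three items must be checked: upper semicontinuity, the obstacle bound $\overline{\mathrm{U}}_{a} \leq \mathrm{O}_{e}$, and the subsolution inequality. The first is automatic because $\overline{\mathrm{U}}_{a}$ is defined as the USC envelope. For the obstacle bound, write $U_{0} := \sup\{u : u \in \overline{\mathscr{S}}_{a}\}$; since every $u \in \overline{\mathscr{S}}_{a}$ satisfies $u \leq \mathrm{O}_{e}$ pointwise, so does $U_{0}$, and because $\mathrm{O}_{e}$ is continuous one has $\overline{\mathrm{U}}_{a} = U_{0}^{*} \leq \mathrm{O}_{e}^{*} = \mathrm{O}_{e}$ throughout $\mathrm{C}_{d}$.

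The core step is the subsolution inequality. Let $\phi \in C^{2,1}(B_{r}(x_{0},t_{0}))$ touch $\overline{\mathrm{U}}_{a}$ from above at an interior point $(x_{0},t_{0}) \in \mathrm{C}_{d}$. I first convert the touching into a strict maximum by replacing $\phi$ with $\phi_{\epsilon}(x,t) := \phi(x,t) + \epsilon(|x-x_{0}|^{2} + (t-t_{0})^{2})$, so that $\overline{\mathrm{U}}_{a} - \phi_{\epsilon} < 0$ off $(x_{0},t_{0})$ inside $B_{r}$. By the definition of the USC envelope there exist $(y_{k},\tau_{k}) \to (x_{0},t_{0})$ with $U_{0}(y_{k},\tau_{k}) \to \overline{\mathrm{U}}_{a}(x_{0},t_{0}) = \phi(x_{0},t_{0})$, and by the definition of the supremum one can choose $u_{k} \in \overline{\mathscr{S}}_{a}$ with $u_{k}(y_{k},\tau_{k}) \to \phi(x_{0},t_{0})$. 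Since $u_{k} \leq U_{0} \leq \overline{\mathrm{U}}_{a} \leq \phi_{\epsilon}$ pointwise on $B_{r}$, with a uniform strict gap on the annulus $\overline{B_{r/2}} \setminus B_{r/4}$ guaranteed by the strict-max perturbation and the upper semicontinuity of $\overline{\mathrm{U}}_{a}$, the USC function $u_{k} - \phi_{\epsilon}$ attains its maximum over $\overline{B_{r/2}}$ at an interior point $(x_{k},t_{k}) \in B_{r/4}$ for all large $k$, and $(x_{k},t_{k}) \to (x_{0},t_{0})$. Setting $c_{k} := (u_{k} - \phi_{\epsilon})(x_{k},t_{k}) \to 0$, the test function $\phi_{\epsilon} + c_{k}$ touches $u_{k}$ from above at $(x_{k},t_{k})$, and the subsolution property of $u_{k}$ gives $\partial_{t}\phi_{\epsilon}(x_{k},t_{k}) \leq \mathscr{F}^{*}(D^{2}\phi_{\epsilon}(x_{k},t_{k}), D\phi_{\epsilon}(x_{k},t_{k}), x_{k})$. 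Passing successively $k \to \infty$ and then $\epsilon \to 0$, using the upper semicontinuity of $\mathscr{F}^{*}$ and the continuity of the $C^{2,1}$ derivatives of $\phi_{\epsilon}$ in $(x,t,\epsilon)$, yields $\phi_{t}(x_{0},t_{0}) \leq \mathscr{F}^{*}(D^{2}\phi(x_{0},t_{0}), D\phi(x_{0},t_{0}), x_{0})$, as required by Definition \ref{the definition of viscosity subsolution}.

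The main obstacle I expect is ensuring that the nearby maximum point $(x_{k},t_{k})$ remains in the interior of $\overline{B_{r/2}}$; this is precisely what the strict-max perturbation together with $u_{k} \leq \overline{\mathrm{U}}_{a}$ achieves, and it is the only nontrivial geometric ingredient. The gradient singularity of $\mathscr{F}$ at $p = 0$ causes no additional trouble because the argument is conducted throughout in terms of the envelope $\mathscr{F}^{*}$, which is by construction upper semicontinuous in all of $\mathcal{S}^{n} \times \RR^{n} \times \RR^{n}$.
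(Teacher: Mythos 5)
Your argument is correct and is precisely the standard Perron-type stability proof (the USC envelope of a supremum of viscosity subsolutions is a subsolution) that the paper invokes implicitly by citing \cite{Crandall Ishii Lions BAMS}, together with the trivial obstacle bound. So you have taken essentially the same approach as the paper, merely spelling out the details it leaves to the reference.
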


\begin{proof}
	If follows from the definition of viscosity sub/super-solution (c.f. \cite{Crandall Ishii Lions BAMS}).
\end{proof}

\subsubsection{Coincidence on the boundary}

The following Lemma shows that the obstacle subsolution coincides with the obstacle if the domain is not shrinking.

\begin{lemma}\label{the obstacle subsolution coincides with the obstacle on the boundary}
	Fix $a := (\nu, R, \mathscr{R}, q, s) \in \mathbb{A}$ with $\mathscr{R} \geq 0$, then set $d := (\nu, R, \mathscr{R}) \in \mathbb{D}$ and $e := (\nu, q, s) \in \mathbb{E}$, then
	\begin{eqnarray*}
		\overline{\mathrm{U}}_{a}(x,t) = \mathrm{O}_{e}(x,t), && (x, t) \in \left\lbrace (y, \tau)\big |y \in \partial \mathrm{C}_{d}(\tau), \hspace{2mm} y\cdot\nu = s\tau\right\rbrace.
	\end{eqnarray*}
\end{lemma}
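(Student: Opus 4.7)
The plan is to prove $\overline{\mathrm{U}}_a(y,\tau) = 0 = \mathrm{O}_e(y,\tau)$ on the specified edge locus by exhibiting a concrete admissible subsolution $w \in \overline{\mathscr{S}}_a$ that attains the value $0$ at every such edge point. Since $\overline{\mathrm{U}}_a \leq \mathrm{O}_e$ is automatic from the definition and $\mathrm{O}_e$ vanishes on the moving hyperplane $\{x\cdot\nu = s\tau\}$, producing such a $w$ would give $\overline{\mathrm{U}}_a(y,\tau) \geq w^*(y,\tau) = 0$, whence equality.

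The case $s \leq m_0$ is immediate because the obstacle $\mathrm{O}_e$ itself is a subsolution ($\mathscr{F}(0,q,x) = g(x)|q| \geq m_0|q| \geq s|q| = (\mathrm{O}_e)_t$), so $\mathrm{O}_e \in \overline{\mathscr{S}}_a$ and $\overline{\mathrm{U}}_a \equiv \mathrm{O}_e$. The interesting case is $s > m_0$: here no planar subsolution can track $\mathrm{O}_e$ globally, because its speed exceeds the minimum of the forcing. The idea is to tilt the planar profile radially to extract an extra factor $\sqrt{1+A^2}$ from the forcing term, using the cylindrical geometry so the tilt vanishes on the lateral boundary.

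Set $r(x) := |x - (x\cdot\nu)\nu|$, choose $A := \sqrt{(s/m_0)^2 - 1}$ so that $m_0\sqrt{1+A^2} = s$, and define
\[
w(x,t) := \mathrm{O}_e(x,t) - A\,|q|\bigl(R + \mathscr{R}t - r(x)\bigr).
\]
Since $R + \mathscr{R}t > r(x)$ throughout $\mathrm{C}_d$, the subtracted term is nonnegative and vanishes precisely on the lateral boundary $\{r(x) = R + \mathscr{R}t\}$; hence $w \leq \mathrm{O}_e$ globally and $w = \mathrm{O}_e$ on the edge locus. In particular, at any $(y_0,\tau_0)$ with $y_0\cdot\nu = s\tau_0$ and $r(y_0) = R + \mathscr{R}\tau_0$, we have $w(y_0,\tau_0) = \mathrm{O}_e(y_0,\tau_0) = 0$. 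To check the subsolution property at smooth points ($r(x)>0$), the plan is to use $Dr = (x-(x\cdot\nu)\nu)/r$ and $D^2 r = r^{-1}(I - \nu\otimes\nu - Dr\otimes Dr)$, which gives $|Dw| = |q|\sqrt{1+A^2}$ and reduces $w_t \leq \mathscr{F}(D^2 w, Dw, x)$ to
\[
s - A\mathscr{R} \;\leq\; \frac{A(n-2)}{r(x)} + g(x)\sqrt{1+A^2},
\]
which holds because $g(x)\sqrt{1+A^2} \geq m_0\sqrt{1+A^2} = s$ and both remaining terms are nonnegative (using $\mathscr{R} \geq 0$ and $n \geq 2$). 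At axis points $r(x) = 0$, the term $A|q|r(x)$ produces a convex conical kink in the $(n-1)$ transverse directions; a direct two-sided slope argument shows that no $C^{2,1}$ test function can touch $w$ from above nearby, so the viscosity subsolution condition is vacuously satisfied.

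The main computational task is the reduction of $\mathscr{F}(D^2 w, Dw, x)$: the factor $(A|q|/r(x))$ multiplied by the projection onto the $(n-2)$-dimensional subspace orthogonal to both $\nu$ and $Dr$ yields the harmless mean-curvature contribution $A(n-2)/r(x)$, while the tilt enters the forcing through $|Dw| = |q|\sqrt{1+A^2}$; the balance $m_0\sqrt{1+A^2} = s$ then closes the estimate regardless of $n$ and $\mathscr{R}\geq 0$. Once $w \in \overline{\mathscr{S}}_a$ is established, the upper semicontinuous envelope in the definition of $\overline{\mathrm{U}}_a$ gives $\overline{\mathrm{U}}_a(y_0,\tau_0) \geq 0 = \mathrm{O}_e(y_0,\tau_0)$, which combined with $\overline{\mathrm{U}}_a \leq \mathrm{O}_e$ yields the claimed equality.
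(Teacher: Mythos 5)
Your proof is correct and takes essentially the same approach as the paper: both constructions rely on tilting the planar profile so that the minimal forcing $m_0$ gets amplified by the factor $\sqrt{1+A^2}$ to match $s$, producing a conical subsolution in $\overline{\mathscr{S}}_a$ that touches the obstacle along the lateral boundary. The paper realizes this cone as $\sup_{\mu\in\overline{M}_a}\overline{V}_\mu$ (a supremum of moving hyperplanes, hence automatically a subsolution), while you write it in closed form and verify the PDE directly, which requires the extra — but correctly handled — observation that the convex kink on the cylinder axis admits no $C^{2,1}$ touching from above; your choice of tilt $A=\sqrt{(s/m_0)^2-1}$ also uses the sign of $\mathscr{R}\geq 0$ as pure slack, whereas the paper adjusts $\sigma$ to incorporate $\mathscr{R}$ exactly, but both yield admissible comparison functions.
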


\begin{proof}
	Let us denote the set of admissible normal directions:
	\begin{eqnarray*}
	\overline{M}_{a} := \left\lbrace \mu \in \mathbb{S}^{n-1} \big| \mu \cdot \nu = \sigma\right\rbrace, &\text{where}& \sigma := \frac{m_0s + \mathscr{R}\sqrt{\mathscr{R}^{2} + s^{2} - m_0^{2}}}{\mathscr{R}^{2} + s^{2}} > 0.
	\end{eqnarray*}
    Then for any $\mu \in \overline{M}_{a}$, let us define the moving hyperplane
    \begin{equation*}
    \overline{V}_{\mu}(x,t) := - \frac{|q|}{\sigma}\left( x\cdot \mu - m_0t + R\sqrt{1 - \sigma^{2}} \right),
    \end{equation*}
    and a specific subsolution $\overline{V}_{a}(x,t)$ in $\mathrm{C}_{d}$ as below.
    \begin{equation*}
    \overline{V}_{a}(x,t) := \sup_{\mu \in \overline{M}_{a}} \overline{V}_{\mu}(x,t) \in \overline{\mathscr{S}}_{a}.
    \end{equation*}
   Based on the above construction, we have that
	\begin{eqnarray*}
		\overline{V}_{a}(x,t) = \mathrm{O}_{e}(x,t), && (x, t) \in \left\lbrace (y, \tau)\big |y \in \partial \mathrm{C}_{d}(\tau), \hspace{2mm} y\cdot\nu = s\tau\right\rbrace.
	\end{eqnarray*}
    The result follows from the ordering relation $\overline{V}_{a}(x,t) \leq \overline{\mathrm{U}}_{a}(x,t) \leq \mathrm{O}_{e}(x,t)$.   
\end{proof}


In a similar manner, the next Lemma says that if the domain's expanding speed is large enough, the obstacle supersolution matches the obstacle on the boundary.

\begin{lemma}\label{the obstacle supersolution coincides with the obstacle on the boundary}
	Fix $a := (\nu, R, \mathscr{R}, q, s) \in \mathbb{A}$ with $\mathscr{R} \geq \sqrt{M_0^{2} - s^{2}}$, then set $d := (\nu, R, \mathscr{R}) \in \mathbb{D}$ and $e := (\nu, q, s) \in \mathbb{E}$, then
	\begin{eqnarray*}
		\underline{\mathrm{U}}_{a}(x,t) = \mathrm{O}_{e}(x,t), && (x, t) \in \left\lbrace (y, \tau)\big |y \in \partial \mathrm{C}_{d}(\tau), \hspace{2mm} y\cdot\nu = s\tau\right\rbrace. 
	\end{eqnarray*}
\end{lemma}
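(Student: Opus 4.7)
The plan is to mirror the proof of Lemma \ref{the obstacle subsolution coincides with the obstacle on the boundary}, interchanging supremum with infimum and $m_0$ with $M_0$. The strategy is to exhibit an explicit barrier $\underline{V}_a \in \underline{\mathscr{S}}_a$ that coincides with $\mathrm{O}_e$ on the target boundary piece; the sandwich $\mathrm{O}_e \leq \underline{\mathrm{U}}_a \leq \underline{V}_a$ then immediately yields the conclusion.

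The natural candidate is an infimum of affine pieces travelling at the maximum admissible speed $M_0$. I would set
\[
\varsigma := \frac{M_0 s + \mathscr{R}\sqrt{\mathscr{R}^{2}+s^{2}-M_0^{2}}}{\mathscr{R}^{2}+s^{2}}, \qquad \underline{M}_a := \bigl\{\mu \in \mathbb{S}^{n-1} : \mu\cdot\nu = \varsigma\bigr\},
\]
and, for $\mu \in \underline{M}_a$,
\[
\underline{V}_\mu(x,t) := -\frac{|q|}{\varsigma}\Bigl( x\cdot \mu - M_0 t - R\sqrt{1-\varsigma^{2}}\Bigr), \qquad \underline{V}_a(x,t) := \inf_{\mu \in \underline{M}_a} \underline{V}_\mu(x,t).
\]
The value of $\varsigma$ is forced by the tangency identity $\varsigma s + \mathscr{R}\sqrt{1-\varsigma^{2}} = M_0$, and the hypothesis $\mathscr{R}\geq \sqrt{M_0^{2}-s^{2}}$ is precisely what keeps the radicand above nonnegative, yielding a real $\varsigma \in (0,1]$. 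In the subsolution lemma the analogous radicand $\mathscr{R}^{2}+s^{2}-m_0^{2}$ is automatically nonnegative because $s \geq m_0$, which is why only $\mathscr{R}\geq 0$ is required there.

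Three routine checks complete the argument. First, each $\underline{V}_\mu$ is a classical supersolution of \eqref{the differential operator regarding space derivatives}: being affine the curvature term drops out, and the inequality reduces to $|q|M_0/\varsigma \geq g(x)\,|q|/\varsigma$, which holds by (\ref{the hypothesis}). Second, writing $\mu = \varsigma\nu + \sqrt{1-\varsigma^{2}}\,w_\mu$ with $w_\mu \perp \nu$ a unit vector and decomposing $x \in \mathrm{C}_d(t)$ along and perpendicular to $\nu$, the bound $\underline{V}_\mu \geq \mathrm{O}_e$ throughout $\mathrm{C}_d$ reduces to $(R+\mathscr{R}t)\sqrt{1-\varsigma^{2}} \leq (M_0 - \varsigma s)t + R\sqrt{1-\varsigma^{2}}$, which collapses to the tangency identity. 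Third, at a target boundary point $y = s\tau\nu + (R+\mathscr{R}\tau) w$ with $w \perp \nu$ a unit vector, the aligned direction $\mu_0 := \varsigma\nu + \sqrt{1-\varsigma^{2}}\,w$ yields $\underline{V}_{\mu_0}(y,\tau) = 0 = \mathrm{O}_e(y,\tau)$ by direct substitution, hence $\underline{V}_a(y,\tau) = 0$.

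The one step requiring more care than in the subsolution case is verifying that the pointwise infimum $\underline{V}_a$ is itself a viscosity supersolution; infima of supersolutions are not automatic in general. Here however $\underline{M}_a$ is compact and $\mu \mapsto \underline{V}_\mu(x,t)$ is continuous, so the infimum is attained and $\underline{V}_a$ is continuous. Any smooth test function touching $\underline{V}_a$ from below at $(x_0,t_0)$ therefore also touches the minimising $\underline{V}_{\mu_0}$ from below; degenerate ellipticity of $\mathscr{F}$ combined with $D^{2}\underline{V}_{\mu_0} \equiv 0$ transfers the classical supersolution inequality for $\underline{V}_{\mu_0}$ to the test function, which is the hardest — though still standard — part of the argument.
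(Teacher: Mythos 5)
Your proposal is correct, and the overall strategy is the same as the paper's (explicit barrier $\underline{V}_a = \inf_\mu \underline{V}_\mu$, then sandwich $\mathrm{O}_e \leq \underline{\mathrm{U}}_a \leq \underline{V}_a$). But the barrier itself is genuinely different from the one the paper writes down, and yours is the one that actually closes the estimate. The paper's proof of this lemma takes $\sigma := s/\sqrt{\mathscr{R}^2+s^2}$ with a prefactor $(-|q|\sigma)$ in $\underline{V}_\mu$, rather than the $(m_0 \to M_0$, $+R \to -R)$ mirror of Lemma~\ref{the obstacle subsolution coincides with the obstacle on the boundary} that you write down (your $\varsigma$, prefactor $-|q|/\varsigma$). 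With your choice the coefficient of $x\cdot\mu$ in $\underline{V}_\mu$ is $-|q|/\varsigma$, so after expanding $\mu = \varsigma\nu + \sqrt{1-\varsigma^2}\,w_\mu$ the $x\cdot\nu$ contribution is exactly $-|q|(x\cdot\nu)$, which cancels the corresponding term in $\mathrm{O}_e$, leaving the tangency identity $\varsigma s + \mathscr{R}\sqrt{1-\varsigma^2} = M_0$ to finish the bound $\underline{V}_\mu \geq \mathrm{O}_e$. In the paper's version the prefactor is $-|q|\sigma$, so the $x\cdot\nu$ contribution is $-|q|\sigma^2(x\cdot\nu)$ and does \emph{not} cancel against $\mathrm{O}_e$; as a result $\underline{V}_\mu \geq \mathrm{O}_e$ fails on the far side of the cylinder (sending $x\cdot\nu \to -\infty$ inside $\mathrm{C}_d$), except in the degenerate case $\mathscr{R} = \sqrt{M_0^2-s^2}$ where the two choices coincide. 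So your barrier is the one consistent with the subsolution lemma and with the definition of $\underline{\mathscr{S}}_a$. Two minor remarks: (i) your caution about infima of supersolutions is slightly more than necessary, since the Perron lemma already gives that the LSC envelope of an infimum of supersolutions is a supersolution, but the direct touching argument you give via compactness of $\underline{M}_a$ is clean and self-contained; (ii) the observation that $\varsigma \leq 1$ can be checked quickly by Cauchy--Schwarz and AM--GM from the explicit formula, should a referee ask.
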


\begin{proof}
	Let us denote the set of admissible normal directions
	\begin{eqnarray*}
	\underline{M}_{a} := \left\lbrace \mu \in \mathbb{S}^{n-1} \big | \nu \cdot \mu = \sigma\right\rbrace, &\text{where}& \sigma := \frac{s}{\sqrt{\mathscr{R}^{2} + s^{2}}} > 0.
	\end{eqnarray*}
	Then for any $\mu \in \underline{M}_{a}$, let us define the moving hyperplane
	\begin{equation*}
	\underline{V}_{\mu}(x,t) := \left\lbrace x \cdot \mu - \left[ \frac{R\mathscr{R}\sigma}{s} + \sqrt{\mathscr{R}^{2} + s^{2}}t\right] \right\rbrace \cdot \left( -|q|\sigma\right) \in \underline{\mathscr{S}}_{a}, 
	\end{equation*}
	and a specific supersolution $\underline{V}_{a}(x,t)$ in $\mathrm{C}_{d}$ as below:
	\begin{equation*}
	\underline{V}_{a}(x,t) := \inf_{\mu \in \underline{M}_{a}} \underline{V}_{\mu}(x,t) \in \underline{\mathscr{S}}_{a}.
	\end{equation*}
	Based on the above construction, we have that
	\begin{eqnarray*}
		\underline{V}_{a}(x,t) = \mathrm{O}_{e}(x,t), && (x, t) \in \left\lbrace (y, \tau)\big |y \in \partial \mathrm{C}_{d}(\tau), \hspace{2mm} y\cdot\nu = s\tau\right\rbrace, 
	\end{eqnarray*}
	the result follows from the ordering relation $ \mathrm{O}_{e}(x,t)\leq \underline{\mathrm{U}}_{a}(x,t) \leq \underline{V}_{a}(x,t)$.   
\end{proof}

\subsubsection{The Birkhoff properties}

The Birkhoff property describes the monotonicity of a specific obstacle sub/super-solution with respect to time, under certain interger vector shift. The monotonicity depends on two aspects: (i) subsolution or supersolution; (ii) expanding domain or shrinking domain. Let us discuss each of them respectively.\\
In the expanding domain, the obstacle sub/super-solution tends to keep away from the obstacle as time evolves. Therefore, the obstacle subsolution (resp. supersolution) shows a decreasing (resp. an increasing) pattern.

\begin{proposition}\label{the birkhoff property for subsolution in an expanding domain}
	Fix $a := (\nu, R, \mathscr{R}, q, s) \in \mathbb{A}$ with $\mathscr{R} \geq 0$, then set $d := (\nu, R, \mathscr{R}) \in \mathbb{D}$ and $e := (\nu, q, s) \in \mathbb{E}$. Let $\Delta t > 0$ and $\Delta z \in \ZZ^{n}$, such that
	\begin{eqnarray*}
	0 < s\Delta t \leq \Delta z \cdot \nu &\text{and}& \mathscr{R}\Delta t \geq \left| \Delta z - \left( \Delta z \cdot \nu\right) \nu\right|, 
	\end{eqnarray*}
    then
    \begin{eqnarray*}
    \overline{\mathrm{U}}_{a}(x + \Delta z, t + \Delta t) \leq \overline{\mathrm{U}}_{a}(x, t), && (x,t) \in \mathrm{C}_{d}.
    \end{eqnarray*}
\end{proposition}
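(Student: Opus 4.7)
The plan is to show that the shifted function
\[
w(x,t) := \overline{\mathrm{U}}_{a}(x + \Delta z, t + \Delta t)
\]
belongs to the admissible class $\overline{\mathscr{S}}_{a}$, and then invoke the maximality of $\overline{\mathrm{U}}_{a}$ to conclude $w \le \overline{\mathrm{U}}_{a}$ on $\mathrm{C}_{d}$. Three things need to be verified: (i) $w$ is well-defined on $\mathrm{C}_{d}$, i.e.\ the translate $(x+\Delta z, t+\Delta t)$ lies again in $\mathrm{C}_{d}$; (ii) $w$ is a viscosity subsolution of the same equation; (iii) $w$ lies beneath the obstacle $\mathrm{O}_{e}$.

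For (i), given $(x,t)\in \mathrm{C}_{d}$ we use the orthogonal decomposition
\[
(x+\Delta z) - \bigl((x+\Delta z)\cdot \nu\bigr)\nu = \bigl[x - (x\cdot\nu)\nu\bigr] + \bigl[\Delta z - (\Delta z\cdot\nu)\nu\bigr].
\]
The triangle inequality, combined with the hypothesis $\mathscr{R}\Delta t \ge |\Delta z - (\Delta z\cdot\nu)\nu|$, then yields
\[
\bigl|(x+\Delta z) - ((x+\Delta z)\cdot\nu)\nu\bigr| < R + \mathscr{R}t + \mathscr{R}\Delta t = R + \mathscr{R}(t+\Delta t),
\]
so $(x+\Delta z, t+\Delta t) \in \mathrm{C}_{d}$. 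For (ii), the operator $\mathscr{F}$ is autonomous in $t$ and $g$ is $\ZZ^{n}$-periodic, so $g(x+\Delta z) = g(x)$; hence translations in $\ZZ^{n}\times \RR$ preserve the class of subsolutions, and $w$ is upper semicontinuous on $\mathrm{C}_{d}$.

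For (iii), using $q = -|q|\nu$ (which comes from $\nu = -q/|q|$ in the definition of $\mathbb{E}$),
\[
\mathrm{O}_{e}(x+\Delta z, t+\Delta t) = (x+\Delta z)\cdot q + s(t+\Delta t)|q| = \mathrm{O}_{e}(x,t) + |q|\bigl(s\Delta t - \Delta z\cdot\nu\bigr).
\]
Since $s\Delta t \le \Delta z\cdot \nu$ by hypothesis, the last term is nonpositive and we get
\[
w(x,t) \le \mathrm{O}_{e}(x+\Delta z, t+\Delta t) \le \mathrm{O}_{e}(x,t).
\]
Combining (i)--(iii) gives $w \in \overline{\mathscr{S}}_{a}$, and then $w \le \overline{\mathrm{U}}_{a}$ on $\mathrm{C}_{d}$ by definition of the upper obstacle solution as the supremum of admissible subsolutions (with upper semicontinuous envelope).

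The argument is essentially bookkeeping; the only substantive point is the domain inclusion in (i), which is exactly why the hypothesis on $\mathscr{R}\Delta t$ is imposed, and the sign calculation in (iii), which pins down the geometric meaning of $s\Delta t \le \Delta z \cdot \nu$: the translate $\Delta z$ must outrun the obstacle's motion in the direction $\nu$. No comparison principle or PDE-level estimate is needed beyond the standard fact that the upper semicontinuous envelope of the supremum of a family of subsolutions is again a subsolution, which is already used in the preceding lemma of the paper.
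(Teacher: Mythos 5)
Your proof is correct and takes essentially the same approach as the paper: translate $\overline{\mathrm{U}}_{a}$ by $(\Delta z, \Delta t)$, verify it is an admissible element of $\overline{\mathscr{S}}_{a}$ (domain inclusion, subsolution property via $\ZZ^{n}$-periodicity of $g$, and the obstacle bound via the sign of $s\Delta t - \Delta z\cdot\nu$), and conclude by maximality. The paper's proof is identical in structure; you have simply written out the domain and obstacle computations explicitly where the paper asserts them.
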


\begin{proof}
	By the choice of $\Delta t$ and $\Delta z$, $(x,t) \in \mathrm{C}_{d}$ indicates $(x + \Delta z, t + \Delta t) \in \mathrm{C}_{d}$. Moreover, $\overline{\mathrm{U}}_{a}(x + \Delta z, t + \Delta t) \leq \mathrm{O}_{e}(x,t)$, for any $(x,t) \in \mathrm{C}_{d}$. Because $\overline{\mathrm{U}}_{a}(x,t) \in \overline{\mathscr{S}}_{a}$ and $\Delta z \in \ZZ^{n}$, $\overline{\mathrm{U}}_{a}(\cdot + \Delta z, \cdot + \Delta t)\big|_{\mathrm{C}_{d}} \in \overline{\mathscr{S}}_{a}$. Hence the maximality of $\overline{\mathrm{U}}_{a}(x,t)$ from the Definition \ref{the definitions of obstacle solutions} implies that $\overline{\mathrm{U}}_{a}(x + \Delta z, t + \Delta t) \leq \overline{\mathrm{U}}_{a}(x, t)$, for any $(x,t) \in \mathrm{C}_{d}$.
\end{proof}

\begin{proposition}\label{the birkhoff property for supersolution in an expanding domain}
	Fix $a := (\nu, R, \mathscr{R}, q, s) \in \mathbb{A}$ with $\mathscr{R} \geq 0$, then set $d := (\nu, R, \mathscr{R}) \in \mathbb{D}$ and $e := (\nu, q, s) \in \mathbb{E}$. Let $\Delta t > 0$ and $\Delta z \in \ZZ^{n}$, such that
	\begin{eqnarray*}
		s\Delta t \geq \Delta z \cdot \nu > 0 &\text{and}& \mathscr{R}\Delta t \geq \left| \Delta z - \left( \Delta z \cdot \nu\right) \nu\right| ,
	\end{eqnarray*}
	then
	\begin{eqnarray*}
		 \underline{\mathrm{U}}_{a}(x, t) \leq \underline{\mathrm{U}}_{a}(x + \Delta z, t + \Delta t), && (x,t) \in \mathrm{C}_{d}.
	\end{eqnarray*}
\end{proposition}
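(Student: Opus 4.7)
The plan is to mirror the proof of Proposition \ref{the birkhoff property for subsolution in an expanding domain}, but exploit the \emph{minimality} of $\underline{\mathrm{U}}_{a}$ rather than the maximality of $\overline{\mathrm{U}}_{a}$, and keep careful track of which side of the obstacle inequality the shift lands on. Specifically, I would show that the shifted function
\[
w(x,t) := \underline{\mathrm{U}}_{a}(x + \Delta z,\, t + \Delta t)
\]
belongs to $\underline{\mathscr{S}}_{a}$ when restricted to $\mathrm{C}_{d}$; then the defining infimum gives $\underline{\mathrm{U}}_{a}(x,t) \leq w(x,t)$, which is exactly the claim.

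First I would verify that $w$ is well-defined on $\mathrm{C}_{d}$, i.e.\ that $(x,t) \in \mathrm{C}_{d}$ implies $(x+\Delta z, t+\Delta t) \in \mathrm{C}_{d}$. Decomposing $\Delta z = (\Delta z\cdot\nu)\nu + (\Delta z - (\Delta z\cdot\nu)\nu)$ and using the triangle inequality together with the hypothesis $\mathscr{R}\Delta t \geq |\Delta z - (\Delta z\cdot\nu)\nu|$, one obtains
\[
\bigl|(x+\Delta z) - ((x+\Delta z)\cdot\nu)\nu\bigr| \leq \bigl|x - (x\cdot\nu)\nu\bigr| + \bigl|\Delta z - (\Delta z\cdot\nu)\nu\bigr| < R + \mathscr{R}t + \mathscr{R}\Delta t = R + \mathscr{R}(t+\Delta t),
\]
so $(x+\Delta z, t+\Delta t)\in\mathrm{C}_{d}$. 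The lower semicontinuity of $w$ is immediate, and because $\Delta z \in \ZZ^{n}$ while $g$ is $\ZZ^{n}$-periodic and $\mathscr{F}$ is autonomous in $t$, the supersolution property is preserved under the shift, so $w_t \geq \mathscr{F}(D^{2}w, Dw, x)$ on $\mathrm{C}_{d}$.

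The key remaining step is the obstacle inequality $w(x,t) \geq \mathrm{O}_{e}(x,t)$. Since $\nu = -q/|q|$, we have $\Delta z \cdot q = -|q|\,\Delta z \cdot \nu$, hence
\[
\mathrm{O}_{e}(x+\Delta z, t+\Delta t) - \mathrm{O}_{e}(x,t) = \Delta z\cdot q + s\Delta t\,|q| = |q|\bigl(s\Delta t - \Delta z\cdot\nu\bigr) \geq 0,
\]
where the last inequality is precisely the hypothesis $s\Delta t \geq \Delta z\cdot\nu$. Combining this with $\underline{\mathrm{U}}_{a}(x+\Delta z, t+\Delta t) \geq \mathrm{O}_{e}(x+\Delta z, t+\Delta t)$ yields $w(x,t) \geq \mathrm{O}_{e}(x,t)$, so $w \in \underline{\mathscr{S}}_{a}$. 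By the minimality in Definition \ref{the definitions of obstacle solutions},
\[
\underline{\mathrm{U}}_{a}(x,t) \leq w(x,t) = \underline{\mathrm{U}}_{a}(x+\Delta z, t+\Delta t),\qquad (x,t)\in\mathrm{C}_{d}.
\]

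I do not anticipate any serious obstacle here; the proof is essentially the mirror image of the subsolution case. The only thing to be careful about is the sign convention linking $q$ and $\nu$ (which reverses the direction of the obstacle inequality under the shift), and the fact that minimality of $\underline{\mathrm{U}}_{a}$ replaces maximality of $\overline{\mathrm{U}}_{a}$, so the \emph{reverse} inequality $s\Delta t \geq \Delta z \cdot \nu$ is now the correct condition for the obstacle to move upward rather than downward.
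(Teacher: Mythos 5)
Your proof is correct and follows the same approach as the paper's (very terse) argument: verify domain invariance of the shift, verify the shifted function remains a supersolution above the obstacle, and invoke minimality. You simply fill in the details that the paper leaves implicit, including the explicit sign computation via $\nu = -q/|q|$ for the obstacle inequality and the triangle-inequality check for the domain inclusion.
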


\begin{proof}
	By the choice of $\Delta t$ and $\Delta z$, if we have $(x, t) \in \mathrm{C}_{d}$, so does $(x + \Delta z, t + \Delta t)$. In addition, $\mathrm{O}_{e}(x,t) \leq \underline{\mathrm{U}}_{a}(x + \Delta z, t + \Delta t)$, for any $(x,t) \in \mathrm{C}_{d}$. Since $\underline{\mathrm{U}}_{a}(x,t) \in \underline{\mathscr{S}}_{a}$ and $\Delta z \in \ZZ^{n}$, $\underline{\mathrm{U}}_{a}(\cdot + \Delta z, \cdot + \Delta t) \in \underline{\mathscr{S}}_{a}$. The minimality of $\underline{\mathrm{U}}_{a}(\cdot,\cdot)$ from the Definition \ref{the definitions of obstacle solutions} implies that $\underline{\mathrm{U}}_{a}(x,t) \leq \underline{\mathrm{U}}_{a}(x + \Delta z, t + \Delta t)$, for any $(x,t) \in \mathrm{C}_{d}$.
\end{proof}

Next, let us investigate the case of static domains, i.e., $\mathscr{R} = 0$. In the following two propositions, we shall compare the sub/super-solutions in two different static domains. It turns out that the larger the domain is, the further the sub/super-solutions stay away from the associated obstacles.

\begin{proposition}\label{the birkhoff property for subsolutions in two static domains}
	Fix $a_{i} := (\nu, R_{i}, 0, q, s) \in \mathbb{A}$, where $i = 1, 2$ and $0 < R_{1} < R_{2} < \infty$, then set $d_{i} := (\nu, R_{i}, 0) \in \mathbb{D}$, $i = 1, 2$ and $e := (\nu, q, s) \in \mathbb{E}$. Let $\Delta t \geq 0$ and $\Delta z \in \ZZ^{n}$, such that
	\begin{eqnarray*}
	0 < s\Delta t \leq \Delta z \cdot \nu &\text{and}& R_{2} - R_{1} \geq |\Delta z - (\Delta z \cdot \nu) \nu|,
 	\end{eqnarray*}
    then
    \begin{eqnarray*}
    \overline{\mathrm{U}}_{a_{2}}(x + \Delta z, t + \Delta t) \leq \overline{\mathrm{U}}_{a_{1}}(x,t), && (x,t) \in \mathrm{C}_{d_{1}}.
    \end{eqnarray*}
\end{proposition}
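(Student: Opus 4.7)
The plan is to mimic the proof of Proposition \ref{the birkhoff property for subsolution in an expanding domain}: define the shifted candidate
\[
w(x,t) := \overline{\mathrm{U}}_{a_{2}}(x+\Delta z,\, t+\Delta t)
\]
on $\mathrm{C}_{d_{1}}$, verify that $w \in \overline{\mathscr{S}}_{a_{1}}$, and then invoke the maximality of $\overline{\mathrm{U}}_{a_{1}}$ from Definition \ref{the definitions of obstacle solutions}. The argument is structural; periodicity of $g$ and integrality of $\Delta z$ take care of the PDE part, so the real content is purely geometric bookkeeping about the cylinders and the obstacle.

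First I would check that the shift sends $\mathrm{C}_{d_{1}}$ into $\mathrm{C}_{d_{2}}$. Since both cylinders are static ($\mathscr{R}=0$), for $(x,t)\in \mathrm{C}_{d_{1}}$ one has $|x-(x\cdot\nu)\nu|<R_{1}$, and the triangle inequality combined with the hypothesis $R_{2}-R_{1}\ge |\Delta z-(\Delta z\cdot\nu)\nu|$ gives
\[
\bigl|(x+\Delta z)-((x+\Delta z)\cdot\nu)\nu\bigr| \;\le\; |x-(x\cdot\nu)\nu|+|\Delta z-(\Delta z\cdot\nu)\nu| \;<\; R_{2},
\]
so $(x+\Delta z,\,t+\Delta t)\in \mathrm{C}_{d_{2}}$ and $w$ is well-defined. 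Next, because $g$ is $\ZZ^{n}$-periodic and $\Delta z\in\ZZ^{n}$, the operator $\mathscr{F}$ is invariant under the space shift, and it is autonomous in time; hence $w$ is a viscosity subsolution of $u_{t}=\mathscr{F}(D^{2}u,Du,x)$ on $\mathrm{C}_{d_{1}}$.

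It remains to check the obstacle constraint $w(x,t)\le \mathrm{O}_{e}(x,t)$ on $\mathrm{C}_{d_{1}}$. Using $\overline{\mathrm{U}}_{a_{2}}\le \mathrm{O}_{e}$ and $q=-|q|\nu$,
\[
w(x,t) \;\le\; (x+\Delta z)\cdot q + s(t+\Delta t)|q| \;=\; \mathrm{O}_{e}(x,t) + |q|\bigl(s\Delta t - \Delta z\cdot\nu\bigr) \;\le\; \mathrm{O}_{e}(x,t),
\]
where the last inequality uses the hypothesis $s\Delta t\le \Delta z\cdot\nu$. Therefore $w\in \overline{\mathscr{S}}_{a_{1}}$, and the maximality of $\overline{\mathrm{U}}_{a_{1}}$ yields the claimed inequality on $\mathrm{C}_{d_{1}}$.

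I expect no genuine obstacle here: the only subtlety is to make sure that the admissibility window for $\Delta z$ correctly balances the transverse slack $R_{2}-R_{1}$ (which plays the role of the transverse expansion $\mathscr{R}\Delta t$ in Proposition \ref{the birkhoff property for subsolution in an expanding domain}) with the longitudinal tilt $\Delta z\cdot\nu$ of the obstacle. Once those two inequalities are in place, the proof reduces to a one-line application of the definition of $\overline{\mathrm{U}}_{a_{1}}$ as the largest element of $\overline{\mathscr{S}}_{a_{1}}$.
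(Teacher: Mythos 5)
Your proof is correct and follows exactly the same route as the paper's: shift $\overline{\mathrm{U}}_{a_{2}}$ by $(\Delta z,\Delta t)$, use $R_{2}-R_{1}\geq|\Delta z-(\Delta z\cdot\nu)\nu|$ to land in $\mathrm{C}_{d_{2}}$, use $s\Delta t\leq\Delta z\cdot\nu$ together with $q=-|q|\nu$ to preserve the obstacle bound, and conclude by maximality of $\overline{\mathrm{U}}_{a_{1}}$. The only difference is that you spell out the triangle-inequality and obstacle computations that the paper leaves implicit.
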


\begin{proof}
	By the choice of $R_{1}$, $R_{2}$, $\Delta t$ and $\Delta z$, $(x,t) \in \mathrm{C}_{d_{1}}$ indicates $(x + \Delta z, t + \Delta t) \in \mathrm{C}_{d_{2}}$. Moreover, $\overline{\mathrm{U}}_{a_{2}}(x + \Delta z, t + \Delta t) \leq \mathrm{O}_{e}(x,t)$, for any $(x,t) \in \mathrm{C}_{d_{1}}$. Because $\overline{\mathrm{U}}_{a_{2}}(\cdot,\cdot) \in \overline{\mathscr{S}}_{a_{2}}$ and $\Delta z \in \ZZ^{n}$, $\overline{\mathrm{U}}_{a_{2}}(\cdot + \Delta z, \cdot + \Delta t)\big|_{\mathrm{C}_{d_{1}}} \in \overline{\mathscr{S}}_{a_{1}}$. Hence the maximality of $\overline{\mathrm{U}}_{a_{1}}(x,t)$ the Definition \ref{the definitions of obstacle solutions} implies that $\overline{\mathrm{U}}_{a_{2}}(x + \Delta z, t + \Delta t) \leq \overline{\mathrm{U}}_{a_{1}}(x, t)$, for any $(x,t) \in \mathrm{C}_{d_{1}}$.
\end{proof}

\begin{proposition}\label{the birkhoff property for supersolutions in two static domains}
	Fix $a_{i} := (\nu, R_{i}, 0, q, s) \in \mathbb{A}$, where $i = 1, 2$ and $0 < R_{1} < R_{2} < \infty$, then set $d_{i} := (\nu, R_{i}, 0) \in \mathbb{D}$, $i = 1, 2$ and $e := (\nu, q, s) \in \mathbb{E}$. Let $\Delta t \geq 0$ and $\Delta z \in \ZZ^{n}$, such that 
	\begin{eqnarray*}
	s \Delta t \geq \Delta z \cdot \nu > 0 &\text{and}& R_{2} - R_{1} \geq |\Delta z - (\Delta z \cdot \nu)\nu|,
	\end{eqnarray*}
    then
    \begin{eqnarray*}
    \underline{\mathrm{U}}_{a_{1}}(x,t) \leq \underline{\mathrm{U}}_{a_{2}}(x + \Delta z, t + \Delta t), && (x,t) \in \mathrm{C}_{d_{1}}.
    \end{eqnarray*}
\end{proposition}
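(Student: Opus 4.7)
The proof should mirror the argument for Proposition~\ref{the birkhoff property for subsolutions in two static domains}, swapping the roles of sub-/supersolutions and reversing the appropriate inequalities. The plan is to verify three items and then invoke the minimality characterization of $\underline{\mathrm{U}}_{a_1}$ from Definition~\ref{the definitions of obstacle solutions}.

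First I would check the geometric inclusion: if $(x,t)\in \mathrm{C}_{d_1}$, then $(x+\Delta z,t+\Delta t)\in \mathrm{C}_{d_2}$. Since both domains are static ($\mathscr{R}=0$), this reduces to the transverse estimate
$$|(x+\Delta z) - ((x+\Delta z)\cdot\nu)\nu| \leq |x-(x\cdot\nu)\nu| + |\Delta z - (\Delta z\cdot\nu)\nu| < R_1 + (R_2 - R_1) = R_2,$$
which uses the second hypothesis on $\Delta z$.

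Second I would verify the obstacle inequality $\mathrm{O}_e(x+\Delta z, t+\Delta t) \geq \mathrm{O}_e(x,t)$. Using $q=-|q|\nu$ (from $(\nu,q,s)\in \mathbb{E}$), a direct computation gives
$$\mathrm{O}_e(x+\Delta z, t+\Delta t) - \mathrm{O}_e(x,t) = \Delta z \cdot q + s\Delta t\, |q| = |q|\bigl(s\Delta t - \Delta z\cdot\nu\bigr) \geq 0,$$
by the first hypothesis $s\Delta t \geq \Delta z\cdot\nu$. Consequently, since $\underline{\mathrm{U}}_{a_2}\in \underline{\mathscr{S}}_{a_2}$, for every $(x,t)\in \mathrm{C}_{d_1}$,
$$\underline{\mathrm{U}}_{a_2}(x+\Delta z, t+\Delta t) \geq \mathrm{O}_e(x+\Delta z, t+\Delta t) \geq \mathrm{O}_e(x,t).$$

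Third, I would observe that $\Delta z \in \ZZ^n$ combined with the $\ZZ^n$-periodicity of $g$ (hypothesis~\eqref{the hypothesis}) and the autonomy of $\mathscr{F}$ in time makes the operator invariant under the shift $(x,t)\mapsto(x+\Delta z, t+\Delta t)$. Thus the shifted function $\underline{\mathrm{U}}_{a_2}(\cdot+\Delta z,\cdot+\Delta t)$, restricted to $\mathrm{C}_{d_1}$, is still a viscosity supersolution of $u_t = \mathscr{F}(D^2u, Du, x)$ lying above $\mathrm{O}_e$, i.e.\ an element of $\underline{\mathscr{S}}_{a_1}$. The claimed inequality then follows immediately from the minimality of $\underline{\mathrm{U}}_{a_1}$ in Definition~\ref{the definitions of obstacle solutions}. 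There is no real obstacle in this argument; the only point requiring a small computation is the obstacle-shift inequality in the second step, and even that is algebraic once the relation $q=-|q|\nu$ is used.
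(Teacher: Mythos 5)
Your proof is correct and follows essentially the same approach as the paper's: check the geometric inclusion $\mathrm{C}_{d_1}\to\mathrm{C}_{d_2}$ under the shift, verify that the shifted supersolution stays above the unshifted obstacle (your explicit computation using $q=-|q|\nu$ is a useful clarification that the paper omits), note that the $\ZZ^n$-shift preserves the supersolution property, and conclude by minimality of $\underline{\mathrm{U}}_{a_1}$. If anything, your write-up is slightly more careful than the paper's, which has a small typo (it writes $\underline{\mathscr{S}}_{a_2}$ where, after restriction to $\mathrm{C}_{d_1}$, it should be $\underline{\mathscr{S}}_{a_1}$ — you state this correctly).
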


\begin{proof}
	By the choice of $R_{1}$, $R_{2}$, $\Delta t$ and $\Delta z$, if we have $(x, t) \in \mathrm{C}_{d_{1}}$, then $(x + \Delta z, t + \Delta t) \in \mathrm{C}_{d_{2}}$. In addition, $\mathrm{O}_{e}(x,t) \leq \underline{\mathrm{U}}_{a_{2}}(x + \Delta z, t + \Delta t)$, for any $(x,t) \in \mathrm{C}_{d_{1}}$. Since $\underline{\mathrm{U}}_{a_{2}}(x,t) \in \underline{\mathscr{S}}_{a_{2}}$ and $\Delta z \in \ZZ^{n}$, $\underline{\mathrm{U}}_{a_{2}}(\cdot + \Delta z, \cdot + \Delta t) \in \underline{\mathscr{S}}_{a_{2}}$. The minimality of $\underline{\mathrm{U}}_{a_{1}}(\cdot,\cdot)$ from Definition \ref{the definitions of obstacle solutions} implies that $\underline{\mathrm{U}}_{a_{1}}(x,t) \leq \underline{\mathrm{U}}_{a_{2}}(x + \Delta z, t + \Delta t)$, for any $(x,t) \in \mathrm{C}_{d_{1}}$.
\end{proof}

Finally, in the case of shrinking domains, we have the monotonicity with an opposite direction. i.e., as time passes by, the obstacle sub/super-solutions tend to stay closer to the associated obstacle.

\begin{proposition}\label{the birkhoff property for subsolution in a shrinking domain}
	Fix $a := (\nu, R, \mathscr{R}, q, s) \in \mathbb{A}$ with $\mathscr{R} < 0$, then set $d := (\nu, R, \mathscr{R}) \in \mathbb{D}$ and $e := (\nu, q, s) \in \mathbb{E}$. Let $\Delta t > 0$ and $\Delta z \in \ZZ^{n}$, such that
	\begin{eqnarray*}
		m_0\Delta t \geq \Delta z \cdot \nu > 0 &\text{and}& \left( -\mathscr{R}\right) \Delta t \geq \left| \Delta z - \left( \Delta z \cdot \nu\right) \nu\right|, 
	\end{eqnarray*}
	then
	\begin{eqnarray*}
		\overline{\mathrm{U}}_{a}(x - \Delta z, t) \leq \overline{\mathrm{U}}_{a}(x, t + \Delta t), && x \in \mathrm{C}_{d}(t + \Delta t).
	\end{eqnarray*}
\end{proposition}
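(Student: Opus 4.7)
The plan is to adapt the maximality argument from Proposition~\ref{the birkhoff property for subsolution in an expanding domain}, with an extra step to glue a shifted subsolution to $\overline{\mathrm{U}}_{a}$ across the ``initial seam'' $t=\Delta t$. First I would set $w(x,t):=\overline{\mathrm{U}}_{a}(x-\Delta z,\,t-\Delta t)$. Using the hypothesis $(-\mathscr{R})\Delta t\geq|\Delta z-(\Delta z\cdot\nu)\nu|$ together with the triangle inequality, for every $(x,t)\in \mathrm{C}_{d}$ with $t\geq \Delta t$ one has $(x-\Delta z,t-\Delta t)\in \mathrm{C}_{d}$, so $w$ is well-defined on $D:=\mathrm{C}_{d}\cap\{t\geq\Delta t\}$. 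Since $g$ is $\ZZ^{n}$-periodic and $\Delta z\in\ZZ^{n}$, $w$ is USC and a viscosity subsolution of the PDE on $D$. Using $\Delta z\cdot\nu\leq m_{0}\Delta t\leq s\Delta t$ and the identity $\mathrm{O}_{e}(x-\Delta z,t-\Delta t)=\mathrm{O}_{e}(x,t)+|q|(\Delta z\cdot\nu-s\Delta t)$, one further gets $w\leq\mathrm{O}_{e}$ on $D$.

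Next I would extend $w$ to an admissible competitor on all of $\mathrm{C}_{d}$ by setting
\begin{equation*}
\tilde{w}(x,t):=\begin{cases}\max\{w(x,t),\,\overline{\mathrm{U}}_{a}(x,t)\},& t\geq\Delta t,\\ \overline{\mathrm{U}}_{a}(x,t),& 0\leq t<\Delta t,\end{cases}
\end{equation*}
and show that $\tilde{w}\in\overline{\mathscr{S}}_{a}$. Upper semicontinuity and the bound $\tilde{w}\leq\mathrm{O}_{e}$ are immediate. The delicate point is the subsolution property across the seam $\{t=\Delta t\}$; my plan is to eliminate it by first establishing the initial identity $\overline{\mathrm{U}}_{a}(y,0)=y\cdot q$ for $y\in \mathrm{C}_{d}(0)$. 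For this I would observe that the linear profile $u_{\mathrm{lin}}(x,t):=x\cdot q+m_{0}t|q|$ belongs to $\overline{\mathscr{S}}_{a}$ (since $m_{0}\leq g$ and $m_{0}\leq s$), so $\overline{\mathrm{U}}_{a}\geq u_{\mathrm{lin}}$, while $\overline{\mathrm{U}}_{a}\leq\mathrm{O}_{e}$ forces equality at $t=0$. Combined with the hypothesis $\Delta z\cdot\nu\leq m_{0}\Delta t$, this produces
\begin{equation*}
w(x,\Delta t)=(x-\Delta z)\cdot q\leq x\cdot q+m_{0}\Delta t\,|q|\leq\overline{\mathrm{U}}_{a}(x,\Delta t),
\end{equation*}
so $\tilde{w}\equiv\overline{\mathrm{U}}_{a}$ on the seam. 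Any $C^{2,1}$ test function touching $\tilde{w}$ from above at a seam point then also touches $\overline{\mathrm{U}}_{a}$ from above there, and the subsolution property transfers; at points off the seam, $\tilde{w}$ is locally either $\overline{\mathrm{U}}_{a}$ or the maximum of two subsolutions.

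Once $\tilde{w}\in\overline{\mathscr{S}}_{a}$ is established, maximality of $\overline{\mathrm{U}}_{a}$ forces $\tilde{w}\leq\overline{\mathrm{U}}_{a}$, and since $\tilde{w}\geq\overline{\mathrm{U}}_{a}$ by construction one obtains $\tilde{w}=\overline{\mathrm{U}}_{a}$. In particular $w\leq\overline{\mathrm{U}}_{a}$ on $D$, which after the change of variables $t\mapsto t+\Delta t$ yields the claimed inequality $\overline{\mathrm{U}}_{a}(x-\Delta z,t)\leq\overline{\mathrm{U}}_{a}(x,t+\Delta t)$ for $x\in \mathrm{C}_{d}(t+\Delta t)$. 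The main obstacle I anticipate is exactly the seam analysis: a naive maximum construction is not automatically a subsolution across $\{t=\Delta t\}$, and the whole argument hinges on identifying $\overline{\mathrm{U}}_{a}(\cdot,0)=\mathrm{O}_{e}(\cdot,0)$ through the explicit linear subsolution, in tandem with the bound $m_{0}\Delta t\geq\Delta z\cdot\nu$ which ensures $w$ never strictly exceeds $\overline{\mathrm{U}}_{a}$ on the seam.
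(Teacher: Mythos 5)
Your proposal is correct and follows essentially the same route as the paper: both derive the seam inequality $\overline{\mathrm{U}}_a(x-\Delta z, 0) \leq \overline{\mathrm{U}}_a(x, \Delta t)$ from the linear subsolution $x\cdot q + m_0 t |q| \in \overline{\mathscr{S}}_a$ together with the hypothesis $m_0\Delta t \geq \Delta z\cdot\nu > 0$, verify that the shifted function stays below $\mathrm{O}_e$, and then invoke maximality of $\overline{\mathrm{U}}_a$. Your explicit gluing $\tilde{w}$ and the test-function check across the seam $\{t=\Delta t\}$ spell out what the paper compresses into a single appeal to maximality.
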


\begin{proof}
	Since $m_0 \leq s \leq M_0$, the function $-|q|\left( x \cdot \nu - m_0t\right)$ is a subsolution in $\mathrm{C}_{d}$. The choice of $\Delta t$ and $\Delta z$ indicates that $\overline{\mathrm{U}}_{a}(x - \Delta z, 0) \leq \overline{\mathrm{U}}_{a}(x, \Delta t)$, for any $x \in \mathrm{C}_{d}(\Delta t)$. It also implies that $\overline{\mathrm{U}}_{a}(x - \Delta z, t) \leq \mathrm{O}_{e}(x, t + \Delta t)$, for any $x \in \mathrm{C}_{d}(t + \Delta t)$. Because $\Delta z \in \ZZ^{n}$, $\overline{\mathrm{U}}_{a}(x - \Delta z, t)$ is a subsolution bounded from above by $\mathrm{O}_{e}(x, t + \Delta t)$, in $\hat{\mathrm{C}}_{d} := \left\lbrace (x,t) \in \RR^{n} \times (0,\infty) \big| x \in \mathrm{C}_{d}(t + \Delta t) \right\rbrace $, so does $\max\left\lbrace \overline{\mathrm{U}}_{a}(x - \Delta z, t), \overline{\mathrm{U}}_{a}(x, t + \Delta t) \right\rbrace$. By the maximality of $\overline{\mathrm{U}}_{a}$ from Definition \ref{the definitions of obstacle solutions}, we conclude that
	\begin{eqnarray*}
	\max\left\lbrace \overline{\mathrm{U}}_{a}(x - \Delta z, t), \overline{\mathrm{U}}_{a}(x, t + \Delta t) \right\rbrace \leq \overline{\mathrm{U}}_{a}(x, t + \Delta t), && (x,t) \in \hat{\mathrm{C}}_{d}.
	\end{eqnarray*}
    Equivalently,
    \begin{eqnarray*}
    \overline{\mathrm{U}}_{a}(x - \Delta z, t) \leq \overline{\mathrm{U}}_{a}(x, t + \Delta t), && x \in \mathrm{C}_{d}(t + \Delta t).
    \end{eqnarray*}

\end{proof}

\begin{proposition}\label{the birkhoff property for supersolution in a shrinking domain}
	Fix $a := (\nu, R, \mathscr{R}, q, s) \in \mathbb{A}$ with $\mathscr{R} < 0$, then set $d := (\nu, R, \mathscr{R}) \in \mathbb{D}$ and $e := (\nu, q, s) \in \mathbb{E}$. Let $\Delta t > 0$ and $\Delta z \in \ZZ^{n}$, such that
	\begin{eqnarray*}
		\Delta z \cdot \nu \geq M\Delta t \geq 0 &\text{and}& \left( -\mathscr{R}\right) \Delta t \geq \left| \Delta z - \left( \Delta z \cdot \nu\right) \nu\right| ,
	\end{eqnarray*}
	then
	\begin{eqnarray*}
		\underline{\mathrm{U}}_{a}(x - \Delta z, t) \geq \underline{\mathrm{U}}_{a}(x, t + \Delta t), && x \in \mathrm{C}_{d}(t + \Delta t).
	\end{eqnarray*}
\end{proposition}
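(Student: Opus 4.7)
The plan is to mirror the proof of Proposition~\ref{the birkhoff property for subsolution in a shrinking domain}, with the roles of sup/inf and sub/super interchanged, and with $m_0$ replaced by $M_0$. Note the key algebraic swap: the function $-|q|(x\cdot\nu - M_0 t)$ is a supersolution of the equation (since $v_t = M_0|q| \geq g(x)|q| = \mathscr{F}(D^2v, Dv, x)$), which plays here the role that the corresponding $m_0$ plane played in the subsolution case.

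First I verify the two geometric compatibilities implied by the hypotheses. The condition $(-\mathscr{R})\Delta t \geq |\Delta z - (\Delta z\cdot\nu)\nu|$ together with $\mathscr{R} < 0$ gives, via the triangle inequality, that whenever $x \in \mathrm{C}_d(t+\Delta t)$ we have $x - \Delta z \in \mathrm{C}_d(t)$ (and $R + \mathscr{R}t > R + \mathscr{R}(t+\Delta t) > 0$, so the shifted point indeed lies in $\mathrm{C}_d$). The condition $\Delta z\cdot\nu \geq M_0\Delta t \geq s\Delta t$ yields the obstacle ordering
\[
\mathrm{O}_e(x - \Delta z, t) - \mathrm{O}_e(x, t+\Delta t) = |q|(\Delta z\cdot\nu - s\Delta t) \geq 0,
\]
so in particular $\underline{\mathrm{U}}_a(x - \Delta z, t) \geq \mathrm{O}_e(x - \Delta z, t) \geq \mathrm{O}_e(x, t + \Delta t)$ for all $x \in \mathrm{C}_d(t + \Delta t)$.

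Next, because $\Delta z \in \ZZ^n$ and $g$ is $\ZZ^n$-periodic, the translated function $\underline{\mathrm{U}}_a(x - \Delta z, t)$ is again a supersolution of the equation; restricted to the domain $\hat{\mathrm{C}}_d := \{(x,t)\in\RR^n\times(0,\infty) : x \in \mathrm{C}_d(t+\Delta t)\}$, it is a supersolution there bounded from below by $\mathrm{O}_e(x, t+\Delta t)$. The same is true of $\underline{\mathrm{U}}_a(x, t+\Delta t)$. Hence
\[
W(x,t) := \min\bigl\{\underline{\mathrm{U}}_a(x - \Delta z, t),\ \underline{\mathrm{U}}_a(x, t + \Delta t)\bigr\}
\]
is an LSC supersolution on $\hat{\mathrm{C}}_d$ lying above the obstacle $\mathrm{O}_e(x, t+\Delta t)$. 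Invoking the minimality property of $\underline{\mathrm{U}}_a$ from Definition~\ref{the definitions of obstacle solutions} (exactly as in the subsolution analogue), we deduce
\[
W(x,t) \geq \underline{\mathrm{U}}_a(x, t+\Delta t), \qquad (x,t)\in \hat{\mathrm{C}}_d,
\]
which is equivalent to the desired inequality $\underline{\mathrm{U}}_a(x - \Delta z, t) \geq \underline{\mathrm{U}}_a(x, t+\Delta t)$ for $x \in \mathrm{C}_d(t+\Delta t)$.

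The only point I expect to require care is the last invocation of minimality, since $\underline{\mathrm{U}}_a$ is characterized as a minimum among supersolutions on $\mathrm{C}_d$, while $W$ is defined only on the sub-domain $\hat{\mathrm{C}}_d$. This is handled exactly as in the proof of the preceding shrinking-domain subsolution Proposition: one extends $W$ to $\mathrm{C}_d$ by gluing it with $\underline{\mathrm{U}}_a(x,t)$ itself outside $\hat{\mathrm{C}}_d$, which preserves both the supersolution property (the jump only improves the LSC supersolution condition in the supersolution direction) and the obstacle bound from below, so that $W$ is dominated by $\underline{\mathrm{U}}_a$. All other ingredients, including the $\ZZ^n$-periodic shift, domain compatibility, and the min-is-a-supersolution step, are the direct symmetric analogues of the subsolution proof.
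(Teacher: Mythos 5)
Your overall structure mirrors the paper's, and the identification of the $M_0$-plane $-|q|(x\cdot\nu - M_0 t)$ as the key supersolution to swap in is correct. The domain compatibility, obstacle ordering, $\ZZ^n$-periodicity, and the min-of-two-supersolutions observations are also all right.

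However, the reduction to minimality at the end has a genuine gap. The gluing you propose is a \emph{spatial} gluing: on $\mathrm{C}_d\setminus\hat{\mathrm{C}}_d$ (the annular region $\mathrm{C}_d(t)\setminus\mathrm{C}_d(t+\Delta t)$ in each time slice) you put $\underline{\mathrm{U}}_a(x,t)$, and on $\hat{\mathrm{C}}_d$ you put $W$. Even granting that this glued function is a supersolution on $\mathrm{C}_d$ above $\mathrm{O}_e(x,t)$, minimality of $\underline{\mathrm{U}}_a$ on $\mathrm{C}_d$ only yields $W(x,t)\geq\underline{\mathrm{U}}_a(x,t)$ on $\hat{\mathrm{C}}_d$, which is \emph{not} the desired $W(x,t)\geq\underline{\mathrm{U}}_a(x,t+\Delta t)$. (Your closing statement ``$W$ is dominated by $\underline{\mathrm{U}}_a$'' is also backwards; minimality gives the opposite inequality.) The gluing that actually produces the stated conclusion is a gluing in \emph{time}: setting $\tau := t + \Delta t$, one must show that $\tilde W(x,\tau) := W(x,\tau-\Delta t)$ for $\tau\geq\Delta t$, glued with $\underline{\mathrm{U}}_a(x,\tau)$ for $0\le\tau<\Delta t$, is a supersolution on $\mathrm{C}_d$ above $\mathrm{O}_e(x,\tau)$; minimality then gives $W(x,\tau-\Delta t)\geq\underline{\mathrm{U}}_a(x,\tau)$, which is the claim.

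For this time-gluing to work one needs the initial data ordering
\[
\underline{\mathrm{U}}_a(x-\Delta z,0)\;\geq\;\underline{\mathrm{U}}_a(x,\Delta t),\qquad x\in\mathrm{C}_d(\Delta t),
\]
so that no upward jump occurs at $\tau=\Delta t$. This is precisely where the $M_0$-plane supersolution is used: since that plane lies in $\underline{\mathscr{S}}_a$, it dominates $\underline{\mathrm{U}}_a$, hence $\underline{\mathrm{U}}_a(x,\Delta t)\leq x\cdot q + M_0\Delta t|q|$, while $\underline{\mathrm{U}}_a(x-\Delta z,0)=(x-\Delta z)\cdot q = x\cdot q + |q|\Delta z\cdot\nu$; combining with the hypothesis $\Delta z\cdot\nu\geq M_0\Delta t$ gives the ordering. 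You state the $M_0$-plane fact but never put it to use, and your argument only invokes the weaker consequence $\Delta z\cdot\nu\geq s\Delta t$ for the obstacle ordering. That the hypothesis requires $M_0\Delta t$ rather than $s\Delta t$ is a signal that the initial data ordering --- not merely the obstacle ordering --- is load-bearing, and it is the part missing from your proof.
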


\begin{proof}
	Since $m_0 \leq s \leq M_0$, the function $-|q|\left( x \cdot \nu - M_0t\right)$ is a supersolution in $\mathrm{C}_{d}$. The choice of $\Delta t$ and $\Delta z$ indicates that $\underline{\mathrm{U}}_{a}(x - \Delta z, 0) \geq \underline{\mathrm{U}}_{a}(x, \Delta t)$, for any $x \in \mathrm{C}_{d}(\Delta t)$. It also implies that $\underline{\mathrm{U}}_{a}(x - \Delta z, t) \geq \mathrm{O}_{e}(x, t + \Delta t)$, for any $x \in \mathrm{C}_{d}(t + \Delta t)$. Because $\Delta z \in \ZZ^{n}$, $\underline{\mathrm{U}}_{a}(x - \Delta z, t)$ is a supersolution bounded from below by $\mathrm{O}_{e}(x, t + \Delta t)$, in $\hat{\mathrm{C}}_{d} := \left\lbrace (x,t) \in \RR^{n} \times (0,\infty) \big| x \in \mathrm{C}_{d}(t + \Delta t) \right\rbrace $, so does $\min\left\lbrace \underline{\mathrm{U}}_{a}(x - \Delta z, t), \underline{\mathrm{U}}_{a}(x, t + \Delta t) \right\rbrace$. By the minimality of $\underline{\mathrm{U}}_{a}$ from Definition \ref{the definitions of obstacle solutions}, we conclude that
	\begin{eqnarray*}
		\min\left\lbrace \underline{\mathrm{U}}_{a}(x - \Delta z, t), \underline{\mathrm{U}}_{a}(x, t + \Delta t) \right\rbrace \geq \underline{\mathrm{U}}_{a}(x, t + \Delta t), && (x,t) \in \hat{\mathrm{C}}_{d}.
	\end{eqnarray*}
	Equivalently,
	\begin{eqnarray*}
		\underline{\mathrm{U}}_{a}(x - \Delta z, t) \geq \underline{\mathrm{U}}_{a}(x, t + \Delta t), && x \in \mathrm{C}_{d}(t + \Delta t).
	\end{eqnarray*}	
	
\end{proof}

\begin{remark}\label{a remark on the birkhoff property in laminar case}
	In the previous Propositions \ref{the birkhoff property for subsolution in an expanding domain}, \ref{the birkhoff property for supersolution in an expanding domain}, \ref{the birkhoff property for subsolutions in two static domains}, \ref{the birkhoff property for supersolutions in two static domains}, \ref{the birkhoff property for subsolution in a shrinking domain}, \ref{the birkhoff property for supersolution in a shrinking domain}, the space shift $\Delta z \in \ZZ^{n}$ is only due the to periodicity of $g(x)$. In the laminar case, i.e., $g(x) = g(x^{\prime})$ with $x = (x^{\prime}, x_{n})$, it suffices to have $\Delta z = (\Delta z^{\prime}, \Delta z_{n})$ with $\Delta z^{\prime} \in \ZZ^{n-1}$ and $\Delta z_{n} \in \RR$.
\end{remark}

\section{Inf-convolution}\label{the inf convolution}

\subsection{Concepts and properties}

\begin{definition}\label{the definition of the inf convolution}
	Let $h := (r(t), \varphi(x)) \in \mathbb{F}$ and $u(x,t): U \rightarrow \RR$ be a function defined in a space time domain $U \subseteq \RR^{n} \times (0, \infty)$. The $h$ inf-convolution of $u(x,t)$, denoted by $u^{h_{-}}(x,t)$, is defined as follows.
	\begin{eqnarray*}
	u^{h_{-}}(x,t) := \inf_{y \in B_{r(t) \varphi(x)}(x)}u(y,t), &\text{where}& \left( B_{r(t)\varphi(x)}(x), t\right)  \subseteq U.
	\end{eqnarray*}
    Here $B_{r(t)\varphi(x)}(x) := \left\lbrace y \in \RR^{n} \big| |y - x| \leq r(t)\varphi(x)\right\rbrace $.
\end{definition}

\begin{definition}\label{the strict ordering relation between level sets of two functions}
	Let $f(x), g(x): \Omega \rightarrow \RR$, where $\Omega$ is a subset of $\RR^{n}$, for any $\mu \in \RR$, let us denote the sublevel set and superlevel set of the function $f$ in $\Omega$ as follows.
	\begin{eqnarray*}
	L^{-}_{\mu}\left( f; \Omega\right) := \left\lbrace x \in \Omega \big| f(x) \leq \mu \right\rbrace, && L^{+}_{\mu}\left( f; \Omega\right) := \left\lbrace x \in \Omega \big| f(x) \geq \mu \right\rbrace.
	\end{eqnarray*}
	For later convenience, we also denote the sub/super level-set based ordering relation as below:
	\begin{eqnarray*}
		f \prec_{(\Omega,\mu)} g, &\text{if}& (i)\hspace{1mm} f < g, (ii) \hspace{1mm} L_{\mu}^{+}(f;\Omega) \cap L_{\mu}^{-}(g;\Omega) = \emptyset, (iii)\hspace{1mm} \begin{cases}
			\inf\left\lbrace x \cdot \nu \big| x \in L_{\mu}^{+}(f;\Omega) \right\rbrace = - \infty\\
			\sup\left\lbrace x \cdot \nu \big| x \in L_{\mu}^{-}(g;\Omega) \right\rbrace = + \infty
		\end{cases} 
	\end{eqnarray*}
\end{definition}

The next proposition shows that any sublevel (resp. superlevel) set of the $(r(t),\varphi(x))$ inf-convolution of a lower semicontinuous function has a $r(t)\varphi(x)$ interior (resp. exterior) ball condition. That is to say, the inf-convolution makes the sublevel (resp. superlevel) set more regular from one direction.
\begin{proposition}\label{the exterior ball condition of the inf convolution}
	Fix $\mu \in \RR$, $h := (r(t), \sigma) \in \mathbb{F}$, where $\sigma > 0$ is a constant and $u(x,t) \in \LSC(U)$, where $U \subseteq \RR^{n} \times (0,\infty)$ is a space time domain. Let $u^{h_{-}}(x,t)$ be the $h$ inf-convolution through Definition \ref{the definition of the inf convolution}. Assume $(x,t)$ satisfies the following (i)-(iii),
	\begin{enumerate}
		\item [(i)] $x \in \partial \left\lbrace w \in \RR^{n} \big | (w,t) \in U, \hspace{2mm} u^{h_{-}}(w,t) \leq \mu\right\rbrace$;
		\item [(ii)] $\left( B_{r(t)\sigma}(x), t \right)  \subseteq U $;
		\item [(iii)] $u^{h_{-}}(x,t) = \mu$.
	\end{enumerate}
	Then there exists $\left( y, t\right)  \in U$, such that
	\begin{eqnarray*}
	|y - x| = r(t)\sigma &\text{and}& u^{h_{-}}(z,t) \leq \mu, \hspace{2mm}\text{if}\hspace{2mm} z \in B_{r(t)\sigma}(y).
	\end{eqnarray*}
\end{proposition}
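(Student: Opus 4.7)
My plan is to locate a minimizer $y$ of the infimum defining $u^{h_-}(x,t)$ that sits exactly on the sphere $\partial B_{r(t)\sigma}(x)$, and then propagate the sublevel information from this minimizer to the entire ball $B_{r(t)\sigma}(y)$ via the symmetry of the Euclidean ball. Geometrically, inf-convolution turns sublevel sets into unions of balls, so the boundary of the sublevel set must admit a supporting ball from the outside.

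First I would use the lower semicontinuity of $u(\cdot, t)$ together with the compactness of the closed ball $B_{r(t)\sigma}(x)$ (which lies in the time-slice of $U$ by condition (ii)) to conclude via Weierstrass that the infimum defining $u^{h_-}(x,t)$ is attained. Combined with (iii) this yields at least one $y \in B_{r(t)\sigma}(x)$ with $u(y,t) = \mu$. I would then argue by contradiction that some such minimizer must satisfy $|y - x| = r(t)\sigma$. Suppose instead that every minimizer $y_0$ of the above infimum lies in the open ball $|y_0 - x| < r(t)\sigma$, and fix one. Set $\delta := r(t)\sigma - |y_0 - x| > 0$ and shrink $\delta$ so that $(B_{r(t)\sigma}(x'), t) \subset U$ for every $x' \in B_\delta(x)$; this is possible by the openness of $U$ together with the strict inclusion in (ii) for a closed ball. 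For any such $x'$ the triangle inequality gives $|y_0 - x'| \leq |y_0 - x| + |x - x'| < r(t)\sigma$, so $y_0 \in B_{r(t)\sigma}(x')$ and hence $u^{h_-}(x', t) \leq u(y_0, t) = \mu$. But then the whole neighborhood $B_\delta(x)$ sits inside $\{w : (w,t) \in U,\; u^{h_-}(w,t) \leq \mu\}$, contradicting assumption (i) that $x$ lies on its boundary.

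With $y$ now satisfying $|y - x| = r(t)\sigma$ and $u(y,t) = \mu$, I conclude as follows: for any $z \in B_{r(t)\sigma}(y)$ the symmetry $y \in B_{r(t)\sigma}(z)$ immediately gives $u^{h_-}(z, t) \leq u(y,t) = \mu$ wherever $u^{h_-}(z,t)$ is defined, which is the desired conclusion. The only delicate point in this scheme is verifying that the inf-convolution remains well-defined at small perturbations $x'$ of $x$, for which condition (ii) -- the full closed ball at $x$ lying in $U$ -- is crucial. Apart from that, the argument is a soft semicontinuity-plus-compactness observation with no quantitative estimates required, and it uses the exact shape of the ball only through the symmetry $y \in B_r(z) \iff z \in B_r(y)$.
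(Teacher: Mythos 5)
Your proof is correct and takes essentially the same route as the paper: attain the infimum via lower semicontinuity and compactness, show the minimizer must sit on the sphere $\partial B_{r(t)\sigma}(x)$ because otherwise $u^{h_-}(\cdot,t)\leq\mu$ on a neighborhood of $x$ contradicting (i), then use the ball symmetry $y\in B_{r(t)\sigma}(z)\Leftrightarrow z\in B_{r(t)\sigma}(y)$ to get $u^{h_-}(z,t)\leq u(y,t)=\mu$. The only (cosmetic) difference is that you make explicit the point that $u^{h_-}$ must remain well-defined at the perturbed centers $x'$, which the paper leaves implicit.
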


\begin{proof}
	By the choice of $x$, we get that $\inf\limits_{y \in B_{r(t)\sigma}(x)}u(y,t) = \mu$. Since $u \in \LSC (U)$, there exists $y \in B_{r(t)\sigma}(x)$, such that $u(y,t) = \mu$. Suppose $|y - x| < r(t)\sigma$, then $u^{h_{-}}(\cdot,t) \leq \mu$ in a neighborhood of $x$, which contradicts to the choice of $x$. Hence $|y - x| = r(t)\sigma$. Moreover, $u(\cdot,t) > \mu$ in the interior of $B_{r(t)\sigma}(x)$. Next, let $z \in  B_{r(t)\sigma}(x)$, then $y \in B_{r(t)\sigma}(z)$, therefore, $u^{h_{-}}(z,t) \leq u(y,t) = \mu$.
\end{proof}

Based on the construction of a super barrier as follows, we show that the superlevel set of an obstacle subsolution propagates with a finite speed.

\begin{proposition}\label{finite speed propagation of subsolution}
	Fix $(\delta, \mu, t_{0}, C) \in (0,\infty) \times \RR \times (0, \infty) \times (1, \infty)$, $a := (\nu, R, \mathscr{R}, q, s) \in \mathbb{A}$ and then set $d := (\nu, R, \mathscr{R}) \in \mathbb{D}$ and $h := (\delta, 1) \in \mathbb{F}$. Assume that
	\begin{enumerate}
		\item [(i)] $u(x,t) \in \overline{\mathscr{S}}_{a}$, $v(x,t) \in \underline{\mathscr{S}}_{a}$;
		\item [(ii)] Let $\Omega$ be a domain such that $\left( \overline{\Omega} + B_{\delta}(0), t\right) \subseteq \mathrm{C}_{d}$, $t_{0} - \Delta t \leq t \leq t_{0}$;
		\item [(iii)] $v^{h_{-}}(x,t) : \overline{\Omega} \times [t_{0} - \Delta t, t_{0}] \rightarrow \RR$ is defined through Definition \ref{the definition of the inf convolution};
		\item [(iv)] $0 < \Delta t < \frac{(C - 1)\delta^{2}}{(n - 1)C^{2} + C(C - 1)M_0\delta}$ and $u(x, t_{0} - \Delta t) \prec_{(\Omega, \mu)} v^{h_{-}}(x,t_{0})$.
	\end{enumerate}
    Then
    \begin{eqnarray*}
    u(x, t_{0}) \prec_{(\Omega, \mu)} v^{\hat{h}_{-}}(x, t_{0}), &\text{where}& \hat{h} := \left(\left(1 - \frac{1}{C}\right)\delta, 1\right) \in \mathbb{F}.
    \end{eqnarray*}

\end{proposition}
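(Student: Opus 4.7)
The plan is to reduce the proposition to a geometric finite-speed estimate for the closed superlevel set $\{u \geq \mu\}$ and then to establish that estimate by a shrinking-ball barrier. Using $\{v^{h_-}(\cdot, t_0) \leq \mu\} = \{v(\cdot, t_0) \leq \mu\} + \overline{B_\delta(0)}$, the non-intersection clause (ii) of the hypothesis is equivalent to the statement that, for every $z$ with $v(z, t_0) \leq \mu$ and $\overline{B_\delta(z)} \subset \overline{\Omega} + B_\delta$, the open ball $B_\delta(z)$ is disjoint from $\{u(\cdot, t_0 - \Delta t) \geq \mu\}$. Since the conclusion is the analogous statement with $\delta$ replaced by $(1 - 1/C)\delta$ and $u$ evaluated at $t_0$, it suffices to fix one such $z$ and show that $B_{(1 - 1/C)\delta}(z) \cap \{u(\cdot, t_0) \geq \mu\} = \emptyset$.

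For this I would use a shrinking ball $B_{\rho(t)}(z)$ where $\rho$ satisfies the ODE $\dot\rho = -(n-1)/\rho - M_0$ with $\rho(t_0 - \Delta t) = \delta$. Since $|\dot\rho| = (n-1)/\rho + M_0$ is monotone decreasing in $\rho$, the upper bound on $\Delta t$ in (iv) is exactly the time required to shrink from $\delta$ to $(1 - 1/C)\delta$ at the maximal rate on this interval, so $\rho(t_0) > (1 - 1/C)\delta$. The key claim is
\begin{equation*}
B_{\rho(t)}(z) \cap \{u(\cdot, t) \geq \mu\} = \emptyset \qquad \text{for all } t \in [t_0 - \Delta t, t_0],
\end{equation*}
which at $t = t_0$ yields the desired disjointness. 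I would argue by contradiction: let $t^*$ be the first contact time between the closed set $\{u^*(\cdot, t) \geq \mu\}$ and $\overline{B_{\rho(t)}(z)}$. Upper semicontinuity of $u$ and continuity of $\rho$ produce a touching point $x^* \in \partial B_{\rho(t^*)}(z)$, at which $B_{\rho(t^*)}(z)$ is an exterior tangent ball to $\{u^* \geq \mu\}$. A local graph comparison then forces the mean curvature of $\partial\{u^* \geq \mu\}$ at $x^*$, with respect to the outward normal of $\{u^* \geq \mu\}$ pointing toward $z$, to satisfy $\kappa(x^*) \geq -(n-1)/\rho(t^*)$. Combined with the subsolution inequality $V \leq -\kappa + g(x^*)$ for the outward normal velocity, this gives $V \leq (n-1)/\rho(t^*) + M_0 = -\dot\rho(t^*)$, which is strictly inconsistent with the set crossing into $B_{\rho(t^*)}(z)$. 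The rigorous viscosity version of this touching argument uses a quadratic test function such as $\phi(x, t) = \mu + \alpha(|x - z|^2 - \rho(t)^2)$ and a small strict-margin perturbation $\eta > 0$ in the ODE for $\rho$, sent to zero at the end.

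Clauses (i) and (ii) of $u(\cdot, t_0) \prec_{(\Omega, \mu)} v^{\hat h_-}(\cdot, t_0)$ are immediate from the set disjointness just proved, and clause (iii) is inherited from its analogue at $t_0 - \Delta t$: the $+\nu$-extent of $\{v^{\hat h_-}(\cdot, t_0) \leq \mu\}$ follows from that of $\{v(\cdot, t_0) \leq \mu\}$ via the inclusion $\{v^{\hat h_-} \leq \mu\} \supset \{v \leq \mu\}$, while the $-\nu$-extent of $\{u(\cdot, t_0) \geq \mu\}$ is preserved by applying the same shrinking-ball estimate at witness points chosen far out along $-\nu$ in the hypothesis. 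The main obstacle in this plan is the rigorous viscosity implementation of the shrinking-ball touching argument: the degeneracy of $\mathscr{F}$ near $D\phi = 0$ and the possibility of upper-semicontinuous jumps of $u$ at the contact point both need care, which the strict-margin perturbation and the $C^2$ regularity of the quadratic barrier handle in the standard way.
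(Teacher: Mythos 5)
Your proposal is essentially the paper's own argument: both reduce the claim to showing that a ball centered at each point of $\{v(\cdot,t_0)\le\mu\}$, initially of radius $\delta$, shrinks to radius at least $(1-1/C)\delta$ over time $\Delta t$ while remaining disjoint from $\{u\ge\mu\}$, and both accomplish this with a shrinking-sphere barrier whose radius satisfies (at least) $\dot\rho\le -(n-1)/\rho - M_0$. The only cosmetic difference is that you use the sharp nonlinear ODE $\dot\rho=-(n-1)/\rho-M_0$ while the paper uses the linear choice $r(t)=\delta-(t-t_0+\Delta t)\bigl(\tfrac{(n-1)C}{(C-1)\delta}+M_0\bigr)$ (i.e.\ freezes the shrink rate at its worst-case value on $[(1-1/C)\delta,\delta]$); your computation that the threshold on $\Delta t$ in (iv) is exactly the time to shrink from $\delta$ to $(1-1/C)\delta$ at that maximal rate is correct and identifies where the constant comes from, but it yields the same bound and the same barrier supersolution property, so there is no genuine divergence.
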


\begin{proof}
	For any $y \in L_{\mu}^{-}\left( v(\cdot, t_{0}); \overline{\Omega}\right)$, let us define the superbarrier
	\begin{equation*}
	G_{y}(x,t) := \begin{cases}
	\mu, & x \in B_{r(t)}(y)\\
	+ \infty, & x \in \overline{\Omega}\diagdown B_{r(t)}(y)
	\end{cases},
	\end{equation*}
    where
    \begin{equation*}
    r(t) := \delta - \left( t - t_{0} + \Delta t\right)\cdot \left( \frac{(n - 1)C}{(C - 1)\delta} + M_0\right).
    \end{equation*}
    Here $r(t)$ is chosen such that $r(t_{0} - \Delta t) = \delta$ and $r(t_{0}) = \left( 1 - \frac{1}{C}\right)\delta$. In addition, $r^{\prime}(t) = - \left( \frac{(n-1)C}{(C - 1)\delta} + M_0 \right)$ guarantees that $G_{y}(x,t)$ is a supersolution. i.e.,
    \begin{eqnarray*}
    \partial_{t}G_{y} \geq \mathscr{F}\left( D^{2}G_{y}, DG_{y}, x\right), && (x,t) \in \overline{\Omega} \times \left( t_{0} - \Delta t, t_{0}\right).  
    \end{eqnarray*}
    On the other hand, let us consider the function
    \begin{equation*}
    H_{u}(x,t) := \begin{cases}
    \mu, & u(x, t) \geq \mu\\
    -\infty, & u(x,t) < \mu
    \end{cases}.
    \end{equation*}
    Since the operator $\mathscr{F}(\cdot, \cdot, \cdot)$ is geometric (c.f. \cite{Caffarelli and Monneau ARMA}), it is clear that $H_{u}(x,t)$ is a subsolution. i.e.,
    \begin{eqnarray*}
    \partial_{t}H_{u} \leq \mathscr{F}\left( D^{2}H_{u}, DH_{u}, x\right), && (x,t) \in \overline{\Omega} \times (t_{0} - \Delta t, t_{0}).
    \end{eqnarray*}
    Then Proposition \ref{the exterior ball condition of the inf convolution} and an application of the usual comparison principle (c.f. Proposition \ref{the usual comparison principle}), restricted to certain bounded domain if necessary, shows that 
    \begin{eqnarray*}
    H_{u}(x,t) < G_{y}(x,t), && (x,t) \in \overline{\Omega} \times \left[ t_{0} - \Delta t, t_{0}\right] .
    \end{eqnarray*}
In particular, it is true that
\begin{eqnarray*}
H_{u}(x, t_{0}) < G_{y}(x, t_{0}), && x \in \overline{\Omega}.
\end{eqnarray*}
Hence
\begin{eqnarray*}
L_{\mu}^{+}\left( u(\cdot, t_{0}); \overline{\Omega}\right) \cap L_{\mu}^{-}\left( G_{y}(\cdot, t_{0}); \overline{\Omega}\right) = \emptyset, && y \in L_{\mu}^{-}\left( v(\cdot, t_{0}); \overline{\Omega}\right) .
\end{eqnarray*}
Notice that
\begin{equation*}
\bigcup_{y \in L_{\mu}^{-}\left( v(\cdot,t_{0}); \overline{\Omega}\right)} L_{\mu}^{-}\left( G_{y}(\cdot, t_{0}); \overline{\Omega}\right) = L_{\mu}^{-}\left( v(\cdot, t_{0}); \overline{\Omega}\right) + B_{\left( 1 - \frac{1}{C}\right)\delta} = L_{\mu}^{-}\left( v^{\hat{h}_{-}}(\cdot, t_{0}); \overline{\Omega}\right). 
\end{equation*}
Based on Definition \ref{the strict ordering relation between level sets of two functions}, the conclusion follows immediately.
\end{proof}


\subsection{Evolution law}

The coming proposition shows that if we choose $h \in \mathbb{F}$ in an appropriate way, the $h$ inf-convolution of a (pseudo) supersolution is still a (pseudo) supersolution. This plays an important role, in later sections, in proving the local comparison principle.
\begin{proposition}\label{the evolution law of inf convolution}
	Let $h := (r(t), \varphi(x)) \in \mathbb{F}$ and assume the followng (i)-(iv).
	\begin{enumerate}
		\item [(i)] $\left| r(t)D\varphi(x)\right| < 1$;
		\item [(ii)] $r(t)$ and $\varphi(x)$ satisfy the differential inequality as follows.
		\begin{flalign}\label{the assumption in the inf convolution}
		\begin{aligned}
		r^{\prime}(t) + \left( \frac{\left( n + 1\right) \lVert D^{2}\varphi \rVert_{\infty}}{\varphi(x)} + \frac{M_0|D\varphi(x)|}{\varphi(x)}
		+ L_0\right)r(t) + \frac{|D\varphi(x)|^{2}r(t)}{\left( 1 - r(t)|D\varphi(x)|\right)^{2}\varphi^{2}(x)} &\leq& 0
		\end{aligned} 
		\end{flalign} 
		where $M_0$, $L_0$ are from (\ref{the hypothesis});
		\item [(iii)] Let $T > 0$, $\Omega \subseteq \RR^{n}$ be an open set and $u(x,t): \Omega \times (0,T) \rightarrow \RR$ be a pseudo viscosity supersolution (c.f. Definition \ref{pseudo viscosity supersolution});
		\item [(iv)] Denote the space domain $\Omega^{h_{-}}$ as follows:
		\begin{equation*}
		\Omega^{h_{-}} := \left\lbrace x\in\Omega \big| \dist(x, \Omega^{c}) > \sup_{(x,t) \in \Omega \times (0,T)}r(t)\varphi(x)\right\rbrace.
		\end{equation*} 
	\end{enumerate}
	  Then $u^{h_{-}}(x,t): \Omega^{h_{-}} \times (0,T) \rightarrow \RR$ is also a pseudo viscosity supersolution.
\end{proposition}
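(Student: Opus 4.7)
The strategy is the standard test-function transfer. Suppose $\psi\in C^{2,1}$ touches $u^{h_-}$ from below at $(x_0,t_0)\in\Omega^{h_-}\times(0,T)$ with $|D\psi(x_0,t_0)|>0$. From $\psi$ I will build a smooth test function touching $u$ from below at a point $y_0$ where the defining infimum is attained, apply the pseudo-supersolution property of $u$ there, and read the resulting inequality back at $(x_0,t_0)$. Assumption \eqref{the assumption in the inf convolution} is precisely the algebraic condition which lets us absorb all the error terms generated by the change of variables.

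Because $u(\cdot,t_0)\in\LSC$ on the compact ball $\overline{B_{r(t_0)\varphi(x_0)}(x_0)}$, the infimum defining $u^{h_-}(x_0,t_0)$ is attained at some $y_0=x_0+r(t_0)\varphi(x_0)\xi_0$ with $|\xi_0|\le 1$. Introduce the map $\Phi(x,t):=x+r(t)\varphi(x)\xi_0$. Assumption (i) ensures $D_x\Phi=I+r(t)\xi_0\otimes D\varphi(x)$ is invertible, with Sherman--Morrison inverse $(D_x\Phi)^{-1}=I-\frac{r}{1+rD\varphi\cdot\xi_0}\xi_0\otimes D\varphi$, so $\Phi$ is a local $C^{2,1}$ diffeomorphism in $x$ with inverse $\Psi$ near $(x_0,t_0)$. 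Set $\tilde\psi(y,t):=\psi(\Psi(y,t),t)$. For $(y,t)$ near $(y_0,t_0)$, putting $x:=\Psi(y,t)$ one has $y=\Phi(x,t)\in B_{r(t)\varphi(x)}(x)$, whence
\[
\tilde\psi(y,t)=\psi(x,t)\le u^{h_-}(x,t)\le u(y,t),
\]
with equality at $(y_0,t_0)$. Since $D\tilde\psi(y_0,t_0)=(D_x\Phi)^{-T}D\psi(x_0,t_0)\ne 0$, the pseudo-supersolution property of $u$ yields $\tilde\psi_t(y_0,t_0)\ge\mathscr{F}(D^2\tilde\psi,D\tilde\psi,y_0)$ at $(y_0,t_0)$.

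The remaining task, which is the heart of the proof, is to convert this inequality at $(y_0,t_0)$ into $\psi_t(x_0,t_0)\ge\mathscr{F}(D^2\psi,D\psi,x_0)$. I compute $\tilde\psi_t,D\tilde\psi,D^2\tilde\psi$ at $(y_0,t_0)$ via the chain rule, using $D_y\Psi=(D_x\Phi)^{-1}$ and $\partial_t\Psi=-\frac{r'\varphi}{1+rD\varphi\cdot\xi_0}\xi_0$. The time derivative produces the $r'(t)|D\psi|$ term. The forcing piece $g(y_0)|D\tilde\psi|-g(x_0)|D\psi|$ splits into a Lipschitz contribution of order $L_0 r\varphi|D\psi|$ (using $|y_0-x_0|\le r\varphi$ and \eqref{the hypothesis}) and a deformation contribution of order $M_0|D\varphi|r/(1-r|D\varphi|)\cdot|D\psi|$ coming from the rank-one perturbation in $A^{-T}=(D_x\Phi)^{-T}$. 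Because $\mathscr{F}$ is geometric, the curvature-trace term splits into: (a) a factor $(1-r|D\varphi|)^{-2}$ produced by rescaling through $A^{-1}$, which accounts for the last term in \eqref{the assumption in the inf convolution}; and (b) a lower-order piece coming from $D_y^2\Psi$ hitting $D\psi$ in the second-order chain rule, bounded by $(n+1)\|D^2\varphi\|_\infty r/\varphi\cdot|D\psi|$. Summing all contributions and dividing by $\varphi(x_0)|D\psi(x_0,t_0)|$, the total discrepancy is precisely the left-hand side of \eqref{the assumption in the inf convolution}, which is nonpositive by (ii).

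The main obstacle is the bookkeeping in the second-order step: keeping track of the $\xi_0$-dependent rank-one perturbations as they interact with the projector $I-\widehat{D\psi}\otimes\widehat{D\psi}$ inside the trace, and extracting exactly the constants $(n+1)$, $M_0$, $L_0$, and $(1-r|D\varphi|)^{-2}$ appearing in \eqref{the assumption in the inf convolution}. I expect it to be cleaner to exploit the geometric (level-set) invariance of $\mathscr{F}$ by regarding the computation as a comparison of mean curvatures of the $\psi$-level set and its image under $\Phi$, rather than a brute-force tensor expansion; this converts the estimate into a standard parallel-surface curvature bound with the $D^2\varphi/\varphi$ and $|D\varphi|^2/\varphi^2$ correction terms appearing naturally.
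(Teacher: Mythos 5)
Your outline follows the same architecture as the paper's proof: locate the minimizer $y_0 = x_0 + r(t_0)\varphi(x_0)\xi_0$, pull the test function through the diffeomorphism $\Phi(x,t) = x + r(t)\varphi(x)\xi_0$ to touch $u$ from below at $(y_0,t_0)$, invoke the pseudo-supersolution inequality there, and push it back to $(x_0,t_0)$. So far, so good; this is exactly what the paper does (the paper writes $e_1$ where you write $\xi_0$, which is the same up to a rotation).

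The genuine gap is that the ``heart of the proof,'' as you call it, is left as a claimed-to-work sketch. Two things are missing. First, the paper does not compute $\tilde\psi_t, D\tilde\psi, D^2\tilde\psi$ directly; because $\mathscr{F}$ is geometric it replaces $\psi$ by the signed distance function $d^\psi$ to the zero level set of $\psi$, and works with $d^{\#\phi}(x,t) := d^\psi(\Phi(x,t),t)$. This is not a cosmetic simplification: it is what kills the cross terms. At $y_0$ one has $|Dd^\psi| = 1$, $\partial_{y_1}d^\psi = -1$, $\partial_{y_k}d^\psi = 0$ for $k\neq 1$, and $\partial^2_{y_1 y_k}d^\psi = 0$ for all $k$ (hence $\Delta_\infty d^\psi(y_0) = 0$). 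Without these identities, the raw tensor expansion of $D^2\tilde\psi$ via the chain rule does not visibly reduce to the three error terms in \eqref{the assumption in the inf convolution}; you only mention the signed-distance route at the end as a ``cleaner'' alternative, but it is in fact the needed device. Second, and more seriously, you never use the interior ball property of the inf-convolution (Proposition \ref{the exterior ball condition of the inf convolution}), which is what bounds the tangential Hessian $\partial^2_{y_2y_2}d^\psi(y_0,t_0) \ge -\frac{1}{r(t_0)\varphi(x_0)}$; this curvature bound is exactly the source of the $\frac{|D\varphi|^2 r}{(1-r|D\varphi|)^2\varphi^2}$ term in \eqref{the assumption in the inf convolution}. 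Without it, $\varepsilon_2$ in the paper's notation has no lower bound and the argument stalls. Finally, a small sign that the bookkeeping is not quite under control: your forcing-term error is claimed to be of order $M_0|D\varphi|r/(1 - r|D\varphi|)\cdot |D\psi|$, but the paper's $\varepsilon_3$ carries no $(1 - r|D\varphi|)^{-1}$ factor; that denominator belongs to the $\Delta_\infty$ piece, not to the $g$ piece. So while your route is the right one, the proposal as written asserts rather than establishes that the discrepancy sums to the left side of \eqref{the assumption in the inf convolution}, and it omits the two ingredients (signed distance reduction, interior ball bound) that make that assertion verifiable.
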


\begin{figure}[h]
	\centering
	\includegraphics[width=0.7\linewidth]{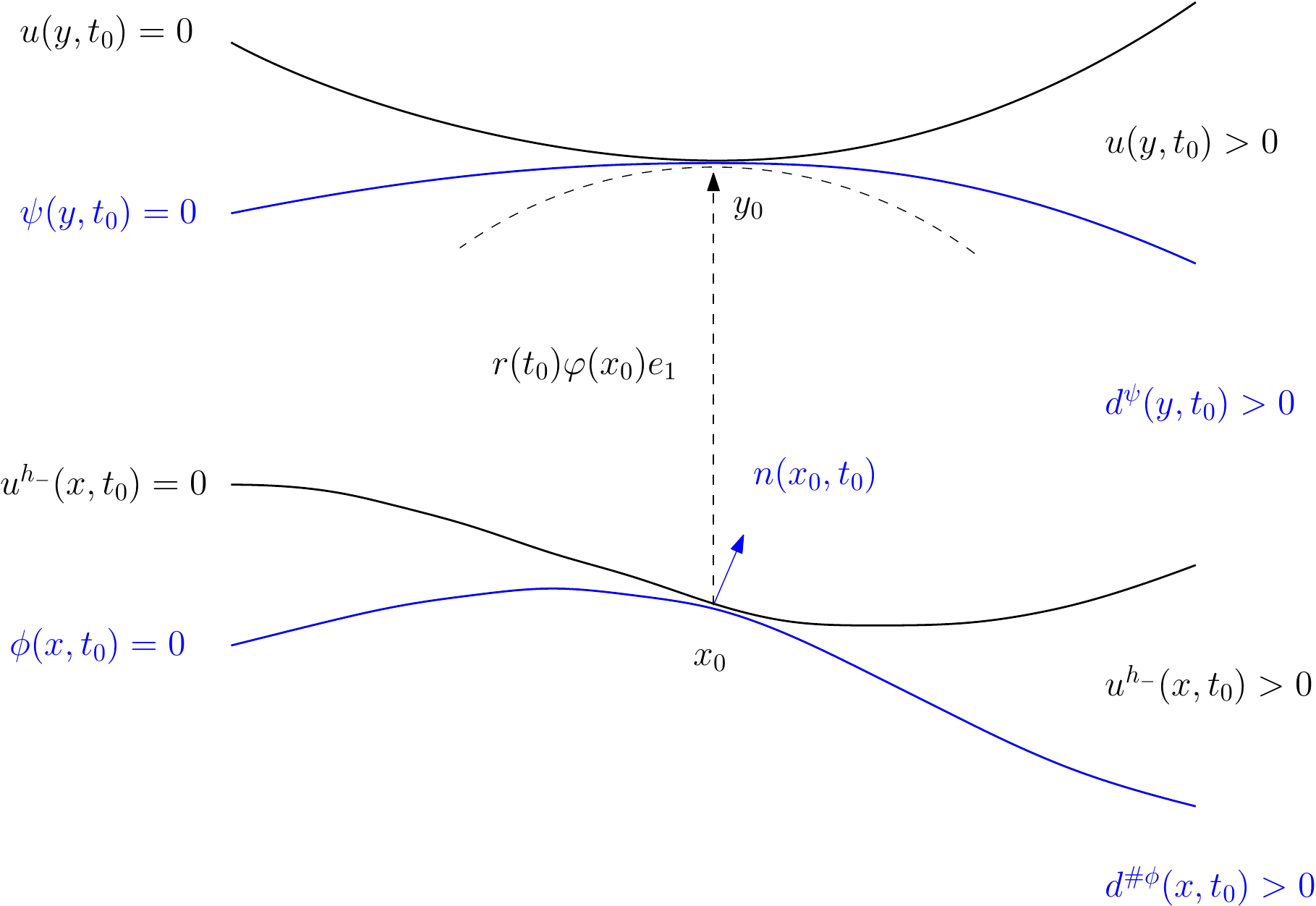}
	\caption{The inf-convolution}
	\label{fig:1}
\end{figure}

\begin{proof}
    Fix $(x_{0},t_{0}) \in \Omega \times (0,T)$, $\phi \in C^{2,1}\left( \Omega \times (0, T)\right) $ with (a) and (b) as below.
    \begin{enumerate}
    	\item [(a)] $|D\phi(x_{0},t_{0})| > 0$;
    	\item [(b)] $u^{h_{-}}(x_{0},t_{0}) - \phi(x_{0},t_{0}) \leq u^{h_{-}}(x,t) - \phi(x,t)$, $(x,t) \in \Omega \times (0,T)$.
    \end{enumerate} 
    Let us assume for simplicity that $u^{h_{-}}(x_{0},t_{0}) = \phi(x_{0},t_{0}) = \mu$, without loss of generality, we can take $\mu = 0$. The case of the general $\mu$ level set can be argued similarly. By the above (a), we have that $x_{0} \in \partial \left\lbrace w \in \RR^{n} \big| w \in \Omega, \hspace{2mm}\phi(w,t_{0}) > 0\right\rbrace $. Similar to the proof of Proposition \ref{the exterior ball condition of the inf convolution}, there exists $y_{0} \in \Omega$, such that 
    \begin{eqnarray*}
    |y_{0} - x_{0}| = r(t_{0})\varphi(x_{0}) &\text{and}& u(y_{0},t_{0}) = 0.
    \end{eqnarray*}
    Denote the orthonormal basis of $\RR^{n}$ by $e_{1}, e_{2}, \cdots, e_{n}$, such that $y_{0} - x_{0} = r(t_{0})\varphi(x_{0})e_{1}$.\\
   \underline{Step 1.} Let us define $\psi(y,t)$, which touches $u(y,t)$ from below at $(y_{0},t_{0})$, by the relation:
   \begin{equation*}
   \phi(x,t) = \psi(x + r(t)\varphi(x)e_{1},t).
   \end{equation*}
   Since $\left| r(t)D\varphi(x)\right| < 1$, we have $\text{det}J \neq 0$, where $J$ is the Jacobian matrix associated to the map $x \mapsto x + r(t)\varphi(x)e_{1}$. i.e.,
   \begin{equation*}
   J = \begin{pmatrix}
   1 + r(t)\varphi_{x_{1}}(x) & r(t)\varphi_{x_{2}}(x) & \cdots & r(t)\varphi_{x_{n}}(x)\\
   0 & 1 & \cdots & 0 \\
   \vdots & \vdots & \ddots &\vdots \\
   0 & 0 & \cdots & 1
   \end{pmatrix}.
   \end{equation*}
   By the inverse function theorem, $\psi(y,t)$ exists in a neighborhood of $(y_{0},t_{0})$\\
   \underline{Step 2.} By the choice of $\psi(y,t)$, let us set the (smooth) zero level set as follows
   \begin{equation*}
   \Gamma^{\psi}(t) := \partial\left\lbrace y \in \Omega \big| \psi(y,t) > 0 \right\rbrace, 
   \end{equation*}
   and we define the signed distance function $d^{\psi}(y,t)$ as below (c.f. \cite{Evans Soner and Souganidis CPAM}).
   \begin{equation*}
   d^{\psi}(y,t) := \begin{cases}
   \dist\left( y, \Gamma^{\psi}(t)\right), & y \in \mathrm{I}_{t}^{\psi} := \left\lbrace z \in \Omega \big| \psi(z,t) > 0\right\rbrace \\
   0, & y \in \Gamma^{\psi}(t) \\
   -\dist\left( y, \Gamma^{\psi}(t)\right), & y \in \mathrm{O}_{t}^{\psi} := \left\lbrace z \in \Omega \big| \psi(z,t) < 0\right\rbrace  
   \end{cases}.
   \end{equation*}
   Then $d^{\psi}$ is well defined and smooth in a neighborhood of $(y_{0},t_{0})$. In general, although $d^{\psi}$ may not touch $u(y,t)$ at $(y_{0},t_{0})$ from below, the zero level set of $d^{\psi}$ indeed touches that of $u(y,t)$ from below. Since $|D\phi(x_{0},t_{0})| > 0$, then $|D\psi(y_{0},t_{0})| > 0$, the viscosity inequality satisfied by $\psi$ is equivalent to that
   \begin{equation*}
   \frac{\psi_{t}}{|D\psi|}(y_{0},t_{0}) \geq - \kappa(y_{0}) + g\left(y_{0} \right),
   \end{equation*}
   where $\kappa(y_{0})$ denotes the mean curvature of the hypersurface $\left\lbrace y\in\Omega\big| u(y,t_{0}) = 0 \right\rbrace$ at $y_{0}$. Since $|D\psi(y_{0},t_{0})| > 0$, this is equivalent to the same inequality with $\psi$ replaced by $d^{\psi}$, i.e.,
  \begin{equation*}
  d^{\psi}_{t}(y_{0},t_{0}) \geq \mathscr{F}\left( D^{2}d^{\psi}(y_{0},t_{0}), Dd^{\psi}(y_{0},t_{0}), y_{0}\right),
  \end{equation*}
   which can be rewritten as
   \begin{equation}\label{the super equation satisfied by the signed distance function}
   d_{t}^{\psi}(y_{0},t_{0}) - \left[ \left( \Delta - \Delta_{\infty}\right)d^{\psi}(y_{0},t_{0}) - g(y_{0})|Dd^{\psi}(y_{0},t_{0})| \right] \geq 0,
   \end{equation}
   where $\Delta$ is the usual Laplace operator and $\Delta_{\infty}$ is defined as follows
   \begin{equation*}
   \Delta_{\infty}d^{\psi} := \frac{1}{|Dd^{\psi}|^{2}}\sum_{1 \leq i,j \leq n} \frac{\partial d^{\psi}}{\partial y_{i}}\cdot \frac{\partial^{2}d^{\psi}}{\partial y_{i}\partial y_{j}}\cdot \frac{\partial d^{\psi}}{\partial y_{j}},
   \end{equation*}
   which is called the (normalized) infinity Laplace operator (c.f. \cite{Crandall Evans and Gariepy, Peres Schramm Sheffield and Wilson}, etc.). It denotes the second derivative of $d^{\psi}$ in the direction of $\frac{Dd^{\psi}}{|Dd^{\psi}|}$.\\
   \underline{Step 3.} Let us introduce the function $d^{\#\phi}(x,t)$ as follows, which is well defined and smooth in a neighborhood of $(x_{0},t_{0})$:
   \begin{equation}\label{the pull back of the signed distance function}
   d^{\#\phi}(x,t) := d^{\psi}\left( x + r(t)\varphi(x)e_{1},t\right). 
   \end{equation}
   It is clear that the zero level set of $d^{\#\phi}$ coincides with that of $\phi$, at least in a neighborhood of $(x_{0},t_{0})$. However, it is not necessary the case that $d^{\#\phi}$ matches the signed distance function $d^{\phi}$, associated to the test function $\phi$. Our aim in the following steps is: by equation (\ref{the super equation satisfied by the signed distance function}) and relation (\ref{the pull back of the signed distance function}), we derive the evolution law of the zero level set of $d^{\#\phi}$ (i.e., the zero level set of $\phi$) at $(x_{0},t_{0})$. From which, we get the viscosity inequality satisfied by the test function $\phi$.\\
  \underline{Step 4.} Since $d^{\psi}(y,t)$ (resp. $d^{\#\phi}(x,t)$) is smooth around $(y_{0},t_{0})$ (resp. $(x_{0},t_{0})$), we are allowed to differentiate it in the classical sense. In fact, we have the following properties (c.f. \cite{Evans Soner and Souganidis CPAM}) regarding the derivatives of $d^{\psi}$.
  \begin{enumerate}
  	\item [(1)] $|Dd^{\psi}(y,t_{0})| = 1$ in a neighborhood of $y_{0}$;
  	\item [(2)] $\frac{\partial d^{\psi}}{\partial y_{1}}(y_{0},t_{0}) = -1$ and $\frac{\partial d^{\psi}}{\partial y_{k}}(y_{0},t_{0}) = 0$, $k \neq 1$;
  	\item [(3)] $\frac{\partial^{2} d^{\psi}}{\partial y_{1}^{2}}(y_{0},t_{0}) = 0$, i.e., $\Delta_{\infty}d^{\psi}(y_{0},t_{0}) = 0$.
  \end{enumerate}
  The above (1) implies that $\left( \frac{\partial}{\partial y_{k}}\left| Dd^{\psi}\right|^{2}\right)(y_{0},t_{0}) = 0$, which is equivalent to that
  \begin{eqnarray*}
  \sum_{i = 1}^{n}\left( \frac{\partial^{2}d^{\psi}}{\partial y_{i}\partial y_{k}}\cdot \frac{\partial d^{\psi}}{\partial y_{i}}\right) (y_{0},t_{0}) = 0, && k = 1, \cdots, n.
  \end{eqnarray*}
  Then by (2), we have for any $k$ that $\frac{\partial^{2}d^{\psi}}{\partial y_{1}\partial y_{k}}(y_{0},t_{0}) = 0$. A direct differentiation in (\ref{the pull back of the signed distance function}) implies that
  \begin{eqnarray*}
  \frac{\partial d^{\#\phi}}{\partial x_{k}}(x_{0},t_{0}) = \frac{\partial d^{\psi}}{\partial y_{k}}(y_{0},t_{0}) + \frac{\partial d^{\psi}}{\partial y_{1}}(y_{0},t_{0})\cdot r(t_{0})\frac{\partial \varphi}{\partial x_{k}}(x_{0}), && 1 \leq k \leq n,
  \end{eqnarray*}
  and then
  \begin{eqnarray*}
  \frac{\partial^{2}d^{\#\phi}}{\partial x_{k}^{2}}(x_{0},t_{0}) &=& \frac{\partial^{2}d^{\psi}}{\partial y_{k}^{2}}(y_{0},t_{0}) + 2\frac{\partial^{2}d^{\psi}}{\partial y_{1}\partial y_{k}}(y_{0},t_{0})\cdot r(t_{0})\frac{\partial \varphi}{\partial x_{k}}(x_{0})\\
  &+& \frac{\partial^{2}d^{\psi}}{\partial y_{1}^{2}}(y_{0},t_{0})\cdot r^{2}(t_{0})\left( \frac{\partial \varphi}{\partial x_{k}}(x_{0})\right)^{2} + \frac{\partial d^{\psi}}{\partial y_{1}}\cdot r(t_{0})\frac{\partial^{2}\varphi}{\partial x_{k}^{2}}(x_{0}). 
  \end{eqnarray*}
  Hence,
  \begin{equation*}
  \Delta d^{\#\phi}(x_{0},t_{0}) = \Delta d^{\psi}(y_{0},t_{0}) - r(t_{0})\Delta\varphi(x_{0}).
  \end{equation*}
  Next, let us explore the evolution law of $d^{\#\phi}(x,t)$ through the following relation.
  \begin{eqnarray*}
  d_{t}^{\#\phi}(x_{0},t_{0}) &=& d_{t}^{\psi}(y_{0},t_{0}) + \frac{\partial d^{\psi}}{\partial y_{1}}(y_{0},t_{0})\cdot \varphi(x_{0})r^{\prime}(t_{0})\\
  &\geq& \Delta d^{\psi}(y_{0},t_{0}) + g(y_{0})|Dd^{\psi}(y_{0},t_{0})| - \varphi(x_{0})r^{\prime}(t_{0})\\
  &=& \left(\Delta d^{\#\phi}(x_{0},t_{0}) + r(t_{0})\Delta\varphi(x_{0})\right) + g(y_{0})|Dd^{\psi}(y_{0},t_{0})| - \varphi(x_{0})r^{\prime}(t_{0}) \\
  &=& \left( \Delta - \Delta_{\infty}\right)d^{\#\phi}(x_{0},t_{0}) + g(x_{0}) |Dd^{\#\phi}(x_{0},t_{0})| \\
  &+& \left( \Delta_{\infty}d^{\#\phi}(x_{0},t_{0}) + r(t_{0})\Delta\phi(x_{0})\right) - \varphi(x_{0})r^{\prime}(t_{0})\\
  &+& g(y_{0})|Dd^{\psi}(y_{0},t_{0})| - g(x_{0})|Dd^{\#\phi}(x_{0},t_{0})| \\
  &=& \left( -\kappa^{\#\phi}(x_{0}) + g(x_{0})\right)|Dd^{\#\phi}(x_{0},t_{0})| + \underbrace{\left( - \varphi(x_{0})r^{\prime}(t_{0})\right)}_{\varepsilon_{1}}\\
  &+& \underbrace{\left( \Delta_{\infty}d^{\#\phi}(x_{0},t_{0}) + r(t_{0})\Delta\varphi(x_{0})\right)}_{\varepsilon_{2}} + \underbrace{g(y_{0}) - g(x_{0})|Dd^{\#\phi}(x_{0},t_{0})|}_{\varepsilon_{3}},
  \end{eqnarray*}
  where $\kappa^{\#\phi}(x_{0})$ is the mean curvature of the set $\left\lbrace x\in\Omega \big| d^{\#\phi}(x,t_{0}) = 0\right\rbrace $ at $x_{0}$.\\
  \underline{Step 5.} Keep the choice of $e_{1}$, let us select $e_{2}$, $\cdots$, $e_{n}$, so that $D\varphi(x_{0}) = \alpha e_{1} + \beta e_{2}$ for two numbers $\alpha$ and $\beta$. Recall the formula of $\Delta_{\infty}d^{\#\phi}(x_{0},t_{0})$ derived in (\ref{the fomula of the infinity laplacian term}) and $\Gamma^{\psi}(t_{0})$ has an interior $r(t_{0})\varphi(x_{0})$ ball condition at $y_{0}$, which implies that
  \begin{equation*}
  \frac{\partial^{2}d}{\partial y_{2}^{2}}(y_{0},t_{0}) \geq - \frac{1}{r(t_{0})\varphi(x_{0})}.
  \end{equation*}
Then, we can estimate the error term $\varepsilon_{2}$ as follows,
  \begin{eqnarray*}
  \varepsilon_{2} &:=& \Delta_{\infty}d^{\#\phi}(x_{0},t_{0}) + r(t_{0})\Delta\varphi(x_{0})\\
  &\geq& \frac{(r(t_{0})\beta)^{2}}{(1 + r(t_{0})\alpha)^{2} + (r(t_{0})\beta)^{2}}\cdot\frac{-1}{r(t_{0})\varphi(x_{0})}  + r(t_{0})\Delta\varphi(x_{0})\\
  &-& \frac{(1 + r(t_{0})\alpha)^{2}r(t_{0})}{(1 + r(t_{0})\alpha)^{2} + \left( r(t_{0})\beta\right)^{2}} \frac{\partial^{2}\varphi}{\partial x_{1}^{2}}(x_{0}) - \frac{2(1 + r(t_{0})\alpha)\left( r(t_{0})\beta\right)r(t_{0})}{(1 + r(t_{0})\alpha)^{2} + \left( r(t_{0})\beta\right)^{2}} \frac{\partial^{2}\varphi}{\partial x_{1}\partial x_{2}}(x_{0})\\
  &-& \frac{\left( r(t_{0})\beta\right)^{2}r(t_{0})}{(1 + r(t_{0})\alpha)^{2} + \left( r(t_{0})\beta\right)^{2} } \frac{\partial^{2}\varphi}{\partial x_{2}^{2}}(x_{0})\\
  &\geq& - \frac{r(t_{0})|D\varphi(x_{0})|^{2}}{\left( 1 - r(t_{0})|D\varphi(x_{0})|\right)^{2}\varphi(x_{0})} - (n + 1)r(t_{0})\lVert D^{2}\varphi\rVert_{\infty}.
  \end{eqnarray*}
And then the term $\varepsilon_{3}$ as below:
\begin{eqnarray*}
\varepsilon_{3} &:=& g(y_{0}) - g(x_{0})|Dd^{\#\phi}(x_{0},t_{0})|\\
&=& \left( g(y_{0}) - g(x_{0})\right) + g(x_{0})\left( 1 - |Dd^{\#\phi}(x_{0},t_{0})|\right) \\
&\geq& - L_0 r(t_{0})\varphi(x_{0}) - M_0 r(t_{0})|D\varphi(x_{0})|,
\end{eqnarray*}
where $L_0$ and $M_0$ are from the hypothesis (\ref{the hypothesis}). The assumption (\ref{the assumption in the inf convolution}) implies that $\varepsilon_{1} + \varepsilon_{2} + \varepsilon_{3} \geq 0$. Therefore,
\begin{equation*}
d_{t}^{\#\phi}(x_{0},t_{0}) \geq \left( -\kappa^{\#\phi}(x_{0}) + g(x_{0})\right)|Dd^{\#\phi}(x_{0},t_{0})|,
\end{equation*}
which is equivalent to (since $|D\phi(x_{0},t_{0})| > 0$) the inequality as follows, as desired:
\begin{equation*}
\phi_{t}(x_{0},t_{0}) \geq \mathscr{F}\left( D^{2}\phi(x_{0},t_{0}), D\phi(x_{0},t_{0}), x_{0}\right).
\end{equation*}
\end{proof}

\section{Local comparison principle}\label{the section of LCP}

\subsection{Discrepancy and the lattice points}\label{the subsection of discrepancy}

In this part, based on the language of discrepancy, we investigate the existence of certain lattice points that are arbitrarily close to a given hyperplane with irrational normal direction $\nu \in \mathbb{S}^{n-1}\diagdown \RR\ZZ^{n}$. Let us first recall some definitions and Lemmas. 

\begin{definition}[\cite{Kuipers and Niederreiter,Choi Kim and Lee Analysis and PDEs,Feldman JMPA}]
	A bounded sequence $\left\lbrace x_{\ell}\right\rbrace_{\ell \geq 1} \subseteq [a,b]$ is said to be equidistributed on the interval $[a,b]$ if for any $[c,d] \subseteq [a,b]$, we have that
	\begin{equation*}
	\lim_{\ell\rightarrow \infty} \frac{\left| \left\lbrace x_{i} \right\rbrace_{i = 1}^{\ell} \cap [c,d]\right| }{\ell} = \frac{d - c}{b - a}.
	\end{equation*}
\end{definition}

\begin{definition}
	A sequence $\left\lbrace x_{\ell}\right\rbrace_{\ell \geq 1}$ is said to be equidistributed modulo $1$ if $\left\lbrace x_{\ell} - \left[x_{\ell}\right] \right\rbrace_{\ell \geq 1} $ is equidistributed in the interval $[0,1]$.
\end{definition}

\begin{lemma}[Weyl's equidistribution theorem]
	Let $x \in \RR \diagdown \QQ$, then $\left\lbrace \ell x \right\rbrace_{\ell \geq 1}$ is equidistributed modulo $1$.
\end{lemma}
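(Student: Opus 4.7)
The plan is to prove this via Weyl's criterion, which states that a sequence $\{y_\ell\}_{\ell \geq 1}$ in $[0,1)$ is equidistributed modulo $1$ if and only if
\[
\lim_{N\to\infty} \frac{1}{N}\sum_{\ell=1}^{N} e^{2\pi i k y_\ell} = 0 \quad \text{for every } k \in \ZZ \setminus \{0\}.
\]
First I would establish this criterion in a lemma. The forward direction is immediate by writing the integral of $e^{2\pi i k y}$ over $[0,1]$ (which is zero for $k \neq 0$) as a limit of Riemann sums along the sequence. The reverse direction uses a three-step approximation: by linearity the criterion extends to trigonometric polynomials, then by the Stone--Weierstrass theorem to continuous $1$-periodic functions, and finally to indicator functions of intervals $[c,d] \subset [0,1]$ by sandwiching them between continuous functions from above and below.

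Given the criterion, the proof reduces to estimating the exponential sum $S_N(k) := \sum_{\ell=1}^{N} e^{2\pi i k \ell x}$ for $k \in \ZZ\setminus \{0\}$. Since $x$ is irrational, $kx \notin \ZZ$, so $e^{2\pi i k x} \neq 1$, and the geometric series formula gives
\[
S_N(k) = e^{2\pi i k x}\cdot \frac{1 - e^{2\pi i k x N}}{1 - e^{2\pi i k x}}, \quad \text{so} \quad |S_N(k)| \leq \frac{2}{|1 - e^{2\pi i k x}|}.
\]
The right-hand side is a finite constant depending only on $k$ (and $x$), so $\frac{1}{N}|S_N(k)| \to 0$ as $N \to \infty$. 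By the Weyl criterion, $\{\ell x - [\ell x]\}_{\ell \geq 1}$ is equidistributed in $[0,1]$, which is exactly the statement that $\{\ell x\}_{\ell \geq 1}$ is equidistributed modulo $1$.

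The only nontrivial step is the Weyl criterion itself, in particular the approximation argument for indicator functions of subintervals; this is a standard and essentially the only delicate point, but it is a textbook result (see e.g. \cite{Kuipers and Niederreiter}). Since the lemma is stated as a citation, I would simply invoke the criterion and present the geometric sum computation above as the substance of the proof.
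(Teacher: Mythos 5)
The paper does not prove this lemma; it is presented as a classical result and implicitly deferred to the cited references (Kuipers--Niederreiter in particular). Your proposed proof via Weyl's criterion and the geometric-sum estimate $|S_N(k)| \le 2/|1-e^{2\pi i k x}|$ is the standard, correct argument and fills the gap the paper leaves to the literature.
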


\begin{definition}[\cite{Kuipers and Niederreiter,Choi Kim and Lee Analysis and PDEs,Feldman JMPA}]
	Let $\left\lbrace x_{n}\right\rbrace_{n \geq 1}$ be a sequence in $\RR$. For a subset $E \subseteq [0,1]$, let $A(E;N)$ denote the number of points $\left\lbrace x_{\ell}\right\rbrace_{\ell = 1}^{N}$ that lie in $E$. 
	\begin{enumerate}
		\item [(a)] The sequence $\left\lbrace x_{\ell}\right\rbrace_{\ell \geq 1}$ is said to be uniformly distributed modulo $1$ in $\RR$ if
		\begin{equation*}
		\lim_{N \rightarrow \infty}\frac{A(E;N)}{N} = \mathcal{L}(E),
		\end{equation*}
		for all $E = [a, b) \subseteq [0,1]$. Here $\mathcal{L}$ denotes the Lebesgue measure in $\RR$.
		\item [(b)] For $x \in [0, 1]$, we define the discrepancy
		\begin{equation*}
		D_{N}(x) := \sup_{E = [a, b) \subseteq [0,1]} \left|\frac{A(E;N)}{N} - \mathcal{L}(E) \right|, 
		\end{equation*}
		where $A(E;N)$ is defined with the sequence $\left\lbrace \ell x\right\rbrace_{\ell \geq 1}$ modulo $1$.
		\item [(c)] For $x \in [0,1]$, we define the modified discrepancy 
		\begin{equation*}
		D_{N}^{*}(x) := \sup_{0 < a \leq 1}\left| \frac{A([0,a); N)}{N} - a \right|,
		\end{equation*}
		where $A(E;N)$ is defined with the sequence $\left\lbrace \ell x\right\rbrace_{\ell \geq 1}$ modulo $1$.
	\end{enumerate}
\end{definition}

\begin{lemma}[\cite{Kuipers and Niederreiter,Feldman JMPA}]
	Let us list some properties of the discrepancies as follows.
	\begin{enumerate}
		\item [(i)] The discrepancies $D_{N}$ and $D_{N}^{*}$ are equivalent up to constants:
		\begin{equation*}
		D_{N}^{*} \leq D_{N} \leq 2D_{N}^{*}.
		\end{equation*}
		\item [(ii)] Let $x_{1} \leq x_{2} \leq \cdots \leq x_{N}$ be in $[0, 1)$, then
		\begin{equation*}
		D_{N}^{*} = \frac{1}{2N} + \max_{i = 1, \cdots, N}\left| x_{i} - \frac{2i - 1}{2N}\right|.
		\end{equation*}
		\item [(iii)] Fix any $x \in \RR\diagdown \QQ$, the modified discrepancy function $D_{N}^{*}(\cdot)$ is continuous in a neighborhood of $x$.
	\end{enumerate}
\end{lemma}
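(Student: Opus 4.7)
The proof proposal is to handle the three items independently, using the standard machinery of one-dimensional discrepancy theory, with items (i) and (ii) being purely combinatorial and (iii) following from (ii) by a continuity-of-sorting argument valid at irrational points.

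For (i), the inequality $D_N^* \leq D_N$ is immediate, since every interval $[0,a)$ with $0 < a \leq 1$ is allowed in the supremum defining $D_N$. For the upper bound $D_N \leq 2 D_N^*$, I would write any half-open interval $[a,b) \subseteq [0,1]$ as the set difference $[0,b) \setminus [0,a)$, so that $A([a,b);N) = A([0,b);N) - A([0,a);N)$ and $\mathcal{L}([a,b)) = b - a$. The triangle inequality then gives
\[
\left| \tfrac{A([a,b);N)}{N} - (b-a) \right| \leq \left| \tfrac{A([0,b);N)}{N} - b \right| + \left| \tfrac{A([0,a);N)}{N} - a \right| \leq 2 D_N^*,
\]
and taking the supremum over $[a,b)$ finishes the argument.

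For (ii), with the ordered sample $x_1 \leq \cdots \leq x_N$ in $[0,1)$, the counting function $a \mapsto A([0,a);N)$ is a left-continuous step function which jumps from $i-1$ to $i$ at $a = x_i$. On any interval between consecutive $x_i$'s the deviation $a \mapsto A([0,a);N)/N - a$ is affine with slope $-1$. Consequently the supremum of the absolute deviation is attained only at one-sided limits at the $x_i$ (and at the endpoints $0,1$), and splitting into the two signs yields
\[
D_N^* \;=\; \max_{1 \leq i \leq N}\,\max\!\left\{\, \tfrac{i}{N} - x_i,\ \ x_i - \tfrac{i-1}{N}\,\right\}.
\]
A short calculation, pulling out the midpoint $(2i-1)/(2N)$, rewrites the inner maximum as $\frac{1}{2N} + \bigl| x_i - \frac{2i-1}{2N}\bigr|$, giving the claimed formula. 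The main obstacle in the whole argument lives here: one must be careful with the half-open convention so that the left and right one-sided limits at each $x_i$ produce precisely the two expressions $i/N - x_i$ and $x_i - (i-1)/N$, and one must verify that the boundary contributions at $0$ and $1$ are dominated by these.

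For (iii), I would apply (ii) to the sequence $\{\ell x\}_{\ell=1}^{N}$ reduced modulo one. Since $x \in \RR \setminus \QQ$, none of the points $\ell x \pmod{1}$ equals $0$ for $\ell = 1, \ldots, N$, and they are pairwise distinct. Therefore for $y$ in a small enough neighborhood of $x$ the map $y \mapsto (\{\ell y\})_{\ell=1}^{N}$ is continuous componentwise and preserves the strict ordering of these $N$ points within $[0,1)$. Hence the sorted values $x_1(y) \leq \cdots \leq x_N(y)$ depend continuously on $y$, and inserting them into the closed-form expression from (ii) exhibits $D_N^*$ as a continuous function of $y$ near $x$. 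Note that irrationality is essential: at a rational $x$ some $\ell x$ may be an integer and a perturbation of $y$ can send the corresponding fractional part across $0$, destroying continuity of the sorted configuration.
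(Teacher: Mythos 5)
The paper does not prove this lemma; it cites it to Kuipers--Niederreiter and Feldman as a classical fact, so there is no in-paper argument to compare against. Your reconstruction is the standard textbook proof (it is essentially Theorems~1.3 and~1.4 of Kuipers--Niederreiter, Chapter~2) and all three parts are correct: (i) via the subfamily inclusion and the set-difference/triangle-inequality split of $[a,b)=[0,b)\setminus[0,a)$; (ii) via the piecewise-affine structure of $a\mapsto A([0,a);N)/N-a$ with slope $-1$, the one-sided extrema $i/N-x_i$ and $x_i-(i-1)/N$, and the identity $\max\{a,b\}=\tfrac{a+b}{2}+\tfrac{|a-b|}{2}$ to pull out the midpoint $(2i-1)/(2N)$; (iii) via the facts that for $x$ irrational and fixed $N$ the points $\{\ell x\}$, $\ell=1,\dots,N$, are distinct and bounded away from $0$ and $1$, so $y\mapsto(\{\ell y\})_{\ell=1}^N$ is continuous near $x$, and composing with the (continuous) order-statistics map and the closed form from (ii) gives continuity of $D_N^*$. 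One small remark on (iii): you do not actually need the strict ordering to be preserved near $x$ --- the order-statistics map $(a_1,\dots,a_N)\mapsto(a_{(1)},\dots,a_{(N)})$ is continuous even when crossings occur --- though it is also true that a small enough neighborhood does preserve the ordering, so your argument as stated is fine.
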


\begin{definition}[\cite{Feldman JMPA}]\label{the definition of disrepancy for an irrational vector}
	Fix any $\nu = (\nu_{1}, \cdots, \nu_{n}) \in \mathbb{S}^{n-1}$, let $m_{j}(\nu)$, $1 \leq j \leq n$, be numbers such that $m_{j}(\nu)|\nu|_{\ell^{\infty}} = |\nu_{j}|$, where $|\nu|_{\ell^{\infty}} := \max_{1 \leq i \leq n}|\nu_{i}|$. Let us define the number $\omega_{\nu}(N)$ as follows:
	\begin{eqnarray*}
		\omega_{\nu}(N) := 2\min_{1 \leq j \leq n} D_{N}^{*}(m_{j}(\nu)), && N > 1.
	\end{eqnarray*}
	Let us denote by $H_{\nu}$ the hyperplane with $\nu \in \mathbb{S}^{n-1}$ as its normal direction. i.e.,
	\begin{equation*}
	H_{\nu} := \left\lbrace x \in \RR^{n} \big | x \cdot \nu = 0\right\rbrace.
	\end{equation*}
\end{definition}

\begin{lemma}[\cite{Kuipers and Niederreiter}]\label{properties regarding an index function of directions}
	Fix $\nu \in \mathbb{S}^{n-1}$, then
	\begin{enumerate}
		\item [(i)] If $\nu \in \mathbb{S}^{n-1}\cap\RR\ZZ^{n}$, then $\omega_{\nu}(\cdot) \in (0,1)$ has a positive lower bound;
		\item [(ii)] If $\nu \in \mathbb{S}^{n-1}\diagdown \RR\ZZ^{n}$, then $\lim\limits_{N\rightarrow\infty}\omega_{\nu}(N) = 0$.
	\end{enumerate} 
\end{lemma}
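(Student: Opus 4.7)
}

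The plan is to use the interplay between the rationality of $\nu$ and the rationality of the normalized components $m_j(\nu)=|\nu_j|/|\nu|_{\ell^\infty}$, together with the classical Weyl equidistribution theorem, to handle each case of the dichotomy.

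The first step is to establish the following arithmetic dichotomy: $\nu\in\mathbb{S}^{n-1}\cap\RR\ZZ^n$ if and only if every $m_j(\nu)$ is rational. Indeed, let $k$ be an index achieving the maximum $|\nu_k|=|\nu|_{\ell^\infty}$, so that $m_k(\nu)=1$. If $\nu\in\RR\ZZ^n$, then $\nu=\lambda z$ for some $z\in\ZZ^n$, and each $m_j(\nu)=|z_j|/|z_k|\in\QQ$. Conversely, if every $m_j(\nu)\in\QQ$, writing $\nu_j=\mathrm{sgn}(\nu_j)\,m_j(\nu)\,|\nu_k|$ exhibits $\nu$ as $|\nu_k|$ times a rational vector, hence (after clearing denominators) a scalar multiple of an integer vector.

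For part (i), assume $\nu\in\RR\ZZ^n$, so each $m_j(\nu)=p_j/q_j\in\QQ\cap[0,1]$ in lowest terms. The sequence $\{\ell\, m_j(\nu)\}_{\ell\ge 1}$ modulo $1$ takes only the $q_j$ values $\{0,1/q_j,\ldots,(q_j-1)/q_j\}$, in a periodic pattern of period $q_j$. Choosing the test interval $[0,a)$ with $a:=1/(2q_j)$, which contains only the value $0$, one has $A([0,a);N)=\lfloor N/q_j\rfloor$. A direct computation shows
\begin{equation*}
\left|\frac{A([0,a);N)}{N}-a\right|=\left|\frac{\lfloor N/q_j\rfloor}{N}-\frac{1}{2q_j}\right|\ge \frac{1}{2q_j}-\epsilon(N,q_j),
\end{equation*}
and by also testing the interval $[0,1/q_j+\epsilon)$ for those ``bad'' $N$ where the above is small, one concludes that $D_N^\ast(m_j(\nu))\ge c(q_j)>0$ for every $N\ge 1$. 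Taking the minimum over the finitely many $j$ gives $\omega_\nu(N)\ge 2\min_j c(q_j)>0$, and the trivial upper bound $D_N^\ast\le 1$ places $\omega_\nu(N)$ in $(0,1)$ (up to a factor which can be absorbed into the statement).

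For part (ii), if $\nu\notin\RR\ZZ^n$, then by the dichotomy above there exists some index $j_0$ with $m_{j_0}(\nu)\in\RR\setminus\QQ$. By Weyl's equidistribution theorem, the sequence $\{\ell\,m_{j_0}(\nu)\}_{\ell\ge 1}$ is equidistributed modulo $1$, which is precisely the statement $D_N^\ast(m_{j_0}(\nu))\to 0$ as $N\to\infty$. Since $\omega_\nu(N)=2\min_{1\le j\le n}D_N^\ast(m_j(\nu))\le 2 D_N^\ast(m_{j_0}(\nu))$, it follows that $\omega_\nu(N)\to 0$, completing the proof.

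The only genuinely technical point is the uniform-in-$N$ lower bound $D_N^\ast(p/q)\ge c(q)>0$ in part (i); once one allows the test interval $a$ to depend on $N$ (choosing $a$ of the form $k/q+\epsilon$ for the appropriate $k$), this reduces to an elementary counting argument on periodic orbits and is a standard fact from \cite{Kuipers and Niederreiter}. The rest of the argument is a straightforward packaging of Weyl's theorem together with the linear algebra identifying rationality of $\nu$ with that of the vector $(m_1(\nu),\ldots,m_n(\nu))$.
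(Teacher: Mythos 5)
The paper states this lemma as a citation to Kuipers--Niederreiter and gives no proof of its own, so there is no in-paper argument to compare against; I evaluate your proof on its merits. Your overall plan --- the rational/irrational dichotomy for the normalized components $m_j(\nu)$, Weyl's equidistribution theorem for (ii), and periodic-orbit counting for (i) --- is the right one, and part (ii) is complete and correct as written.

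Part (i) has a gap that you half-acknowledge but do not close. Testing only the interval $[0,1/(2q_j))$ does not yield a lower bound uniform in $N$: when $\lfloor N/q_j\rfloor/N$ is close to $1/(2q_j)$ (which happens for $N$ near $2q_j$, and periodically thereafter), the quantity you estimate can vanish, and your sentence about also testing $[0,1/q_j+\epsilon)$ for "those bad $N$" is a plan, not an argument. A clean way to finish: since $\gcd(p_j,q_j)=1$, the orbit $\{\ell m_j(\nu)\}\pmod 1$ takes values only in $\{0,1/q_j,\ldots,(q_j-1)/q_j\}$ and so never enters the open interval $(0,1/q_j)$. Taking $E:=[\,1/(2q_j),\,1/q_j\,)$ gives $A(E;N)=0$ while $\mathcal{L}(E)=1/(2q_j)$, hence $D_N(m_j(\nu))\ge 1/(2q_j)$ for every $N\ge 1$; the inequality $D_N\le 2D_N^*$ recorded in the paper then gives $D_N^*(m_j(\nu))\ge 1/(4q_j)$ uniformly in $N$. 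The degenerate case $q_j=1$ (i.e.\ $m_j\in\{0,1\}$, constant orbit) gives $D_N^*\to 1$ and is trivial. Minimizing over the finitely many $j$ produces the claimed positive lower bound on $\omega_\nu$. Incidentally, the upper bound $\omega_\nu(\cdot)\in(0,1)$ as literally written requires that at least one $D_N^*(m_j)$ fall below $1/2$, which can fail for symmetric rational directions such as $\nu=(1,1)/\sqrt 2$; as you note, this is a normalization quirk of the statement rather than a defect of your argument.
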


\begin{proposition}\label{a lattice point that is close to a hyperplane}
	For any $\nu \in \mathbb{S}^{n-1}\diagdown \RR\ZZ^{n}$ and $0 < \delta < 1$, there exists $R_{0}(\nu,\delta) > 0$, such that the following statement holds: fix any $R \geq R_{0}(\nu,\delta)$ and $x_{0} \in R\mathbb{S}^{n-1}\cap H_{\nu}$, there exists $z_{0} \in \RR^{n}$, such that (i)-(iii) as follows are satisfied.
	\begin{equation*}
	(i)\hspace{1mm} \frac{\delta}{3} < z_{0}\cdot\nu < \delta; \hspace{4mm} (ii)\hspace{1mm} |z_{0} - 2 x_{0}| < \frac{R}{3}; \hspace{4mm} (iii) \hspace{1mm} z_{0} - x_{0} \in \ZZ^{n}.
	\end{equation*}
\end{proposition}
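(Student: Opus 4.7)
The plan is to reduce the problem to finding a lattice vector $\Delta z \in \ZZ^{n}$ close to $x_{0}$ whose $\nu$-projection lies in $(\delta/3,\delta)$; then $z_{0} := x_{0}+\Delta z$ satisfies (i) and (iii) automatically (using $x_{0}\cdot\nu=0$), and (ii) becomes $|\Delta z-x_{0}|<R/3$. Let $y_{0}\in\ZZ^{n}$ be the nearest lattice point to $x_{0}$, so $|y_{0}-x_{0}|\leq\sqrt{n}/2$ and in particular $|y_{0}\cdot\nu|\leq\sqrt{n}/2$. It then suffices to correct $y_{0}$ by an integer multiple $k_{0}v$ of a small lattice vector $v\in\ZZ^{n}$ whose $\nu$-projection is small and positive, with both $|v|$ and $|k_{0}|$ bounded by a constant depending only on $\nu,\delta$; taking $R_{0}(\nu,\delta)$ three times this bound completes the argument.

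The key step is constructing $v$. Since $\nu\notin\RR\ZZ^{n}$, at least one ratio $m_{j}(\nu)=|\nu_{j}|/|\nu|_{\ell^{\infty}}$ must be irrational (otherwise a common denominator would exhibit $\nu$ as a rational multiple of an integer vector). For such $j$, Weyl's theorem combined with Lemma \ref{properties regarding an index function of directions}(ii) gives $D_{N}^{*}(m_{j}(\nu))\to 0$. Pick $N=N(\nu,\delta)$ with $D_{N}^{*}(m_{j}(\nu))<\delta/(20|\nu|_{\ell^{\infty}})$; the definition of $D_{N}^{*}$ then furnishes $k^{*}\in\{1,\ldots,N\}$ with $\{k^{*}m_{j}(\nu)\bmod 1\}\in(0,\delta/(10|\nu|_{\ell^{\infty}}))$ (strict positivity comes from irrationality of $m_{j}(\nu)$). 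Call this fractional part $\xi$ and write $k^{*}m_{j}(\nu)=M+\xi$ with $M\in\ZZ$. Let $j^{*}$ realize $|\nu_{j^{*}}|=|\nu|_{\ell^{\infty}}$, and choose signs $\sigma_{1},\sigma_{2}\in\{\pm 1\}$ so that $\sigma_{1}\nu_{j}>0$ and $\sigma_{2}\nu_{j^{*}}>0$. Then $v:=\sigma_{1}k^{*}e_{j}-\sigma_{2}Me_{j^{*}}\in\ZZ^{n}$ satisfies $v\cdot\nu=|\nu|_{\ell^{\infty}}\xi\in(0,\delta/10)$ together with $|v|\leq 2N=:K(\nu,\delta)$.

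The adjustment step is pigeonholing. Write $\eta:=v\cdot\nu\in(0,\delta/10)$; the target interval $(\delta/3-y_{0}\cdot\nu,\;\delta-y_{0}\cdot\nu)$ has length $2\delta/3>\eta$, so some integer $k_{0}$ places $k_{0}\eta$ inside it, with $|k_{0}|\leq(\sqrt{n}/2+\delta)/\eta$. Then $\Delta z:=y_{0}+k_{0}v\in\ZZ^{n}$ satisfies $\Delta z\cdot\nu=y_{0}\cdot\nu+k_{0}\eta\in(\delta/3,\delta)$ and
\[
|\Delta z-x_{0}|\leq|y_{0}-x_{0}|+|k_{0}|\,|v|\leq\tfrac{\sqrt{n}}{2}+\frac{(\sqrt{n}/2+\delta)K(\nu,\delta)}{\eta}=:C(\nu,\delta).
\]
Setting $R_{0}(\nu,\delta):=3C(\nu,\delta)$ and $z_{0}:=x_{0}+\Delta z$ then yields all three conclusions for any $R\geq R_{0}$ and any $x_{0}\in R\mathbb{S}^{n-1}\cap H_{\nu}$.

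The main obstacle is extracting, from the merely asymptotic statement $D_{N}^{*}(m_{j}(\nu))\to 0$, a lattice vector $v$ enjoying both a \emph{definite positive sign} on $v\cdot\nu$ and a quantitative a-priori bound $|v|\leq K(\nu,\delta)$. Keeping track of the signs $\sigma_{1},\sigma_{2}$ and of the integer rounding $M=\lfloor k^{*}m_{j}(\nu)\rfloor$ is essential so that $v\cdot\nu$ lands in $(0,\delta/10)$ rather than merely $(-\delta/10,\delta/10)$; once $v$ is in hand, the nearest-lattice-point approximation and the one-dimensional pigeonhole are elementary.
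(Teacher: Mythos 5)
Your proof is correct, and although it rests on the same key ingredient as the paper's --- Weyl equidistribution (quantified via $D_{N}^{*}\to 0$) of $\{k\,m_{j}(\nu)\}$ for an irrational ratio $m_{j}(\nu)$ --- the organization is genuinely different. The paper works directly modulo $\ZZ^{n}$: writing $-x_{0}\equiv s\pmod{\ZZ^{n}}$, it decomposes $s$ into $H_{\nu}$- and $\nu$-components and uses the discrepancy bound to select a single $k_{0}\le N$ whose fractional part $\{k_{0}m_{j}(\nu)\}$ lands in a prescribed window determined by $\{\frac{s\cdot\nu}{\nu_{n}}\}$; this forces a three-way case analysis on where that quantity sits mod $1$, and $z_{0}$ is assembled by adding a carefully cancelled non-integer vertical correction $\flat\, e_{n}$. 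You instead factor the problem into two independent steps: (a) construct a \emph{fixed} lattice step $v=\sigma_{1}k^{*}e_{j}-\sigma_{2}Me_{j^{*}}\in\ZZ^{n}$, depending only on $\nu,\delta$, with $0<v\cdot\nu<\delta/10$ and $|v|\le 2N$ --- the discrepancy estimate is used exactly once, to get $\xi=\{k^{*}m_{j}(\nu)\}$ small and positive; (b) take the nearest lattice point $y_{0}$ to $x_{0}$ and pigeonhole an integer multiple $k_{0}v$ into the one-dimensional target interval $(\delta/3-y_{0}\cdot\nu,\,\delta-y_{0}\cdot\nu)$ of length $2\delta/3>\eta$. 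This decoupling avoids the paper's case split, makes explicit that $v$ is independent of $x_{0}$ (only the trivial step (b) involves $x_{0}$, through $y_{0}$), and keeps the bounds elementary: $|\Delta z-x_{0}|\le \sqrt{n}/2+|k_{0}|\,|v|$ with $|k_{0}|\le(\sqrt{n}/2+\delta)/\eta$. Two minor points worth recording explicitly: the reduction ``some $m_{j}(\nu)$ is irrational'' is exactly what $\nu\notin\RR\ZZ^{n}$ gives, as you note; and in the corner case $\delta/(10|\nu|_{\ell^{\infty}})>1$ (possible only for $n>100$) the discrepancy inequality is vacuous, but then $|\nu|_{\ell^{\infty}}<\delta/10$ forces $\eta=|\nu|_{\ell^{\infty}}\xi<\delta/10$ for \emph{any} $k^{*}$, so your construction covers it automatically.
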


\begin{figure}[h]
\centering
\includegraphics[width=0.7\linewidth]{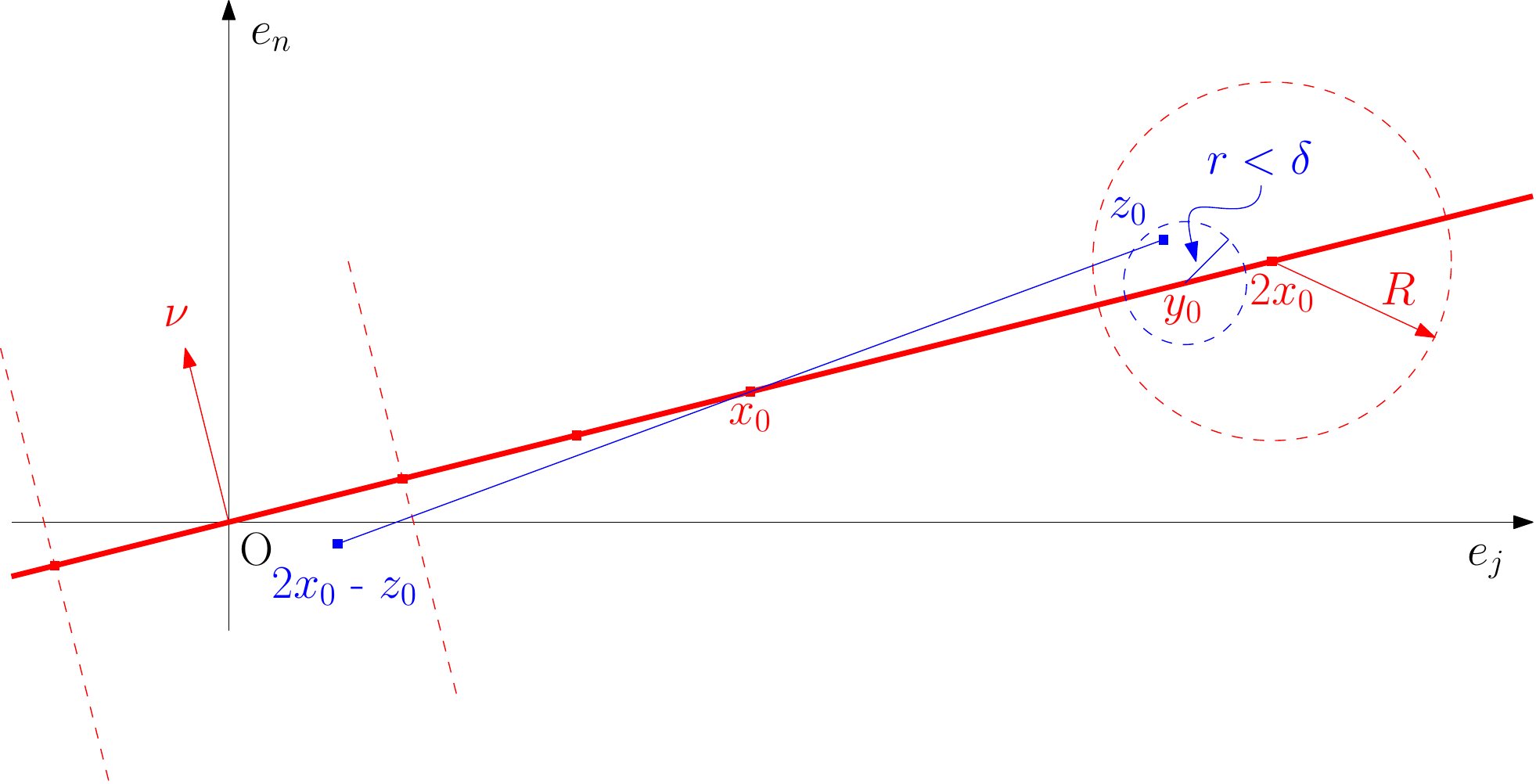}
\caption{A lattice point that is close to $H_{\nu}$}
\label{fig:2}
\end{figure}

\begin{proof}
	Since $\nu$ is an irrational direction, by Lemma \ref{properties regarding an index function of directions}, there exists a positive integer $N$ such that $0 < \omega_{\nu}(N) < \frac{\delta}{3|\nu|_{\ell^{\infty}}}$. Let us assume without loss of generality that 
	\begin{eqnarray*}
		\nu_{n} = |\nu|_{\ell^{\infty}} \hspace{4mm} \text{and} \hspace{4mm} \omega_{\nu}(N) = 2D_{N}^{*}(m_{j}(\nu)), \hspace{4mm} \text{where} \hspace{4mm} m_{j}(\nu) \in \RR\diagdown \QQ.
	\end{eqnarray*}
	Pick $s \in [0,1)^{n}$ such that $- x_{0} = s \mod \ZZ^{n}$, then
	\begin{eqnarray*}
		s = \left( s - \frac{s\cdot\nu}{\nu_{n}} e_{n}\right) + \frac{s\cdot\nu}{\nu_{n}} e_{n}, &\text{where}& \left( s - \frac{s\cdot\nu}{\nu_{n}} e_{n}\right) \in H_{\nu}.
	\end{eqnarray*}
	Let us only consider the case $|\nu_{j}| = -\nu_{j}$ since the other case can be analyzed similarly. Then the sequence of vectors $k\left( m_{j}(\nu)e_{n} + e_{j}\right)$, $1 \leq k \leq N$ lie on $H_{\nu}$. Because $0 < D_{N}(m_{j}(\nu)) \leq \omega_{\nu}(N) < \frac{\delta}{3|\nu|_{\ell^{\infty}}}$, there exists $1 \leq k_{0} \leq N$, such that if $ \frac{2\delta}{3|\nu|_{\ell^{\infty}}} \leq \frac{s\cdot\nu}{\nu_{n}} - \left[\frac{s\cdot\nu}{\nu_{n}}\right]$, then take $k_{0}m_{j}(\nu) - \left[k_{0}m_{j}(\nu)\right] \in \left(\frac{s\cdot\nu}{\nu_{n}} - \left[\frac{s\cdot\nu}{\nu_{n}}\right] - \frac{2\delta}{3|\nu|_{\ell^{\infty}}}, \frac{s\cdot\nu}{\nu_{n}} - \left[\frac{s\cdot\nu}{\nu_{n}}\right] - \frac{\delta}{3|\nu|_{\ell^{\infty}}}\right)$,
	\begin{equation*}
	\frac{\delta}{3|\nu|_{\ell^{\infty}}} < \flat := \left( \frac{s\cdot\nu}{\nu_{n}} - \left[\frac{s\cdot\nu}{\nu_{n}}\right] \right)  - \left( k_{0}m_{j}(\nu) - \left[k_{0}m_{j}(\nu)\right]  \right) \leq 2\omega_{\nu}(N) < \frac{\delta}{|\nu|_{\ell^{\infty}}}.
	\end{equation*}
    If $\frac{\delta}{3|\nu|_{\ell^{\infty}}} \leq \frac{s\cdot\nu}{\nu_{n}} - \left[\frac{s\cdot\nu}{\nu_{n}}\right] < \frac{2\delta}{3|\nu|_{\ell^{\infty}}}$, then take $k_{0}m_{j}(\nu) - \left[k_{0}m_{j}(\nu)\right] \in \left(1 - \frac{\delta}{3|\nu|_{\ell^{\infty}}}, 1\right)$,
    \begin{equation*}
    \frac{\delta}{3|\nu|_{\ell^{\infty}}} < \flat := \left( 1 + \frac{s\cdot\nu}{\nu_{n}} - \left[\frac{s\cdot\nu}{\nu_{n}}\right] \right)  - \left( k_{0}m_{j}(\nu) - \left[k_{0}m_{j}(\nu)\right]  \right) < 3\omega_{\nu}(N) < \frac{\delta}{|\nu|_{\ell^{\infty}}}.
    \end{equation*}
    And if $0 \leq \frac{s\cdot\nu}{\nu_{n}} - \left[\frac{s\cdot\nu}{\nu_{n}}\right] < \frac{\delta}{3|\nu|_{\ell^{\infty}}}$, then take $k_{0}m_{j}(\nu) - \left[k_{0}m_{j}(\nu)\right] \in \left(1 - \frac{2\delta}{3|\nu|_{\ell^{\infty}}}, 1 - \frac{\delta}{3|\nu|_{\ell^{\infty}}}\right)$,
    \begin{equation*}
    \frac{\delta}{3|\nu|_{\ell^{\infty}}} < \flat := \left( 1 + \frac{s\cdot\nu}{\nu_{n}} - \left[\frac{s\cdot\nu}{\nu_{n}}\right] \right)  - \left( k_{0}m_{j}(\nu) - \left[k_{0}m_{j}(\nu)\right]  \right) < 3\omega_{\nu}(N) < \frac{\delta}{|\nu|_{\ell^{\infty}}}.
    \end{equation*}
	Let us denote $y_{0}$ and $z_{0}$ as follows to guarantee (i) and (ii).
	\begin{eqnarray*}
		y_{0} := 2x_{0} + k_{0}\left( m_{j}(\nu)e_{n} + e_{j}\right) + \left( s - \frac{s\cdot\nu}{\nu_{n}} e_{n}\right) \in H_{\nu} &\text{and}&
		z_{0} := y_{0} + \flat e_{n}. 
	\end{eqnarray*}
	Finally, let us estimate $|z_{0} - 2x_{0}|$,
	\begin{eqnarray*}
		|z_{0} - 2x_{0}| \leq |z_{0} - y_{0}| + |y_{0} - 2x_{0}| < 2 + 2N + \left(1 + \sqrt{n} \right) = 2N + \sqrt{n} + 3.
	\end{eqnarray*}
	Hence, let us set $R_{0}(\nu,\delta) := 6N + 3\sqrt{n} + 9$, where $N$ is picked such that $0 < \omega_{\nu}(N) < \frac{\delta}{3|\nu|_{\ell^{\infty}}}$. And (iii) follows immediately. 
\end{proof}

\begin{remark}
	The above arguments are motivated by similar proofs in \cite{Choi Kim and Lee Analysis and PDEs, Feldman JMPA}.
\end{remark}

\begin{remark}
	Since $D_{N}^{*}(\cdot)$ is continuous in a neighborhood of any $x \in \RR\diagdown \QQ$, $\omega_{\cdot}(N)$ is also continuous in an $\mathbb{S}^{n-1}$-neighborhood of any $\nu \in \mathbb{S}^{n-1}\diagdown \RR\ZZ^{n}$. Therefore, the point $z_{0} = z_{0}(\nu)$, which is characterized as above, depends on $\nu$ continuously in a neighborhood of $\nu$.
\end{remark}

\subsection{The local comparison principle}\label{the subsection of LCP}

\begin{definition}\label{comparison consistent triplet}
	The triplet $(\nu, T, R) \in \mathbb{S}^{n-1} \times (0, \infty) \times \left( \frac{\sqrt{n}}{2}, \infty\right)$ is called \textit{comparison consistent} if the following property holds: there exists $0 < \delta < \delta(T)$, such that for any $x_{0} \in \partial\Upomega(0, R; \nu)$ (see (\ref{thin cylinder pointing to a specific direction})), we can find $z_{0}$, such that the following (i)-(iii) hold:
	\begin{equation*}
		(i) \hspace{1mm} \frac{\delta}{3} < \left( z_{0} - x_{0}\right) \cdot\nu < \delta; \hspace{2mm} (ii) \hspace{1mm} |z_{0} - 2x_{0}| < \frac{R}{3}; \hspace{2mm} (iii) \hspace{1mm} z_{0} - x_{0} \in \ZZ^{n},
	\end{equation*}
	here $\delta(T)$ is defined as follows.
	\begin{eqnarray*}
		\delta(T) := \frac{M_0 m_0}{M_0 - m_0}\cdot \frac{\gamma^{2}(T)}{\left( \sqrt{M_0\gamma(T) + (n - 1)} + \sqrt{n - 1}\right)^{2}}, &\text{with}& \gamma(t) := \frac{1}{2}e^{-2L_0t}.
	\end{eqnarray*}    
\end{definition}

\begin{proposition}[Local comparison principle]\label{LCP in the unit scale}
	Fix any $m_0 \leq s_{1} < s_{2} \leq M_0$, $q \in \RR^{n}\diagdown\left\lbrace 0\right\rbrace $ and $\xi_{0} \in \argmin\limits\limits_{\xi\in\ZZ^{n},\hspace{1mm} \xi \cdot \nu > 1}|\xi|$. Let $(\nu, T, R)$ be a comparison consistent triplet (c.f. Definition \ref{comparison consistent triplet}), such that $R \geq \frac{12(3n + M_0 + 27)}{L_0}$, then
	\begin{eqnarray*}
		\overline{\mathrm{U}}_{a_{2}}(x,t) < \underline{\mathrm{U}}_{a_{1}}(x - \xi_{0},t), &\text{if}& x\in\Upomega(0,R; \nu), \hspace{2mm} 0 \leq t \leq T,
	\end{eqnarray*}
	where $a_{i} = (\nu, R, \mathscr{R}, q, s_{i})$, $i = 1, 2$, $ \mathscr{R} = \frac{4M_0 R}{\delta}$, and $\Upomega(0,R; \nu)$ is defined in (\ref{thin cylinder pointing to a specific direction}).

\end{proposition}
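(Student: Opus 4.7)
The strategy is to argue by contradiction at the first time $t_0\in(0,T]$ at which the ordering $U<W$ breaks, where $U:=\overline U_{a_2}$ and $W(x,t):=\underline U_{a_1}(x-\xi_0,t)$. Since $\mathscr F$ is $\ZZ^n$-periodic and $\xi_0\in\ZZ^n$, $W$ remains a viscosity supersolution of $\partial_t w = \mathscr F(D^2 w,Dw,x)$, and the obstacle bounds give
\[
W(x,0)-U(x,0)\;\ge\;\mathrm O_{e_1}(x-\xi_0,0)-\mathrm O_{e_2}(x,0)\;=\;|q|\,\xi_0\cdot\nu\;>\;|q|.
\]
If the conclusion fails, let $t_0$ be the infimum of times at which the strict ordering is violated on $\Upomega(0,R;\nu)$; by semicontinuity of $U$ and $W$ we obtain a touching point $(x_0,t_0)\in\overline{\Upomega(0,R;\nu)}$ with $U(x_0,t_0)=W(x_0,t_0)$ and $U<W$ strictly on $\Upomega(0,R;\nu)\times[0,t_0)$. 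We then split into two cases according to whether $x_0$ is interior or lies on $\partial\Upomega(0,R;\nu)$.

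\textbf{Interior case via inf-convolution.} Replace $W$ by $W^{h_-}$ of Definition~\ref{the definition of the inf convolution}, with $h=(r(t),\varphi(x))$ where $r(t)=r(0)\gamma(t)/\gamma(0)$ is the exponential profile built into $\delta(T)$ and $\varphi$ is a smooth positive cut-off vanishing near $\partial\Upomega(0,R;\nu)$. The lower bound $R\ge 12(3n+M_0+27)/L_0$ is exactly what makes $(n+1)\lVert D^2\varphi\rVert_\infty/\varphi+M_0|D\varphi|/\varphi\le L_0$, so the differential inequality \eqref{the assumption in the inf convolution} holds and Proposition~\ref{the evolution law of inf convolution} ensures $W^{h_-}$ is a pseudo viscosity supersolution of the same PDE. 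Take $r(0)\sup\varphi$ small enough that $W^{h_-}(\cdot,0)>U(\cdot,0)$. If the touching is interior, Proposition~\ref{the exterior ball condition of the inf convolution} supplies an exterior ball on the sublevel set $\{W^{h_-}(\cdot,t_0)\le U(x_0,t_0)\}$, from which we extract a smooth test function with nonzero gradient touching $U$ from above and $W^{h_-}$ from below at $(x_0,t_0)$. The pseudo-subsolution inequality for $U$ and pseudo-supersolution inequality for $W^{h_-}$ then conflict on $\phi_t(x_0,t_0)$, a direct contradiction. Proposition~\ref{finite speed propagation of subsolution} handles the edge case where contact is approached only from below, by transporting the contact to an interior point slightly before $t_0$.

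\textbf{Boundary case via Birkhoff shifts.} When $x_0\in\partial\Upomega(0,R;\nu)$, invoke the comparison-consistent hypothesis at $x_0$: there exists $\xi:=z_0-x_0\in\ZZ^n$ with $\delta/3<\xi\cdot\nu<\delta$ and $|z_0-2x_0|<R/3$, so $|(x_0-\xi)^\perp|=|(2x_0-z_0)^\perp|<R/3$, placing $x_0-\xi$ strictly inside $\Upomega(0,R;\nu)$. The expansion rate $\mathscr R=4M_0 R/\delta$ is tuned so that the perpendicular constraint $\mathscr R\,\Delta t\ge|\xi^\perp|$ is met whenever $\Delta t\ge\delta/(3M_0)$. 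This leaves room to choose $\Delta t:=(\xi\cdot\nu)/s_1\in(\delta/(3M_0),\delta/m_0]$ for the supersolution Birkhoff (Proposition~\ref{the birkhoff property for supersolution in an expanding domain}) and $\Delta t':=(\xi\cdot\nu)/s_2<\Delta t$ for the subsolution Birkhoff (Proposition~\ref{the birkhoff property for subsolution in an expanding domain}), yielding
\[
U(x_0,t_0)\;\le\;U(x_0-\xi,t_0-\Delta t')\qquad\text{and}\qquad W(x_0-\xi,t_0-\Delta t)\;\le\;W(x_0,t_0).
\]
Combining these with the strict ordering at the interior earlier-time point $(x_0-\xi,t_0-\Delta t')$ and interpolating across the time window $[t_0-\Delta t,\,t_0-\Delta t']$ through the interior-case argument applied to $W^{h_-}$ (whose validity over this window is guaranteed by the finite speed propagation bound of Proposition~\ref{finite speed propagation of subsolution}), one forces $W(x_0,t_0)>U(x_0,t_0)$ strictly, contradicting the touching. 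The main difficulty is precisely this last step: the exponential rate $\gamma(t)$ inside $\delta(T)$, the cut-off $\varphi$, the lower bound on $R$, and the expansion $\mathscr R$ must line up so that the inf-convolution regularization remains a valid pseudo-supersolution across the time window on which the two Birkhoff shifts transport the boundary touching into the interior — which is exactly what the restrictive definitions of $\delta(T)$ and of the comparison consistent triplet are designed to secure.
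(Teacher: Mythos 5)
Your high-level outline — first touching time, Birkhoff shift at a boundary point, inf-convolution, finite speed propagation, pseudo-comparison — does match the paper's plan, but several of the steps as you've written them do not actually go through, and the synthesis is left as a black box precisely where the paper does all the work.

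\textbf{There is no interior case.} The paper applies the usual comparison principle (Proposition~\ref{the usual comparison principle}) to $\overline{\mathrm{U}}_{a_2}$ and $\mathrm{U}=\underline{\mathrm{U}}_{a_1}(\cdot-\xi_0,\cdot)$ on $\Upomega(0,R;\nu)$ to conclude that the supremum of their difference is attained on the lateral boundary; together with the fact that the first inf-convolution $h=(\gamma(t),1)$ is \emph{space-independent}, this forces the first touching point $x_0$ of $\overline{\mathrm{U}}_{a_2}$ and $\mathrm{U}^{h_-}$ onto $\partial\Upomega(0,R;\nu)$. So your ``interior case'' is vacuous, and splitting the argument this way removes the step that actually makes the Birkhoff machinery usable.

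\textbf{The cut-off you propose is incompatible with the inf-convolution PDE.} You describe $\varphi$ as ``a smooth positive cut-off vanishing near $\partial\Upomega(0,R;\nu)$'', and simultaneously assert that the lower bound on $R$ makes $(n+1)\lVert D^2\varphi\rVert_\infty/\varphi + M_0|D\varphi|/\varphi \leq L_0$. These cannot both hold: the differential inequality \eqref{the assumption in the inf convolution} contains the terms $\lVert D^2\varphi\rVert_\infty/\varphi$, $|D\varphi|/\varphi$, and $|D\varphi|^2/\big((1-r|D\varphi|)^2\varphi^2\big)$, all of which blow up where $\varphi\to 0$. In the paper, the space-dependent $\varphi$ lives on the \emph{smaller} cylinder $\Upomega(x_0,R/3;\nu)$ centered at the boundary touching point $x_0$, equals $\tfrac12\!\left(3-\tfrac1C\right)>1$ at the axis $\{(x-x_0)^\top=0\}$ and $1-\tfrac1C>0$ on $\partial\Upomega(x_0,R/3;\nu)$; it is bounded away from zero, and the condition $R\geq 12(3n+M_0+27)/L_0$ gives the differential inequality with this specific parabola, not with a vanishing cut-off.

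\textbf{Your touching-point contradiction is not a contradiction.} You claim to extract ``a smooth test function with nonzero gradient touching $U$ from above and $W^{h_-}$ from below at $(x_0,t_0)$'', and that ``the pseudo-subsolution inequality for $U$ and pseudo-supersolution inequality for $W^{h_-}$ then conflict on $\phi_t(x_0,t_0)$.'' A single $\phi$ touching $U$ from above and $W^{h_-}$ from below at the same point yields $\phi_t \leq \mathscr{F}(D^2\phi,D\phi,x_0)$ and $\phi_t \geq \mathscr{F}(D^2\phi,D\phi,x_0)$ — an equality, not a contradiction. The paper's contradiction is obtained instead via Proposition~\ref{the comparison principle regarding pseudo viscosity solutions} (a doubling-of-variables pseudo comparison), which is applied after establishing strict ordering on the parabolic boundary of $\Upomega(x_0,R/3;\nu)$ \emph{and} observing that at the center $x_0$, where $\varphi(x_0)>1>1-\tfrac1C$, the inf-convolution radius $\gamma(t_0)\varphi(x_0)$ strictly exceeds the radius in $\hat h=\big((1-\tfrac1C)\gamma,1\big)$, so $\mathrm{U}^{(\gamma,\varphi)_-}(x_0,t_0)\leq \mathrm{U}^{\hat h_-}(x_0,t_0)$ but also $\overline{\mathrm{U}}_{a_2}(x_0,t_0)=\mathrm{U}^{h_-}(x_0,t_0)$ while $\varphi(x_0)>1$ forces $\mathrm{U}^{(\gamma,\varphi)_-}(x_0,t_0)\leq \mathrm{U}^{h_-}(x_0,t_0)$ with a strict level-set separation, giving the contradiction in the pseudo comparison. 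Neither this quantitative interplay between $1-\tfrac1C$ and $\tfrac12(3-\tfrac1C)$, nor the doubling-of-variables argument, appears in your proposal.

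\textbf{The ``interpolation'' is exactly the hard part and is omitted.} You say one ``interpolat[es] across the time window $[t_0-\Delta t, t_0-\Delta t']$ through the interior-case argument'' and acknowledge this is the main difficulty. The paper's Steps 3--5 spell it out: the combined Birkhoff shifts transport strict ordering to time $t_0-\Delta t$ with $\Delta t=\sigma/s_1-\sigma/s_2$ (a \emph{difference}, not the two individual drift times) on $\Upomega(x_0,R/3;\nu)$; one then checks $0<\Delta t<\tfrac{(C-1)\gamma^2(t_0)}{(n-1)C^2+C(C-1)\gamma(t_0)M_0}$ (this is exactly what $\delta<\delta(T)$ secures), applies Proposition~\ref{finite speed propagation of subsolution} to advance to time $t_0$ with the shrunken inf-convolution $\hat h$, and then brings in the space-dependent $\varphi$ on $\Upomega(x_0,R/3;\nu)$, whose boundary value $1-\tfrac1C$ matches $\hat h$ while its center value exceeds $1$, so that Proposition~\ref{the comparison principle regarding pseudo viscosity solutions} gives a contradiction precisely at $x_0$. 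Without these quantitative matchings the argument does not close.
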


\begin{proof}
	Let us denote $d := (\nu, R, \mathscr{R})$ and define functions
	\begin{eqnarray*}
		\gamma(t) = \frac{1}{2}e^{-2L_0t} &\text{and}& \mathrm{U}(x,t) := \underline{\mathrm{U}}_{a_{1}}(x - \xi_{0},t)
	\end{eqnarray*}
	The choice of $\xi_{0}$ and the location of the obstacle implies that (c.f. Definition \ref{the definition of the domain})
	\begin{eqnarray*}
		\overline{\mathrm{U}}_{a_{2}}(x,t) < \mathrm{U}(x,t), && x \in \mathrm{C}_{d} \cap \left( \mathrm{C}_{d} + \xi_{0}\right), \hspace{2mm}  0 \leq t < \frac{1}{s_{2} - s_{2}}.
	\end{eqnarray*}
	In order to establish the desired result, we assume on the contrary
	\begin{equation}\label{the contrary assumption in the local comparison principle}
		\sup\left\lbrace t > 0 \big| \overline{\mathrm{U}}_{a_{2}}(x,t) < \mathrm{U}(x,t), \hspace{2mm} x \in \Upomega(0,R;\nu) \right\rbrace \leq T, 
	\end{equation}
	and derive a contradiction through the following steps.\\
	\underline{Step 1.} Let $h := (\gamma(t),1) \in \mathbb{F}$ and $\mathrm{U}^{h_{-}}(x,t)$ be the $h$ inf-convolution of $\mathrm{U}(x,t)$ by Definition \ref{the definition of the inf convolution}. Then the above assumption (\ref{the contrary assumption in the local comparison principle}) shows that 
	\begin{equation}\label{the moment of the first crossing}
		\frac{1}{2(s_{2} - s_{1})} \leq t_{0} := \sup\left\lbrace t > 0 \big| \overline{\mathrm{U}}_{a_{2}}(x,t) < \mathrm{U}^{h_{-}}(x,t), \hspace{2mm} x \in \Upomega(0,R;\nu) \right\rbrace \leq T.
	\end{equation}
	Assume $x_{0}$ is the first crossing point between $\overline{\mathrm{U}}_{a_{2}}$ and $\mathrm{U}^{h_{-}}$, more precisely,
	\begin{eqnarray*}
		\overline{\mathrm{U}}_{a_{2}}(x_{0},t_{0}) = \mathrm{U}^{h_{-}}(x_{0},t_{0}).
	\end{eqnarray*}
	By applying Proposition \ref{the usual comparison principle} to $\overline{\mathrm{U}}_{a_{2}}(\cdot,t)$ and $\mathrm{U}(\cdot,t)$ in $\Upomega(0,R;\nu)$ (we can only consider a bounded subdomain if necessary), the maximum of $\overline{\mathrm{U}}_{a_{2}}(x,t) - \mathrm{U}(x,t)$ is obtained on $\partial\Upomega(0,R;\nu)$. Since $h$ is $x$-independent, $x_{0} \in \partial\Upomega(0,R;\nu)$. Therefore, $\overline{\mathrm{U}}_{a_{2}}(x_{0},t_{0}) = \mathrm{U}^{h_{-}}(x_{0},t_{0}) = \mu$ for some $\mu \in \RR$.\\
	\underline{Step 2.} By the choise of $(\nu, T, R)$, we can find $0 < \delta < \delta(T)$. Morevoer, there exists a constant $C > 1$, such that
	\begin{equation}\label{the small vertical distance}
	\delta = \left( \frac{M_0 m_0}{M_0 - m_0}\right)\cdot \frac{ (C - 1)\gamma^{2}(T)}{(n - 1)C^{2} + C(C - 1)\gamma(T)M_0}.
	\end{equation}
    Moreover, there exists $z_{0} \in \RR^{n}$ with the following (i)-(iii) satisfied.
    \begin{equation}\label{the choice of an integer shift}
    (i) \hspace{1mm} \frac{\delta}{3} < \left( z_{0} - x_{0}\right) \cdot\nu < \delta; \hspace{2mm} (ii) \hspace{1mm} |z_{0} - 2x_{0}| < \frac{R}{3};
    (iii) \hspace{1mm} z_{0} - x_{0} \in\ZZ^{n}.
    \end{equation}
	\underline{Step 3.} Let us investigate the ordering relation between $\overline{\mathrm{U}}_{a_{2}}(x,t)$ and $\mathrm{U}^{h_{-}}(x,t)$ in the cylinder domain $\Upomega\left( x_{0}, \frac{R}{3}; \nu\right)$. Let us choose 
	\begin{equation}\label{the data for the birkhoff property}
	\Delta z := z_{0} - x_{0}; \hspace{2mm} \sigma := |\Delta z \cdot \nu|; \hspace{2mm} \Delta t := \frac{\sigma}{s_{1}} - \frac{\sigma}{s_{2}}; \hspace{2mm} \mathscr{R} := \frac{4M_0 R}{\delta}.
	\end{equation}
	Then Proposition \ref{the birkhoff property for subsolution in an expanding domain} indicates that
	\begin{equation}\label{birkhoff applied to the subsolution}
	\overline{\mathrm{U}}_{a_{2}}\left( x, t\right) \leq  \overline{\mathrm{U}}_{a_{2}}\left(x - \Delta z, t - \frac{\sigma}{s_{2}}\right), \hspace{2mm} x \in \Upomega\left( x_{0}, \frac{R}{3}; \nu\right), \hspace{2mm}\frac{\sigma}{s_{2}} \leq t \leq t_{0} - \Delta t.
	\end{equation}
    Because of the inclusion $\Upomega\left( x_{0}, \frac{R}{3}; \nu\right) + \Delta z \subseteq \Upomega(0,R;\nu)$ and (\ref{the moment of the first crossing}), we have that
    \begin{equation}\label{the ordering relation between the subsolution and the supersolution at an earlier moment}
    \overline{\mathrm{U}}_{a_{2}}\left(y, t - \frac{\sigma}{s_{2}}\right) < \mathrm{U}^{h_{-}}\left(y, t - \frac{\sigma}{s_{2}}\right), \hspace{2mm} y \in \Upomega\left(0, R; \nu\right), \hspace{2mm}\frac{\sigma}{s_{2}} \leq t \leq t_{0} - \Delta t.
    \end{equation} 	
    The Proposition \ref{the birkhoff property for supersolution in an expanding domain} applied to the supersolution $\underline{\mathrm{U}}_{a_{1}}$ with above shift $\Delta z$ shows that
    \begin{eqnarray*}
    \underline{\mathrm{U}}_{a_{1}}\left(y, t - \frac{\sigma}{s_{2}}\right) &\leq& \underline{\mathrm{U}}_{a_{1}}\left(y + \Delta z, t + \frac{\sigma}{s_{1}} - \frac{\sigma}{s_{2}}\right)\\
    &=& \underline{\mathrm{U}}_{a_{1}}\left(y + \Delta z, t + \Delta t\right), \hspace{1cm} y \in \Upomega\left(0, R; \nu\right), \hspace{2mm}\frac{\sigma}{s_{2}} \leq t \leq t_{0} - \Delta t.
    \end{eqnarray*}
    Then apply the $\xi_{0}$ shift and the $h$ inf-convolution to both sides, we conclude that
    \begin{equation}\label{birkhoff applied to the supersolution}
    \mathrm{U}^{h_{-}}\left(y,t - \frac{\sigma}{s_{2}}\right) \leq \mathrm{U}^{h_{-}}\left(y + \Delta z, t + \Delta t\right), \hspace{1mm} y \in \Upomega\left(0, R; \nu\right), \hspace{1mm}\frac{\sigma}{s_{2}} \leq t \leq t_{0} - \Delta t.
    \end{equation}
    A combination of (\ref{birkhoff applied to the subsolution}), (\ref{the ordering relation between the subsolution and the supersolution at an earlier moment}) and (\ref{birkhoff applied to the supersolution}) gives the relation
    \begin{eqnarray*}
    \overline{\mathrm{U}}_{a_{2}}\left( x, t\right) < \mathrm{U}^{h_{-}}\left( x, t + \Delta t\right), && x \in \Upomega\left( x_{0}, \frac{R}{3}; \nu\right), \hspace{2mm} \frac{\sigma}{s_{2}} < t < t_{0} - \Delta t.
    \end{eqnarray*}
	In particular, 
	\begin{eqnarray*}
	\overline{\mathrm{U}}_{a_{2}}(x,t_{0} - \Delta t) < \mathrm{U}^{h_{-}}(x,t_{0}), && x \in \Upomega\left( x_{0}, \frac{R}{3}; \nu\right). 
	\end{eqnarray*}
	By the choice of $\Delta t$ through (\ref{the moment of the first crossing}), (\ref{the small vertical distance}), (\ref{the choice of an integer shift}) and (\ref{the data for the birkhoff property}),
	\begin{equation*}
	0 < \Delta t < \frac{(C - 1)\gamma^{2}(t_{0})}{(n - 1)C^{2} + C(C - 1)\gamma(t_{0})M_0}.
	\end{equation*}
	Then Proposition \ref{finite speed propagation of subsolution} implies that
	\begin{eqnarray*}
	\overline{\mathrm{U}}_{a_{2}}(x,t_{0}) < \mathrm{U}^{\hat{h}_{-}}(x,t_{0}), \hspace{2mm} x \in \Upomega\left( x_{0}, \frac{R}{3}; \nu\right), &\text{where}& \hat{h} := \left( \left( 1 - \frac{1}{C}\right)\gamma(t), 1 \right).
	\end{eqnarray*}
	\underline{Step 4.} Let us construct $\varphi(x): \Upomega\left( x_{0}, \frac{R}{3}; \nu\right)  \rightarrow \RR$ as follows:
	\begin{eqnarray*}
	\varphi(x) := - \frac{9(1 + C)}{2CR^{2}}\left|(x - x_{0})^{\top}\right|^{2} + \frac{1}{2}\left( 3 - \frac{1}{C}\right), 
	\end{eqnarray*}
	where $(x - x_{0})^{\top} := (x - x_{0}) - ((x - x_{0})\cdot\nu)\nu$. Then $\varphi(x)$ satisfies that
	\begin{eqnarray*}
		\begin{cases}
			\varphi|_{\left\lbrace x | (x - x_{0})^{\top} = 0\right\rbrace } = \frac{1}{2}\left( 3 - \frac{1}{C}\right) > 1, \\
			\varphi|_{\left\lbrace x | |(x - x_{0})^{\top}| = \frac{R}{3}\right\rbrace } = 1 - \frac{1}{C},
		\end{cases} &\text{and}& \begin{cases}
		\lVert D\varphi\rVert_{\infty} \leq \frac{6}{R},\\
		\lVert D^{2}\varphi\rVert_{\infty} \leq \frac{18}{R^{2}}.
	\end{cases}
\end{eqnarray*}
Because $R \geq \max\left\lbrace 6, \frac{12(3n + M_0 + 27)}{L_0} \right\rbrace $, then
\begin{eqnarray*}
\gamma^{\prime}(t) + \left( \frac{\left( n + 1\right) \lVert D^{2}\varphi \rVert_{\infty}}{\varphi(x)} + \frac{M_0|D\varphi(x)|}{\varphi(x)}
+ L_0\right)\gamma(t) + \frac{|D\varphi(x)|^{2}\gamma(t)}{\left( 1 - \gamma(t)|D\varphi(x)|\right)^{2}\varphi^{2}(x)} &\leq& 0.
\end{eqnarray*}
Then by Proposition \ref{the evolution law of inf convolution}, $\mathrm{U}^{(\gamma,\varphi)_{-}}(x,t)$ is a pseudo viscosity supersolution. Based on the finite speed of propagation regarding the subsolution (c.f. Proposition \ref{finite speed propagation of subsolution}), we have for some $\tau > 0$ that
\begin{equation}\label{the strict ordering relation on the boundary in the local comparison principle}
\overline{\mathrm{U}}_{a_{2}}(x,t) < \mathrm{U}^{(\gamma,\varphi)_{-}}(x,t), \hspace{2mm} \frac{1}{2(M_0 - m_0)}\leq t \leq t_{0} + \tau, \hspace{2mm} x \in \partial\Upomega\left( x_{0}, \frac{R}{3}; \nu\right). 
\end{equation}
Note that the following strict ordering relation holds.
\begin{equation}\label{an strict ordering relation at the initial moment}
\overline{\mathrm{U}}_{a_{2}}\left( x, \frac{1}{2(M_0 - m_0)}\right)  < \mathrm{U}^{(\gamma,\varphi)_{-}}\left( x, \frac{1}{2(M_0 - m_0)}\right), \hspace{2mm} x \in \Upomega\left( x_{0}, \frac{R}{3}; \nu\right).
\end{equation}
Recall that $x_{0}$ is the first touching point and $\overline{\mathrm{U}}_{a_{2}}(x_{0},t_{0}) = \mathrm{U}^{h_{-}}(x_{0},t_{0}) = \mu$, which is a contradiction due to Proposition \ref{the comparison principle regarding pseudo viscosity solutions}.

\end{proof}

\section{Head and tail speeds}\label{the section of head and tail speeds}

\subsection{Detachment}

\begin{definition}\label{the definition of detachment}
	Let $a = (\nu, R, \mathscr{R}, q, s) \in \mathbb{A}$ with $\mathscr{R} \geq 0$ and set $e := (\nu, q, s) \in \mathbb{E}$.
	\begin{enumerate}
		\item [(1)] Let $\mu \in \RR$, the obstacle subsolution $\overline{\mathrm{U}}_{a}(x,t)$ (resp. supersolution $\underline{\mathrm{U}}_{a}(x,t)$) detaches from the associated obstacle $\mathrm{O}_{e}(x,t)$ at the  $\mu$ level set if there exist $r > \frac{\sqrt{n}}{2}$ and $T > 0$, such that (c.f. Definition \ref{the strict ordering relation between level sets of two functions})
		\begin{eqnarray*}
		\overline{\mathrm{U}}_{a}(\cdot, t) \prec_{(\Upomega(0,r; \nu), \mu)} \mathrm{O}_{e}(\cdot, t), \hspace{2mm} t > T; \hspace{2mm} (\text{resp.} \hspace{2mm} \mathrm{O}_{e}(\cdot, t) \prec_{(\Upomega(0,r; \nu), \mu)}  \underline{\mathrm{U}}_{a}(\cdot, t), \hspace{2mm} t > T ).
		\end{eqnarray*}
		\item [(2)] The obstacle subsolution $\overline{\mathrm{U}}_{a}(x,t)$ (resp. supersolution $\underline{\mathrm{U}}_{a}(x,t)$) detaches from the associated obstacle $\mathrm{O}_{e}(x,t)$ if for any $\mu \in \RR$, there exists $T = T(\mu) > 0$, such that
		\begin{eqnarray*}
			\overline{\mathrm{U}}_{a}(\cdot, t) \prec_{(\Upomega(0,r; \nu), \mu)} \mathrm{O}_{e}(\cdot, t), \hspace{2mm} t > T(\mu); \hspace{2mm}
			(\text{resp.} \hspace{2mm} \mathrm{O}_{e}(\cdot, t) \prec_{(\Upomega(0,r; \nu), \mu)} \underline{\mathrm{U}}_{a}(\cdot, t), \hspace{2mm} t > T(\mu)). 
		\end{eqnarray*}
		\item [(3)] The obstacle subsolution $\overline{\mathrm{U}}_{a}(x,t)$ (resp. supersolution $\underline{\mathrm{U}}_{a}(x,t)$) detaches uniformly from the associated obstacle $\mathrm{O}_{e}(x,t)$ if there exist $r > \frac{\sqrt{n}}{2}$ and $T > 0$, such that
		\begin{eqnarray*}
			\overline{\mathrm{U}}_{a}(\cdot, t) \prec_{(\Upomega(0,r; \nu), \mu)} \mathrm{O}_{e}(\cdot, t), && t > T \hspace{2mm} \text{for any} \hspace{2mm} \mu \in \RR
		\end{eqnarray*}
		\begin{eqnarray*}
			(\text{resp.} \hspace{2mm} \mathrm{O}_{e}(\cdot, t)\prec_{(\Upomega(0,r; \nu), \mu)} \underline{\mathrm{U}}_{a}(\cdot, t), && t > T \hspace{2mm} \text{for any} \hspace{2mm} \mu \in \RR).
		\end{eqnarray*}
	\end{enumerate}
\end{definition}

\begin{lemma}\label{the geometric property in the detachment relation}
	Let $a := (\nu, R, \mathscr{R}, q, s) \in \mathbb{A}$ with $\mathscr{R} \geq 0$, and then set $e := (\nu, q, s) \in \mathbb{E}$. Assume that $\overline{\mathrm{U}}_{a}$ (resp. $\underline{\mathrm{U}}_{a}$) (uniformly) detaches from $\mathrm{O}_{e}$. Then for any number $\zeta > 0$, $\overline{\mathrm{U}}_{\hat{a}}$ (resp. $\underline{\mathrm{U}}_{\hat{a}}$) (uniformly) detaches from $\mathrm{O}_{\hat{e}}$, where
	\begin{eqnarray*}
	\hat{a} := (\nu, R, \mathscr{R}, \zeta q, s) &\text{and}& \hat{e} := (\nu, \zeta q, s).
	\end{eqnarray*}
\end{lemma}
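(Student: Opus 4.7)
The plan is to exploit the geometric nature of the operator $\mathscr{F}$ together with the linearity of the obstacle in $q$. Specifically, I would first observe that $\mathscr{F}(D^2 u, Du, x)$ is positively homogeneous of degree one under the rescaling $u \mapsto \zeta u$ with $\zeta > 0$: the curvature term is linear in $D^2 u$ and involves only the unit direction $\widehat{Du}$, which is invariant under positive scaling of $u$, while $g(x)|Du|$ scales linearly in $|Du|$. Therefore $u$ is a viscosity subsolution (resp.\ supersolution) of the equation if and only if $\zeta u$ is. Similarly, $\mathrm{O}_{\hat e}(x,t) = x\cdot(\zeta q) + st|\zeta q| = \zeta \mathrm{O}_e(x,t)$.

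These two facts together imply that the map $u\mapsto \zeta u$ is a bijection between $\overline{\mathscr S}_a$ and $\overline{\mathscr S}_{\hat a}$, and between $\underline{\mathscr S}_a$ and $\underline{\mathscr S}_{\hat a}$, because the pointwise constraint $u \leq \mathrm{O}_e$ is preserved as $\zeta u \leq \zeta \mathrm{O}_e = \mathrm{O}_{\hat e}$, and analogously for the supersolution class. Taking suprema (resp.\ infima) and applying the upper (resp.\ lower) semicontinuous envelope then yields the clean identities
\begin{equation*}
\overline{\mathrm{U}}_{\hat a}(x,t) = \zeta\, \overline{\mathrm{U}}_{a}(x,t), \qquad \underline{\mathrm{U}}_{\hat a}(x,t) = \zeta\, \underline{\mathrm{U}}_{a}(x,t).
\end{equation*}

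With these identities in hand, detachment transfers automatically. Indeed, for any $\mu\in\RR$,
\begin{equation*}
L^{+}_{\zeta\mu}(\overline{\mathrm{U}}_{\hat a}(\cdot,t);\Upomega) = L^{+}_{\mu}(\overline{\mathrm{U}}_{a}(\cdot,t);\Upomega), \qquad L^{-}_{\zeta\mu}(\mathrm{O}_{\hat e}(\cdot,t);\Upomega) = L^{-}_{\mu}(\mathrm{O}_{e}(\cdot,t);\Upomega),
\end{equation*}
since $\zeta>0$. Thus each item in Definition \ref{the strict ordering relation between level sets of two functions} at level $\mu$ for the pair $(\overline{\mathrm{U}}_a,\mathrm{O}_e)$ translates, verbatim, to the same item at level $\zeta\mu$ for the pair $(\overline{\mathrm{U}}_{\hat a},\mathrm{O}_{\hat e})$. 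Because $\mu\mapsto \zeta\mu$ is a bijection of $\RR$, the three forms of detachment in Definition \ref{the definition of detachment}, namely at a fixed level, for every level, and uniformly, all survive the rescaling with the same threshold time $T$ (or $T(\mu)=T(\zeta^{-1}(\zeta\mu))$ in the level-dependent case) and the same radius $r$.

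I do not expect a genuine obstacle here; the only point that requires a small care is verifying the scaling invariance at the viscosity level, which is immediate from the fact that a test function $\phi$ touching $u$ at $(x_0,t_0)$ from above (below) corresponds to $\zeta\phi$ touching $\zeta u$ at $(x_0,t_0)$ from above (below), and both sides of the viscosity inequality rescale by the same positive factor $\zeta$. Everything else is bookkeeping.
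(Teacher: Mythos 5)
Your proposal is correct and follows the same route as the paper: the paper's proof simply cites the identities $\overline{\mathrm{U}}_{\hat a} = \zeta\,\overline{\mathrm{U}}_a$, $\underline{\mathrm{U}}_{\hat a} = \zeta\,\underline{\mathrm{U}}_a$, $\mathrm{O}_{\hat e} = \zeta\,\mathrm{O}_e$ together with Definition \ref{the definition of detachment}. You have merely supplied the verification that the operator is positively homogeneous and that the level-set bijection $\mu\mapsto\zeta\mu$ carries the detachment property over, which the paper leaves implicit.
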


\begin{proof}
	It follows from Definition \ref{the definition of detachment} and the facts as follows.
	\begin{equation*}
	\overline{\mathrm{U}}_{\hat{a}}(x,t) = \zeta \overline{\mathrm{U}}_{a}(x,t), \hspace{2mm} \underline{\mathrm{U}}_{\hat{a}}(x,t) = \zeta \underline{\mathrm{U}}_{a}(x,t), \hspace{2mm} \mathrm{O}_{\hat{e}}(x,t) = \zeta \mathrm{O}_{e}(x,t).
	\end{equation*}
\end{proof}

\begin{proposition}\label{a semi obstacle solution becomes a solution when detachment happens}
	Let $a := (\nu, R, \mathscr{R}, q, s) \in \mathbb{A}$, then set $d := (\nu, R, \mathscr{R}) \in \mathbb{D}$ and $e := (\nu, q, s) \in \mathbb{E}$, for any space time domain $\Sigma \subseteq \mathrm{C}_{d}$ (see (\ref{the space time domain})), we have that 
	\begin{enumerate}
		\item [(i)] If $\overline{\mathrm{U}}_{a}(x,t) < \mathrm{O}_{e}(x,t)$, for $(x,t) \in \Sigma$, then $\left( \overline{\mathrm{U}}_{a}\right)_{*}$ is a supersolution in $\Sigma$;
		\item [(ii)] If $\mathrm{O}_{e}(x,t) < \underline{\mathrm{U}}_{a}(x,t)$, for $(x,t) \in \Sigma$, then $\left( \underline{\mathrm{U}}_{a}\right)^{*}$ is a subsolution in $\Sigma$.
	\end{enumerate}
\end{proposition}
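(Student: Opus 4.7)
The plan is to prove (i) by a standard Perron-type bump construction; part (ii) follows by the mirror argument (swap sub/super, USC/LSC, max/min, and flip the obstacle inequality). Suppose for contradiction that $u_* := (\overline{\mathrm{U}}_a)_*$ fails the supersolution property at some $(x_0, t_0) \in \Sigma$. Then there exist $r > 0$ and a test function $\psi \in C^{2,1}(B_r(x_0,t_0))$ touching $u_*$ from below at $(x_0,t_0)$ such that
\[
\psi_t(x_0,t_0) \;<\; \mathscr{F}_*\bigl(D^2\psi(x_0,t_0),\, D\psi(x_0,t_0),\, x_0\bigr).
\]

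First I would upgrade $\psi$ to a smooth classical \emph{strict} subsolution that lies above $u_*$ at $(x_0,t_0)$ on a small parabolic neighborhood $Q := B_\rho(x_0,t_0) \subset \Sigma$. Using the lower semicontinuous envelope definition of $\mathscr{F}_*$, pick $(Y, p, y) \in \mathscr{D}_0$ arbitrarily close to $(D^2\psi(x_0,t_0), D\psi(x_0,t_0), x_0)$ with $p \neq 0$ and $\psi_t(x_0,t_0) < \mathscr{F}(Y, p, y)$; this lets me perturb $\psi$ by a tiny smooth bump and define
\[
\tilde\psi(x,t) := \psi(x,t) + \delta - \eta\bigl(|x-x_0|^2 + (t-t_0)^2\bigr) + \langle p_0, x - x_0\rangle,
\]
where $|p_0|, \delta, \eta, \rho$ are chosen small enough that $\tilde\psi$ is a classical subsolution of $w_t \le \mathscr{F}(D^2w, Dw, x)$ in $Q$ with $|D\tilde\psi| > 0$, $\tilde\psi(x_0,t_0) \ge u_*(x_0,t_0) + \delta/2$, and $\tilde\psi < u_* \le \overline{\mathrm{U}}_a$ on the parabolic boundary $\partial_p Q$ (the last inequality uses that $\psi$ touches $u_*$ from below at $(x_0,t_0)$, so $\psi - u_* \le 0$ with equality only at that point).

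Then I patch: set
\[
\widehat{\mathrm{U}}(x,t) := \begin{cases} \max\{\overline{\mathrm{U}}_a(x,t),\, \tilde\psi(x,t)\}, & (x,t) \in Q,\\[2pt] \overline{\mathrm{U}}_a(x,t), & (x,t) \in \mathrm{C}_d \setminus Q. \end{cases}
\]
The strict inequality $\tilde\psi < \overline{\mathrm{U}}_a$ near $\partial_p Q$ ensures $\widehat{\mathrm{U}} \in \USC(\mathrm{C}_d)$, and since the maximum of two subsolutions is a subsolution, $\widehat{\mathrm{U}}$ is a viscosity subsolution on $\mathrm{C}_d$. The hypothesis $\overline{\mathrm{U}}_a < \mathrm{O}_e$ on $\Sigma$ combined with upper semicontinuity of $\overline{\mathrm{U}}_a - \mathrm{O}_e$ gives $\overline{\mathrm{U}}_a \le \mathrm{O}_e - \kappa$ on a neighborhood of $(x_0,t_0)$ for some $\kappa > 0$; shrinking $\rho, \delta, \eta, |p_0|$ further makes $\tilde\psi \le \mathrm{O}_e$ on $Q$, hence $\widehat{\mathrm{U}} \le \mathrm{O}_e$ everywhere and $\widehat{\mathrm{U}} \in \overline{\mathscr{S}}_a$. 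Selecting $(x_k,t_k) \to (x_0,t_0)$ with $\overline{\mathrm{U}}_a(x_k,t_k) \to u_*(x_0,t_0)$ yields $\widehat{\mathrm{U}}(x_k,t_k) \ge \tilde\psi(x_k,t_k) > \overline{\mathrm{U}}_a(x_k,t_k) + \delta/4$ for all large $k$, contradicting the maximality of $\overline{\mathrm{U}}_a$ in Definition \ref{the definitions of obstacle solutions}. The main technical point will be the degenerate case $D\psi(x_0,t_0) = 0$: the strict inequality then involves $\mathscr{F}_*(\cdot, 0, \cdot)$, and one must exploit the liminf definition to select $(Y,p,y) \in \mathscr{D}_0$ with $p \neq 0$ so that the classical inequality $\tilde\psi_t < \mathscr{F}(D^2\tilde\psi, D\tilde\psi, x)$ holds throughout $Q$ after the perturbation — this is precisely what the envelope $\mathscr{F}_*$ provides.
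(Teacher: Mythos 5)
Your proof is correct and takes essentially the same bump-and-patch approach as the paper: derive a classical strict subsolution from the failed test function, take the max with $\overline{\mathrm{U}}_a$, check the result stays in $\overline{\mathscr{S}}_a$, and contradict maximality. The added linear term $\langle p_0, x-x_0\rangle$ is not strictly necessary (the paper instead relies on the operator extension $\mathscr{F}^*$ to handle a vanishing gradient), and your quadratic penalty $-\eta\bigl(|x-x_0|^2 + (t-t_0)^2\bigr)$ already compensates for the slightly unjustified assertion that $\psi - u_*$ vanishes only at $(x_0,t_0)$ -- the touching condition gives a minimum, not a strict one, which the paper handles by explicitly subtracting a multiple of $|x-x_0|^4 + (t-t_0)^2$.
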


\begin{proof}
	Let us only prove $(i)$ since $(ii)$ can be established by parallel arguments. Let $\phi(x,t)$ be a $C^{2,1}$ function on $\Sigma$, assume that $\left( \overline{\mathrm{U}}_{a}\right)_{*}(x,t) - \phi(x,t)$ obtains its local mininum at $(x_{0}, t_{0}) \in \Sigma$, without loss of generality, we can assume that $\left( \overline{\mathrm{U}}_{a}\right)_{*}(x_{0},t_{0}) = \phi(x_{0},t_{0})$. To prove that $\left( \overline{\mathrm{U}}_{a}\right)_{*}$ is a viscosity supersolution in $\Sigma$, let us assume on the contrary that
	\begin{equation*}
	\phi_{t}(x_{0},t_{0}) < \mathscr{F}_{*}\left( D^{2}\phi(x_{0},t_{0}), D\phi(x_{0},t_{0}), x_{0}\right). 
	\end{equation*}
   Then by the continuity of $\phi_{t}$, $D\phi$, $D^{2}\phi$ and the lower semi continuity of $\mathscr{F}_{*}$, there exists a neighborhood of $(x_{0},t_{0})$ (let us still denote it by $\Sigma$), where the above strict inequality holds. By subtracting a multiple of $|x - x_{0}|^{4} + (t - t_{0})^{2}$ from $\phi(x,t)$ if necessary (this does not change the above inequality), we can assume that $\left( \overline{\mathrm{U}}_{a}\right)_{*}(x,t) - \phi(x,t)$ has a strict minimum at $(x_{0},t_{0})$, over $\Sigma$. Then there exists $\delta > 0$, such that 
   \begin{enumerate}
   	\item [(a)] $\phi(x,t) + \delta < \mathrm{O}_{e}(x,t)$, for $(x,t) \in \Sigma$;
   	\item [(b)] $\phi(x,t) + \delta < \left( \underline{\mathrm{U}}_{a}\right)_{*}(x,t)$, for $(x,t) \in \partial \Sigma$.
   \end{enumerate}
   Then we can define the function as follows, 
   \begin{equation*}
   \tilde{\mathrm{U}}(x,t) := \max\left\lbrace \overline{\mathrm{U}}_{a}(x,t), \phi(x,t) + \delta\right\rbrace, 
   \end{equation*}
   which is also a viscosity subsolution in $\mathrm{C}_{d}$. On the other hand, there exists $(\hat{x},\hat{t})$ arbitrarily close to $(x_{0}, t_{0})$, such that $\overline{\mathrm{U}}_{a}(\hat{x},\hat{t}) < \phi(\hat{x},\hat{t}) + \delta$. Therefore, $\tilde{\mathrm{U}}(\hat{x},\hat{t}) > \overline{\mathrm{U}}_{a}(\hat{x},\hat{t})$, which contradicts to the maximality of $\overline{\mathrm{U}}_{a}$.
\end{proof}

\subsection{Irrational directions}\label{the subsection of irrational directions}

\subsubsection{Basic facts of head/tail speed}
\begin{definition}\label{the head speed in an irrational direction}
	Let $\nu \in \mathbb{S}^{n-1}\diagdown \RR\ZZ^{n}$, the head speed in $\nu$ direction, denoted by $\overline{s}(\nu)$, is defined as the smallest number, such that: for any $\delta > 0$, there exists $R > 0$, such that $\overline{\mathrm{U}}_{a}(x,t)$ detaches (c.f. Definition \ref{the definition of detachment}) from $\mathrm{O}_{e}(x,t)$, where
	\begin{eqnarray*}
	a := (\nu, R, 0, q, \overline{s}(\nu) + \delta) \in \mathbb{A}, && e := (\nu, q, \overline{s}(\nu) + \delta) \in \mathbb{E}.
	\end{eqnarray*}
\end{definition}

\begin{definition}\label{the tail speed in an irrational direction}
	Let $\nu \in \mathbb{S}^{n-1}\diagdown \RR\ZZ^{n}$, the tail speed in $\nu$ direction, denoted by $\underline{s}(\nu)$, is defined as the largest number, such that: for any $\delta > 0$, there exists $R > 0$, such that $\underline{\mathrm{U}}_{a}(x,t)$ detaches (c.f. Definition \ref{the definition of detachment}) from $\mathrm{O}_{e}(x,t)$, where
	\begin{eqnarray*}
		a := (\nu, R, 0, q, \underline{s}(\nu) - \delta) \in \mathbb{A}, && e := (\nu, q, \underline{s}(\nu) - \delta) \in \mathbb{E}.
	\end{eqnarray*}
\end{definition}

\begin{remark}
	By Lemma \ref{the geometric property in the detachment relation}, the head speed $\overline{s}(\nu)$ and the tail speed $\underline{s}(\nu)$ are independent of $q$, therefore, they are both well-defined.
\end{remark}

\begin{lemma}
	Fix any $\nu \in \mathbb{S}^{n-1}\diagdown \RR\ZZ^{n}$, then $m_0 \leq \overline{s}(\nu), \underline{s}(\nu) \leq M_0$.
\end{lemma}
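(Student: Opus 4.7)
The plan is to establish separately the four inequalities $m_0 \leq \overline{s}(\nu)$, $m_0 \leq \underline{s}(\nu)$, $\overline{s}(\nu) \leq M_0$, and $\underline{s}(\nu) \leq M_0$. Two of them follow immediately from the observation that the obstacle $\mathrm{O}_e$ is itself a sub- or supersolution at the extremal speeds; the remaining two require showing detachment actually occurs when $s$ lies outside $[m_0, M_0]$, via a tilted barrier construction in the spirit of Lemmas \ref{the obstacle subsolution coincides with the obstacle on the boundary} and \ref{the obstacle supersolution coincides with the obstacle on the boundary}.

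For the first pair, I would invoke the direct computation
\[ \partial_t \mathrm{O}_e - \mathscr{F}(D^2 \mathrm{O}_e, D\mathrm{O}_e, x) = (s - g(x))|q|, \]
which is $\leq 0$ when $s \leq m_0$ (so that $\mathrm{O}_e \in \overline{\mathscr{S}}_a$) and $\geq 0$ when $s \geq M_0$ (so that $\mathrm{O}_e \in \underline{\mathscr{S}}_a$). In the former case, the maximality of $\overline{\mathrm{U}}_a$ combined with the constraint $\overline{\mathrm{U}}_a \leq \mathrm{O}_e$ forces $\overline{\mathrm{U}}_a \equiv \mathrm{O}_e$, and so detachment in the sense of Definition \ref{the definition of detachment} is impossible; by Definition \ref{the head speed in an irrational direction}, this implies that any $s < m_0$ lies outside the set whose infimum defines $\overline{s}(\nu)$, yielding $\overline{s}(\nu) \geq m_0$. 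Dually for $s \geq M_0$ one finds $\underline{\mathrm{U}}_a \equiv \mathrm{O}_e$, giving $\underline{s}(\nu) \leq M_0$ by Definition \ref{the tail speed in an irrational direction}.

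For the remaining pair, the point is to show that detachment occurs at every $s > M_0$ (for $\overline{\mathrm{U}}_a$) and every $s < m_0$ (for $\underline{\mathrm{U}}_a$). In both regimes the obstacle becomes a \emph{strict} super- or sub-solution with defect $|s - M_0||q|$ or $|s - m_0||q|$. To turn this pointwise strictness into the required linear-in-$t$ gap on an inner sub-cylinder, I would construct tilted planar sub-/supersolutions analogous to $\overline{V}_a$ and $\underline{V}_a$ in Lemmas \ref{the obstacle subsolution coincides with the obstacle on the boundary}--\ref{the obstacle supersolution coincides with the obstacle on the boundary}. For the head speed bound, for example, one takes the family
\[ \underline{V}_\mu(x,t) = \tfrac{|q|s}{M_0}\bigl(M_0 t - x\cdot \mu + C\bigr), \qquad \mu = \tfrac{M_0}{s}\nu + \sqrt{1 - M_0^2/s^2}\,\tau, \quad \tau \in \nu^\perp \cap \mathbb{S}^{n-1}, \]
each a classical supersolution (since $M_0 \geq g$), and calibrates the constant $C$ so that the infimum over $\tau$ equals $\mathrm{O}_e$ on the lateral boundary of $\Upomega(0,R;\nu)$. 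Comparing this barrier against $\overline{\mathrm{U}}_a$ on an inner cylinder $\Upomega(0,r;\nu)$ with $r < R$ produces the gap required by Definition \ref{the definition of detachment}. The symmetric construction with tilted subsolutions moving at speed $m_0$ handles $\underline{s}(\nu) \geq m_0$.

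The main obstacle is the interaction with the lateral boundary of the static cylinder ($\mathscr{R}=0$ in the definitions of head and tail speed): by Lemma \ref{the obstacle subsolution coincides with the obstacle on the boundary}, $\overline{\mathrm{U}}_a$ attains $\mathrm{O}_e$ on part of that boundary, so no global comparison throughout $C_d$ can yield a strict gap. The tilted construction circumvents this by localizing the gap to an inner sub-cylinder, and one must choose $R$ sufficiently large (relative to the time scale on which the linear gap is established) so that the boundary does not corrupt the interior, using the finite propagation inherent in mean curvature flow with bounded $g$. The fiber-at-infinity condition (iii) in Definition \ref{the strict ordering relation between level sets of two functions} is then routine: it follows from the planar lower bound $\overline{\mathrm{U}}_a \geq x\cdot q + m_0 t|q|$, which is an element of $\overline{\mathscr{S}}_a$ whenever $s \geq m_0$ and diverges to $+\infty$ as $x\cdot \nu \to -\infty$ (with a dual statement for $\underline{\mathrm{U}}_a$).
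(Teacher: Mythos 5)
Your decomposition into four separate inequalities is more careful than the paper's own terse argument, and your first pair ($m_0 \leq \overline{s}(\nu)$ and $\underline{s}(\nu) \leq M_0$, via $\mathrm{O}_e$ being itself a sub-/supersolution so $\overline{\mathrm{U}}_a \equiv \mathrm{O}_e$, resp.\ $\underline{\mathrm{U}}_a \equiv \mathrm{O}_e$, with no detachment) matches the paper exactly. However, your treatment of the second pair has a genuine gap: the tilted barrier you propose cannot establish detachment. With $\mu = \tfrac{M_0}{s}\nu + \sqrt{1-M_0^2/s^2}\,\tau$ and $C = \sqrt{1-M_0^2/s^2}\,R$ (so that the infimum touches the obstacle on the lateral boundary), a direct computation gives
\[
\inf_{\tau}\underline{V}_\mu(x,t) - \mathrm{O}_e(x,t) = \frac{|q|s}{M_0}\sqrt{1 - M_0^2/s^2}\,\bigl(R - |x^\top|\bigr) \geq 0,
\]
time-independently. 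Since the planes are \emph{steeper} than the obstacle and pinned to it on the lateral boundary, their infimum lies \emph{above} $\mathrm{O}_e$ on the inner cylinder, not below; comparison then yields only $\overline{\mathrm{U}}_a \leq \inf_\tau \underline{V}_\mu$, which is weaker than the trivial bound $\overline{\mathrm{U}}_a \leq \mathrm{O}_e$ and says nothing about detachment.

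You also do not need a linear-in-$t$ gap: Definition \ref{the definition of detachment} together with Definition \ref{the strict ordering relation between level sets of two functions} only asks for the strict pointwise ordering $\overline{\mathrm{U}}_a < \mathrm{O}_e$ on $\Upomega(0,r;\nu)$ for $t > T$, plus the level-set separation and unboundedness clauses. You already noticed the one fact that does all the work: for $s > M_0$ the obstacle is a \emph{strict} supersolution, with defect $\partial_t \mathrm{O}_e - \mathscr{F}(D^2\mathrm{O}_e, D\mathrm{O}_e, x) = (s - g(x))|q| \geq (s-M_0)|q| > 0$. Use it directly. If $\overline{\mathrm{U}}_a(x_0,t_0) = \mathrm{O}_e(x_0,t_0)$ at an interior point of $\mathrm{C}_d$ with $t_0 > 0$, then $\mathrm{O}_e$ is an admissible smooth test function touching $\overline{\mathrm{U}}_a$ from above (with $D\mathrm{O}_e = q \neq 0$), and the viscosity subsolution inequality forces $s|q| \leq g(x_0)|q| \leq M_0|q|$, a contradiction. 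Hence $\overline{\mathrm{U}}_a < \mathrm{O}_e$ throughout the interior of $\mathrm{C}_d$ for every $t > 0$, for \emph{every} $R$. Taking any $\sqrt{n}/2 < r < R$ and any $T > 0$, conditions (i) and (ii) of $\prec$ are immediate, and (iii) follows exactly as you say from the planar lower barrier $-|q|(x\cdot\nu - m_0 t) \in \overline{\mathscr{S}}_a$. The dual argument (for $s < m_0$, $\mathrm{O}_e$ is a strict subsolution, and $\underline{\mathrm{U}}_a$ cannot touch it from above at interior points) gives $\underline{s}(\nu) \geq m_0$. This is both correct and simpler than either your tilted cone or the paper's ``resp.''\ clause, which as written deduces a bound on $\overline{s}(\nu)$ from a statement about $\underline{\mathrm{U}}_a$ and does not actually close the argument.
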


\begin{proof}
	Let us only prove $m_0 \leq \overline{s}(\nu) \leq M_0$, since a parallel argument applies to $\underline{s}(\nu)$. Fix any $R$, $q$ and $s < m_0$ (resp. $s > M_0$), such that $a := (\nu, R, 0, q, s) \in \mathbb{A}$, then set $e := (\nu, q, s) \in \mathbb{E}$. Then $\mathrm{O}_{e}(x,t)$ is a subsolution (resp. supersolution) and $\overline{\mathrm{U}}_{a}(x,t) \equiv \mathrm{O}_{e}(x,t)$ (resp. $\underline{\mathrm{U}}_{a}(x,t) \equiv \mathrm{O}_{e}(x,t)$). In this case, there is no detachment between $\overline{\mathrm{U}}_{a}$ (resp. $\underline{\mathrm{U}}_{a}$) and $\mathrm{O}_{e}$, hence $\overline{s}(\nu) \geq m_0$ (resp. $\overline{s}(\nu) \leq M_0$).
\end{proof}

The next Lemma says that in the case of expanding domain, i.e., $\mathscr{R} > 0$, if the detachment happens at certain level set, then for any $r > 0$, the sub and super solution shall be away from the obstacle in $\Upomega(0,r;\nu)$, after certain amount of time. i.e., the detachment expands as time evolves.
\begin{lemma}\label{the expansion of detachment}
	Let $a := (\nu, R, \mathscr{R}, q, s) \in \mathbb{A}$ with $\mathscr{R} > 0$, $\mu \in \RR$ and then set $e := (\nu, q, s) \in \mathbb{E}$, if $\overline{\mathrm{U}}_{a}(x,t)$ (resp. $\underline{\mathrm{U}}_{a}(x,t)$) detaches from $\mathrm{O}_{e}(x,t)$ in the $\mu$ level set (c.f. Definition \ref{the definition of detachment}), then for any $r > 0$, there exists $T := T(\mu, s, \mathscr{R}) > 0$, such that
	\begin{eqnarray*}
	\overline{\mathrm{U}}_{a}(\cdot,t) \prec_{(\Upomega(0,r; \nu),\mu)} \mathrm{O}_{e}(\cdot,t), \hspace{2mm} ( \text{resp.}\hspace{2mm} \underline{\mathrm{U}}_{a}(\cdot,t) \prec_{(\Upomega(0,r; \nu), \mu)} \mathrm{O}_{e}(\cdot,t)), && t > T.
	\end{eqnarray*}
\end{lemma}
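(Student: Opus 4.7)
The plan is to propagate the $\mu$-level detachment from the inner cylinder $\Upomega(0,r_0;\nu)$ (on which it holds for $t > T_0$ by Definition~\ref{the definition of detachment}) out to the larger $\Upomega(0,r;\nu)$ by means of the Birkhoff monotonicity in the expanding domain, i.e.\ Proposition~\ref{the birkhoff property for subsolution in an expanding domain} for $\overline{\mathrm U}_a$ and Proposition~\ref{the birkhoff property for supersolution in an expanding domain} for $\underline{\mathrm U}_a$. The essential use of $\mathscr{R}>0$ is that the admissible cross-section $\mathrm{C}_d(t)$ grows without bound: once $t > (r-R)/\mathscr{R}$ one has $\Upomega(0,r;\nu) \subset \mathrm{C}_d(t)$, and the admissibility condition $\mathscr{R}\,\Delta t \geq |(\Delta z)^\perp|$ of the Birkhoff property can be met for any integer shift of prescribed transverse size by taking $\Delta t$ proportionally large.

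Focusing on the subsolution case, I fix any $y \in \Upomega(0,r;\nu)$ and aim to produce $\Delta z \in \ZZ^n$ and $\Delta t > 0$ satisfying
\begin{enumerate}
\item[(a)] $\Delta z \cdot \nu = s\,\Delta t$,
\item[(b)] $\mathscr{R}\,\Delta t \geq |(\Delta z)^\perp|$,
\item[(c)] $x := y - \Delta z \in \Upomega(0,r_0;\nu)$,
\item[(d)] $t - \Delta t > T_0$.
\end{enumerate}
Granting such a shift, Proposition~\ref{the birkhoff property for subsolution in an expanding domain} yields $\overline{\mathrm U}_a(y,t) \leq \overline{\mathrm U}_a(x,t - \Delta t)$, while (a) together with the linear form of the obstacle gives $\mathrm{O}_e(y,t) = \mathrm{O}_e(x,t - \Delta t)$. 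Hence $\overline{\mathrm U}_a(y,t) \geq \mu$ would force $\overline{\mathrm U}_a(x,t-\Delta t) \geq \mu$ and, by the hypothesized detachment at $(x,t-\Delta t)$, also $\mathrm{O}_e(x,t-\Delta t) > \mu$; this transports back to $\mathrm{O}_e(y,t) > \mu$. This is exactly the disjointness $L^+_\mu(\overline{\mathrm U}_a) \cap L^-_\mu(\mathrm{O}_e) = \emptyset$ in $\Upomega(0,r;\nu)$ required by Definition~\ref{the strict ordering relation between level sets of two functions}, and the asymptotic conditions~(iii) of that definition are inherited from the smaller cylinder since $\Upomega(0,r_0;\nu) \subset \Upomega(0,r;\nu)$.

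The main obstacle is the Diophantine step of producing $\Delta z$ satisfying (a)-(c) simultaneously. Combining (a) and (b) restricts $\Delta z$ to the integer cone about $\nu$ of half-aperture $\mathscr{R}/s$, i.e.\ $|(\Delta z)^\perp| \leq (\mathscr{R}/s)\,\Delta z \cdot \nu$, while (c) demands that $(\Delta z)^\perp$ fall within $r_0 - \tfrac{\sqrt n}{2}$ of $y^\perp$. For irrational $\nu$ this is handled by the discrepancy estimate of Proposition~\ref{a lattice point that is close to a hyperplane}; for rational $\nu$, the projections $\{(\Delta z)^\perp : \Delta z \in \ZZ^n,\ \Delta z \cdot \nu = \text{const}\}$ form an affine lattice in $H_\nu$ with bounded covering radius. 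If a single shift is geometrically insufficient, one composes finitely many admissible Birkhoff shifts, each individually preserving $\mathrm{O}_e$ through the choice $\Delta z_i \cdot \nu = s\,\Delta t_i$; the composition remains inside $\mathrm{C}_d$ at every step precisely because $\mathscr{R}>0$. Taking $T$ equal to $T_0$ plus the total $\Delta t$ and the waiting time $(r-R)/\mathscr{R}$ completes the subsolution case. The supersolution statement is entirely parallel, using Proposition~\ref{the birkhoff property for supersolution in an expanding domain} and again choosing $\Delta z \cdot \nu = s\,\Delta t$, which saturates both Birkhoff admissibility inequalities and keeps the obstacle value invariant along the shift.
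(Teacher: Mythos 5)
Your proposal follows the paper's strategy: transport the small-cylinder detachment from $\Upomega(0,r_0;\nu)$ to $\Upomega(0,r;\nu)$ by an integer translation $\Delta z$ paired with $\Delta t := \Delta z\cdot\nu/s$ (so that $\mathrm{O}_e$ is invariant under the combined shift), and then apply the Birkhoff monotonicity of Propositions~\ref{the birkhoff property for subsolution in an expanding domain} and \ref{the birkhoff property for supersolution in an expanding domain}. Your conclusion about the disjointness of the level sets and the inheritance of the asymptotic conditions~(iii) of Definition~\ref{the strict ordering relation between level sets of two functions} is correct.

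Where you diverge from the paper, and also become unsure, is the Diophantine step of actually producing $\Delta z$. You propose discrepancy estimates via Proposition~\ref{a lattice point that is close to a hyperplane} in irrational directions, covering radii of the fiber projections $\{(\Delta z)^\perp : \Delta z\cdot\nu = \text{const}\}$ in rational directions, and a fallback of composing several admissible Birkhoff shifts. None of this machinery is needed, and some of it is misdirected. The paper's construction is a single integer translate, uniform in $\nu$: given $y\in\Upomega(0,r;\nu)$, take $\Delta z\in\ZZ^n$ nearest to the point $y^\perp + M\nu$ for a suitably large $M$ (the paper's choice is $M > sT_0 + \sqrt n + s(r+r_0)/\mathscr R$). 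Since the covering radius of $\ZZ^n$ is $\sqrt n/2$ — this is precisely why Definition~\ref{the definition of detachment} insists on $r_0 > \sqrt n/2$ — one gets $|(\Delta z)^\perp - y^\perp| \le \sqrt n/2 < r_0$, so your condition~(c) holds, while $\Delta z\cdot\nu \ge M - \sqrt n/2$ is automatically large enough for your cone condition~(b), since $|(\Delta z)^\perp| \le r + r_0$ is a priori bounded, and for the waiting condition~(d) once $T$ is chosen accordingly. In particular there is no rational/irrational dichotomy, no need to iterate shifts, and Proposition~\ref{a lattice point that is close to a hyperplane} is the wrong tool: it is built for the local comparison principle, where one needs a lattice translate with a \emph{prescribed small} $\nu$-component and bounded transverse displacement — essentially the opposite of what you need here, where $\Delta z\cdot\nu$ should be taken freely large.
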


\begin{proof}
	Let us only prove the case associated to the obstacle subsolution $\overline{\mathrm{U}}_{a}$, since the other case can be argued in a similar way. By Definition \ref{the definition of detachment}, there exist $r_{0} > \frac{\sqrt{n}}{2}$ and $T_{0} = T_{0}(\mu) > 0$, such that
	\begin{eqnarray*}
	\overline{\mathrm{U}}_{a}(\cdot,t) \prec_{(\Upomega(0,r_{0};\nu),\mu)} \mathrm{O}_{e}(\cdot,t), && t > T_{0}.
	\end{eqnarray*}
    Denote
    \begin{equation*}
    K := \Upomega(0, r_{0}; \nu) \cap \left\lbrace x \in \RR^{n} \big| sT_{0} \leq x \cdot \nu \leq sT_{0} + \sqrt{n}\right\rbrace. 
    \end{equation*}
    For any $x \in \Upomega(0,r;\nu)\diagdown \Upomega(0,r_{0};\nu)$, there exists $B \in \RR$, such that
    \begin{equation*}
    \tilde{x} := x + B\nu \in \left\lbrace y\in\RR^{n} \big| y\cdot \nu > s\left(T_{0} + \frac{\sqrt{n}}{s} + \frac{r + r_{0}}{\mathscr{R}}\right) \right\rbrace.
    \end{equation*}
    Then there exists $\tilde{\omega} \in K$, such that
    \begin{eqnarray*}
    \Delta z := \tilde{x} - \tilde{\omega} \in \ZZ^{n} &\text{and}& x - \Delta z \in \Upomega(0, r_{0}; \nu).
    \end{eqnarray*}
    Set $\Delta t := \frac{\Delta z \cdot \nu}{s}$, then $\Delta z$ and $\Delta t$ satisfy the assumptions in Proposition \ref{the birkhoff property for subsolution in an expanding domain}. Let us take $T(\mu, s, \mathscr{R}) := T_{0} + \Delta t$, then we have for any $t > T$ that
    \begin{eqnarray*}
    \overline{\mathrm{U}}_{a}(x,t) \leq \overline{\mathrm{U}}_{a}(x - \Delta z, t - \Delta t) \prec_{(\Upomega(0,r_{0};\nu), \mu)} \mathrm{O}_{e}\left(x - \Delta z, t - \Delta t \right) = \mathrm{O}_{e}(x,t).
    \end{eqnarray*}
\end{proof}

The next Lemma is a static version of Lemma \ref{the expansion of detachment}. i.e., we can expect that the sub and supersolution be away from its obstacle in $\Upomega(0,r;\nu)$, for any $r > 0$, as long as the obstacle sub and supersolution is initially defined in $\Upomega(0, R; \nu)$ with sufficiently large $R$.

\begin{lemma}\label{the expansion of detachment in a static domain for the obstacle subsolution}
	Fix $\nu \in \mathbb{S}^{n-1}\diagdown \RR\ZZ^{n}$, $\delta > 0$ and set $s := \overline{s}(\nu) + \delta$. Then for any $\mu \in \RR$ and $r > 0$, there exist $R = R(\nu, \mu,\delta, r) > 0$ and $T = T(\nu, \mu,\delta) > 0$, such that
	\begin{eqnarray*}
	\overline{\mathrm{U}}_{a}(\cdot,t) \prec_{(\Upomega(0,r; \nu), \mu)} \mathrm{O}_{e}(\cdot,t), && t > T,
	\end{eqnarray*}
    where
    \begin{eqnarray*}
    a := (\nu, R, 0, q, s) \in \mathbb{A} &\text{and}& e := (\nu, q, s) \in \mathbb{E}.
    \end{eqnarray*}
\end{lemma}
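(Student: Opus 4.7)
This is the static-domain analog of Lemma \ref{the expansion of detachment}, so the plan is to argue in parallel, with Proposition \ref{the birkhoff property for subsolutions in two static domains} playing the role of Proposition \ref{the birkhoff property for subsolution in an expanding domain}. That is, we compare $\overline{\mathrm{U}}_{a}$ on a wide cylinder of radius $R$ with a known detachment of $\overline{\mathrm{U}}_{a_0}$ on a narrow cylinder of radius $R_0$, via an integer shift $(\Delta z,\Delta t)\in\ZZ^{n}\times[0,\infty)$ with $s\Delta t\le \Delta z\cdot\nu$. Because the domain is static ($\mathscr{R}=0$), we cannot absorb the transverse component $|\Delta z-(\Delta z\cdot\nu)\nu|$ by expansion, so we must compensate by enlarging $R$ with $r$. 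The clean device is to force the equality $s\Delta t=\Delta z\cdot\nu$, which makes the obstacle $\mathrm{O}_e(x,t)=|q|(st-x\cdot\nu)$ invariant under the shift $(\Delta z,\Delta t)$, so detachment on the narrow cylinder transfers without distortion to the wide one.

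\textbf{Set-up and choice of shifts.}
By Definition \ref{the head speed in an irrational direction} applied to $s=\overline{s}(\nu)+\delta$, pick $R_0=R_0(\nu,\delta)>0$ so that $\overline{\mathrm{U}}_{a_0}$ detaches from $\mathrm{O}_e$, where $a_0:=(\nu,R_0,0,q,s)$ and $e:=(\nu,q,s)$. For the prescribed $\mu$, Definition \ref{the definition of detachment}(2) supplies $r_\mu>\sqrt{n}/2$ and $T_0=T_0(\mu)>0$ with
\[\overline{\mathrm{U}}_{a_0}(\cdot,t)\prec_{(\Upomega(0,r_\mu;\nu),\mu)}\mathrm{O}_e(\cdot,t),\qquad t>T_0.\]
Now fix $\epsilon\in(0,1)$. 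Using that $\nu\notin\RR\ZZ^{n}$ and applying Proposition \ref{a lattice point that is close to a hyperplane} (iterated across concentric spheres if necessary), produce a finite collection $\{\Delta z_1,\dots,\Delta z_N\}\subset\ZZ^{n}$ such that every $\Delta z_i$ satisfies $0<\Delta z_i\cdot\nu\le\epsilon$, and every $y\in\overline{B_r}\cap\nu^\perp$ lies within distance $r_\mu$ of some projection $\Delta z_i-(\Delta z_i\cdot\nu)\nu$. Automatically $|\Delta z_i-(\Delta z_i\cdot\nu)\nu|\le r+r_\mu$. Then set $R:=R_0+r+r_\mu$, $a:=(\nu,R,0,q,s)$, and $T:=T_0(\mu)+\epsilon/s$. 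Note that $T$ does not depend on $r$, since $\epsilon$ is chosen independently of $r$.

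\textbf{Transfer of detachment.}
Given $x\in\Upomega(0,r;\nu)$ and $t>T$, pick $\Delta z\in\{\Delta z_i\}$ with $|x^\top-\Delta z^\top|\le r_\mu$ (writing $v^\top:=v-(v\cdot\nu)\nu$), and set $\Delta t:=(\Delta z\cdot\nu)/s\le\epsilon/s$. Since $R-R_0\ge|\Delta z^\top|$ by construction, Proposition \ref{the birkhoff property for subsolutions in two static domains} yields
\[\overline{\mathrm{U}}_{a}(x,t)\le \overline{\mathrm{U}}_{a_0}(x-\Delta z,t-\Delta t),\]
with $x-\Delta z\in\Upomega(0,r_\mu;\nu)$ and $t-\Delta t>T_0(\mu)$. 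Because $q=-|q|\nu$ and $s\Delta t=\Delta z\cdot\nu$, a direct computation gives $\mathrm{O}_e(x,t)=\mathrm{O}_e(x-\Delta z,t-\Delta t)$. Combined with the detachment at the shifted point, this delivers both $\overline{\mathrm{U}}_{a}(x,t)<\mathrm{O}_e(x,t)$ and the level-set disjointness $L_\mu^+(\overline{\mathrm{U}}_{a}(\cdot,t);\Upomega(0,r;\nu))\cap L_\mu^-(\mathrm{O}_e(\cdot,t);\Upomega(0,r;\nu))=\emptyset$. The two $\pm\infty$ conditions in Definition \ref{the strict ordering relation between level sets of two functions} are automatic: $\mathrm{O}_e$ is linear, while the subbarrier $\overline{V}_{a}$ constructed inside the proof of Lemma \ref{the obstacle subsolution coincides with the obstacle on the boundary} forces $\overline{\mathrm{U}}_{a}(x,t)\to+\infty$ as $x\cdot\nu\to-\infty$ inside $\Upomega(0,r;\nu)$.

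\textbf{Main obstacle.}
The delicate step is the construction of the finite family $\{\Delta z_i\}$ with both properties simultaneously: $r_\mu$-density of projections across the whole disk $\overline{B_r}\cap\nu^\perp$ together with the uniform bound $\Delta z_i\cdot\nu\le\epsilon$, where $\epsilon$ is fixed independently of $r$. Proposition \ref{a lattice point that is close to a hyperplane} as stated only locates lattice points inside a fixed annular neighborhood of a single sphere $A\mathbb{S}^{n-1}\cap H_\nu$, so one must iterate it across several radii or directly extract the Kronecker-type density of $\ZZ^{n}$-projections onto $\nu^\perp$ (the irrationality of $\nu$ enters precisely here, as the detachment transfer would otherwise force $\Delta z\cdot\nu$ to be bounded away from $0$, making $T$ grow with $r$). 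Once these shifts are in hand, the rest reduces to bookkeeping based on Proposition \ref{the birkhoff property for subsolutions in two static domains} and the shift invariance of $\mathrm{O}_e$.
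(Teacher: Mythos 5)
Your overall skeleton is the same as the paper's: use the definition of $\overline{s}(\nu)$ to get detachment of $\overline{\mathrm{U}}_{a_0}$ from $\mathrm{O}_e$ on a small static cylinder $\Upomega(0,r_0;\nu)$ after time $T_0$, then transfer this to all of $\Upomega(0,r;\nu)$ via integer shifts $(\Delta z,\Delta t)$ with $s\Delta t=\Delta z\cdot\nu$, which leave $\mathrm{O}_e$ invariant and can be fed into Proposition~\ref{the birkhoff property for subsolutions in two static domains}. That part matches the paper exactly, including the choice $R>R_0+r+r_0$.

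However, the construction of the shifts has a genuine gap — one you yourself flag in your ``Main obstacle'' paragraph — and the fix you gesture at is the wrong one. You impose the extra constraint $0<\Delta z_i\cdot\nu\le\epsilon<1$, which you cannot easily arrange from Proposition~\ref{a lattice point that is close to a hyperplane} and which forces you into a Kronecker-density argument. But this constraint is not needed, and your parenthetical remark (``the irrationality of $\nu$ enters precisely here, as the detachment transfer would otherwise force $\Delta z\cdot\nu$ to be bounded away from $0$, making $T$ grow with $r$'') is incorrect: $T$ grows with $r$ only if $\Delta z\cdot\nu$ is unbounded \emph{above} as $x$ ranges over $\Upomega(0,r;\nu)$, not if it is bounded away from $0$. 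The paper avoids your difficulty by not projecting onto $\nu^\perp$ at all. For each $x\in\Upomega(0,r;\nu)$, first slide along $\nu$: choose $A\in\RR$ so that $\tilde{x}:=x+A\nu$ satisfies $\tilde{x}\cdot\nu-T_0 s\in(\sqrt{n},2\sqrt{n})$. Then one seeks $\hat{x}\in\Upomega(0,r_0;\nu)$ with $\hat{x}\cdot\nu-T_0 s\in(0,\sqrt{n})$ and $\tilde{x}-\hat{x}\in\ZZ^n$, and this is pure pigeonhole: the slab $\Upomega(0,r_0;\nu)\cap\{T_0 s<y\cdot\nu<T_0 s+\sqrt{n}\}$, being a cylinder of radius $r_0>\sqrt{n}/2$ and height $\sqrt{n}$, contains a ball of radius $\sqrt{n}/2$ and hence an axis-aligned unit cube, which must contain a point of the shifted lattice $\tilde{x}+\ZZ^n$. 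Setting $\Delta z:=\tilde{x}-\hat{x}$ gives $\Delta z\cdot\nu\in(0,2\sqrt{n})$, $|\Delta z-(\Delta z\cdot\nu)\nu|\le r+r_0$, and $\Delta t:=\Delta z\cdot\nu/s<2\sqrt{n}/m_0$, yielding $T:=T_0+2\sqrt{n}/m_0$ independent of $r$. No irrationality of $\nu$ is used in this step (it already entered via the definition of $\overline{s}(\nu)$); the lower bound $r_0>\sqrt{n}/2$ from Definition~\ref{the definition of detachment} is what makes the pigeonhole work. Replacing your Kronecker-density step by this cube argument closes the gap and gives exactly the paper's proof.
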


\begin{proof}
	As $s > \overline{s}(\nu)$, there exist three numbers
	\begin{eqnarray*}
	r_{0} > \frac{\sqrt{n}}{2}, \hspace{2mm} R_{0} = R_{0}(\nu, \mu, \delta) > 0 &\text{and}& T_{0} = T_{0}(\nu, \mu, \delta)> 0,
	\end{eqnarray*}
	such that (where $a_{0} := (\nu, R_{0}, 0, q, s) \in \mathbb{A}$)
	\begin{eqnarray}\label{detachment assumption in proving detachment in a large domain}
	\overline{\mathrm{U}}_{a_{0}}(\cdot,t) \prec_{(\Upomega(0,r_{0}; \nu),\mu)} \mathrm{O}_{e}(\cdot,t), && t > T_{0}.
	\end{eqnarray}
    Let us take $R$ and $T$, such that
    \begin{eqnarray*}
    R > R_{0} + r + r_{0} &\text{and}& T > T_{0} + \frac{2\sqrt{n}}{m_0}.
    \end{eqnarray*}
    For any $x \in \Upomega(0,r;\nu)$, let us choose $A \in \RR$, such that 
    \begin{eqnarray*}
    \tilde{x}\cdot\nu - T_{0}s \in \left(\sqrt{n}, 2\sqrt{n}\right) &\text{where}& \tilde{x} = x + A\nu.
    \end{eqnarray*}
    Then there exists $\hat{x} \in \Omega(0,r_{0})$, such that
    \begin{eqnarray*}
    \hat{x}\cdot \nu - T_{0}s \in (0, \sqrt{n}) &\text{and}& \Delta z := \tilde{x} - \hat{x} \in \ZZ^{n}.
    \end{eqnarray*}
    Let us set $\Delta t := \frac{\Delta z \cdot \nu}{s}$, then by Proposition \ref{the birkhoff property for subsolutions in two static domains}, we conclude that
    \begin{eqnarray*}
    \overline{\mathrm{U}}_{a}(x, t) = \overline{\mathrm{U}}_{a}(x - \Delta z + \Delta z, t - \Delta t + \Delta t) \leq \overline{\mathrm{U}}_{a_{0}}(x - \Delta z, t - \Delta t), &&  x \in \Upomega(0,R; \nu).
    \end{eqnarray*}
    Then for any $t > T$, we have that $t - \Delta t > T_{0}$, hence
    \begin{eqnarray*}
    \overline{\mathrm{U}}_{a}(\cdot, t) \prec_{(\Upomega(0, r; \nu), \mu)} \mathrm{O}_{e}(\cdot,t), && t > T,
    \end{eqnarray*}
    where we have used (\ref{detachment assumption in proving detachment in a large domain}) and the fact that $\mathrm{O}_{e}(x - \Delta z, t - \Delta t) = \mathrm{O}_{e}(x,t)$.

\end{proof}

\begin{lemma}\label{the expansion of detachment in a static domain for the obstacle supersolution}
	Fix $\nu \in \mathbb{S}^{n-1}\diagdown \RR\ZZ^{n}$, $\delta > 0$, and set $s := \underline{s}(\nu) - \delta$. Then for any $\mu \in \RR$ and $r > 0$, there exist $R = R(\nu, \mu, \delta,r) > 0$ and $T = T(\nu, \mu, \delta)> 0$, such that
	\begin{eqnarray*}
		 \mathrm{O}_{e}(\cdot,t) \prec_{(\Upomega(0,r;\nu), \mu)} \underline{\mathrm{U}}_{a}(\cdot,t), && t > T,
	\end{eqnarray*}
	where
	\begin{eqnarray*}
		a := (\nu, R, 0, q, s) \in \mathbb{A} &\text{and}& e := (\nu, q, s) \in \mathbb{E}.
	\end{eqnarray*}
\end{lemma}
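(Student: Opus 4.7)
The plan is to dualize the proof of the preceding Lemma \ref{the expansion of detachment in a static domain for the obstacle subsolution} by swapping in the supersolution Birkhoff property (Proposition \ref{the birkhoff property for supersolutions in two static domains}) and reversing the role of the obstacle and the obstacle solution in the detachment inequality.

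Since $s < \underline{s}(\nu)$, Definition \ref{the tail speed in an irrational direction} supplies $r_0 > \sqrt{n}/2$, $R_0 = R_0(\nu, \mu, \delta) > 0$ and $T_0 = T_0(\nu, \mu, \delta) > 0$ such that, with $a_0 := (\nu, R_0, 0, q, s)$,
\[
\mathrm{O}_e(\cdot, t) \prec_{(\Upomega(0, r_0; \nu), \mu)} \underline{\mathrm{U}}_{a_0}(\cdot, t), \qquad t > T_0.
\]
I would then take $R > R_0 + r + r_0$ and $T > T_0 + 2\sqrt{n}/s$. Given any $x \in \Upomega(0, r; \nu)$, I would pick $A \in \RR$ so that $\tilde x := x + A\nu$ satisfies $\tilde x \cdot \nu - T_0 s \in (\sqrt n, 2\sqrt n)$. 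Because $r_0 > \sqrt n/2$, the slice $\{y \in \Upomega(0, r_0; \nu) : y \cdot \nu - T_0 s \in (0, \sqrt n)\}$ contains a full unit cube and hence realizes every residue class modulo $\ZZ^n$; in particular it contains some $\hat x$ with $\Delta z := \tilde x - \hat x \in \ZZ^n$.

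Setting $\Delta t := (\Delta z \cdot \nu)/s > 0$ so that $s \Delta t = \Delta z \cdot \nu$ accomplishes two things simultaneously: it verifies the Birkhoff hypothesis $s \Delta t \geq \Delta z \cdot \nu > 0$, and it gives the shift-invariance $\mathrm{O}_e(x - \Delta z, t - \Delta t) = \mathrm{O}_e(x, t)$. Since the perpendicular displacement satisfies $|\Delta z - (\Delta z \cdot \nu)\nu| \leq r + r_0 < R - R_0$, Proposition \ref{the birkhoff property for supersolutions in two static domains} applied with $(a_1, a_2) = (a_0, a)$ yields
\[
\underline{\mathrm{U}}_{a_0}(x - \Delta z, t - \Delta t) \leq \underline{\mathrm{U}}_a(x, t).
\]
For $t > T$ one has $t - \Delta t > T_0$ (since $\Delta t < 2\sqrt n / s$) and $x - \Delta z \in \Upomega(0, r_0; \nu)$, so chaining this estimate with the inherited detachment for $\underline{\mathrm{U}}_{a_0}$ at level $\mu$ produces the desired strict ordering on $\Upomega(0, r; \nu)$.

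No substantive obstacle is anticipated: the argument runs in exact parallel to the subsolution case, and the supersolution Birkhoff property is arranged precisely so that the dual inequality $s \Delta t \geq \Delta z \cdot \nu$ is compatible with the same choice $\Delta t = (\Delta z \cdot \nu)/s$ that keeps the obstacle fixed under the space-time shift.
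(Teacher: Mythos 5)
Your proposal is correct and follows exactly the same route as the paper: the paper's own proof of this Lemma consists of the one-line remark that one repeats the argument of Lemma \ref{the expansion of detachment in a static domain for the obstacle subsolution} with Proposition \ref{the birkhoff property for supersolutions in two static domains} substituted for Proposition \ref{the birkhoff property for subsolutions in two static domains}, which is precisely what you do. You have also filled in the minor verifications the paper leaves implicit (that the slice contains a lattice representative because $r_0 > \sqrt{n}/2$, that $\Delta t = (\Delta z\cdot\nu)/s$ simultaneously meets the dual Birkhoff hypothesis $s\Delta t \geq \Delta z\cdot\nu$ and fixes the obstacle, and that the perpendicular shift stays below $R - R_0$).
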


\begin{proof}
	It is similar to the above Lemma \ref{the expansion of detachment in a static domain for the obstacle subsolution}. Instead of using Proposition \ref{the birkhoff property for subsolutions in two static domains}, Proposition \ref{the birkhoff property for supersolutions in two static domains} should be applied. Hence, we omit the details here.
\end{proof}

\subsubsection{The second version of local comparison principles}
Recall that in Proposition \ref{LCP in the unit scale}, we established an ordering relation between an obstacle subsolution with a fast obstacle speed and an obstacle supersolution with a slow obstacle speed. In this part, we shall compare two obstacle subsolutions with the obstacle speed over its tail speed. In this case, the obstacle subsolution detaches from its obstacle, so it is actually a supersolution. In order to apply the Birkhoff property with a right monotonicity, we shall require a shrinking domain. 

\begin{proposition}\label{the second local comparison principle subsolution}
	Let $(\nu, T, R)$ be a comparison consistent triplet (c.f. Definition \ref{comparison consistent triplet}), fix $\mu \in \RR$ and $s_{i} := \overline{s}(\nu) + \delta_{i}$, $i = 1, 2$, where $\delta_{2} > \delta_{1} > 0$ are two fixed numbers. Then for any $r > \max \left\lbrace R, R_{0}, \frac{12(3n + M_0 + 27)}{L_0}\right\rbrace $, $T > 0$, where $R_{0}$ is the radius such that the detachment occurs for the following $\overline{\mathrm{U}}_{a_{i}}$, $i = 1, 2$. Then there exist $A > 0$ (independent of $r$ and $T$), $R_{i}$ and $\mathscr{R}_{i} > 0$, $i = 1, 2$, such that the following (i) and (ii) hold.
	\begin{enumerate}
		\item [(i)] $R_{1} \geq R_{2} + \left( \mathscr{R}_{1} + \mathscr{R}_{2}\right)T$;
		\item [(ii)] $\overline{\mathrm{U}}_{a_{2}}(\cdot,t) \prec_{(\Upomega(0,r; \nu),\mu)} \overline{\mathrm{U}}_{a_{1}}(\cdot - \xi_{A}, t)$, $0 \leq t \leq T$,
	\end{enumerate}
    where
    \begin{equation*}
    a_{1} := (\nu, R_{1}, -\mathscr{R}_{1}, q, s_{1}), \hspace{2mm} a_{2} := (\nu, R_{2}, \mathscr{R}_{2}, q, s_{2}) \in \mathbb{A} \hspace{2mm} \text{and} \hspace{2mm} \xi_{A} \in \argmin\limits\limits_{\xi \in \ZZ^{n},\hspace{1mm} \xi \cdot \nu > A} |\xi|.
    \end{equation*}
\end{proposition}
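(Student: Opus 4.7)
The plan is to adapt the local comparison argument of Proposition \ref{LCP in the unit scale}, now with the \emph{subsolution} $\overline{\mathrm{U}}_{a_1}$ on a shrinking cylinder playing the role previously played by the obstacle supersolution $\underline{\mathrm{U}}_{a_1}$. The enabling observation is that, because $s_1 > \overline{s}(\nu)$, Lemma \ref{the expansion of detachment in a static domain for the obstacle subsolution} allows us to fix $R_0$ so that $\overline{\mathrm{U}}_{a_1}$ has strictly detached from its own obstacle $\mathrm{O}_{e_1}$ at level $\mu$ throughout $\Upomega(0, r; \nu) \times [0, T]$; by choosing $R_1 \geq R_0 + \mathscr{R}_1 T$ the same detachment persists in the shrinking cylinder. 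On this detached region, Proposition \ref{a semi obstacle solution becomes a solution when detachment happens}(i) promotes $\overline{\mathrm{U}}_{a_1}$ to a bona fide viscosity supersolution of the mean curvature flow, so the strategy of Proposition \ref{LCP in the unit scale} can be transplanted.

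The parameters are chosen in the following order. First, fix $\delta \in (0, \delta(T))$ from the comparison consistency of $(\nu, T, R)$. Next, take $A > 0$ large enough that the shift $\xi_A \in \ZZ^n$ with $\xi_A \cdot \nu > A$ creates initial separation between the $\mu$-level sets of $\overline{\mathrm{U}}_{a_1}(\cdot - \xi_A, 0)$ and $\overline{\mathrm{U}}_{a_2}(\cdot, 0)$ in $\Upomega(0, r; \nu)$; this is possible with $A$ depending only on $\mu$, $\delta_2 - \delta_1$, $q$ and the detachment constants, independently of $r$ and $T$. The speeds $\mathscr{R}_1, \mathscr{R}_2$ are chosen comparable to $4 M_0 R / \delta$ as in Proposition \ref{LCP in the unit scale}, and then $R_1, R_2$ are fixed so that condition (i), $R_1 \geq R_2 + (\mathscr{R}_1 + \mathscr{R}_2) T$, holds together with the containment $\Upomega(0, r; \nu) \cup \mathrm{C}_{d_2}(t) \subseteq \mathrm{C}_{d_1}(t) + \xi_A$ for every $t \in [0, T]$.

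Assume by contradiction that the conclusion fails, and let $t_0 \in (0, T]$ be the first crossing time at level $\mu$. A local application of the standard comparison principle (Proposition \ref{the usual comparison principle}) in bounded subdomains, together with the supersolution upgrade of Step 1, forces the first crossing point $x_0$ to lie on $\partial \Upomega(0, r; \nu)$. Apply the discrepancy Proposition \ref{a lattice point that is close to a hyperplane} at $x_0$ to obtain $\Delta z \in \ZZ^n$ with $\tfrac{\delta}{3} < (\Delta z - x_0) \cdot \nu < \delta$ and $|\Delta z - 2 x_0| < R/3$. Using the expanding-domain Birkhoff Proposition \ref{the birkhoff property for subsolution in an expanding domain} for $\overline{\mathrm{U}}_{a_2}$ with time increment $\sigma / s_2$ (where $\sigma = |\Delta z \cdot \nu|$), and the shrinking-domain Birkhoff Proposition \ref{the birkhoff property for subsolution in a shrinking domain} for $\overline{\mathrm{U}}_{a_1}(\cdot - \xi_A, \cdot)$ with time increment $\sigma / m_0$, we propagate the strict ordering from $t = t_0 - \sigma/m_0 < t_0$ to obtain a strict inequality between $\overline{\mathrm{U}}_{a_2}(\cdot, t_0 - \Delta t)$ and $\overline{\mathrm{U}}_{a_1}(\cdot - \xi_A, t_0)$ on $\Upomega(x_0, R/3; \nu)$, with net time gap $\Delta t = \sigma/m_0 - \sigma/s_2 > 0$ playing the role of $\sigma/s_1 - \sigma/s_2$ in Proposition \ref{LCP in the unit scale}.

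Finally, combine the finite-speed subsolution propagation Proposition \ref{finite speed propagation of subsolution} with the $(\gamma, \varphi)$ inf-convolution of $\overline{\mathrm{U}}_{a_1}(\cdot - \xi_A, \cdot)$, where $\gamma(t) = \tfrac{1}{2} e^{-2 L_0 t}$ and $\varphi$ is the quadratic bump defined in Proposition \ref{LCP in the unit scale}. The evolution law Proposition \ref{the evolution law of inf convolution} ensures that this inf-convolution remains a pseudo viscosity supersolution on the detached region, and the strict ordering on the parabolic boundary of $\Upomega(x_0, R/3; \nu) \times (\tfrac{1}{2(M_0 - m_0)}, t_0 + \tau)$, combined with the pseudo comparison principle, contradicts the supposed contact at $(x_0, t_0)$. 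The principal technical obstacle is the asymmetry between the expanding-domain time shift $\sigma/s_2$ and the shrinking-domain one $\sigma/m_0$: in the shrinking-domain Birkhoff inequality the constraint on $\Delta z \cdot \nu$ uses the uniform lower bound $m_0$ rather than $s_1$, so the gap $\Delta t$ is larger than in Proposition \ref{LCP in the unit scale} and the shrinking speed $\mathscr{R}_1$ must be taken correspondingly large to keep $\Delta t$ within the admissibility window of Proposition \ref{finite speed propagation of subsolution}. This is ultimately what forces the linear-in-$T$ dependence of $R_1$ in condition (i).
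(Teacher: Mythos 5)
Your proposal follows the paper's proof essentially step for step: use detachment of $\overline{\mathrm{U}}_{a_1}$ (valid since $s_1 > \overline{s}(\nu)$) to upgrade its level set to a supersolution via Proposition \ref{a semi obstacle solution becomes a solution when detachment happens}, fix the shift $\xi_A$ and the domain parameters so that initial separation and the containment (i) both hold, and then run the Proposition \ref{LCP in the unit scale}-style first-crossing argument with the inf-convolution, the discrepancy lemma, and the two Birkhoff propositions (expanding for $\overline{\mathrm{U}}_{a_2}$ with increment $\sigma/s_2$, shrinking for $\overline{\mathrm{U}}_{a_1}$ with increment $\sigma/m_0$, giving the time gap $\Delta t = \sigma/m_0 - \sigma/s_2$). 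Aside from minor slips (you should start the parabolic-boundary comparison at the detachment time $T_0$ rather than at $\tfrac{1}{2(M_0-m_0)}$, and the paper formalizes the ``supersolution upgrade'' via level-set characteristic functions $\mathrm{Z},\mathrm{Y},\mathrm{W}$ rather than promoting the whole function), the argument is the same as the paper's.
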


\begin{proof}
	The key idea of the argument is similar to that of Proposition \ref{LCP in the unit scale}. Since there are several modifications in both the Proposition and the proof, we still provide the details as follows. Because $s_{i} > \overline{s}(\nu)$, there exist $T_{0} > 0$ and $R_{0}  > 2r$, such that
	\begin{eqnarray*}
	\overline{\mathrm{U}}_{b_{i}}(\cdot,t) \prec_{(\Upomega(0,2r; \nu),\mu)} \mathrm{O}_{e}(\cdot,t), \hspace{2mm} t > T_{0}, &\text{where}& b_{i} := (\nu, R_{0}, 0, q, s_{i}), \hspace{2mm} i = 1, 2.
	\end{eqnarray*}
   Let us set $A := (s_{2} - m_0)T_{0} + 1$ and $R_{2} > R_{0}$, then 
   \begin{eqnarray*}
   \overline{\mathrm{U}}_{a_{2}}(\cdot,t) \prec_{(\Upomega(0,2r;\nu), \mu)} \left( \overline{\mathrm{U}}_{a_{1}}\right)_{*}(x - \xi_{A},t), && 0 \leq t \leq T_{0}.
   \end{eqnarray*}
   Recall the Proposition \ref{the birkhoff property for subsolutions in two static domains} (with $\Delta z = 0$ and $\Delta t = 0$), we get that
   \begin{eqnarray*}
   \overline{\mathrm{U}}_{a_{i}}(\cdot,t) \prec_{(\Upomega(0,2r;\nu),\mu)} \overline{\mathrm{U}}_{b_{i}}(\cdot,t) \prec_{(\Upomega(0,2r;\nu),\mu)} \mathrm{O}_{e}(\cdot,t), && T_{0} < t < \infty, \hspace{2mm} i = 1, 2.
   \end{eqnarray*} 
   Let us set
   \begin{eqnarray*}
   \mathrm{Z}(x,t) &:=& \begin{cases}
   1, & x \in L_{\mu}^{+}\left( \overline{\mathrm{U}}_{a_{2}}(\cdot, t); \Upomega(0, 2r; \nu)\right),\\
   -1, & x \in \Upomega(0, 2r; \nu) \diagdown L_{\mu}^{+}\left( \overline{\mathrm{U}}_{a_{2}}(\cdot, t); \Upomega(0, 2r; \nu)\right),
   \end{cases}\\
   \mathrm{Y}(x,t) &:=& \begin{cases}
   	2, & x \in \Upomega(0, 2r; \nu) \diagdown L_{\mu}^{-}\left(\left( \overline{\mathrm{U}}_{a_{1}}\right)_{*}(\cdot - \xi_{A}, t); \Upomega(0, 2r; \nu) \right), \\
   	0, & x \in L_{\mu}^{-}\left(\left(  \overline{\mathrm{U}}_{a_{1}}\right)_{*}(\cdot - \xi_{A},t); \Upomega(0, 2r; \nu)\right). 
   \end{cases}
   \end{eqnarray*}
   Because $\overline{\mathrm{U}}_{a_{1}}(x,t)$ detaches from the obstacle $\mathrm{O}_{e}(x,t)$ in the $\mu$ level set, by upper semicontinuity of $\overline{\mathrm{U}}_{a_{1}}(x,t)$, Proposition \ref{a semi obstacle solution becomes a solution when detachment happens}, the operator $\mathscr{F}(\cdot, \cdot, \cdot)$ is geometric and the fact that $\xi_{A} \in \ZZ^{n}$, we can conclude that $\mathrm{Y}(x,t)$ is a viscosity supersolution for $x\in \Upomega(0, 2r; \nu)$ and $t > T_{0}$. In order to prove the Proposition, we assume on the contrary as follows, where $\mathrm{U}(\cdot, t) := \left( \overline{\mathrm{U}}_{a_{1}}\right)_{*}(\cdot - \xi_{A}, t)$:
   \begin{eqnarray}\label{the opposite assumption in the second local comparison principle}
   \sup\left\lbrace t > 0 \big| \overline{\mathrm{U}}_{a_{2}}(\cdot,t) \prec_{(\Upomega(0,r; \nu), \mu)} \mathrm{U}(\cdot,t)\right\rbrace \leq T.
   \end{eqnarray}
   \underline{Step 1.} Let us define (recall Definition \ref{comparison consistent triplet})
   \begin{eqnarray*}
   \gamma(t) := \frac{1}{2}e^{-2L_0t} &\text{and}& h := (\gamma(t), 1) \in \mathbb{F}.
   \end{eqnarray*}
   Let $\mathrm{U}^{h_{-}}(x,t)$ be the $h$ inf-convolution of $\mathrm{U}(x,t)$ by Definition \ref{the definition of the inf convolution}. Then the above assumption (\ref{the opposite assumption in the second local comparison principle}) implies that
   \begin{equation}\label{the first touching time in the second local comparison principle}
   T_{0} < t_{0} := \sup \left\lbrace t > 0 \big| \overline{\mathrm{U}}_{a_{2}}(\cdot,t) \prec_{(\Upomega(0,r;\nu), \mu)} \mathrm{U}^{h_{-}}(\cdot,t) \right\rbrace \leq T.
   \end{equation}
   Assume $x_{0}$ is the first touching point between the $\mu$ superlevel set of $\overline{\mathrm{U}}_{a_{2}}$ and the $\mu$ sublevel set of $\mathrm{U}^{h_{-}}$ in $\Upomega(0,r;\nu)$. i.e.,
   \begin{eqnarray*}
   \overline{\mathrm{U}}_{a_{2}}(x_{0},t_{0}) = \mathrm{U}^{h_{-}}(x_{0},t_{0}) = \mu &\text{and}&  \mathrm{Z}(x_{0},t_{0}) = 1 > 0 = \mathrm{Y}^{h_{-}}(x_{0},t_{0}).
   \end{eqnarray*}
   By applying the comparison principle (i.e., Proposition \ref{the usual comparison principle}) to $\mathrm{Z}$ and $\mathrm{Y}$ in $\Upomega(0,r;\nu)$, the maximum difference of $\mathrm{Z}(x,t) - \mathrm{Y}(x,t)$ is achieved on $\partial\Upomega(0,r;\nu)$. As $h$ does not depend on the space variable, we must have that $x_{0} \in \partial\Upomega(0,r;\nu)$.\\
  \underline{Step 2.}   
  By the choice of $r$, there exists $z_{0} \in \RR^{n}$ with the following (i)-(iii) hold:
  \begin{equation}\label{the choice of the integer in the second local comparison principle}
  (i) \hspace{1mm} \frac{\delta}{3} < (z_{0} - x_{0})\cdot\nu < \delta; \hspace{4mm} (ii) \hspace{1mm} |z_{0} - 2x_{0}| < \frac{r}{3}; \hspace{4mm}
  (iii) \hspace{1mm} z_{0} - x_{0} \in \ZZ^{n},
  \end{equation}
  where $0 < \delta < \delta(T)$ with $\delta(T)$ defined in Definition \ref{comparison consistent triplet}. Therefore, there exists $C > 1$, such that $\delta$ can be written as follows:
   \begin{equation}\label{the small vertical shift in the second local comparson principle}
   \delta := \left( \frac{M_0 m_0}{M_0 - m_0}\right) \cdot \frac{(C - 1)\gamma^{2}(T)}{(n - 1)C^{2} + C(C - 1)\gamma(T)M_0} > 0.
   \end{equation}
  \underline{Step 3.} Let us investigate the ordering relation between the $\mu$ superlevel set of $\overline{\mathrm{U}}_{a_{2}}(x,t)$ and the $\mu$ sublevel set of $\mathrm{U}^{h_{-}}(x,t)$ in $\Upomega(x_{0},\frac{r}{3}; \nu)$. Let us introduce the notations
  \begin{equation}\label{some notations regarding the interger shift in the second local comparison principle}
  \Delta z := z_{0} - x_{0}; \hspace{2mm} \sigma := |\Delta z \cdot \nu|; \hspace{2mm} \Delta t := \frac{\sigma}{m_0} - \frac{\sigma}{s_{2}}; \hspace{2mm} \mathscr{R}_{1} = \mathscr{R}_{2} := \frac{4M_0R}{\delta}.
  \end{equation}
  Then Proposition \ref{the birkhoff property for subsolution in an expanding domain} indicates that
  \begin{equation}\label{the birkhoff property applied to the subsolution the second local comparison principle}
  \overline{\mathrm{U}}_{a_{2}}(x,t) \leq \overline{\mathrm{U}}_{a_{2}}\left( x - \Delta z, t - \frac{\sigma}{s_{2}}\right), \hspace{2mm} x \in \Upomega\left(x_{0},\frac{r}{3}; \nu \right), \hspace{2mm} \frac{\sigma}{s_{2}} \leq t \leq t_{0} - \Delta t.
  \end{equation}
  Because of the inclusion $\Upomega(0,\frac{r}{3};\nu) + \Delta z\subseteq \Upomega(0,r;\nu)$ and (\ref{the first touching time in the second local comparison principle}), we have that
  \begin{equation}\label{the ordering relation at an earlier moment the second local comparison principle}
  \overline{\mathrm{U}}_{a_{2}}\left(\cdot, t - \frac{\sigma}{s_{2}}\right) \prec_{(\Upomega(0,r;\nu), \mu)} \mathrm{U}^{h_{-}}\left(\cdot, t - \frac{\sigma}{s_{2}}\right), \hspace{2mm} \frac{\sigma}{s_{2}} \leq t \leq t_{0} - \Delta t  
  \end{equation}
  The Proposition \ref{the birkhoff property for subsolution in a shrinking domain} applied to the subsolution $\overline{\mathrm{U}}_{a_{1}}$ with above shift $\Delta z$ shows that
  \begin{eqnarray*}
  \overline{\mathrm{U}}_{a_{1}}\left(y,t - \frac{\sigma}{s_{2}}\right) \leq \overline{\mathrm{U}}_{a_{1}}\left( y + \Delta z, t + \frac{\sigma}{m_0} - \frac{\sigma}{s_{2}}\right) = \overline{\mathrm{U}}_{a_{1}}\left( y + \Delta z, t + \Delta t\right), \hspace{2mm} y \in \Upomega(0, r; \nu), \hspace{2mm} \frac{\sigma}{s_{2}} \leq t \leq t_{0} - \Delta t.
  \end{eqnarray*}
  Then apply the $\xi_{A}$ shift and the $h$ inf-convolution to both sides, we conclude that
  \begin{equation}\label{the birkhoff property applied to the subsolution in a shrinking domain the second local comparison principle}
  \mathrm{U}^{h_{-}}\left( y, t - \frac{\sigma}{s_{2}}\right) \leq \mathrm{U}^{h_{-}}(y + \Delta z, t + \Delta t), \hspace{2mm} y \in \Upomega(0,r;\nu), \hspace{2mm} \frac{\sigma}{s_{2}} \leq t \leq t_{0} - \Delta t. 
  \end{equation}
  A combination of (\ref{the birkhoff property applied to the subsolution the second local comparison principle}), (\ref{the ordering relation at an earlier moment the second local comparison principle}) and (\ref{the birkhoff property applied to the subsolution in a shrinking domain the second local comparison principle}) gives the relation
  \begin{eqnarray*}
  \overline{\mathrm{U}}_{a_{2}}(\cdot, t) \prec_{\left(\Upomega\left(x_{0},\frac{r}{3}; \nu\right), \mu\right)} \mathrm{U}^{h_{-}}(\cdot, t + \Delta t), \hspace{2mm} \frac{\sigma}{s_{2}} < t < t_{0} - \Delta t.
  \end{eqnarray*}
  In particular,
  \begin{eqnarray*}
	\overline{\mathrm{U}}_{a_{2}}(\cdot, t_{0} - \Delta t) \prec_{\left(\Upomega\left( 0, \frac{r}{3}; \nu\right), \mu\right)} \mathrm{U}^{h_{-}}(\cdot, t_{0}).
  \end{eqnarray*}  
  By the choice of $\Delta t$ through (\ref{the first touching time in the second local comparison principle}), (\ref{the small vertical shift in the second local comparson principle}), (\ref{the choice of the integer in the second local comparison principle}) and (\ref{some notations regarding the interger shift in the second local comparison principle}), we have that
  \begin{equation*}
  0 < \Delta t < \frac{(C - 1)\gamma^{2}(t_{0})}{(n - 1)C^{2} + C(C - 1)\gamma(t_{0})M_0}.
  \end{equation*}
  Then the Proposition \ref{finite speed propagation of subsolution} implies that
  \begin{equation*}
  \overline{\mathrm{U}}_{a_{2}}(\cdot,t_{0}) \prec_{\left(\Upomega\left( 0,\frac{r}{3}; \nu\right), \mu\right)} \mathrm{U}^{\hat{h}_{-}}(\cdot,t_{0}), \hspace{2mm} \text{where} \hspace{2mm} \hat{h} := \left( \left( 1 - \frac{1}{C}\right)\gamma(t), 1 \right) \in \mathbb{F}.
  \end{equation*}
  \underline{Step 4.} Let us construct $\varphi(x) : \Upomega\left( x_{0}, \frac{r}{3}; \nu\right) \rightarrow (0, \infty)$ as follows:
  \begin{equation*}
  \varphi(x) := - \frac{9(1 + C)}{2Cr^{2}}|(x - x_{0})^{\top}|^{2} + \frac{1}{2}\left( 3 - \frac{1}{C}\right), 
  \end{equation*}
  where $(x - x_{0})^{\top} := (x - x_{0}) - ((x - x_{0})\cdot \nu)\nu$. Then $\varphi(x)$ satisfies that
  	\begin{eqnarray*}
  		\begin{cases}
  			\varphi|_{\left\lbrace x | (x - x_{0})^{\top} = 0\right\rbrace } = \frac{1}{2}\left( 3 - \frac{1}{C}\right) > 1, \\
  			\varphi|_{\left\lbrace x | |(x - x_{0})^{\top}| = \frac{r}{3}\right\rbrace } = 1 - \frac{1}{C},
  		\end{cases} &\text{and}& \begin{cases}
  		\lVert D\varphi\rVert_{\infty} \leq \frac{6}{r},\\
  		\lVert D^{2}\varphi\rVert_{\infty} \leq \frac{18}{r^{2}}.
  	\end{cases}
  \end{eqnarray*}
  Then, because $r \geq \max\left\lbrace 6, \frac{12(3n + M_0 + 27)}{L_0} \right\rbrace $, we have that
  \begin{eqnarray*}
  	\gamma^{\prime}(t) + \left( \frac{\left( n + 1\right) \lVert D^{2}\varphi \rVert_{\infty}}{\varphi(x)} + \frac{M_0|D\varphi(x)|}{\varphi(x)}
  	+ L_0\right)\gamma(t) + \frac{|D\varphi(x)|^{2}\gamma(t)}{\left( 1 - \gamma(t)|D\varphi(x)|\right)^{2}\varphi^{2}(x)} &\leq& 0.
  \end{eqnarray*}
  Then, by Proposition \ref{the evolution law of inf convolution}, $\mathrm{U}^{(\gamma,\varphi)}$ is a pseudo viscosity supersolution in $\Upomega\left( x_{0}, \frac{r}{3}; \nu\right) $. Based on the finite speed of propagation regarding the subsolution (c.f. Proposition \ref{finite speed propagation of subsolution}), there exists $\tau > 0$ such that
  \begin{eqnarray}\label{the strict ordering relation on the boundary in the second local comparison principle}
  \overline{\mathrm{U}}_{a_{2}}(\cdot,t) \prec_{(\partial\Upomega\left(x_{0}, \frac{r}{3}; \nu\right), \mu)} \mathrm{U}^{(\gamma,\varphi)_{-}}(\cdot,t), && T_{0} \leq t \leq t_{0} + \tau.
  \end{eqnarray}
  Note that the following strict ordering relation holds.
  \begin{equation}\label{an strict ordering relation at the initial moment in the second local comparison principle}
  \overline{\mathrm{U}}_{a_{2}}\left( \cdot, T_{0}\right)  \prec_{\left(\Upomega\left( x_{0}, \frac{r}{3}; \nu\right), \mu \right)} \mathrm{U}^{(\gamma,\varphi)_{-}}\left( \cdot, T_{0}\right).
  \end{equation}
  Let us introduce the function
  \begin{equation*}
  \mathrm{W}(x,t) := \begin{cases}
  2, & x \in \Upomega\left( x_{0}, \frac{r}{3}; \nu\right) \diagdown L_{\mu}^{-}\left( \mathrm{U}^{(\gamma, \varphi)}(\cdot, t); \Upomega\left( x_{0}, \frac{r}{3}; \nu\right) \right), \\
  0, & L_{\mu}^{-}\left( \mathrm{U}^{(\gamma, \varphi)}(\cdot, t); \Upomega\left( x_{0}, \frac{r}{3}; \nu\right) \right).
  \end{cases}
  \end{equation*}
  Since $\mathscr{F}(\cdot,\cdot,\cdot)$ is geometric, $\mathrm{W}(x,t)$ is a pseudo viscosity supersolution in $\Upomega(x_{0}, \frac{r}{3};\nu)$. \\
  \underline{Step 5.} Based on observations of (\ref{the strict ordering relation on the boundary in the second local comparison principle}), (\ref{an strict ordering relation at the initial moment in the second local comparison principle}), we have equivalently that
  \begin{eqnarray*}
  \mathrm{Z}(x,t) < \mathrm{W}(x,t), && (x,t) \in \left( \Upomega\left( x_{0}, \frac{r}{3}; \nu\right) \times \left\lbrace T_{0}\right\rbrace \right) \cup \left( \partial\Upomega\left( x_{0}, \frac{r}{3}; \nu\right) \times \left[ T_{0}, t_{0} + \tau\right]  \right). 
  \end{eqnarray*}
  Then we apply Proposition \ref{the comparison principle regarding pseudo viscosity solutions} to the above $\mathrm{Z}$ and $\mathrm{W}$ and conclude that
  \begin{eqnarray*}
  \mathrm{Z}(x,t) < \mathrm{W}(x,t), && T_{0} \leq t \leq t_{0} + \tau, \hspace{2mm} x \in \Upomega\left( x_{0}, \frac{r}{3}; \nu\right). 
  \end{eqnarray*}
  On the other hand, recall that $h = \left( \gamma(t), 1\right)$ and consider the facts
  \begin{eqnarray*}
  \overline{\mathrm{U}}_{a_{2}}(x_{0},t_{0}) = \mathrm{U}^{h_{-}}(x_{0},t_{0}) = \mu &\text{and}&  \varphi|_{\left\lbrace x | (x - x_{0})^{\top} = 0\right\rbrace } = \frac{1}{2}\left( 3 - \frac{1}{C}\right) > 1.
  \end{eqnarray*}
  It follows that $\mathrm{Z}(x_{0}, t_{0}) = 1 > 0 = \mathrm{W}(x_{0}, t_{0})$, which is a contradiction.
  
\end{proof}

\begin{proposition}\label{the second local comparison principle supersolution}
	Let $(\nu, T, R)$ be a comparison consistent triplet (c.f. Definition \ref{comparison consistent triplet}), fix $\mu \in \RR$ and $s_{i} := \underline{s}(\nu) - \delta_{i}$, $i = 1, 2$, where $\frac{m_0}{2} > \delta_{2} > \delta_{1} > 0$ are two fixed numbers. Then for any $r > \max \left\lbrace R, \frac{12(3n + M_0 + 27)}{L_0}\right\rbrace $, $T > 0$, there exist $A > 0$(independent of $r$ and $T$), $R_{i}$ and $\mathscr{R}_{i} > 0$, $i = 1, 2$, such that the following (i) and (ii) hold.
	\begin{enumerate}
		\item [(i)] $R_{1} \geq R_{2} + \left( \mathscr{R}_{1} + \mathscr{R}_{2}\right)T$;
		\item [(ii)] $\underline{\mathrm{U}}_{a_{1}}(\cdot,t) \prec_{(\Upomega\left( 0, r; \nu\right), \mu )} \underline{\mathrm{U}}_{a_{2}}(\cdot + \xi_{A}, t)$, $0 \leq t \leq T$;
	\end{enumerate}
	where
	\begin{eqnarray*}
	a_{1} := (\nu, R_{1}, -\mathscr{R}_{1}, q, s_{1}), \hspace{2mm} a_{2} := (\nu, R_{2}, \mathscr{R}_{2}, q, s_{2}) \in \mathbb{A} &\text{and}& \xi_{A} \in \argmin\limits\limits_{\xi \in \ZZ^{n},\hspace{1mm} \xi \cdot \nu > A} |\xi|.
	\end{eqnarray*}
\end{proposition}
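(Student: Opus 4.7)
The plan is to mirror the proof of Proposition \ref{the second local comparison principle subsolution} with the roles of sub- and super-solutions interchanged. Since $s_1, s_2 < \underline{s}(\nu)$, both $\underline{\mathrm{U}}_{a_i}$ detach from their obstacles (Lemma \ref{the expansion of detachment in a static domain for the obstacle supersolution}), and by Proposition \ref{a semi obstacle solution becomes a solution when detachment happens} this means that $(\underline{\mathrm{U}}_{a_1})^{*}$ is a subsolution on the detached region — the structural property that powers the contradiction argument below. The inf-convolution is applied to the right-hand function $\mathrm{V}(x,t) := \underline{\mathrm{U}}_{a_2}(x+\xi_A, t)$, which remains a pseudo supersolution after inf-convolution by Proposition \ref{the evolution law of inf convolution}.

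Concretely, I would first fix $T_0, R_0, A, R_i, \mathscr{R}_i$: use Lemma \ref{the expansion of detachment in a static domain for the obstacle supersolution} with ambient width $2r$ to obtain $T_0, R_0$ with $\mathrm{O}_{e_i}(\cdot,t) \prec_{(\Upomega(0,2r;\nu),\mu)} \underline{\mathrm{U}}_{b_i}(\cdot,t)$ for $t > T_0$ where $b_i := (\nu, R_0, 0, q, s_i)$; pick $A > 0$ independent of $r, T$ so that the initial ordering in (ii) holds throughout $[0, T_0]$ via the a priori envelope $\mathrm{O}_{e_i} \leq \underline{\mathrm{U}}_{a_i} \leq -|q|(x\cdot\nu - M_0 t)$; and set $R_2 > R_0$, $\mathscr{R}_1 = \mathscr{R}_2 := 4M_0 R/\delta$, $R_1 := R_2 + (\mathscr{R}_1 + \mathscr{R}_2)T$ so that (i) holds. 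Then, assuming for contradiction that with $h := (\gamma(t), 1)$, $\gamma(t) := \tfrac12 e^{-2L_0 t}$, the first failure time $t_0 := \sup\{t > 0 : \underline{\mathrm{U}}_{a_1}(\cdot, t) \prec_{(\Upomega(0,r;\nu),\mu)} \mathrm{V}^{h_-}(\cdot, t)\}$ lies in $(T_0, T]$, localize the first touching point $x_0$ on $\partial\Upomega(0, r; \nu)$ via Proposition \ref{the usual comparison principle}. By comparison consistency of $(\nu, T, R)$, select $\delta \in (0, \delta(T))$ and $z_0 \in \RR^n$ with $(z_0 - x_0)\cdot\nu \in (\delta/3, \delta)$, $|z_0 - 2x_0| < r/3$, and $\Delta z := z_0 - x_0 \in \ZZ^n$. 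With $\sigma := \Delta z\cdot\nu$ and $\Delta t := \sigma/m_0 - \sigma/s_2$, the shrinking-domain Birkhoff property (Proposition \ref{the birkhoff property for supersolution in a shrinking domain}) applied to $\underline{\mathrm{U}}_{a_1}$, the expanding-domain Birkhoff property (Proposition \ref{the birkhoff property for supersolution in an expanding domain}) applied to $\underline{\mathrm{U}}_{a_2}$, the assumed ordering at time $t_0 - \Delta t$, and the finite-speed-propagation estimate (Proposition \ref{finite speed propagation of subsolution}, applied with the detached $\underline{\mathrm{U}}_{a_1}$ as subsolution) combine to deliver $\underline{\mathrm{U}}_{a_1}(\cdot, t_0) \prec \mathrm{V}^{\hat h_-}(\cdot, t_0)$ on $\Upomega(x_0, r/3; \nu)$ with $\hat h := ((1 - 1/C)\gamma(t), 1)$.

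To close the argument, introduce the bump $\varphi(x) := -\frac{9(1+C)}{2Cr^2}|(x-x_0)^\top|^2 + \frac{1}{2}(3 - 1/C)$ on $\Upomega(x_0, r/3;\nu)$. The threshold $r \geq 12(3n + M_0 + 27)/L_0$ is calibrated exactly so that $(\gamma, \varphi)$ satisfy the differential inequality (\ref{the assumption in the inf convolution}), and hence $\mathrm{V}^{(\gamma,\varphi)_-}$ is a pseudo supersolution by Proposition \ref{the evolution law of inf convolution}. Defining the geometric indicator functions $\mathrm{Z}, \mathrm{W}$ from the $\mu$-level sets of $\underline{\mathrm{U}}_{a_1}$ (a subsolution in the detached region) and $\mathrm{V}^{(\gamma,\varphi)_-}$ as in the subsolution proof, Proposition \ref{the comparison principle regarding pseudo viscosity solutions} yields $\mathrm{Z}(x_0, t_0) < \mathrm{W}(x_0, t_0)$; on the other hand, since $\varphi(x_0) = \tfrac12(3 - 1/C) > 1 > \gamma(t_0)$ and $\underline{\mathrm{U}}_{a_1}(x_0, t_0) = \mathrm{V}^{h_-}(x_0, t_0) = \mu$, we obtain $\mathrm{Z}(x_0, t_0) = 1 > 0 = \mathrm{W}(x_0, t_0)$, the desired contradiction. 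The main technical obstacle, shared with the subsolution case, is the simultaneous coordination of the lattice-recurrence scale $\delta$, the Birkhoff time shift $\Delta t$, the inf-convolution radius $\gamma$, and the spatial bump $\varphi$, so that $\mathrm{V}^{(\gamma,\varphi)_-}$ remains a pseudo supersolution and $\Delta t$ fits the small-time-step window $0 < \Delta t < (C-1)\gamma^2(t_0)/[(n-1)C^2 + C(C-1)\gamma(t_0) M_0]$ needed to invoke Proposition \ref{finite speed propagation of subsolution}.
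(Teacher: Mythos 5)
Your proposal follows exactly the route the paper indicates: mirror Proposition \ref{the second local comparison principle subsolution} with the supersolution Birkhoff properties (Propositions \ref{the birkhoff property for supersolution in an expanding domain}, \ref{the birkhoff property for supersolutions in two static domains}, \ref{the birkhoff property for supersolution in a shrinking domain}) replacing the subsolution ones, with detachment via Proposition \ref{a semi obstacle solution becomes a solution when detachment happens}(ii) turning $(\underline{\mathrm{U}}_{a_1})^{*}$ into a subsolution, and with the inf-convolution applied to the supersolution side $\mathrm{V}=\underline{\mathrm{U}}_{a_2}(\cdot+\xi_A,\cdot)$. The architecture (contradiction at the first touching time, lattice recurrence, finite-speed propagation, the $\varphi$-bump, Proposition \ref{the comparison principle regarding pseudo viscosity solutions}) is all correctly identified.

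There is however one calibration that was carried over verbatim and should have been adjusted. You set $\Delta t := \sigma/m_0 - \sigma/s_2$, copying the subsolution proof. But the admissibility condition in the \emph{subsolution} shrinking Birkhoff property (Proposition \ref{the birkhoff property for subsolution in a shrinking domain}) reads $m_0\Delta t \geq \Delta z\cdot\nu$, producing the $\sigma/m_0$ contribution, whereas the \emph{supersolution} shrinking Birkhoff property (Proposition \ref{the birkhoff property for supersolution in a shrinking domain}) reads $\Delta z\cdot\nu \geq M_0 \Delta t$, producing a $\sigma/M_0$ contribution with the opposite sign; the expanding-domain supersolution Birkhoff contributes $\sigma/s_2$ in the forward direction. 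The correct net time shift is therefore $\Delta t = \sigma/s_2 - \sigma/M_0$, which is positive because $s_2 < M_0$. Your expression $\sigma/m_0 - \sigma/s_2$ need not even be positive: since $\delta_2 < m_0/2$ only gives $s_2 = \underline{s}(\nu)-\delta_2 > m_0/2$, one cannot conclude $s_2 > m_0$, so your $\Delta t$ can be negative, invalidating the Birkhoff step and the application of Proposition \ref{finite speed propagation of subsolution}. This is also what the paper's brief remark about ``$m_0$ replaced by $m_0/2$'' is pointing at: with $\Delta t = \sigma(M_0-s_2)/(M_0 s_2)$ and only $s_2 > m_0/2$ available, the smallness threshold $\delta(T)$ in Definition \ref{comparison consistent triplet} has to be recalibrated with $m_0/2$ in place of $m_0$, a subtlety your sketch does not address.
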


\begin{proof}
	The idea of proof is similar to Proposition \ref{the second local comparison principle subsolution}, in view of Proposition \ref{finite speed propagation of subsolution}, \ref{the evolution law of inf convolution}, \ref{a lattice point that is close to a hyperplane}, \ref{a semi obstacle solution becomes a solution when detachment happens}, etc. The difference is that, this time, we use the Proposition \ref{the birkhoff property for supersolution in an expanding domain}, \ref{the birkhoff property for supersolutions in two static domains}, \ref{the birkhoff property for supersolution in a shrinking domain}, instead of Proposition \ref{the birkhoff property for subsolution in an expanding domain}, \ref{the birkhoff property for subsolutions in two static domains}, \ref{the birkhoff property for subsolution in a shrinking domain}. Note that $\frac{m_0}{2} \leq s_{2} < s_{1} \leq M_0$, the speeds have positive bounds, the arguments of Proposition \ref{the second local comparison principle subsolution} or Proposition \ref{LCP in the unit scale} still apply, merely with $m_0$ replaced by $\frac{m_0}{2}$. Thus, we omit the details here.
\end{proof}

\subsubsection{The detachment lemma}

\begin{proposition}[Detachment Property]\label{the detachment lemma subsolution}
	Let $(\nu, T, r)$ be a comparison triplet (c.f. Definition \ref{comparison consistent triplet}), fix $\sigma > 0$ and set $s := \overline{s}(\nu) + \sigma$, then for any $\mu \in \RR$, there exists $R > 0$ and $B := B(\nu,\sigma) > 0$, such that the following statement holds. 
	\begin{eqnarray*}
	\overline{\mathrm{U}}_{a}\left(\cdot - \left( \frac{1}{2}\sigma t - B\right)\nu,t\right) \prec_{\left(\Upomega(0,r;\nu), \mu \right) } \mathrm{O}_{e}(\cdot,t), && 0 \leq t \leq T,
	\end{eqnarray*}
    where
    \begin{eqnarray*}
    a := (\nu, R, 0, q, s) \in \mathbb{A} &\text{and}& e := (\nu, q, s) \in \mathbb{E}.
    \end{eqnarray*}
    Moreover, there exists a constant $C > 0$ independent of $\nu$ and $T$ such that $B = CT_{0}(\nu,\sigma) + C$, where $T_{0}(\nu, \sigma)$ is the time after which the $\mu$ level set of $\overline{\mathrm{U}}_{a}$ detaches from $\mathrm{O}_{e}$ in $\Upomega(0, 2r; \nu)$.
\end{proposition}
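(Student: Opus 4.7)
The strategy is to transfer a stronger detachment bound---available at the slower auxiliary speed $\overline{s}(\nu)+\sigma/2$---back to $\overline{\mathrm{U}}_a$ at speed $s = \overline{s}(\nu)+\sigma$ by a single application of Proposition \ref{the second local comparison principle subsolution}. Applying that proposition with $\delta_1 = \sigma/2$, $\delta_2 = \sigma$ at the comparison-consistent triplet $(\nu, T, r)$ produces a constant $A = A(\nu, \sigma) > 0$ independent of $T$, radii $R_i$ and rates $\mathscr{R}_i > 0$ with $R_1 \geq R_2 + (\mathscr{R}_1+\mathscr{R}_2)T$, and an integer shift $\xi_A \in \argmin\{|\xi| : \xi\in\ZZ^n,\, \xi\cdot\nu > A\}$ for which
\[
\overline{\mathrm{U}}_{a_2}(\cdot, t) \prec_{(\Upomega(0,r;\nu),\mu)} \overline{\mathrm{U}}_{a_1}(\cdot - \xi_A, t), \quad 0 \leq t \leq T,
\]
with $a_1 := (\nu, R_1, -\mathscr{R}_1, q, \overline{s}(\nu)+\sigma/2)$ and $a_2 := (\nu, R_2, \mathscr{R}_2, q, s)$. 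Inspection of the proof of that proposition additionally gives $A \leq (s-m_0)T_0' + 1$, where $T_0'$ is a detachment time for the slower auxiliary speed.

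Next, I bound the $\mu$-superlevel of $\overline{\mathrm{U}}_{a_1}$ by detachment. Lemma \ref{the expansion of detachment in a static domain for the obstacle subsolution} with $\delta = \sigma/2$ supplies $T_0'$ and a static auxiliary subsolution $\overline{\mathrm{U}}_{b_1}$ that detaches from its obstacle in $\Upomega(0, 2r; \nu)$ for $t > T_0'$; the cylinder comparison (Proposition \ref{the birkhoff property for subsolutions in two static domains} with $\Delta z = 0$, $\Delta t = 0$) carries the detachment from $\overline{\mathrm{U}}_{b_1}$ to $\overline{\mathrm{U}}_{a_1}$. Shifting by $\xi_A$ and chaining through the $\prec$-relation above yields, for $t \in (T_0', T]$,
\[
L^+_\mu\bigl(\overline{\mathrm{U}}_{a_2}(\cdot, t);\, \Upomega(0, r; \nu)\bigr) \subset \{y\cdot\nu < (\overline{s}(\nu) + \tfrac{1}{2}\sigma)t + \xi_A\cdot\nu - \mu/|q|\}.
\]
To transfer to the static $\overline{\mathrm{U}}_a$, I choose $R := R_2 + \mathscr{R}_2 T$ and $R_2 \geq r$, so that $\Upomega(0,r;\nu) \subset \mathrm{C}_{a_2}(t) \subset \mathrm{C}_a(t)$ for every $t \in [0, T]$. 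Then the restriction of $\overline{\mathrm{U}}_a$ to $\mathrm{C}_{a_2}$ is a valid candidate in the definition of $\overline{\mathrm{U}}_{a_2}$, so maximality forces $\overline{\mathrm{U}}_a \leq \overline{\mathrm{U}}_{a_2}$ on the overlap, which transfers the superlevel-set bound to $\overline{\mathrm{U}}_a$.

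For the complementary regime $0 \leq t \leq T_0'$ I simply use $\overline{\mathrm{U}}_a \leq \mathrm{O}_e$ to place $L^+_\mu(\overline{\mathrm{U}}_a(\cdot, t); \Upomega(0, r; \nu))$ inside $\{y\cdot\nu \leq sT_0' - \mu/|q|\}$. Setting $B := \max\{\xi_A\cdot\nu,\, sT_0'\} + 1$, both regimes combine into $L^+_\mu(\overline{\mathrm{U}}_a(\cdot, t); \Upomega(0,r;\nu)) \subset \{y\cdot\nu < (\overline{s}(\nu)+\tfrac{1}{2}\sigma)t + B - \mu/|q|\}$ for all $t \in [0, T]$, and the change of variables $y = x - (\tfrac{1}{2}\sigma t - B)\nu$ rewrites this as the claimed $\overline{\mathrm{U}}_a(\cdot - (\tfrac{1}{2}\sigma t - B)\nu, t) \prec_{(\Upomega(0,r;\nu),\mu)} \mathrm{O}_e(\cdot, t)$; the limiting conditions at $\pm\infty$ in the definition of $\prec$ are inherited from the linear growth of $\mathrm{O}_e$ along $\nu$ and the cylindrical geometry of $\Upomega(0,r;\nu)$. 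Since $A \leq (s-m_0)T_0'+1$ and $\xi_A\cdot\nu \leq A + O(1)$, this yields $B = CT_0' + C$ with $C$ independent of $\nu$ and $T$. The main technical obstacle is preserving the $T$-independence of $B$, which relies crucially on the $T$-independence of $A$ built into Proposition \ref{the second local comparison principle subsolution}---the spatial radii $R, R_1, R_2$ are allowed to grow with $T$, but $A$ and $B$ are not. A secondary bookkeeping point is the identification of $T_0'$ (for the slower auxiliary speed) with $T_0(\nu, \sigma)$ (for the actual speed $s$) in the statement, a comparison absorbed into the constant $C$.
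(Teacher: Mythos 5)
Your proof is correct and follows the same core approach as the paper: apply Proposition \ref{the second local comparison principle subsolution} with $\delta_1 = \sigma/2$, $\delta_2 = \sigma$, then chain $\overline{\mathrm{U}}_a \leq \overline{\mathrm{U}}_{a_2} \prec \overline{\mathrm{U}}_{a_1}(\cdot - \xi_A) \leq \mathrm{O}_{e_1}(\cdot - \xi_A)$ and absorb the shift $(\tfrac{1}{2}\sigma t - B)\nu$ to recover $\mathrm{O}_e$. Your detour through Lemma \ref{the expansion of detachment in a static domain for the obstacle subsolution} and the separate treatment of the regime $t \leq T_0'$ are harmless but unnecessary, since the bound $\overline{\mathrm{U}}_{a_1} \leq \mathrm{O}_{e_1}$ already gives the required superlevel-set inclusion for all $t \in [0,T]$ and the paper's choice $B := A + \sqrt{n}$ then closes the argument in one step.
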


\begin{proof}
	Let us set
	\begin{eqnarray*}
	s_{1} := \overline{s}(\nu) + \frac{\sigma}{2} &\text{and}& s_{2} := \overline{s}(\nu) + \sigma.
	\end{eqnarray*}
    Then by Proposition \ref{the second local comparison principle subsolution}, there are numbers
    \begin{eqnarray*}
    A := A(\nu,\sigma) > 0, \hspace{2mm} R_{i} := R_{i}(\nu, \sigma, r, T), \hspace{2mm} \mathscr{R}_{i} := \mathscr{R}_{i}(\nu, \sigma, r, T) > 0, \hspace{2mm} i = 1, 2,
    \end{eqnarray*}
    such that the following (i) and (ii) hold:
	\begin{enumerate}
		\item [(i)] $R_{1} \geq R_{2} + \left( \mathscr{R}_{1} + \mathscr{R}_{2}\right) T$;
		\item [(ii)] $\overline{\mathrm{U}}_{a_{2}}(\cdot,t) \prec_{(\Upomega(0,r; \nu),\mu)} \overline{\mathrm{U}}_{a_{1}}(\cdot - \xi_{A}, t)$, $0 \leq t \leq T$,
	\end{enumerate}
	where
	\begin{eqnarray*}
	a_{1} := (\nu, R_{1}, -\mathscr{R}_{1}, q, s_{1}), \hspace{2mm} a_{2} := (\nu, R_{2}, \mathscr{R}_{2}, q, s_{2}) \in \mathbb{A} &\text{and}& \xi_{A} \in \argmin\limits\limits_{\xi \in \ZZ^{n},\hspace{1mm} \xi \cdot \nu > A} |\xi|.
	\end{eqnarray*} 
    Clearly, $A < \xi_{A} \cdot \nu < A + \sqrt{n}$. Let us also take
    \begin{eqnarray*}
    B(\nu, \sigma) := A(\nu, \sigma) + \sqrt{n} &\text{and}& R := R_{1}(\nu, \sigma, r, T),
    \end{eqnarray*}
    then the Proposition \ref{the birkhoff property for subsolutions in two static domains} indicates that
	\begin{eqnarray*}
	\overline{\mathrm{U}}_{a}(x,t) \leq \overline{\mathrm{U}}_{a_{2}}(x,t), && x \in \Upomega(0, r; \nu), \hspace{2mm} 0 \leq t \leq T.
	\end{eqnarray*}
    Then for any $x \in \Upomega(0, r; \nu)$ and $0 \leq t \leq T$, we get the relations
    \begin{eqnarray*}
    \overline{\mathrm{U}}_{a}\left(\cdot - \left( \frac{1}{2}\sigma t - B\right)\nu, t\right) &\leq& \overline{\mathrm{U}}_{a_{2}}\left(\cdot - \left( \frac{1}{2}\sigma t - B\right)\nu, t\right)\\
    & \prec_{(\Upomega(0,r;\nu),\mu)} & \overline{\mathrm{U}}_{a_{1}}\left(\cdot - \xi_{A} - \left( \frac{1}{2}\sigma t - B\right)\nu, t\right)\\
    &\leq& \mathrm{O}_{e_{1}}\left(\cdot - \xi_{A} - \left( \frac{1}{2}\sigma t - B\right)\nu, t\right)\\
    &\leq & \mathrm{O}_{e_{1}}\left(\cdot - \frac{1}{2}\sigma t\nu, t\right) = \mathrm{O}_{e}(\cdot, t)
    \end{eqnarray*}
   where $e_{1} := (\nu, q, s_{1})$. Hence the desired ordering relation is established.
\end{proof}

\begin{proposition}[Detachment Property]\label{the detachment lemma supersolution}
	Let $(\nu, T, r)$ be a comparison triplet, fix $\sigma > 0$ and set $s := \underline{s}(\nu) - \sigma > 0$, then for any $\mu \in \RR$, there exists $R > 0$ and $B := B(\nu,\sigma) > 0$, and the following statement holds.
	\begin{eqnarray*}
		\mathrm{O}_{e}(\cdot,t) \prec_{(\Upomega(0,r;\nu), \mu)} \underline{\mathrm{U}}_{a}\left(\cdot + \left( \frac{1}{2}\sigma t - B\right)\nu,t\right), && 0 \leq t \leq T,
	\end{eqnarray*}
	where
	\begin{eqnarray*}
		a := (\nu, R, 0, q, s) \in \mathbb{A} &\text{and}& e := (\nu, q, s) \in \mathbb{E}.
	\end{eqnarray*}
	Moreover, there exists a constant $C > 0$ independent of $\nu$ and $T$ such that $B = CT_{0}(\nu,\sigma) + C$, where $T_{0}(\nu, \sigma)$ is the time after which the $\mu$ level set of $\underline{\mathrm{U}}_{a}$ detaches from $\mathrm{O}_{e}$ in $\Upomega(0, 2r; \nu)$. 
\end{proposition}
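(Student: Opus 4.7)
The plan is to mirror the proof of Proposition \ref{the detachment lemma subsolution}, swapping subsolutions for supersolutions and invoking Proposition \ref{the second local comparison principle supersolution} in place of its subsolution counterpart. Introduce two auxiliary speeds $s_1 := \underline{s}(\nu) - \sigma/2$ and $s_2 := \underline{s}(\nu) - \sigma = s$. Proposition \ref{the second local comparison principle supersolution} then supplies a constant $A = A(\nu,\sigma) > 0$, radii $R_1 \geq R_2 + (\mathscr{R}_1 + \mathscr{R}_2)T$, and positive rates $\mathscr{R}_1, \mathscr{R}_2$ such that
$$\underline{\mathrm{U}}_{a_1}(\cdot, t) \prec_{(\Upomega(0, r; \nu), \mu)} \underline{\mathrm{U}}_{a_2}(\cdot + \xi_A, t), \qquad 0 \leq t \leq T,$$
where $a_1 := (\nu, R_1, -\mathscr{R}_1, q, s_1)$ is shrinking and $a_2 := (\nu, R_2, \mathscr{R}_2, q, s_2)$ is expanding. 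I then take $R := R_1$, $B := A + \sqrt{n}$, and $a := (\nu, R, 0, q, s)$.

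With these choices, $R \geq R_2 + \mathscr{R}_2 T$ guarantees that the expanding cylinder $\mathrm{C}_{d_2}$ lies inside the static one $\mathrm{C}_d$ on $[0, T]$, so by the minimality in Definition \ref{the definitions of obstacle solutions}, $\underline{\mathrm{U}}_{a_2} \leq \underline{\mathrm{U}}_a$ on $\mathrm{C}_{d_2}$. Setting $\alpha := \frac{1}{2}\sigma t - B$, a short computation using $\nu \cdot q = -|q|$, $\xi_A \cdot \nu \leq A + \sqrt{n} = B$, and $s_1 - s_2 = \sigma/2$ yields
$$\mathrm{O}_{e_1}(x + \alpha\nu - \xi_A, t) - \mathrm{O}_{e}(x, t) = |q|\bigl[-\alpha + \xi_A \cdot \nu + (s_1 - s_2) t\bigr] = |q|\bigl[\xi_A \cdot \nu + B\bigr] > 0,$$
where $e_1 := (\nu, q, s_1)$. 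Hence $\mathrm{O}_{e_1}(x + \alpha\nu - \xi_A, t) \geq \mathrm{O}_{e}(x, t)$.

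Chaining for $x \in \Upomega(0, r; \nu)$ and $0 \leq t \leq T$ then gives
$$\mathrm{O}_{e}(x, t) \leq \mathrm{O}_{e_1}(x + \alpha\nu - \xi_A, t) \leq \underline{\mathrm{U}}_{a_1}(x + \alpha\nu - \xi_A, t) \prec_{(\Upomega(0, r; \nu), \mu)} \underline{\mathrm{U}}_{a_2}(x + \alpha\nu, t) \leq \underline{\mathrm{U}}_a(x + \alpha\nu, t),$$
which is exactly the claimed relation. The delicate step is propagating the strict level-set ordering $\prec$ in the middle through the pointwise $\leq$ inequalities on either side: monotonicity of $L^{\pm}_\mu(\cdot)$ in the function gives the disjointness in condition (ii) of Definition \ref{the strict ordering relation between level sets of two functions}, while the unboundedness in condition (iii) transfers because both $\mathrm{O}_{e}$ and $\underline{\mathrm{U}}_a$ live on strips infinite in the $\nu$ direction. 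The stated dependence $B = C T_0(\nu, \sigma) + C$ is inherited directly from the analogous structure of $A(\nu, \sigma)$ in the proof of Proposition \ref{the second local comparison principle supersolution}.
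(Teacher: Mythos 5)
Your proposal is correct and follows the paper's own (implicit) route: the authors simply state that the supersolution detachment lemma is obtained from the subsolution one by swapping Propositions \ref{the birkhoff property for subsolutions in two static domains} and \ref{the second local comparison principle subsolution} for Propositions \ref{the birkhoff property for supersolutions in two static domains} and \ref{the second local comparison principle supersolution}, and your argument carries out exactly that mirror. You set $s_1 = \underline{s}(\nu) - \sigma/2$, $s_2 = s$, invoke Proposition \ref{the second local comparison principle supersolution} to get the ordered pair $\underline{\mathrm{U}}_{a_1} \prec \underline{\mathrm{U}}_{a_2}(\cdot + \xi_A, t)$, use domain monotonicity to pass to the static obstacle supersolution $\underline{\mathrm{U}}_a$, and check the obstacle comparison with the $(\tfrac12\sigma t - B)\nu$ shift; the computation $(s_1 - s_2)t - \alpha + \xi_A\cdot\nu = B + \xi_A\cdot\nu > 0$ is the correct analogue of the subsolution case, and the dependence $B = CT_0(\nu,\sigma) + C$ is inherited from $A = (s_2 - m_0)T_0 + 1$ exactly as you say. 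The only slightly glossed-over point is transferring condition (iii) of $\prec$ through the chain: for $\mathrm{O}_e$ the level set is an affine half-space, and for $\underline{\mathrm{U}}_a$ one needs the upper bound by the planar supersolution $\underline{V}_a$ of Lemma \ref{the obstacle supersolution coincides with the obstacle on the boundary} to see that the sublevel set is unbounded in the $+\nu$ direction; your remark about ``strips infinite in the $\nu$ direction'' is a compressed version of this, and the paper leaves the same point implicit in the subsolution case.
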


\begin{proof}
	It is similar to that of Proposition \ref{the detachment lemma subsolution}. The difference is that instead of using Proposition \ref{the birkhoff property for subsolutions in two static domains}, \ref{the second local comparison principle subsolution}, we use Proposition \ref{the birkhoff property for supersolutions in two static domains}, \ref{the second local comparison principle supersolution}. Hence, we do not repeat the details any more.
\end{proof}

Next, let us show that if a speed is strictly larger (resp. strictly smaller) than the head (resp. tail) speed in an irrational direction $\nu$, then all superlevel (resp. sublevel) sets of the obstacle subsolution (resp. supersolution) detaches linearly and uniformly from its obstacle.

\begin{proposition}\label{uniform detachment for subsolution with a sub strict detached speed}
	Let $\nu \in \mathbb{S}^{n-1}\diagdown \RR\ZZ^{n}$, assume that $s > \overline{s}(\nu)$. Then there exists $\delta > 0$ and $B := B(\nu,\delta) > 0$, such that the following statement holds. For any $\mu \in \RR$, $r > 0$ and $T > 0$, there exists $R > 0$, independent of $\mu$, such that 
	\begin{eqnarray*}
		\overline{\mathrm{U}}_{a}\left( \cdot - \left( \delta t - B\right)\nu, t\right) \prec_{\left( \Upomega(0,r;\nu), \mu\right) } \mathrm{O}_{e}(\cdot, t), && 0 \leq t \leq T,
	\end{eqnarray*}
	where
	\begin{eqnarray*}
		a := (\nu, R, 0, q, s) \in \mathbb{A} &\text{and}& e := (\nu, q, s) \in \mathbb{E}.
	\end{eqnarray*}
\end{proposition}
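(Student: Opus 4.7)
The plan is to observe that Proposition~\ref{the detachment lemma subsolution}, applied at the single level $\mu_0 = 0$, already produces a \emph{pointwise} strict inequality between the shifted obstacle subsolution and the obstacle. This strict inequality is a global condition that does not reference any particular level, so it immediately yields conditions~(i) and~(ii) of $\prec_{(\Upomega(0,r;\nu),\mu)}$ simultaneously for every $\mu \in \RR$, reducing the task to a uniform-in-$\mu$ verification of the infinity conditions~(iii) of Definition~\ref{the strict ordering relation between level sets of two functions}.

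First, I would set $\sigma := s - \overline{s}(\nu) > 0$ and $\delta := \sigma/2$, and enlarge $r$ to $r' := \max\{r, R_0(\nu, \delta(T))\}$ (with $R_0$ from Proposition~\ref{a lattice point that is close to a hyperplane}) so that $(\nu, T, r')$ is a comparison-consistent triplet. Applying Proposition~\ref{the detachment lemma subsolution} at level $\mu_0 = 0$ and radius $r'$ produces $B = B(\nu, \delta)$ and $R = R(\nu, \delta, r, T)$, both chosen independent of any $\mu$, such that for $a := (\nu, R, 0, q, s)$,
\[
\overline{\mathrm{U}}_{a}\bigl(\cdot - (\delta t - B)\nu, t\bigr) \prec_{(\Upomega(0,r';\nu),\, 0)} \mathrm{O}_{e}(\cdot, t), \qquad 0 \le t \le T.
\]
Extracting condition~(i) of this $\prec$-relation and restricting to $\Upomega(0,r;\nu) \subseteq \Upomega(0,r';\nu)$ gives the strict pointwise bound $\overline{\mathrm{U}}_{a}(y - (\delta t - B)\nu, t) < \mathrm{O}_{e}(y, t)$ for all $(y, t) \in \Upomega(0, r; \nu) \times [0, T]$. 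For an arbitrary fixed $\mu \in \RR$, this pointwise inequality is exactly condition~(i) of $\prec_{(\Upomega(0,r;\nu), \mu)}$, and condition~(ii) is an immediate consequence: any $y \in L_\mu^+(f;\Upomega) \cap L_\mu^-(g;\Upomega)$ would satisfy $f(y) \ge \mu \ge g(y)$, contradicting $f < g$.

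It remains to verify condition~(iii) uniformly in $\mu$. The sub-level set of $\mathrm{O}_{e}(\cdot,t)$ at level $\mu$ within $\Upomega(0,r;\nu)$ equals $\{y : y\cdot\nu \ge st - \mu/|q|\} \cap \Upomega(0,r;\nu)$, whose supremum of $y\cdot\nu$ is trivially $+\infty$. For the super-level set of the shifted $\overline{\mathrm{U}}_{a}$, I would exploit the linear stationary subsolution $\phi(x,t) := x\cdot q$: since $D^2\phi = 0$ and $|D\phi| = |q|$, the PDE inequality reduces to $0 \le g(x)|q|$, while $\phi \le \mathrm{O}_{e}$ on $\mathrm{C}_d$, so $\phi \in \overline{\mathscr{S}}_{a}$. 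The maximality in Definition~\ref{the definitions of obstacle solutions} then yields $\overline{\mathrm{U}}_{a}(x, t) \ge x\cdot q = -|q|\,x\cdot\nu$ throughout $\mathrm{C}_d$; substituting gives $\overline{\mathrm{U}}_{a}(y - (\delta t - B)\nu, t) \ge -|q|\,y\cdot\nu + |q|(\delta t - B)$, which exceeds $\mu$ whenever $y\cdot\nu \le (\delta t - B) - \mu/|q|$. Hence the $\mu$-superlevel set contains a half-cylinder extending to $-\infty$ in the $\nu$ direction within $\Upomega(0,r;\nu)$, giving the required infimum $= -\infty$. Combining the three conditions, $\overline{\mathrm{U}}_{a}(\cdot - (\delta t - B)\nu, t) \prec_{(\Upomega(0,r;\nu), \mu)} \mathrm{O}_{e}(\cdot, t)$ holds on $[0,T]$ for every $\mu \in \RR$ with the same $\delta, B, R$, as claimed. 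The only delicate point is this uniform check of~(iii), and the universal lower bound coming from the stationary subsolution $x\cdot q$ handles it cleanly; the rest amounts to unpacking the strict inequality already built into Proposition~\ref{the detachment lemma subsolution}.
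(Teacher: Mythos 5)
Your proof is correct, and it takes a genuinely different route from the paper's. After the common first step — applying Proposition~\ref{the detachment lemma subsolution} at level $0$ to obtain $\delta$, $B$, and a $\mu$-free $R$ — the paper reduces the arbitrary-$\mu$ case to the $\mu=0$ case by choosing a lattice vector $\Delta z\in\ZZ^{n}$ with $-\mu/|q|\le\Delta z\cdot\nu\le-\mu/|q|+\sqrt n$ and $|\Delta z-(\Delta z\cdot\nu)\nu|\le\sqrt n$, translating the obstacle subsolution by $\Delta z$ and vertically by $\mu$, and then invoking maximality to dominate $\overline{\mathrm{U}}_a$; this is what forces the paper to enlarge $R$ to $R_0+\sqrt n$ and $B$ to $B_0+\sqrt n$. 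You instead unpack Definition~\ref{the strict ordering relation between level sets of two functions} directly: condition~(i) is a $\mu$-independent pointwise inequality and is inherited verbatim from the $\mu=0$ case, condition~(ii) is a formal consequence of~(i), and condition~(iii) is handled by the global stationary subsolution $x\cdot q$ (for the $-\infty$ side) together with the explicit form of $\mathrm{O}_e$ (for the $+\infty$ side). Your route avoids the lattice shift entirely and does not pay the $\sqrt n$ slack, so you in fact obtain sharper constants. The only caveat worth noting is that your argument leans on the literal reading of condition~(i) of $\prec$ as a pointwise inequality over all of $\Omega$; the paper's usages of $\prec$ are occasionally loose about this point, and the paper's lattice-translation argument is more robust against a weaker interpretation of~(i). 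Under the definition as written, your argument is complete.
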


\begin{proof}
	Based on Proposition \ref{the detachment lemma subsolution}, there exists $\delta > 0$ and $B_{0} := B_{0}(\nu,\delta) > 0$, such that for any $T > 0$, there exists $R_{0} > r + \sqrt{n}$, with the following statement holds.
	\begin{eqnarray*}
		\overline{\mathrm{U}}_{a_{0}}\left( \cdot - \left( \delta t - B_{0}\right)\nu, t \right) \prec_{\left( \Upomega(0, r+\sqrt{n};\nu), 0\right) } \mathrm{O}_{e}(\cdot, t), && 0 \leq t \leq T,
	\end{eqnarray*}
	where
	\begin{eqnarray*}
		a_{0} := (\nu, R_{0}, 0, q, s) \in \mathbb{A} &\text{and}& e:= (\nu, q, s) \in \mathbb{E}.
	\end{eqnarray*}
	Let us consider the general $\mu$ level set, there exists $\Delta z \in \ZZ^{n}$, such that
	\begin{eqnarray*}
		-\frac{\mu}{|q|} \leq \Delta z \cdot \nu \leq - \frac{\mu}{|q|} + \sqrt{n} &\text{and}& \left| \Delta z - \left( \Delta z \cdot \nu\right)\nu \right| \leq \sqrt{n}. 
	\end{eqnarray*}
	Then we denote
	\begin{equation*}
	\mathrm{U}(x,t) := \overline{\mathrm{U}}_{a_{0}}\left( x - \Delta z, t \right) + \mu. 
	\end{equation*}
	Then $\mathrm{U}(x,t)$ is the largest subsolution that is in $\Upomega(0, R_{0}; \nu) + \Delta z$ and bounded from above by $\mathrm{O}(x,t) := \mathrm{O}_{e}(x - \Delta z, t) + \mu$, which is no less than $\mathrm{O}_{e}(x,t)$. Moreover,
	\begin{eqnarray*}
		\mathrm{U}(\cdot - \left( \delta t - B_{0}\right)\nu, t) \prec_{(\Upomega(0, r; \nu), \mu)} \mathrm{O}(x,t), && 0 \leq t \leq T.
	\end{eqnarray*}    
	Now, let us take $R := R_{0} + \sqrt{n}$, then the associated obstacle subsolution $\overline{\mathrm{U}}_{a}(x,t)$, restricted to $\Upomega(0, R_{0}; \nu) + \Delta z$ (which includes $\Upomega(0,r;\nu)$), is no larger than $\mathrm{U}(x,t)$. Therefore,
	\begin{eqnarray*}
		\overline{\mathrm{U}}_{a}\left( \cdot - \left( \delta t - B\right)\nu, t\right) \prec_{\left( \Upomega(0,r;\nu), \mu\right) } \mathrm{O}_{e}(\cdot, t), && 0 \leq t \leq T,
	\end{eqnarray*}
	where $B = B_{0} + \sqrt{n}$. 
\end{proof}

\begin{proposition}\label{uniform detachment for supersolution with a super strict detached speed}
	Let $\nu \in \mathbb{S}^{n-1}\diagdown \RR\ZZ^{n}$, assume that $0 < s < \underline{s}(\nu)$. Then there exists $\delta > 0$ and $B := B(\nu,\delta) > 0$, such that the following statement holds. For any $\mu \in \RR$, $r > 0$ and $T > 0$, there exists $R > 0$, independent of $\mu$, such that 
	\begin{eqnarray*}
		\mathrm{O}_{e}(\cdot, t) \prec_{\left( \Upomega(0,r;\nu), \mu\right) } \overline{\mathrm{U}}_{a}\left( \cdot + \left( \delta t - B\right)\nu, t\right), && 0 \leq t \leq T,
	\end{eqnarray*}
	where
	\begin{eqnarray*}
		a := (\nu, R, 0, q, s) \in \mathbb{A} &\text{and}& e := (\nu, q, s) \in \mathbb{E}.
	\end{eqnarray*}
\end{proposition}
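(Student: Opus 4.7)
The plan is to mirror the argument of Proposition \ref{uniform detachment for subsolution with a sub strict detached speed}, replacing every appeal to maximality of $\overline{\mathrm{U}}$ by minimality of $\underline{\mathrm{U}}$ and reversing the sign of the integer shift. Note that as stated the proposition should read $\underline{\mathrm{U}}_a$ rather than $\overline{\mathrm{U}}_a$, since $a$ is associated to a supersolution obstacle problem.

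First I would apply Proposition \ref{the detachment lemma supersolution} at the $0$-level set to the irrational direction $\nu$ and the speed $s < \underline{s}(\nu)$. This produces constants $\delta > 0$, $B_0 = B_0(\nu,\delta) > 0$, and for any $T > 0$ a radius $R_0 > r + \sqrt{n}$, such that
\begin{equation*}
\mathrm{O}_{e}(\cdot, t) \prec_{(\Upomega(0, r+\sqrt{n}; \nu), 0)} \underline{\mathrm{U}}_{a_0}\bigl(\cdot + (\delta t - B_0)\nu, t\bigr), \qquad 0 \le t \le T,
\end{equation*}
where $a_0 := (\nu, R_0, 0, q, s)$ and $e := (\nu, q, s)$.

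Next, for a general level $\mu \in \RR$, I would select $\Delta z \in \ZZ^n$ satisfying
\begin{equation*}
-\tfrac{\mu}{|q|} - \sqrt{n} \le \Delta z \cdot \nu \le -\tfrac{\mu}{|q|}, \qquad |\Delta z - (\Delta z \cdot \nu)\nu| \le \sqrt{n},
\end{equation*}
(opposite sign convention to the subsolution proof) and define
\begin{equation*}
\mathrm{U}(x,t) := \underline{\mathrm{U}}_{a_0}(x - \Delta z, t) + \mu.
\end{equation*}
The choice of $\Delta z$ gives $\mathrm{O}_{e}(x - \Delta z, t) + \mu \le \mathrm{O}_{e}(x,t)$, so $\mathrm{U}$ is a supersolution on $\Upomega(0, R_0; \nu) + \Delta z$ bounded from below by an obstacle dominated by $\mathrm{O}_e$. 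Set $R := R_0 + \sqrt{n}$, so that $\Upomega(0, R_0; \nu) + \Delta z \subseteq \Upomega(0, R; \nu)$. Then $\underline{\mathrm{U}}_a \ge \mathrm{O}_e \ge \mathrm{O}_e(\cdot - \Delta z,\cdot) + \mu$ on the overlap, so $\underline{\mathrm{U}}_a$ restricted to $\Upomega(0, R_0;\nu) + \Delta z$ is a competitor in the supersolution class defining $\mathrm{U}$; the minimality of $\mathrm{U}$ (Proposition \ref{the birkhoff property for supersolutions in two static domains} in spirit, and the infimum definition in Definition \ref{the definitions of obstacle solutions}) forces $\mathrm{U} \le \underline{\mathrm{U}}_a$ there.

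Finally, with $B := B_0 + \sqrt{n}$, I would chain the implications: if $x \in \Upomega(0,r;\nu)$ and $\underline{\mathrm{U}}_a(x + (\delta t - B)\nu, t) \le \mu$, then $\mathrm{U}(x + (\delta t - B)\nu, t) \le \mu$, hence $\underline{\mathrm{U}}_{a_0}\bigl((x - \Delta z) + (\delta t - B)\nu, t\bigr) \le 0$. Applying the base detachment with $y = x - \Delta z + (B_0 - B)\nu$ yields $y \cdot \nu > st$, which after unpacking becomes $x \cdot \nu > st + B_0 - B + \Delta z \cdot \nu \ge st - \mu/|q|$, i.e., $\mathrm{O}_e(x, t) < \mu$. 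This is the required separation of the $\mu$-superlevel set of $\mathrm{O}_e$ from the $\mu$-sublevel set of $\underline{\mathrm{U}}_a(\cdot + (\delta t - B)\nu, t)$, and the unbounded tail conditions in Definition \ref{the strict ordering relation between level sets of two functions} follow from the affine growth of $\mathrm{O}_e$ and the lower bound $\underline{\mathrm{U}}_a \ge \mathrm{O}_e$. The verification that all domains nest correctly (transverse parts of size $\le r + \sqrt{n} < R_0$) is essentially bookkeeping; the only conceptual point that differs from the subsolution case is the sign flip in the range of $\Delta z \cdot \nu$, which is dictated by whether one wants $\mathrm{U}$ to sit above or below $\mathrm{O}_e$, and this is exactly what I expect to be the one spot where care is needed.
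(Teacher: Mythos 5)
Your proposal is correct and takes exactly the approach the paper intends: its own proof of Proposition \ref{uniform detachment for supersolution with a super strict detached speed} is literally the one-line remark ``It follows the same idea as Proposition \ref{uniform detachment for subsolution with a sub strict detached speed}, we omit the details here,'' and you have fleshed out that mirror image correctly, including the key sign flip in the admissible range of $\Delta z \cdot \nu$ (needed so that $\mathrm{O}_{e}(\cdot - \Delta z, \cdot) + \mu \leq \mathrm{O}_{e}$) and the appeal to minimality of $\underline{\mathrm{U}}_{a}$ in place of maximality of $\overline{\mathrm{U}}_{a}$. You are also right that $\overline{\mathrm{U}}_{a}$ in the statement is a typographical error for $\underline{\mathrm{U}}_{a}$, consistent with how Lemma \ref{comparison between the real solution and the obstacle solutions} later pairs $\underline{s}(\nu) - \sigma$ with $\underline{\mathrm{U}}_{a}$. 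One small slip in the last chain of inequalities: writing $z = x - \Delta z + (B_{0} - B)\nu$ and then $z \cdot \nu > st$ unpacks to $x \cdot \nu > st + \Delta z \cdot \nu + (B - B_{0})$, not $st + (B_{0} - B) + \Delta z \cdot \nu$; with $B - B_{0} = \sqrt{n}$ and $\Delta z \cdot \nu \geq -\mu/|q| - \sqrt{n}$ this gives the intended bound $x \cdot \nu > st - \mu/|q|$, so the conclusion you stated is the right one and only the intermediate display has the sign reversed.
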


\begin{proof}
	It follows the same idea as Proposition \ref{uniform detachment for subsolution with a sub strict detached speed}, we omit the details here.
\end{proof}

\subsubsection{The ordering relation}

Our goal of introducing the head/tail speed is to model the highest/lowest speed of the level set of a real solution. The next Proposition shows that the head speed is indeed no less than the tail speed, at least in each irrational direction.

\begin{proposition}[Ordering relation]\label{the ordering relation in irrational directions}
	Fix any $\nu \in \mathbb{S}^{n-1}\diagdown\RR\ZZ^{n}$, then $\underline{s}(\nu) \leq \overline{s}(\nu)$.
\end{proposition}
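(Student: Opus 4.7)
The plan is to argue by contradiction. Suppose $\underline{s}(\nu) > \overline{s}(\nu)$, and pick any $s_1 < s_2$ strictly inside the open gap $(\overline{s}(\nu), \underline{s}(\nu))$. By the very definition of head and tail speed, both $\overline{\mathrm{U}}_{a_2}$ (with obstacle speed $s_2 > \overline{s}(\nu)$) and $\underline{\mathrm{U}}_{a_1}$ (with obstacle speed $s_1 < \underline{s}(\nu)$) detach from their respective obstacles.

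Next, I would apply the local comparison principle (Proposition \ref{LCP in the unit scale}) to a comparison consistent triplet $(\nu, T, R)$ with $R = R(T)$ chosen large, yielding
\[
\overline{\mathrm{U}}_{a_2}(x,t) < \underline{\mathrm{U}}_{a_1}(x - \xi_0, t), \quad (x,t)\in \Upomega(0,R;\nu)\times[0,T],
\]
where $\xi_0 \in \argmin\{|\xi|: \xi\in\ZZ^n,\ \xi\cdot\nu > 1\}$. Independently, the uniform detachment propositions (Propositions \ref{uniform detachment for subsolution with a sub strict detached speed} and \ref{uniform detachment for supersolution with a super strict detached speed}) provide $\delta_1,\delta_2,B_1,B_2 > 0$ (independent of $T$) such that
\[
\overline{\mathrm{U}}_{a_2}(\cdot - (\delta_2 t - B_2)\nu, t) \prec_{(\Upomega(0,r;\nu),\mu)} \mathrm{O}_{e_2}(\cdot, t), \quad \mathrm{O}_{e_1}(\cdot, t) \prec_{(\Upomega(0,r;\nu),\mu)} \underline{\mathrm{U}}_{a_1}(\cdot + (\delta_1 t - B_1)\nu, t),
\]
for all $\mu$ and all $0\le t \le T$, in arbitrarily large static cylinders $\Upomega(0,r;\nu)$.

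Third, I would combine these ingredients. Using the transitivity of the level-set ordering $\prec$ together with the LCP, evaluating at points of the form $x\cdot\nu = s_1 t + \xi_0\cdot\nu$ (where $\mathrm{O}_{e_1}(x-\xi_0,t) = 0$), and iterating through multiples $k\xi_0$ of the integer shift via the Birkhoff properties (Propositions \ref{the birkhoff property for subsolution in an expanding domain} and \ref{the birkhoff property for supersolution in an expanding domain}), one accumulates a gap between $\overline{\mathrm{U}}_{a_2}(x,t)$ and the shifted $\underline{\mathrm{U}}_{a_1}(x-k\xi_0, t)$ that grows linearly in $t$ at rate $\delta_1 + \delta_2 > 0$. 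Letting $T\to\infty$ (with $R(T)\to\infty$), this growth eventually exceeds the bounded shift provided by $k\xi_0\cdot\nu$, contradicting the strict ordering from the iterated LCP.

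The main obstacle is to maintain the Birkhoff compatibility conditions $s_i\Delta t_i \le \Delta z_i\cdot\nu$ and $\mathscr{R}\Delta t_i \ge |\Delta z_i - (\Delta z_i\cdot\nu)\nu|$ simultaneously under iteration of $k\xi_0$, while keeping all shifted points inside the static cylinder where LCP was applied. This is precisely where the irrationality of $\nu$ is indispensable: the lattice-density results of Section \ref{the subsection of discrepancy} (in particular Proposition \ref{a lattice point that is close to a hyperplane}) allow us to choose the integer shift vectors so that their $\nu$-components fit the linear schedule of detachment without destroying the required compatibility in the transverse directions.
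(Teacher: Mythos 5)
Your strategy does not produce a contradiction, because the two ingredients you invoke work in the \emph{same} direction. With $s_1 < s_2$ both in the open gap, Proposition \ref{LCP in the unit scale} gives $\overline{\mathrm{U}}_{a_2} < \underline{\mathrm{U}}_{a_1}(\cdot - \xi_0, \cdot)$, i.e.\ the obstacle subsolution with the \emph{faster} obstacle speed $s_2$ sits below the shifted obstacle supersolution with the \emph{slower} speed $s_1$. Detachment then pushes the front of $\overline{\mathrm{U}}_{a_2}$ \emph{down} to roughly $(s_2 - \delta_2)t$ and the front of $\underline{\mathrm{U}}_{a_1}$ \emph{up} to roughly $(s_1 + \delta_1)t$, with $\delta_1 = \tfrac{1}{2}(\underline{s}(\nu)-s_1)$ and $\delta_2 = \tfrac{1}{2}(s_2 - \overline{s}(\nu))$ from Propositions \ref{the detachment lemma subsolution}--\ref{the detachment lemma supersolution}. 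One checks $\delta_1 + \delta_2 = \tfrac{1}{2}\big((\underline{s}(\nu)-\overline{s}(\nu)) + (s_2 - s_1)\big) > s_2 - s_1$ precisely because $s_1, s_2$ are strictly interior to the gap. Hence the spatial gap between the two fronts, namely $(s_1 + \delta_1)t + \xi_0\cdot\nu - (s_2 - \delta_2)t$, \emph{grows} linearly in $t$; this is entirely consistent with, indeed reinforces, the strict ordering asserted by the LCP. Iterating through $k\xi_0$ only shifts the supersolution further forward and makes the ordering more comfortable; nothing overflows, and no contradiction appears.

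The missing step is a role reversal, which is what the paper's proof actually carries out. Because the detachment hypothesis makes each obstacle semi-solution a genuine solution away from its obstacle (Proposition \ref{a semi obstacle solution becomes a solution when detachment happens}), one may run the comparison in the opposite configuration: take the detached obstacle \emph{supersolution} with the \emph{faster} obstacle speed as the competing subsolution, and the detached obstacle \emph{subsolution} with the \emph{slower} obstacle speed (shifted forward by $\xi_A$) as the competing supersolution. Concretely, the paper sets $s_1 = \overline{s}(\nu)+\theta$, $s_2 = \underline{s}(\nu)-\theta$ and compares $(\underline{\mathrm{U}}_{b_2})^*$ (a subsolution once detached; front at $\geq s_2 t$) against $\mathrm{U} = (\overline{\mathrm{U}}_{b_1})_*(\cdot - \xi_A, \cdot)$ (a supersolution once detached; front at $\leq s_1 t + A$). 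Since $s_2 > s_1$, the fronts must cross in finite time, whereas an LCP-style argument — rebuilt from scratch in Steps 1--5 of the paper precisely because Proposition \ref{LCP in the unit scale} delivers the opposite inequality and so cannot be cited as a black box — shows the ordering cannot fail. That forced crossing is the contradiction; it is simply not available from the natural-direction LCP you invoke.
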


\begin{proof}
	Let us assume on the contrary that $\theta := \frac{\underline{s}(\nu) - \overline{s}(\nu)}{3} > 0$ and then we derive a contradiction through the following steps.\\
	\underline{Step 1.} For any $\mu \in \RR$, we set
	\begin{eqnarray*}
	s_{1} := \overline{s}(\nu) + \theta &\text{and}& s_{2} := \underline{s}(\nu) - \theta.
	\end{eqnarray*}
	Then there exist $\mathcal{R}_{0} = \mathcal{R}_{0}(\nu) > r_{0} = r_{0}(\nu) > \frac{\sqrt{n}}{2}$ and $T_{0} = T_{0}(\nu) > 0$, such that
	\begin{eqnarray*}
		\begin{cases}
			\overline{\mathrm{U}}_{a_{1}}(\cdot,t) \prec_{(\Upomega(0,r_{0}; \nu), \mu)} \mathrm{O}_{e_{1}}(\cdot,t)\\
			\mathrm{O}_{e_{2}}(\cdot,t) \prec_{(\Upomega(0,r_{0}; \nu),\mu)} \underline{\mathrm{U}}_{a_{2}}(\cdot,t)
		\end{cases}, && t > T_{0},
	\end{eqnarray*}
    where
    \begin{eqnarray*}
    a_{i} := (\nu, \mathcal{R}_{0}, 0, q, s_{i}) \in \mathbb{A} &\text{and}& e_{i} := (\nu, q, s_{i}) \in \mathbb{E}, \hspace{2mm} i = 1, 2.
    \end{eqnarray*}
    Let us denote
    \begin{eqnarray*}
    A := \left(M_0 - m_0\right) T_{0} + 2\sqrt{n}, \hspace{4mm} \xi_{A} \in \argmin\limits\limits_{\substack{\xi \in \ZZ^{n}, \hspace{1mm} \xi\cdot\nu > A}}|\xi| &\text{and}& T := \frac{\left( M_0 - m_0\right)T_{0} + 3\sqrt{n}}{\theta}.
    \end{eqnarray*}
   For a fixed $C > 1$, we set a small positive number as follows, where $\gamma(t) := \frac{1}{2}e^{-2L_0t}$:
   \begin{equation}\label{the choice of vertical gap in the third lcp}
   \delta := \left( \frac{M_0 m_0}{M_0 - m_0}\right)\cdot \frac{ (C - 1)\gamma^{2}(T)}{(n - 1)C^{2} + C(C - 1)\gamma(T)M_0} > 0.
   \end{equation}
   Denote by $R_{0}(\nu,\delta)$ the number defined in Proposition \ref{a lattice point that is close to a hyperplane}, associated to the above $(\nu, \delta)$. And then denote
   \begin{equation*}
   R := \max\left\lbrace \mathcal{R}_{0}(\nu), R_{0}(\nu,\delta), \frac{12\left(3n + M_0 + 27 \right) }{L_0}\right\rbrace. 
   \end{equation*}
   The Proposition \ref{a lattice point that is close to a hyperplane} indicates that for any $\hat{x}$ with $|\hat{x} - (\hat{x}\cdot\nu)\nu| = R$, there exists $\hat{z}$, such that
   \begin{equation*}
   (i) \hspace{2mm} 0 < \left( \hat{z} - \hat{x}\right) \cdot \nu < \delta; \hspace{4mm} (ii)\hspace{2mm} |\hat{z} - 2\hat{x}| < \frac{R}{3}; \hspace{4mm} (iii) \hspace{2mm} \Delta \hat{z} := \hat{z} - \hat{x} \in \ZZ^{n}.
   \end{equation*}
   Since there are finite such interger vectors $\Delta \hat{z}$, we can set
   \begin{eqnarray*}
   \sigma := \min\left\lbrace \Delta \hat{z} \cdot \nu \big| |\hat{x} - (\hat{x}\cdot\nu)\nu| = R\right\rbrace  > 0 &\text{and}& \mathscr{R} := 2\left( \frac{1}{\sigma} + 1\right)M_0 R.
   \end{eqnarray*}
   Consider
   \begin{eqnarray*}
   b_{i} := (\nu, R_{i}, -\mathscr{R}, q, s_{i}), &\text{where}& R_{i} > R + \mathscr{R}T + \sqrt{n}, \hspace{2mm} i = 1, 2.
   \end{eqnarray*}
   And denote $\mathrm{U}(x,t) := \left( \overline{\mathrm{U}}_{b_{1}}\right)_{*}(x - \xi_{A}, t)$, then by Lemma \ref{the expansion of detachment in a static domain for the obstacle subsolution}, we have that
   \begin{eqnarray*}
   \left( \underline{\mathrm{U}}_{b_{2}}\right)^{*}(\cdot,t) \prec_{(\Upomega(0,R;\nu), \mu)} \mathrm{U}(\cdot,t), && 0 \leq t \leq T_{0} + \frac{\sqrt{n}}{\theta}.
   \end{eqnarray*}
   Then for any $t > T_{0} + \frac{\sqrt{n}}{\theta}$, we have that
   \begin{eqnarray*}
   \mathrm{O}_{e_{2}}(\cdot,t) \prec_{\left( \Upomega\left(0, \frac{4R}{3}; \nu\right), \mu\right) } \underline{\mathrm{U}}_{b_{2}}(\cdot,t) &\text{and}& \overline{\mathrm{U}}_{b_{1}}(\cdot,t) \prec_{\left( \Upomega\left(0, \frac{4R}{3}; \nu\right), \mu\right) } \mathrm{O}_{e_{1}}(\cdot,t).
   \end{eqnarray*}
   Because $\theta\cdot T > |\xi_{A}\cdot\nu|$, we conclude that
   \begin{equation*}
   T_{0} + \frac{\sqrt{n}}{\theta} < \sup\left\lbrace t > 0 \big| \left( \underline{\mathrm{U}}_{b_{2}}\right)^{*}(\cdot,t) \prec_{(\Upomega(0,R;\nu), \mu)} \mathrm{U}(\cdot,t) \right\rbrace < T. 
   \end{equation*}
   Let us set
   \begin{eqnarray*}
   \mathrm{Z}(x,t) &:=& \begin{cases}
   	\mu, & x \in L_{\mu}^{+}\left( \left( \underline{\mathrm{U}}_{b_{2}}\right)^{*}(\cdot, t); \Upomega\left( 0, \frac{4R}{3}; \nu\right) \right), \\
   	\mu - 1, & x \in \Upomega\left( 0, \frac{4R}{3}; \nu\right) \diagdown  L_{\mu}^{+}\left( \left( \underline{\mathrm{U}}_{b_{2}}\right)^{*}(\cdot, t); \Upomega\left( 0, \frac{4R}{3}; \nu\right) \right),
   \end{cases}\\
   \mathrm{Y}(x,t) &:=& \begin{cases}
   	\mu, & x \in \Upomega\left( 0, \frac{4R}{3}; \nu\right) \diagdown L_{\mu}^{-}\left( \left( \overline{\mathrm{U}}_{b_{1}}\right)_{*}(\cdot - \xi_{A},t); \Upomega\left(0, \frac{4R}{3};\nu \right)  \right), \\
   	\mu - 1, & x \in L_{\mu}^{-}\left( \left( \overline{\mathrm{U}}_{b_{1}}\right)_{*}(\cdot - \xi_{A}, t); \Upomega\left(0, \frac{4R}{3};\nu \right)  \right). 
   \end{cases}
   \end{eqnarray*}
   Because $\underline{\mathrm{U}}_{b_{2}}(x,t)$ detaches from the obstacle $\mathrm{O}_{e_{2}}(x,t)$ in the $\mu$ level set, by lower semicontinuity of $\underline{\mathrm{U}}_{b_{2}}(x,t)$, Proposition \ref{a semi obstacle solution becomes a solution when detachment happens}, the operator $\mathscr{F}(\cdot,\cdot,\cdot)$ is geometric and the fact that $\xi_{A} \in \ZZ^{n}$, we can conclude that $\mathrm{Z}(x,t)$ is a viscosity subsolution for $x \in \Upomega\left( 0, \frac{4R}{3}; \nu\right)$ and $t > T_{0} + \frac{\sqrt{n}}{\theta}$. Similarly, we have that $\mathrm{Y}(x,t)$ is a viscosity supersolution in the same space time domain.\\  
   \underline{Step 2.} Let $h := (\gamma(t), 1) \in \mathbb{F}$ and $\mathrm{U}^{h_{-}}(x,t)$ be the $h$ inf-convolution of $\mathrm{U}(x,t)$ by Definition \ref{the definition of the inf convolution}. Then the choice of $A$ and $\xi_{A}$ indicates that
   \begin{equation}\label{the first moment of touchment in the third local comparison principle}
   T_{0} + \frac{\sqrt{n}}{\theta} < t_{0} := \sup\left\lbrace t > 0 \big| \left( \underline{\mathrm{U}}_{b_{2}}\right)^{*}(\cdot,t) \prec_{(\Upomega(0,R; \nu), \mu)} \mathrm{U}^{h_{-}}(\cdot,t)\right\rbrace < T.
   \end{equation}
   Assume $x_{0}$ is the first crossing point between (the $\mu$ superlevel set of) $\left( \underline{\mathrm{U}}_{b_{2}}\right)^{*}$ and (the $\mu$ sublevel set of) $\mathrm{U}^{h_{-}}$ over $\Upomega(0,R;\nu)$.  i.e.,
   \begin{equation*}
   \left( \underline{\mathrm{U}}_{b_{2}}\right)^{*}(x_{0},t_{0}) = \mathrm{Z}(x_{0},t_{0}) = \mathrm{Y}^{h_{-}}(x_{0},t_{0}) = \mathrm{U}^{h_{-}}(x_{0},t_{0}) = \mu.
   \end{equation*}
   Then by applying Proposition \ref{the usual comparison principle} to $\underline{\mathrm{U}}_{b_{2}}(\cdot,t)$ and $\mathrm{U}(\cdot,t)$ in $\Upomega(0,R;\nu)$, the maximum of  $\left( \underline{\mathrm{U}}_{b_{2}}\right)^{*}(x,t) - \mathrm{U}(x,t)$ over $\Upomega(0, R; \nu)$ is obtained on $\partial\Upomega(0,R;\nu)$. Since $h$ is $x$-independent, $x_{0} \in \partial\Upomega(0,R;\nu)$. As a result of Proposition \ref{a lattice point that is close to a hyperplane}, there exists $z_{0} \in \RR^{n}$, such that
  \begin{equation*}
  (i) \hspace{2mm} \frac{\delta}{3} < \left(z_{0} - x_{0}\right) \cdot \nu < \delta; \hspace{4mm} (ii)\hspace{2mm} |z_{0} - 2x_{0}| < \frac{R}{3}; \hspace{4mm} (iii) \hspace{2mm} \Delta z := z_{0} - x_{0} \in \ZZ^{n}.
  \end{equation*}   
  \underline{Step 3.} Let us denote 
  \begin{eqnarray}\label{the exact vertical gap and the time gap in the third lcp}
  \sigma_{0} := |\Delta z \cdot \nu| &\text{and}& \Delta t := \frac{\sigma_{0}}{m_0} - \frac{\sigma_{0}}{M_0}.
  \end{eqnarray}
  It is clear that $\sigma_{0} \geq \sigma$. Then by Proposition \ref{the birkhoff property for supersolution in a shrinking domain}, we have the ordering relation
  \begin{equation}\label{the birkhoff property applied to the subsolution in the third lcp}
  \underline{\mathrm{U}}_{b_{2}}(x,t) \leq \underline{\mathrm{U}}_{b_{2}}\left(x - \Delta z, t - \frac{\sigma_{0}}{M_0}\right), \hspace{2mm} x \in \Upomega\left( x_{0}, \frac{R}{3}; \nu\right), \hspace{2mm} \frac{\sigma_{0}}{M_0} \leq t \leq t_{0} - \Delta t.
  \end{equation}
  Because of the inclusion $\Upomega\left( x_{0}, \frac{R}{3}; \nu\right) + \Delta z \subseteq \Upomega(0, R; \nu)$ and (\ref{the first moment of touchment in the third local comparison principle}), we have that
  \begin{equation}\label{the ordering relation at an earlier moment in the third lcp}
  \left( \underline{\mathrm{U}}_{b_{2}}\right)^{*}\left(\cdot, t - \frac{\sigma_{0}}{M_0}\right) \prec_{(\Upomega(0,R;\nu), \mu)} \mathrm{U}^{h_{-}}\left(\cdot, t - \frac{\sigma_{0}}{M_0}\right), \hspace{2mm} \frac{\sigma_{0}}{M_0} \leq t \leq t_{0} - \Delta t. 
  \end{equation}
  The Proposition \ref{the birkhoff property for subsolution in a shrinking domain} applied to $\overline{\mathrm{U}}_{b_{1}}$ with the above shift $\Delta z$ implies that
  \begin{eqnarray*}
  \overline{\mathrm{U}}_{b_{1}}\left( y, t - \frac{\sigma_{0}}{M_0}\right) &\leq& \overline{\mathrm{U}}_{b_{1}}\left( y + \Delta z, t + \frac{\sigma_{0}}{m_0} - \frac{\sigma_{0}}{M_0}\right)\\
  &=& \overline{\mathrm{U}}_{b_{1}}\left( y + \Delta z, t + \Delta t\right), \hspace{1cm} y \in \Upomega(0,R;\nu), \hspace{2mm} \frac{\sigma_{0}}{M_0} \leq t \leq t_{0} - \Delta t.
  \end{eqnarray*}
  Then apply the $\xi_{A}$ shift and the $h$ inf-convolution to both sides, we conclude that
  \begin{equation}\label{the birkhoff property applied to the supersolution in the third lcp}
  \mathrm{U}^{h_{-}}\left( y, t - \frac{\sigma_{0}}{M_0}\right) \leq \mathrm{U}^{h_{-}}(y + \Delta z, t + \Delta t), \hspace{2mm} y \in \Upomega(0, R; \nu), \hspace{2mm} \frac{\sigma_{0}}{M_0} \leq t \leq t_{0} - \Delta t.
  \end{equation}
  A combination of (\ref{the birkhoff property applied to the subsolution in the third lcp}), (\ref{the ordering relation at an earlier moment in the third lcp}) and (\ref{the birkhoff property applied to the supersolution in the third lcp}) gives the following relation:
  \begin{equation*}
  \left( \underline{\mathrm{U}}_{b_{2}}\right)^{*}(\cdot,t) \prec_{\left( \Upomega\left( x_{0}, \frac{R}{3}; \nu\right), \mu\right) } \mathrm{U}^{h_{-}}(\cdot, t + \Delta t), \hspace{2mm} \frac{\sigma_{0}}{M_0} \leq t \leq t_{0} - \Delta t. 
  \end{equation*}
  In particular, we have that 
    \begin{equation*}
    \left( \underline{\mathrm{U}}_{b_{2}}\right)^{*}(\cdot, t_{0} - \Delta t) \prec_{\left( \Upomega\left( x_{0}, \frac{R}{3}; \nu\right), \mu\right) } \mathrm{U}^{h_{-}}(\cdot, t_{0}).
    \end{equation*}
    By the choice of $\Delta t$ through (\ref{the choice of vertical gap in the third lcp}), (\ref{the first moment of touchment in the third local comparison principle}) and (\ref{the exact vertical gap and the time gap in the third lcp}), then
    \begin{equation*}
    0 < \Delta t \leq \frac{(C - 1)\gamma^{2}(t_{0})}{(n - 1)C^{2} + C(C - 1)\gamma(t_{0})M_0}.
    \end{equation*}
    Then the Proposition \ref{finite speed propagation of subsolution} implies that
    \begin{eqnarray*}
    \left( \underline{\mathrm{U}}_{b_{2}}\right)^{*}(\cdot,t_{0}) \prec_{\left( \Upomega\left( x_{0}, \frac{R}{3}; \nu\right), \mu\right) } \mathrm{U}^{\hat{h}_{-}}(\cdot,t_{0}), &\text{where}& \hat{h} := \left( \left( 1 - \frac{1}{C}\right)\gamma(t), 1 \right). 
    \end{eqnarray*}
    	\underline{Step 4.} Let us still construct $\varphi(x): \Upomega\left( x_{0}, \frac{R}{3}; \nu\right)  \rightarrow \RR$ as follows:
    	\begin{eqnarray*}
    		\varphi(x) := - \frac{9(1 + C)}{2CR^{2}}\left|(x - x_{0})^{\top}\right|^{2} + \frac{1}{2}\left( 3 - \frac{1}{C}\right), 
    	\end{eqnarray*}
    	where $(x - x_{0})^{\top} := (x - x_{0}) - ((x - x_{0})\cdot\nu)\nu$. Then $\varphi(x)$ satisfies that
    	\begin{eqnarray*}
    	\begin{cases}
    		\varphi|_{\left\lbrace x | (x - x_{0})^{\top} = 0\right\rbrace } = \frac{1}{2}\left( 3 - \frac{1}{C}\right) > 1, \\
    		\varphi|_{\left\lbrace x | |(x - x_{0})^{\top}| = \frac{R}{3}\right\rbrace } = 1 - \frac{1}{C},
    		\end{cases} &\text{and}& \begin{cases}
    		\lVert D\varphi\rVert_{\infty} \leq \frac{6}{R},\\
    		\lVert D^{2}\varphi\rVert_{\infty} \leq \frac{18}{R^{2}}.
    	\end{cases}
    \end{eqnarray*}
    Then, because $R \geq \max\left\lbrace 6, \frac{12(3n + M_0 + 27)}{L_0} \right\rbrace $, we have that
    \begin{eqnarray*}
    	\gamma^{\prime}(t) + \left( \frac{\left( n + 1\right) \lVert D^{2}\varphi \rVert_{\infty}}{\varphi(x)} + \frac{M_0|D\varphi(x)|}{\varphi(x)}
    	+ L_0\right)\gamma(t) + \frac{|D\varphi(x)|^{2}\gamma(t)}{\left( 1 - \gamma(t)|D\varphi(x)|\right)^{2}\varphi^{2}(x)} \leq 0.
    \end{eqnarray*}
    Then, based on the finite speed of propagation regarding the subsolution (c.f. Proposition \ref{finite speed propagation of subsolution}), we have for some $\tau > 0$ that
    \begin{equation}\label{the strict ordering relation on the boundary in the third local comparison principle}
    \left( \underline{\mathrm{U}}_{b_{2}}\right)^{*}(\cdot,t) \prec_{\left( \partial\Upomega\left( x_{0}, \frac{R}{3}; \nu\right), \mu\right) } \mathrm{U}^{(\gamma,\varphi)_{-}}(\cdot,t), \hspace{2mm} T_{0} + \frac{\sqrt{n}}{\theta} \leq t \leq t_{0} + \tau.
    \end{equation}
    Note that the following strict ordering relation holds.
    \begin{equation}\label{an strict ordering relation at the initial moment in the third local comparison principle}
    \left( \underline{\mathrm{U}}_{b_{2}}\right)^{*}\left(\cdot, T_{0} + \frac{\sqrt{n}}{\theta}\right)  \prec_{\left( \Upomega\left(x_{0}, \frac{R}{3}; \nu\right), \mu\right) } \mathrm{U}^{(\gamma,\varphi)_{-}}\left(\cdot, T_{0} + \frac{\sqrt{n}}{\theta}\right).
    \end{equation}
    Let us introduce the function
    \begin{eqnarray*}
    \mathrm{W}(x,t) := \begin{cases}
    	\mu, & x \in \Upomega\left( x_{0}, \frac{R}{3}; \nu\right)\diagdown L_{\mu}^{-}\left( \mathrm{U}^{(\gamma,\varphi)}(\cdot,t); \Upomega\left(x_{0}, \frac{R}{3}; \nu \right) \right), \\
    	\mu - 1, & x \in L_{\mu}^{-}\left( \mathrm{U}^{(\gamma,\varphi)}(\cdot,t); \Upomega\left(x_{0}, \frac{R}{3}; \nu \right) \right).
    \end{cases}
    \end{eqnarray*}
    \underline{Step 5.} Based on the observations of (\ref{the strict ordering relation on the boundary in the third local comparison principle}) and (\ref{an strict ordering relation at the initial moment in the third local comparison principle}), let us apply the proof of Proposition \ref{the comparison principle regarding pseudo viscosity solutions} to the above $\mathrm{Z}(x,t)$ and $\mathrm{W}(x,t)$, and we can conclude that
    \begin{eqnarray*}
    \mathrm{Z}(x,t) \leq \mathrm{W}(x,t), && T_{0} + \frac{\sqrt{n}}{\theta} \leq t \leq t_{0} + \tau, \hspace{2mm} x \in \Upomega\left( x_{0}, \frac{R}{3}; \nu\right). 
    \end{eqnarray*}
    On the other hand, recall that $h = (\gamma(t), 1)$ and consider the facts
    \begin{eqnarray*}
    \left( \underline{\mathrm{U}}_{b_{2}}\right)^{*}(x_{0},t_{0}) = \mathrm{U}^{h_{-}}(x_{0},t_{0}) = \mu &\text{and}& \varphi|_{\left\lbrace x | (x - x_{0})^{\top} = 0\right\rbrace } = \frac{1}{2}\left( 3 - \frac{1}{C}\right) > 1. 
    \end{eqnarray*}
    It follows that $\mathrm{Z}(x_{0},t_{0}) = \mu > \mu - 1 = \mathrm{W}(x_{0},t_{0})$, which is a contradiction.
\end{proof}

\subsection{General directions}
\subsubsection{The extension of the head/tail speed}
Up to now, we have defined the head/tail speed in all irrational directions $\nu \in \mathbb{S}^{n-1}\diagdown \RR\ZZ^{n}$, in which the local comparison principle (Proposition \ref{LCP in the unit scale}) and the detachment property (Proposition \ref{the detachment lemma subsolution}) hold. In order to study the homogenization, it is necessary to extend the concept to all directions $\nu \in \mathbb{S}^{n-1}$. In particular, let us define the head/tail speed in rational directions, it turns out that the detachment property is the essential ingredient of the concept.

\begin{definition}\label{sub or super strict detached speeds}
	Fix $\vartheta \in \mathbb{S}^{n-1}$, a number $s$ is called sub-strict-detached (resp. super-strict-detached) with respect to $\vartheta$ if the following holds: There exists $\delta > 0$, $B := B(\vartheta, \delta) > 0$, such that for any $\mu \in \RR$, $r > 0$, $q \in \RR^{n}\diagdown \left\lbrace 0\right\rbrace$ and $T > 0$, there exists $R := R(\vartheta, \mu, \delta, r, T) > 0$, such that we have the following relation, where $a := (\vartheta, R, 0, q, s) \in \mathbb{A}$.
	\begin{eqnarray*}
	\overline{\mathrm{U}}_{a}(\cdot - (\delta t - B)\vartheta, t) \prec_{\left( \Upomega(0, r; \vartheta), \mu\right)} \mathrm{O}_{e}(\cdot, t), && 0 \leq t \leq T
	\end{eqnarray*}
	\begin{eqnarray*}
	(resp. \hspace{2mm} \mathrm{O}_{e}(\cdot, t) \prec_{\left( \Upomega(0,r;\vartheta), \mu\right)} \underline{\mathrm{U}}_{a}(\cdot + (\delta t - B)\vartheta, t), && 0 \leq t \leq T ).
	\end{eqnarray*}
\end{definition}

\begin{proposition}
	Fix $\nu \in \mathbb{S}^{n-1}\diagdown \RR\ZZ^{n}$, then any $s \in \left( \overline{s}, \infty\right)$ is sub-strict-detached; any $s \in (0, \underline{s})$ is super-strict-detached. Moreover, we have the following expression of head/tail speed.
	\begin{eqnarray*}
	\overline{s}(\nu) &:=& \inf \left\lbrace s > 0 | s \text{ is sub-strict-detached with respect to } \nu\right\rbrace, \\
	\underline{s}(\nu) &:=& \sup \left\lbrace s > 0 | s \text{ is super-strict-detached with respect to } \nu\right\rbrace.
	\end{eqnarray*}
\end{proposition}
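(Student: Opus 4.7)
The statement has two assertions. The first---that every $s>\overline{s}(\nu)$ is sub-strict-detached and every $0<s<\underline{s}(\nu)$ is super-strict-detached---is a straight identification of notation: the conclusion of Proposition \ref{uniform detachment for subsolution with a sub strict detached speed}, namely the linear separation $\overline{\mathrm{U}}_{a}(\cdot-(\delta t-B)\nu,t)\prec_{(\Upomega(0,r;\nu),\mu)}\mathrm{O}_{e}(\cdot,t)$ with $(\delta,B)$ uniform in $\mu,r,T$, is verbatim the condition defining sub-strict-detached in Definition \ref{sub or super strict detached speeds}. Proposition \ref{uniform detachment for supersolution with a super strict detached speed} yields the super-strict-detached case in the same way.

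For the infimum formula, the inequality $\inf\{s>0:s\text{ sub-strict-detached}\}\le\overline{s}(\nu)$ is immediate from the first assertion. For the reverse inequality, suppose $s$ is sub-strict-detached with parameters $(\delta_0,B)$. The change of variables $y=x-(\delta_0 t-B)\nu$ in Definition \ref{sub or super strict detached speeds}, together with the computation $\mathrm{O}_{e(s)}(y+(\delta_0 t-B)\nu,t)=\mathrm{O}_{e(s-\delta_0)}(y,t)+|q|B$, recasts the defining inequality as
\[
\overline{\mathrm{U}}_{a(s)}(y,t)\prec_{(\Upomega(0,r;\nu),\mu)}\mathrm{O}_{e(s-\delta_0)}(y,t)+|q|B,
\]
uniformly in $\mu$. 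Thus $\overline{\mathrm{U}}_{a(s)}$ trails, in the level-set sense, a hyperplane moving at the slower speed $s-\delta_0$ up to a constant offset. For any $s'\in(s-\delta_0,\infty)$, the residual speed margin $s'-(s-\delta_0)>0$ absorbs the offset $|q|B$ for $t$ large, and a short Birkhoff-type shift in the $\nu$-direction (along the lines of Proposition \ref{the birkhoff property for subsolution in an expanding domain}) then yields detachment of the obstacle subsolution at speed $s'$ from $\mathrm{O}_{e(s')}$ in the sense of Definition \ref{the definition of detachment}.

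Because this detachment holds for every $s'\in(s-\delta_0,\infty)$, the condition defining $\overline{s}(\nu)$ in Definition \ref{the head speed in an irrational direction}---``for every $\delta>0$, detachment at $s^*+\delta$''---is verified at $s^*=s-\delta_0$, giving $\overline{s}(\nu)\le s-\delta_0<s$ and therefore $\inf\{s:s\text{ sub-strict-detached}\}\ge\overline{s}(\nu)$. The supremum formula for $\underline{s}(\nu)$ is proved symmetrically, with Proposition \ref{uniform detachment for supersolution with a super strict detached speed}, Definition \ref{the tail speed in an irrational direction}, and the reversed translation $+(\delta t-B)\nu$ in place of their subsolution analogues. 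The step I expect to need the most care is the Birkhoff-style propagation of detachment from the base speed $s$ to every strictly larger (respectively smaller) obstacle speed, which is where the linear gap $\delta_0$ of the sub-strict-detached condition is spent and the strict inequality $s>\overline{s}(\nu)$ emerges.
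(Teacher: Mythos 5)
The identification of the first assertion with Propositions \ref{uniform detachment for subsolution with a sub strict detached speed} and \ref{uniform detachment for supersolution with a super strict detached speed} is exactly right, and your change-of-variables computation verifying $\mathrm{O}_{e(s)}(y+(\delta_0 t-B)\nu,t)=\mathrm{O}_{e(s-\delta_0)}(y,t)+|q|B$ (using $\nu\cdot q=-|q|$) is correct. This gives the inequality $\inf\{s:\text{sub-strict-detached}\}\le\overline{s}(\nu)$ and the useful reformulation that sub-strict-detachment at speed $s$ with margin $\delta_0$ means the obstacle subsolution trails the slower obstacle $\mathrm{O}_{e(s-\delta_0)}$ up to a constant, in the level-set sense. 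The paper's own one-line proof cites Propositions \ref{the detachment lemma subsolution} and \ref{the detachment lemma supersolution} directly, but those feed into Propositions \ref{uniform detachment for subsolution with a sub strict detached speed} and \ref{uniform detachment for supersolution with a super strict detached speed}, so this is essentially the same route.

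The gap is in the claim that detachment holds for every $s'\in(s-\delta_0,\infty)$. Your mechanism transfers detachment from $\overline{\mathrm{U}}_{a(s)}$ to $\overline{\mathrm{U}}_{a(s')}$ via the comparison $\overline{\mathrm{U}}_{a(s')}\le\overline{\mathrm{U}}_{a(s)}$, which holds because a slower obstacle $\mathrm{O}_{e(s')}\le\mathrm{O}_{e(s)}$ yields a smaller admissible class. This requires $s'\le s$. For $s'>s$ the ordering reverses ($\overline{\mathrm{U}}_{a(s')}\ge\overline{\mathrm{U}}_{a(s)}$), so trailing of $\overline{\mathrm{U}}_{a(s)}$ below $\mathrm{O}_{e(s')}$ says nothing about the level sets of $\overline{\mathrm{U}}_{a(s')}$ itself. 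The Birkhoff property you invoke (Proposition \ref{the birkhoff property for subsolution in an expanding domain}) relates a single obstacle subsolution to itself under integer space-time shifts; it does not compare obstacle subsolutions built from two different obstacle speeds, so it does not close this. What you have actually established is detachment for $s'\in(s-\delta_0,s]$ only. To conclude $\overline{s}(\nu)\le s-\delta_0$ from Definition \ref{the head speed in an irrational direction} you must check ``for every $\delta>0$, detachment at $(s-\delta_0)+\delta$,'' and for $\delta>\delta_0$ this means detachment at a speed strictly above $s$. The missing ingredient is the upward monotonicity of detachment in the obstacle speed, a property the paper's definition implicitly presupposes; it should be isolated and justified (e.g.\ by a comparison with the shift trick for the faster obstacle or a separate appeal to the local-comparison machinery) rather than absorbed into the phrase ``a short Birkhoff-type shift.''
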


\begin{proof}
	It is an immediate corollary of Proposition \ref{the detachment lemma subsolution}, \ref{the detachment lemma supersolution}.
\end{proof}

\begin{definition}
	Let $\vartheta \in \mathbb{S}^{n-1}\cap \RR\ZZ^{n}$, the head speed (resp. tail speed) in $\vartheta$, denoted by $\overline{s}(\vartheta)$ (resp. $\underline{s}(\vartheta)$), is defined as follows.
	\begin{eqnarray*}
	\overline{s}(\vartheta) &:=& \inf \left\lbrace s > 0 | s \text{ is sub-strict-detached with respect to } \vartheta\right\rbrace, \\
	(\text{resp.} \hspace{2mm} \underline{s}(\vartheta) &:=& \sup \left\lbrace s > 0 | s \text{ is sup-strict-detached with respect to } \vartheta\right\rbrace ).
	\end{eqnarray*}
\end{definition}

\subsubsection{An equivalent description}

\begin{definition}\label{the global head speed in an rational direction}
	Let $\vartheta \in \mathbb{S}^{n-1}$, the global head (resp. tail) speed in $\vartheta$ direction, denoted by $\overline{s}^{\infty}(\vartheta)$ (resp. $\underline{s}_{\infty}(\vartheta)$), is defined as the smallest (resp. largest) number, such that: for any $\delta > 0$, $\overline{\mathrm{U}}_{a^{\infty}}(x,t)$ (resp. $\underline{\mathrm{U}}_{a_{\infty}}(x,t))$ detaches (c.f. Definition \ref{the definition of detachment}) from $\mathrm{O}_{e^{\infty}}(x,t)$ (resp.  $\mathrm{O}_{e_{\infty}}(x,t)$) in $\Upomega(0, r; \vartheta)$ for some $r > \frac{\sqrt{n}}{2}$, where
	\begin{eqnarray*}
		a^{\infty} := (\vartheta, \infty, 0, q, \overline{s}^{\infty}(\nu) + \delta) \in \mathbb{A}, && e^{\infty} := (\nu, q, \overline{s}^{\infty}(\nu) + \delta) \in \mathbb{E}\\
		(\text{resp.} \hspace{2mm} a_{\infty} := (\vartheta, \infty, 0, q, \underline{s}_{\infty}(\nu) - \delta) \in \mathbb{A}, && e_{\infty} := (\nu, q, \underline{s}_{\infty}(\nu) - \delta) \in \mathbb{E}).
	\end{eqnarray*}
\end{definition}

\begin{proposition}[Equivalence]\label{the equivalence between domain obstacle semi solutions and global semi solutions in defining head and tail speeds}
	Fix any $\vartheta \in \mathbb{S}^{n-1}$, then 
	\begin{eqnarray*}
	\overline{s}(\vartheta) = \overline{s}^{\infty}(\vartheta) &\text{and}& \underline{s}(\vartheta) = \underline{s}_{\infty}(\vartheta).
	\end{eqnarray*}
\end{proposition}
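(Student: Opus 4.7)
The strategy is to verify the two inequalities $\overline{s}^\infty(\vartheta) \leq \overline{s}(\vartheta)$ and $\overline{s}(\vartheta) \leq \overline{s}^\infty(\vartheta)$; the tail-speed identity follows by the same template with the supersolution Birkhoff properties (Propositions \ref{the birkhoff property for supersolution in an expanding domain}, \ref{the birkhoff property for supersolutions in two static domains}) in place of their subsolution analogues. The key structural observation underlying both directions is the monotonicity of the obstacle subsolution in the cylinder width: restriction of any subsolution on a larger cylinder yields an admissible competitor on the smaller one, so $\overline{\mathrm{U}}_{a^\infty} \leq \overline{\mathrm{U}}_a$ on $\mathrm{C}_d$ for every finite $R$, and $\{\overline{\mathrm{U}}_a\}_R$ is nonincreasing in $R$ with monotone limit $\overline{\mathrm{U}}_{a^\infty}$ as $R \to \infty$.

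For $\overline{s}^\infty \leq \overline{s}$, pick $s > \overline{s}(\vartheta)$ so that $s$ is sub-strict-detached (Definition \ref{sub or super strict detached speeds}), with constants $\delta, B > 0$; for any $(\mu, r, q, T)$ this provides $R$ with the linear-lag relation
$$\overline{\mathrm{U}}_a(\cdot - (\delta t - B)\vartheta, t) \prec_{(\Upomega(0, r; \vartheta), \mu)} \mathrm{O}_e(\cdot, t) \quad \text{on } [0, T].$$
Since $\Upomega(0, r; \vartheta)$ is invariant under $\vartheta$-shifts and $\overline{\mathrm{U}}_{a^\infty} \leq \overline{\mathrm{U}}_a$ on the relevant shifted region, the superlevel-set containment transfers the $\prec$-relation to $\overline{\mathrm{U}}_{a^\infty}$; the infinity conditions in Definition \ref{the strict ordering relation between level sets of two functions}(iii) persist because $\overline{\mathrm{U}}_{a^\infty} \to +\infty$ as $x \cdot \vartheta \to -\infty$ (since the obstacle itself grows there). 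Choosing $T_\star(\mu) > B/\delta$ and letting $T$ range to infinity (with $R = R(T)$) upgrades this linear lag to detachment of $\overline{\mathrm{U}}_{a^\infty}$ from $\mathrm{O}_e$ at every level $\mu$ in $\Upomega(0, r; \vartheta)$ for $t > T_\star(\mu)$, so $\overline{s}^\infty(\vartheta) \leq s$ and infimizing yields $\overline{s}^\infty \leq \overline{s}$.

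For the reverse $\overline{s}(\vartheta) \leq \overline{s}^\infty(\vartheta)$, fix $s > \overline{s}^\infty(\vartheta)$ and pick $s_0 \in (\overline{s}^\infty, s)$ with $\sigma := s - s_0 > 0$; the global subsolution $\overline{\mathrm{U}}_{a_0^\infty}$ at speed $s_0$ detaches from $\mathrm{O}_{e_0}$ in some $\Upomega(0, r_0; \vartheta)$. The monotone convergence $\overline{\mathrm{U}}_{a_0} \searrow \overline{\mathrm{U}}_{a_0^\infty}$ as $R \to \infty$ supplies, for each $(\mu, r, T)$, a radius $R$ for which the bounded-domain $\overline{\mathrm{U}}_{a_0}$ still detaches from $\mathrm{O}_{e_0}$ at level $\mu$ in $\Upomega(0, r; \vartheta)$ uniformly on $[0, T]$. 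To promote this (a priori non-linear) bounded-domain detachment into the linear rate demanded by Definition \ref{sub or super strict detached speeds}, I will replay the proof scheme of the detachment lemma Proposition \ref{the detachment lemma subsolution} at speed $s_0$: for irrational $\vartheta$ the comparison-consistent triplet is supplied by Proposition \ref{a lattice point that is close to a hyperplane}, and for rational $\vartheta$ the lattice discreteness of $\vartheta \cdot \ZZ^n$ is compensated by an approximation argument in which $\vartheta$ is replaced by a sequence of nearby irrational $\vartheta_k \to \vartheta$, the uniform linear lag is derived at the $\vartheta_k$ level, and one passes to the limit via the domain-monotonicity already in hand. The resulting rate is $\delta_0 = \sigma/2$, with offset $B_0$ determined by the onset time of the global detachment, giving $\overline{s}(\vartheta) \leq s$.

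The principal technical hurdle is the quantitative, locally uniform character of the monotone convergence $\overline{\mathrm{U}}_a \to \overline{\mathrm{U}}_{a^\infty}$ on compact space-time cylinders, at a tolerance fine enough to retain the linear $\sigma t$-lag. Since $\overline{\mathrm{U}}_a$ is only upper semicontinuous and $\mathscr{F}$ is degenerate parabolic, a classical Dini argument is unavailable; I anticipate controlling this convergence by bracketing $\overline{\mathrm{U}}_a - \overline{\mathrm{U}}_{a^\infty}$ near $\partial \Upomega(0, R; \vartheta)$ using the explicit sub- and super-barriers from Lemmas \ref{the obstacle subsolution coincides with the obstacle on the boundary} and \ref{the obstacle supersolution coincides with the obstacle on the boundary}, and by invoking the finite-speed propagation of Proposition \ref{finite speed propagation of subsolution} to decouple the inner cylinder from the lateral boundary on $[0, T]$. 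Together with the irrational-to-rational approximation in the last step of the reverse inequality, this is the piece I expect to require the most care.
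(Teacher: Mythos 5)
Your first inequality, $\overline{s}^{\infty}(\vartheta) \leq \overline{s}(\vartheta)$, is correct and is essentially the paper's one-line Step~1: the global obstacle subsolution is a competitor for the finite-cylinder one, so $\overline{\mathrm{U}}_{a^{\infty}} \leq \overline{\mathrm{U}}_{a}$ on every cylinder, and any detachment (linear or otherwise) of $\overline{\mathrm{U}}_{a}$ from the obstacle is inherited by $\overline{\mathrm{U}}_{a^{\infty}}$.

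The reverse inequality is where the gap you flagged is genuine and not merely a technical footnote. You want to transfer detachment of the global subsolution $\overline{\mathrm{U}}_{a_{0}^{\infty}}$ at speed $s_{0}$ to detachment of the finite-$R$ subsolution $\overline{\mathrm{U}}_{a_{0}}$ on a compact window, by controlling the monotone convergence $\overline{\mathrm{U}}_{a_{0}} \searrow \overline{\mathrm{U}}_{a_{0}^{\infty}}$ locally uniformly. But these objects are only upper semicontinuous, convergence is from above, and the boundary Lemma~\ref{the obstacle subsolution coincides with the obstacle on the boundary} pins $\overline{\mathrm{U}}_{a_{0}}$ to the obstacle on the lateral boundary $\partial\Upomega(0,R;\vartheta)$ for all time --- so there is no hope of uniform closeness near that boundary, and an interior estimate would be a substantial new argument. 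Without a rate, the passage to the linear $\delta_{0} t$ lag of Definition~\ref{sub or super strict detached speeds} does not close. Moreover, replaying Proposition~\ref{the detachment lemma subsolution} requires an irrational direction (it is built on Proposition~\ref{a lattice point that is close to a hyperplane}); the approximation by irrational $\vartheta_{k}$ you propose for rational $\vartheta$ is exactly the kind of analysis the paper defers to Proposition~\ref{the detachment property for rational directions}, and that proposition in fact exploits periodicity of rational directions rather than irrational approximation, and logically sits \emph{after} the equivalence you are trying to prove here.

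The paper proves the reverse inequality by contradiction with a soft compactness argument, which bypasses quantitative convergence entirely. Assuming $\overline{s}(\vartheta) = \overline{s}^{\infty}(\vartheta) + \gamma$ with $\gamma > 0$, take $s$ strictly in between and define the touching sets $\mathfrak{T}_{\ell}$ of $\overline{\mathrm{U}}_{a_{\ell}}$ with the obstacle inside $\Upomega\left(0, \tfrac{\sqrt{n}}{2}; \vartheta\right) \times (0,\infty)$. Three soft facts: each $\mathfrak{T}_{\ell}$ is closed (upper semicontinuity of $\overline{\mathrm{U}}_{a_{\ell}}$); they are nested decreasing in $\ell$ (Proposition~\ref{the birkhoff property for subsolutions in two static domains} gives monotonicity of the family in the cylinder radius); and each meets every compact slab $\left\lbrace \alpha \leq x\cdot\vartheta \leq \alpha + \sqrt{n}\right\rbrace$ (Lemma~\ref{a simpler description of detachment for subsolution}, using $s < \overline{s}(\vartheta)$ to rule out detachment). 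The finite-intersection property of nested closed non-empty sets in a compact slab then yields $\mathfrak{T}_{\infty} = \bigcap_{\ell}\mathfrak{T}_{\ell} \neq \emptyset$ in every slab; the sandwich $\mathrm{U}_{\infty} \leq \overline{\mathrm{U}}_{a^{\infty}} \leq \mathrm{O}_{e^{\infty}}$ forces $\overline{\mathrm{U}}_{a^{\infty}}$ to touch the obstacle as well, contradicting $s > \overline{s}^{\infty}(\vartheta)$. The structural point you should take away: it is far easier to pass \emph{non-detachment} from finite domains to the limit (a closedness statement, which compactness hands you for free) than to pass \emph{detachment} from the limit to finite domains (an openness statement, which is precisely the uniform-convergence obstacle you ran into). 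Your Direction~2 is aimed at the openness statement; the paper argues by contradiction so that it only ever needs the closedness statement.
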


\begin{proof}
	Let us only prove the equality regarding the head speed, the other one follows by a similar pattern. \\
    \underline{Step 1.} Under the same obstacle speed, the global obstacle subsolution is clearly less or equal to the obstacle subsolution associated to domain with finte radius. Therefore, we have that $\overline{s}(\vartheta) \geq \overline{s}^{\infty}(\vartheta)$.\\
    \underline{Step 2.} Let us assume on the contrary that $\overline{s}(\vartheta) = \overline{s}^{\infty}(\vartheta) + \gamma$ for some $\gamma > 0$. Set
    \begin{eqnarray*}
    \delta := \frac{\gamma}{2}, \hspace{4mm} a^{\infty} := (\vartheta, \infty, 0, q, \underbrace{\overline{s}^{\infty}(\vartheta) + \delta}_{s}) \in \mathbb{A} &\text{and}& a_{\ell} := (\vartheta, \ell, 0, q, \underbrace{\overline{s}(\vartheta) - \delta}_{s}) \in \mathbb{A}.
    \end{eqnarray*}
    Define
    \begin{equation*}
    \mathrm{U}_{\infty}(x,t) := \limsup_{\eta \rightarrow 0}\left\lbrace \overline{\mathrm{U}}_{a_{\ell}}(y,s) \Big| |y - x| + |s - t| + \frac{1}{\ell} < \eta\right\rbrace. 
    \end{equation*}
   Then by the standard theory of viscosity solution (c.f. \cite{Crandall Ishii Lions BAMS}), $\mathrm{U}_{\infty}(x,t)$ is a viscosity subsolution bounded from above by $\mathrm{O}_{e^{\infty}}(x,t)$, where $e^{\infty} := (\vartheta, q, s) \in \mathbb{E}$. Therefore, by the maximality of the global obstacle subsolution, we have that
   \begin{eqnarray}\label{the limit of domain obstacle subsolution is no bigger than the global obstacle subsolution}
   \mathrm{U}_{\infty}(x,t) \leq \overline{\mathrm{U}}_{a^{\infty}}(x,t), && (x,t) \in \RR^{n} \times (0, \infty).
   \end{eqnarray}
  Let us now consider the set $\mathfrak{T}_{\ell} \subseteq \Upomega\left(0, \frac{\sqrt{n}}{2}; \vartheta\right) \times (0, \infty)$, on which $\overline{\mathrm{U}}_{a_{\ell}}(x,t)$ touches $\mathrm{O}_{e^{\infty}}(x,t)$. To be more precise,
  \begin{equation*}
  \mathfrak{T}_{\ell} := \left\lbrace (x,t) \in \Upomega\left(0, \frac{\sqrt{n}}{2}; \vartheta\right) \times (0, \infty) \Big| \overline{\mathrm{U}}_{a_{\ell}}(x,t) = \mathrm{O}_{e^{\infty}}(x,t) \right\rbrace. 
  \end{equation*}
  Since $\overline{\mathrm{U}}_{a_{\ell}}(x,t)$ is upper semicontinuous, $\mathfrak{T}_{\ell}$, $\ell \in \NN$ are all closed sets. Moreover, we have that (c.f. an independent Lemma \ref{a simpler description of detachment for subsolution})
  \begin{eqnarray*}
  \mathfrak{T}_{\ell} \cap \left\lbrace x \in \RR^{n}\big | \alpha \leq x \cdot \vartheta \leq \alpha + \sqrt{n} \right\rbrace \neq \emptyset, &\text{for any}& \alpha \geq 0.
  \end{eqnarray*}
  Let us also set 
  \begin{equation*}
  \mathfrak{T}_{\infty} := \left\lbrace (x,t) \in \Upomega\left(0, \frac{\sqrt{n}}{2}; \vartheta\right) \times (0, \infty) \Big| \mathrm{U}_{\infty}(x,t) = \mathrm{O}_{e^{\infty}}(x,t) \right\rbrace. 
  \end{equation*}
  Then by the definition of $\mathrm{U}_{\infty}$, we have that $\mathfrak{T}_{\infty} = \cap_{\ell = 1}^{\infty}\mathfrak{T}_{\ell}$. Accordingly,
   \begin{eqnarray*}
   	\mathfrak{T}_{\infty} \cap \left\lbrace x \in \RR^{n}\big | \alpha \leq x \cdot \vartheta \leq \alpha + \sqrt{n} \right\rbrace \neq \emptyset, &\text{for any}& \alpha \geq 0, 
   \end{eqnarray*}  
   which means that $\mathrm{U}_{\infty}(x,t)$ does not detach from $\mathrm{O}_{e^{\infty}}(x,t)$, neither does $\overline{\mathrm{U}}_{a^{\infty}}(x,t)$ (this is due to (\ref{the limit of domain obstacle subsolution is no bigger than the global obstacle subsolution})). Then, $\overline{s}^{\infty}(\vartheta) \leq s$, which contradicts to the definition of $\overline{s}^{\infty}(\vartheta)$.
\end{proof}

\begin{proposition}[Detachment Property]\label{the detachment property for global obstacle subsolution in irrational directions}
	Fix $\nu \in \mathbb{S}^{n-1}\diagdown \RR\ZZ^{n}$, $q \in \RR^{n}\diagdown \left\lbrace 0\right\rbrace $ and $\mu \in \RR$, then for any $\delta > 0$, there exists a number $\hat{B} := \hat{B}(\nu, \delta) > 0$, such that
	\begin{eqnarray*}
		\overline{\mathrm{U}}_{a^{\infty}}\left( \cdot - \left(\frac{\delta}{2}t - \hat{B}\right)\nu, t \right) \prec_{\left( \RR^{n}, \mu\right)} \mathrm{O}_{e^{\infty}}(\cdot, t), &&  t \geq 0, 
	\end{eqnarray*}
	where
	\begin{eqnarray*}
		a^{\infty} := (\nu, \infty, 0, q, \overline{s}^{\infty}(\nu) + \delta) \in \mathbb{A} &\text{and}& e^{\infty} := (\nu, q, \overline{s}^{\infty}(\nu) + \delta) \in \mathbb{E}.
	\end{eqnarray*}
\end{proposition}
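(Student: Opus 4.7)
The plan is to reduce the global detachment to the cylinder-localized detachment of Proposition~\ref{uniform detachment for subsolution with a sub strict detached speed}, transfer it to the global obstacle subsolution via a monotonicity in the cylinder radius, and then exhaust $\RR^{n} \times [0,\infty)$ by enlarging the cylinder and the time horizon.

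First, since $\nu$ is irrational, Proposition~\ref{the equivalence between domain obstacle semi solutions and global semi solutions in defining head and tail speeds} gives $\overline{s}^{\infty}(\nu) = \overline{s}(\nu)$, so the obstacle speed $s := \overline{s}^{\infty}(\nu) + \delta$ satisfies $s > \overline{s}(\nu)$. Applying Proposition~\ref{uniform detachment for subsolution with a sub strict detached speed} with this $s$ (whose rate, traced through Proposition~\ref{the detachment lemma subsolution}, is precisely half of the excess speed, i.e.\ $\delta/2$) produces a constant $\hat{B} = \hat{B}(\nu,\delta) > 0$, independent of $\mu$, $r$ and $T$, such that for each choice of $\mu \in \RR$ and $r, T > 0$ one can find $R = R(\mu, r, T) > 0$ with
\begin{equation*}
\overline{\mathrm{U}}_{a_{R}}\!\left(\cdot - \left(\tfrac{\delta}{2} t - \hat{B}\right)\nu,\, t\right) \prec_{(\Upomega(0,r;\nu),\, \mu)} \mathrm{O}_{e^{\infty}}(\cdot, t), \qquad 0 \le t \le T,
\end{equation*}
where $a_{R} := (\nu, R, 0, q, s) \in \mathbb{A}$.

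Next, I would invoke the monotonicity in the cylinder radius: any $u \in \overline{\mathscr{S}}_{a^{\infty}}$, restricted to $\Upomega(0, R; \nu) \times (0, \infty)$, automatically belongs to $\overline{\mathscr{S}}_{a_{R}}$ (the obstacle constraint is the same and the equation is local), so by maximality $\overline{\mathrm{U}}_{a^{\infty}}(x, t) \le \overline{\mathrm{U}}_{a_{R}}(x, t)$ for $x \in \Upomega(0, R; \nu)$. Because translations along $\nu$ preserve $\Upomega(0,R;\nu)$-membership and $r$ can be taken smaller than $R$, the detachment estimate above transfers to $\overline{\mathrm{U}}_{a^{\infty}}$ on $\Upomega(0, r; \nu) \times [0, T]$ for arbitrary $r, T > 0$. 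Since neither $\hat{B}$ nor the rate $\delta/2$ depends on $r$ or $T$, exhausting $\RR^{n} \times [0,\infty)$ by $\Upomega(0, r_{k}; \nu) \times [0, T_{k}]$ with $r_{k}, T_{k} \to \infty$ yields the strict pointwise inequality and the disjointness of the $\mu$-level sets on all of $\RR^{n} \times [0,\infty)$.

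The main remaining step, which I expect to be the principal obstacle, is to verify the unboundedness condition (iii) of Definition~\ref{the strict ordering relation between level sets of two functions} required by $\prec_{(\RR^{n},\mu)}$; this is a global property that does not follow from any cylinder-local estimate. The sublevel set of $\mathrm{O}_{e^{\infty}}(\cdot, t)$ is the half-space $\{x \cdot \nu \ge (st|q| - \mu)/|q|\}$, so $\sup x \cdot \nu = +\infty$ on it automatically. For the superlevel set of the shifted $\overline{\mathrm{U}}_{a^{\infty}}$, I would exhibit the moving-plane subsolution $(x,t) \mapsto -|q|(x\cdot\nu - m_{0} t)$; a direct calculation shows it satisfies the subsolution inequality for \eqref{the scaled forced mean curvature flow} with $\e = 1$ (since $m_{0} \le g$), and it lies below $\mathrm{O}_{e^{\infty}}$ because $s \ge m_{0}$. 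Hence it belongs to $\overline{\mathscr{S}}_{a^{\infty}}$ and forces $\overline{\mathrm{U}}_{a^{\infty}}(x,t) \ge -|q|(x\cdot\nu - m_{0} t) \to +\infty$ as $x \cdot \nu \to -\infty$, producing points with $x \cdot \nu$ arbitrarily negative where $\overline{\mathrm{U}}_{a^{\infty}}(x, t) \ge \mu$ and thereby completing the verification of (iii).
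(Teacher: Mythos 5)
Your argument is correct, and it uses the same two central ingredients the paper does — the cylinder-scale detachment property with a constant $B$ independent of the cylinder radius and the time horizon, and the monotonicity $\overline{\mathrm{U}}_{a^{\infty}} \le \overline{\mathrm{U}}_{a_{R}}$ on $\Upomega(0,R;\nu)$ — but it assembles them differently. The paper fixes a single $r>\sqrt{n}$, transfers the cylinder estimate to $\overline{\mathrm{U}}_{a^{\infty}}$ on $\Upomega(0,r;\nu)\times[0,\infty)$, and then extends from this thin cylinder to all of $\RR^{n}$ by a Birkhoff lattice shift: given $x$, one finds $\Delta z\in\ZZ^{n}$ with $x-\Delta z$ near the cylinder axis, $0<\Delta z\cdot\nu$ bounded, and uses the global obstacle Birkhoff inequality $\overline{\mathrm{U}}_{a^{\infty}}(x+\Delta z,t+\Delta t)\le\overline{\mathrm{U}}_{a^{\infty}}(x,t)$; this costs the additive correction $\frac{\delta\sqrt{n}}{2m_0}$ that produces $\hat B = B + \frac{\delta\sqrt{n}}{2m_0}$. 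You instead invoke the detachment lemma for a sequence $r_k, T_k\to\infty$, observe that $\hat B$ (and the drift rate $\delta/2$) is uniform in $r,T$, and exhaust $\RR^{n}\times[0,\infty)$ directly. Your route is a bit leaner, avoiding the lattice-approximation argument entirely, at the cost of relying more heavily on the uniformity claim in the earlier proposition. One point where you are actually more careful than the paper's own write-up: you correctly note that the unboundedness clause (iii) of the ordering $\prec_{(\RR^n,\mu)}$ does not follow from the cylinder estimates, because the comparison $\overline{\mathrm{U}}_{a^{\infty}}\le\overline{\mathrm{U}}_{a_R}$ goes the wrong way for super-level sets; your explicit barrier $-|q|(x\cdot\nu - m_0 t)\in\overline{\mathscr{S}}_{a^{\infty}}$ closes that gap cleanly, whereas the paper asserts the $\prec_{(\RR^n,\mu)}$ conclusion without flagging this as a separate check.
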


\begin{proof}
	For any $T > 0$ and $r > \sqrt{n}$, by Proposition \ref{the detachment lemma subsolution}, there exist $R = R(\nu, T, r, \delta) > 0$ and $B = B(\nu, \delta) > 0$, such that
	\begin{eqnarray*}
		\overline{\mathrm{U}}_{a}\left( \cdot - \left(\frac{\delta}{2}t - B\right)\nu, t \right) \prec_{\left( \Upomega(0, r; \nu), \mu\right)} \mathrm{O}_{e}(\cdot, t), &&  0 \leq t \leq T.
	\end{eqnarray*}	
   Then since $\overline{\mathrm{U}}_{a^{\infty}} \leq \overline{\mathrm{U}}_{a}$, we have for all $t \geq 0$ (because $a^{\infty}$ is independent of $T$) that
	\begin{eqnarray*}
		\overline{\mathrm{U}}_{a^{\infty}}\left( \cdot - \left(\frac{\delta}{2}t - B\right)\nu, t \right) \prec_{\left( \Upomega(0, r; \nu), \mu\right)} \mathrm{O}_{e^{\infty}}(\cdot, t).
	\end{eqnarray*}	
    Let us state a Birkhoff property for the global obstacle subsolution $\overline{\mathrm{U}}_{a^{\infty}}(x,t)$. For any $\Delta z \in \ZZ^{n}$, such that $\Delta z \cdot \nu > 0$, let $\Delta t := \frac{\Delta z \cdot \nu}{\overline{s}^{\infty}(\nu) + \delta}$, then by the maximality of $\overline{\mathrm{U}}_{a^{\infty}}$, we get that
    \begin{eqnarray*}
    \overline{\mathrm{U}}_{a^{\infty}}(x + \Delta z, t + \Delta t) \leq \overline{\mathrm{U}}_{a^{\infty}}(x, t), && (x,t) \in \RR^{n} \times (0,\infty).
    \end{eqnarray*}
    Due to $r > \sqrt{n}$, for any $x \in \RR^{n}$, there exists $\Delta z \in \ZZ^{n}$, such that $|(x - \Delta z) - [(x - \Delta z)\cdot \nu] \nu| < \sqrt{n}$, $\Delta z \cdot \nu > 0$ and $\Delta t := \frac{\Delta z \cdot \nu}{\overline{s}^{\infty}(\nu) + \delta} < \frac{\sqrt{n}}{m_0}$. Consider any $t > \frac{\sqrt{n}}{m_0}$, we have that
    \begin{eqnarray*}
    \overline{\mathrm{U}}_{a^{\infty}}(x, t) &\leq& \overline{\mathrm{U}}_{a^{\infty}}(x - \Delta z, t - \Delta t) \\
    &\prec_{(\Upomega(0, r; \nu), \mu)}& \mathrm{O}_{e^{\infty}}(x - \Delta z + (\frac{\delta}{2}(t - \Delta t) - B)\nu, t - \Delta t)\\
    &=& \mathrm{O}_{e^{\infty}}\left( x + \left[ \frac{\delta}{2}t - (B + \frac{\delta}{2}\Delta t)\right]\nu, t \right).
    \end{eqnarray*}
   Therefore,
   \begin{equation*}
   \overline{\mathrm{U}}_{a^{\infty}}\left( \cdot - \left[ \frac{\delta}{2}t - (B + \frac{\delta}{2}\Delta t)\right]\nu, t\right) \prec_{(\RR^{n}, \nu)} \mathrm{O}_{e^{\infty}}(\cdot, t). 
   \end{equation*}
   By taking $\hat{B} = B + \frac{\delta \sqrt{n}}{2m_0}$, we have the desired result.
\end{proof}

Even though we do not have detachment property for obstacle sub/super solutions associated to rational directions in a cylinder with finite radius, we have similar result for the global obstacle sub/super solutions.

\begin{proposition}[Detachment Property]\label{the detachment property for rational directions}
	Fix $\vartheta \in \mathbb{S}^{n-1}\cap \RR\ZZ^{n}$, $q \in \RR^{n}\diagdown \left\lbrace 0\right\rbrace $ and $\mu \in \RR$, then for any $\delta > 0$, there exists a number $B := B(\vartheta, \delta) > 0$, such that
	\begin{eqnarray*}
	\overline{\mathrm{U}}_{a^{\infty}}\left( \cdot - \left(\frac{\delta}{2}t - B\right)\vartheta, t \right) \prec_{\left( \RR^{n}, \mu\right)} \mathrm{O}_{e^{\infty}}(\cdot, t), &&  t \geq 0, 
	\end{eqnarray*}
    where
    \begin{eqnarray*}
    a^{\infty} := (\vartheta, \infty, 0, q, \overline{s}^{\infty}(\vartheta) + \delta) \in \mathbb{A} &\text{and}& e^{\infty} := (\vartheta, q, \overline{s}^{\infty} + \delta) \in \mathbb{E}.
    \end{eqnarray*}
\end{proposition}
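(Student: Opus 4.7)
The plan is to mirror the proof of the irrational-direction analogue, Proposition \ref{the detachment property for global obstacle subsolution in irrational directions}, substituting the tools that fail for rational $\vartheta$. That earlier proof relied on two ingredients: (i) the finite-cylinder linear detachment of Proposition \ref{the detachment lemma subsolution}, which in turn depended on the discrepancy-based local comparison principle; and (ii) a Birkhoff extension of the detachment from the thin cylinder to all of $\RR^n$. Neither discrepancy (Proposition \ref{a lattice point that is close to a hyperplane}) nor the local comparison principle is available when $\vartheta\in\RR\ZZ^n$, so in both steps the rational structure of $\vartheta$ must take over.

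For ingredient (i), I would use the equivalence $\overline{s}^\infty(\vartheta) = \overline{s}(\vartheta)$ from Proposition \ref{the equivalence between domain obstacle semi solutions and global semi solutions in defining head and tail speeds}, together with the defining property of $\overline{s}(\vartheta)$ as the infimum of sub-strict-detached speeds. Since the target speed $s := \overline{s}^\infty(\vartheta) + \delta$ lies strictly above $\overline{s}(\vartheta)$, Definition \ref{sub or super strict detached speeds} directly supplies positive constants $\delta_0, B_0$ (depending on $\vartheta, \delta$) and, for any $\mu, r, T$, a radius $R$ such that the finite-domain subsolution $\overline{\mathrm{U}}_a$ enjoys linear detachment at rate $\delta_0$ in $\Upomega(0, r; \vartheta)$ on $[0,T]$. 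Using $\overline{\mathrm{U}}_{a^\infty} \le \overline{\mathrm{U}}_a$ and sending $T\to\infty$ (with $R$ chosen accordingly for each $T$) transfers this to $\overline{\mathrm{U}}_{a^\infty}$ on $\Upomega(0, r; \vartheta)$ for all $t\ge 0$, since $\overline{\mathrm{U}}_{a^\infty}$ is independent of $R$. To upgrade the a priori rate $\delta_0$ to the sharper $\delta/2$, I would apply the same construction to the intermediate speed $s_1 := \overline{s}^\infty(\vartheta) + \delta/2$ and invoke the obstacle identity
$$\mathrm{O}_{e_1^\infty}(x,t) \;=\; \mathrm{O}_{e^\infty}\bigl(x + (\delta/2)t\,\vartheta,\,t\bigr),$$
which converts the speed gap $s - s_1 = \delta/2$ into an exact linear shift in the $\vartheta$-direction and, chained with the detachment of $\overline{\mathrm{U}}_{a_1^\infty}$ from $\mathrm{O}_{e_1^\infty}$, produces the required rate.

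For ingredient (ii), the rational structure of $\vartheta$ provides a lattice vector $\xi_0\in\ZZ^n$ with $\xi_0 = N\vartheta$ for some $N\in\NN$, giving exact periodicity along $\vartheta$. For any $x\in\RR^n$, a nearest-lattice-point choice produces $\Delta z_0\in\ZZ^n$ with $|x - \Delta z_0|_{\ell^\infty}\le 1/2$, hence $(x-\Delta z_0)\in \Upomega(0,\sqrt{n}/2;\vartheta)\subseteq \Upomega(0,r;\vartheta)$ since the cylinder radius $r>\sqrt{n}/2$ can be fixed. Adjusting $\Delta z_0$ by integer multiples of $\xi_0$ does not alter the perpendicular component and brings $\Delta z\cdot\vartheta$ into the bounded range $(0, N]$. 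Setting $\Delta t := (\Delta z\cdot\vartheta)/s \le N/m_0$, the Birkhoff property (Proposition \ref{the birkhoff property for subsolution in an expanding domain}, applied in the $R=\infty$ setting) gives $\overline{\mathrm{U}}_{a^\infty}(x,t) \leq \overline{\mathrm{U}}_{a^\infty}(x-\Delta z, t-\Delta t)$ for $t\ge\Delta t$, and chaining with the cylinder detachment of step (i) delivers the global linear detachment claimed, with $B(\vartheta,\delta)$ absorbing $B_0$ and the uniformly bounded contributions of $\Delta z\cdot\vartheta$ and $\Delta t$. The main obstacle is the intermediate-speed trick in step (i): verifying that the comparison of $\overline{\mathrm{U}}_{a^\infty}$ against a spatially-shifted $\overline{\mathrm{U}}_{a_1^\infty}$ genuinely yields rate exactly $\delta/2$, which requires careful bookkeeping of the $\prec$ relation under translation and of how the subsolution property interacts with the obstacle identity above.
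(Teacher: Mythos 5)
Your step (i) contains a genuine gap. You claim that because $s := \overline{s}^{\infty}(\vartheta) + \delta$ lies strictly above $\overline{s}(\vartheta) = \inf\{s' : s' \text{ sub-strict-detached}\}$, Definition \ref{sub or super strict detached speeds} ``directly supplies'' the linear-rate constants $\delta_0, B_0$ for $s$. But the infimum only tells you that there \emph{exist} sub-strict-detached speeds arbitrarily close to $\overline{s}(\vartheta)$ from above; it does not tell you that \emph{every} $s > \overline{s}(\vartheta)$ is sub-strict-detached. That stronger fact is proved in the paper only for irrational $\nu$ (the unnamed proposition after Definition \ref{sub or super strict detached speeds}), and its proof runs through Proposition \ref{the detachment lemma subsolution}, which requires a comparison-consistent triplet — precisely the discrepancy machinery that is unavailable when $\vartheta\in\RR\ZZ^n$. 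Nor is the set of sub-strict-detached speeds obviously upward-closed: the pointwise ordering $\overline{\mathrm{U}}_{a_1}\le \overline{\mathrm{U}}_{a_2}$ for $s_1 < s_2$ goes the wrong way, and shifting $\overline{\mathrm{U}}_{a_1}$ by the time-dependent amount $(s_2-s_1)t\vartheta$ does not produce a subsolution of the spatially inhomogeneous equation. In effect, for rational $\vartheta$ the statement ``$s>\overline{s}(\vartheta)\Rightarrow s$ is sub-strict-detached'' is essentially the proposition you are trying to prove, so invoking it is circular.

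The paper's proof therefore never passes through finite-cylinder linear detachment for rational $\vartheta$. It works only with the weaker, time-based detachment that comes for free from Definition \ref{the global head speed in an rational direction}: there is a $t_0$ after which both $\overline{\mathrm{U}}_{a^{\infty}}$ (speed $\overline{s}^{\infty}+\delta$) and $\overline{\mathrm{U}}_{\hat{a}^{\infty}}$ (speed $\overline{s}^{\infty}+\delta/2$) have strictly left their obstacles in a thin cylinder; periodicity of these global solutions along the lattice $\ZZ^n\cap H_\vartheta$ then upgrades this to detachment on all of $\RR^n$. The crucial step — the one your proposal flags as the ``main obstacle'' but does not carry out — is the comparison
\begin{equation*}
\overline{\mathrm{U}}_{a^{\infty}}(\cdot + B\vartheta, t) \prec_{(\RR^n,\mu)} \overline{\mathrm{U}}_{\hat{a}^{\infty}}(\cdot, t), \qquad t\ge 0,
\end{equation*}
which the paper obtains in two stages: for $0\le t\le t_0$, choose $B\ge (M_0-m_0)t_0$ with $B\vartheta\in\ZZ^n$ so that the shifted obstacle bound on $\overline{\mathrm{U}}_{a^{\infty}}$ already lies strictly below the elementary barrier $x\cdot q + m_0 t|q|\le \overline{\mathrm{U}}_{\hat{a}^{\infty}}$; for $t>t_0$, both shifted functions are genuine (unobstructed) global solutions of the same PDE by Proposition \ref{a semi obstacle solution becomes a solution when detachment happens} and because $B\vartheta\in\ZZ^n$, so the standard comparison principle propagates the ordering established at $t_0$. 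The final conversion to the rate $\delta/2$ is exactly the obstacle identity you wrote down, $\mathrm{O}_{\hat e^{\infty}}(x-\tfrac{\delta}{2}t\vartheta,t)=\mathrm{O}_{e^{\infty}}(x,t)$, so that part of your plan is right. What is missing, and cannot be dodged, is the direct comparison of the two obstacle solutions at the two speeds; it is this comparison, not any linear detachment in a bounded cylinder, that the rational structure of $\vartheta$ is used to make possible.
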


\begin{proof}
	Let us also set 
	\begin{eqnarray*}
	\hat{a}^{\infty} := \left(\vartheta, \infty, 0, q, \overline{s}^{\infty} + \frac{\delta}{2}\right) &\text{and}& \hat{e}^{\infty} := \left(\vartheta, q, \overline{s}^{\infty} + \frac{\delta}{2}\right) \in \mathbb{E}.
	\end{eqnarray*}
   By the definition of $\overline{s}^{\infty}(\vartheta)$, there exist $r > \frac{\sqrt{n}}{2}$ and $t_{0} := t_{0}(\vartheta, \delta) > 0$, such that 
   \begin{eqnarray*}
   \overline{\mathrm{U}}_{a^{\infty}}(\cdot, t) \prec_{\left(\Upomega(0, r; \vartheta), \mu \right)} \mathrm{O}_{e^{\infty}}(\cdot, t) \hspace{2mm} \text{and} \hspace{2mm} \overline{\mathrm{U}}_{\hat{a}^{\infty}}(\cdot, t) \prec_{\left(\Upomega(0, r; \vartheta), \mu \right)} \mathrm{O}_{\hat{e}^{\infty}}(\cdot, t), && t \geq t_{0}.
   \end{eqnarray*}
   Because $\vartheta \in \mathbb{S}^{n-1} \cap \RR\ZZ^{n}$, both $\overline{\mathrm{U}}_{a^{\infty}}(\cdot, t)$ and $\overline{\mathrm{U}}_{\hat{a}^{\infty}}(\cdot, t)$ have periodic structure (through not necessarily $\ZZ^{n}$ periodic), therefore (chose a larger $t_{0}$ if necessary), 
   \begin{eqnarray*}
   	\overline{\mathrm{U}}_{a^{\infty}}(\cdot, t) \prec_{\left(\RR^{n}, \mu \right)} \mathrm{O}_{e^{\infty}}(\cdot, t) \hspace{2mm} \text{and} \hspace{2mm} \overline{\mathrm{U}}_{\hat{a}^{\infty}}(\cdot, t) \prec_{\left(\RR^{n}, \mu \right)} \mathrm{O}_{\hat{e}^{\infty}}(\cdot, t), && t \geq t_{0}.
   \end{eqnarray*}   
   Again since $\vartheta \in \mathbb{S}^{n-1} \cap \RR\ZZ^{n}$, there exists $B \geq (M_0 - m_0)t_{0}$, such that $B \vartheta \in \ZZ^{n}$, then
   \begin{eqnarray*}
   \overline{\mathrm{U}}_{a^{\infty}}(\cdot + B\vartheta, t) \prec_{\left(\RR^{n}, \mu \right)} \overline{\mathrm{U}}_{\hat{a}^{\infty}}(\cdot, t), && 0 \leq t \leq t_{0}.
   \end{eqnarray*}
   However, for any $t > t_{0}$, both $\overline{\mathrm{U}}_{a^{\infty}}(x + B\vartheta,t)$ and $\overline{\mathrm{U}}_{\hat{a}^{\infty}}(x,t)$ are globle solutions, therefore, the comparison principle implies the above ordering relation for $t > t_{0}$. As a combination, we conclude that
   \begin{eqnarray*}
   	\overline{\mathrm{U}}_{a^{\infty}}(\cdot + B\vartheta, t) \prec_{\left(\RR^{n}, \mu \right)} \overline{\mathrm{U}}_{\hat{a}^{\infty}}(\cdot, t), && t \geq 0.
   \end{eqnarray*}
   Finally, the desired result follows (for any $t \geq 0$) as
   \begin{eqnarray*}
   \overline{\mathrm{U}}_{a^{\infty}}\left(\cdot - \left(\frac{\delta}{2}t - B\right)\vartheta, t\right) &\prec_{(\RR^{n},\mu)}& \overline{\mathrm{U}}_{\hat{a}^{\infty}}\left(\cdot - \frac{\delta}{2}t \vartheta, t\right) \leq \mathrm{O}_{\hat{e}^{\infty}}\left(\cdot - \frac{\delta}{2}t \vartheta, t\right) = \mathrm{O}_{e^{\infty}}\left( \cdot, t\right).  
   \end{eqnarray*}
\end{proof}

\subsection{Continuity and ordering}

\subsubsection{The semicontinuity}

\begin{lemma}\label{a simpler description of detachment for subsolution}
	Fix $\mu \in \RR$, $a := (\nu, R, 0, q, s) \in \mathbb{A}$ with $R > \frac{\sqrt{n}}{2}$ and set $e := (\nu, q, s) \in \mathbb{E}$, if there exists $r > \frac{\sqrt{n}}{2}$ and $T_{0} > 0$, such that
	\begin{eqnarray*}
	\overline{\mathrm{U}}_{a}(\cdot, t) \prec_{\left( \Upomega(0, r; \nu), \mu\right)} \mathrm{O}_{e}(\cdot, t), && T_{0} \leq t \leq T_{0} + \frac{\sqrt{n}}{s}\\
	(\text{resp.} \hspace{2mm} \mathrm{O}_{e}(\cdot, t) \prec_{\left( \Upomega(0, r; \nu), \mu\right)} \underline{\mathrm{U}}_{a}(\cdot, t) , && T_{0} \leq t \leq T_{0} + \frac{\sqrt{n}}{s} ).
	\end{eqnarray*}
    Then we have for $\hat{a} := (\nu, R + \sqrt{n}, 0, q, s)$ that
 	\begin{eqnarray*}
 	\overline{\mathrm{U}}_{\hat{a}}(\cdot, t) \prec_{\left( \Upomega(0, r; \nu), \mu\right)} \mathrm{O}_{e}(\cdot, t) \hspace{2mm} (\text{resp.} \hspace{2mm} \mathrm{O}_{e}(\cdot, t) \prec_{\left( \Upomega(0, r; \nu), \mu\right)} \underline{\mathrm{U}}_{\hat{a}}(\cdot, t)), && t \geq T_{0},
 	\end{eqnarray*} 	
   i.e., $\overline{\mathrm{U}}_{\hat{a}}(x,t)$ (resp. $\underline{\mathrm{U}}_{\hat{a}}(x,t)$) detaches from $\mathrm{O}_{e}(x,t)$ at the $\mu$ level set. 
\end{lemma}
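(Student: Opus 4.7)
The strategy is to propagate the short-time detachment forward in time using the Birkhoff property for subsolutions in two static cylinders of slightly different radii (Proposition \ref{the birkhoff property for subsolutions in two static domains}). The key algebraic driver is that the planar obstacle $\mathrm{O}_e(x,\tau)=-|q|\,x\cdot\nu + s\tau|q|$ is invariant under any integer space-time shift $(\Delta z,\Delta t)$ satisfying $\Delta z\cdot\nu = s\Delta t$: a direct computation gives $\mathrm{O}_e(y-\Delta z,t-\Delta t)=\mathrm{O}_e(y,t)$ in this case.

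First, for $t\in[T_0,T_0+\sqrt{n}/s]$ the conclusion is essentially immediate: since $\overline{\mathrm{U}}_{\hat a}|_{\mathrm{C}_d}\in\overline{\mathscr{S}}_a$, the maximality of $\overline{\mathrm{U}}_a$ yields $\overline{\mathrm{U}}_{\hat a}\le\overline{\mathrm{U}}_a$ on $\mathrm{C}_d$, so conditions (i) and (ii) of the strict ordering $\prec_{(\Upomega(0,r;\nu),\mu)}$ pass directly from $a$ to $\hat a$ by level-set inclusion. For $t>T_0+\sqrt{n}/s$ I would fix $y\in\Upomega(0,r;\nu)$ and pick $\Delta z\in\ZZ^n$ to be a lattice point nearest to $\alpha\nu+y^\perp$ where $\alpha:=s(t-T_0)-\sqrt{n}/2$. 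The nearest-lattice-point bound $|\Delta z-(\alpha\nu+y^\perp)|\le\sqrt{n}/2$ splits into $|\Delta z\cdot\nu-\alpha|\le\sqrt{n}/2$ and $|\Delta z^\perp-y^\perp|\le\sqrt{n}/2$. Setting $\Delta t:=(\Delta z\cdot\nu)/s$ gives $\Delta z\cdot\nu=s\Delta t$ exactly, $t-\Delta t\in[T_0,T_0+\sqrt{n}/s]$, and $y-\Delta z\in\Upomega(0,\sqrt{n}/2;\nu)\subseteq\Upomega(0,r;\nu)$ (using $r>\sqrt{n}/2$). Proposition \ref{the birkhoff property for subsolutions in two static domains} applied with $R_1=R$ and $R_2=R+\sqrt{n}$ then yields
\[
\overline{\mathrm{U}}_{\hat a}(y,t)\;\le\;\overline{\mathrm{U}}_a\bigl(y-\Delta z,\,t-\Delta t\bigr),
\]
and combined with the obstacle invariance this reads: whenever $\overline{\mathrm{U}}_{\hat a}(y,t)\ge\mu$, also $\overline{\mathrm{U}}_a(y-\Delta z,t-\Delta t)\ge\mu$ with $y-\Delta z\in\Upomega(0,r;\nu)$, so the hypothesized detachment at time $t-\Delta t$ forces $\mathrm{O}_e(y-\Delta z,t-\Delta t)=\mathrm{O}_e(y,t)>\mu$. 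This gives (i) and (ii) at time $t$.

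Condition (iii) of the strict ordering requires a separate remark because $\overline{\mathrm{U}}_{\hat a}\le\overline{\mathrm{U}}_a$ shrinks the superlevel set. However, the hyperplane subsolutions $\overline{V}_{\hat a}$ constructed in the proof of Lemma \ref{the obstacle subsolution coincides with the obstacle on the boundary} are in $\overline{\mathscr{S}}_{\hat a}$, hence satisfy $\overline{V}_{\hat a}\le\overline{\mathrm{U}}_{\hat a}$, and their explicit linear form shows $\overline{V}_{\hat a}(x,t)\to+\infty$ as $x\cdot\nu\to-\infty$; this forces the infimum of $x\cdot\nu$ over $L^+_\mu(\overline{\mathrm{U}}_{\hat a}(\cdot,t);\Upomega(0,r;\nu))$ to be $-\infty$. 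The matching half-space property for $\mathrm{O}_e$ is immediate from its explicit affine form.

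The main technical obstacle lies in the simultaneous geometric balance on $\Delta z$: the same lattice point must satisfy $|\Delta z^\perp-y^\perp|$ small (to keep $y-\Delta z$ in the hypothesis region $\Upomega(0,r;\nu)$) and $|\Delta z^\perp|\le\sqrt{n}$ (the Birkhoff budget $R_2-R_1$). The nearest-lattice-point construction delivers the first bound at accuracy $\sqrt{n}/2$, and the hypothesis $r>\sqrt{n}/2$ is precisely what makes this fit; borderline cases where $|y^\perp|$ approaches $r$ can be handled by a slight inflation of $R_2$ absorbed into constants. The supersolution statement is entirely parallel: replace Proposition \ref{the birkhoff property for subsolutions in two static domains} and Lemma \ref{the obstacle subsolution coincides with the obstacle on the boundary} by Proposition \ref{the birkhoff property for supersolutions in two static domains} and Lemma \ref{the obstacle supersolution coincides with the obstacle on the boundary} respectively, use the minimality of $\underline{\mathrm{U}}_{\hat a}$ in place of the maximality of $\overline{\mathrm{U}}_{\hat a}$, and reverse every inequality and every level-set containment.
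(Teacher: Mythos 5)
Your strategy is the same one the paper uses: the initial slab $T_0 \le t \le T_0 + \sqrt{n}/s$ is handled by the pointwise ordering $\overline{\mathrm{U}}_{\hat a}|_{\mathrm{C}_d} \le \overline{\mathrm{U}}_a$ (maximality of $\overline{\mathrm{U}}_a$), and later times are pulled back via the two-domain Birkhoff comparison of Proposition \ref{the birkhoff property for subsolutions in two static domains} together with the invariance $\mathrm{O}_e(x - \Delta z, t - \Delta t) = \mathrm{O}_e(x,t)$ when $\Delta z\cdot\nu = s\Delta t$. Your explicit check of condition (iii) of the $\prec$-ordering via the hyperplane barriers is a welcome addition that the paper leaves implicit.

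However, the geometric tension you flag at the end is a genuine gap, and the remark ``a slight inflation of $R_2$ absorbed into constants'' does not close it, because $\hat a$ is fixed in the statement with radius $R + \sqrt{n}$. Writing $w^\perp := w - (w\cdot\nu)\nu$, your nearest-lattice-point choice near $\alpha\nu + y^\perp$ gives $|\Delta z^\perp - y^\perp| \le \sqrt{n}/2$, which puts $y - \Delta z$ back in $\Upomega(0,\sqrt{n}/2;\nu)\subseteq\Upomega(0,r;\nu)$, but it forces $|\Delta z^\perp| \le |y^\perp| + \sqrt{n}/2 \le r + \sqrt{n}/2$, which exceeds the Birkhoff budget $R_2 - R_1 = \sqrt{n}$ as soon as $|y^\perp| > \sqrt{n}/2$; since the hypothesis gives only $r > \sqrt{n}/2$, this occurs for $y$ near the lateral boundary of $\Upomega(0,r;\nu)$. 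To invoke Proposition \ref{the birkhoff property for subsolutions in two static domains} as you do, one needs $\hat a$ to have radius at least $R + r + \sqrt{n}/2$ (an $r$-dependent enlargement, not a universal constant absorbed harmlessly), or one must restrict to $|y^\perp|\le\sqrt{n}/2$ and argue separately for the remainder of the cylinder. To be fair, the paper's own proof shares exactly this gap: it picks $\Delta z$ so that $x_0 - \Delta z \in \Upomega(0,r;\nu)$ and then asserts the single-domain step $\overline{\mathrm{U}}_{\hat a}(x_0, t_0) \le \overline{\mathrm{U}}_{\hat a}(x_0 - \Delta z, t_0 - \Delta t)$ without controlling $|\Delta z^\perp|$, which is not licensed by Proposition \ref{the birkhoff property for subsolution in an expanding domain} with $\mathscr{R}=0$ unless $\Delta z$ is parallel to $\nu$, i.e.\ unless $\nu$ is rational. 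So you have faithfully reproduced the paper's argument and correctly located its soft spot, but the proposed patch does not actually repair it for the lemma as stated.
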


\begin{proof}
	By Proposition \ref{the birkhoff property for subsolutions in two static domains}, we have that 
 	\begin{eqnarray*}
 		\overline{\mathrm{U}}_{\hat{a}}(\cdot, t) \leq \overline{\mathrm{U}}_{a}(\cdot, t) \prec_{\left( \Upomega(0, r; \nu), \mu\right)} \mathrm{O}_{e}(\cdot, t), && T_{0} \leq t \leq T_{0} + \frac{\sqrt{n}}{s}.
 	\end{eqnarray*}
    We only need to prove the following ordering relation.
    \begin{eqnarray*}
    \overline{\mathrm{U}}_{\hat{a}}(\cdot, t) \prec_{\left( \Upomega(0, r; \nu), \mu\right)} \mathrm{O}_{e}(\cdot, t), && t > T_{0} + \frac{\sqrt{n}}{s}.
    \end{eqnarray*}
   For any $x_{0} \in \Upomega(0, r; \nu)$ and $t_{0} > T_{0} + \frac{\sqrt{n}}{s}$, then there exists $\Delta z \in \ZZ^{n}$, such that
   \begin{eqnarray*}
   \left( t_{0} - T_{0}\right)s - \sqrt{n}  \leq \Delta z \cdot \nu \leq \left( t_{0} - T_{0}\right)s  &\text{and}& x_{0} - \Delta z \in \Upomega(0,r;\nu).
   \end{eqnarray*}
   Let us set $\Delta t := \frac{\Delta z \cdot \nu}{s}$, then
   \begin{eqnarray*}
   \overline{\mathrm{U}}_{\hat{a}}(x_{0},t_{0}) \leq \overline{\mathrm{U}}_{\hat{a}}(x_{0} - \Delta z, t_{0} - \Delta t) \leq \overline{\mathrm{U}}_{a}(x_{0} - \Delta z, t_{0} - \Delta t) < \mathrm{O}_{e}(x_{0} - \Delta z, t_{0} - \Delta t) = \mathrm{O}_{e}(x_{0}, t_{0}).
   \end{eqnarray*} 
   Since the above $(x_{0}, t_{0})$ is arbitrary, the desired result follows. 
\end{proof}

\begin{proposition}[semicontinuity]\label{the semicontinuity of the head and tail speed}
	The head (resp. tail) speed $\overline{s}(\nu)$ (resp. $\underline{s}(\nu)$) $: \mathbb{S}^{n-1} \rightarrow \left[m_0, M_0\right] $ is upper (resp. lower) semicontinuous.
\end{proposition}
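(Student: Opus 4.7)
The plan is to prove upper semicontinuity of $\overline{s}$ at an arbitrary $\nu_0 \in \mathbb{S}^{n-1}$ by contradiction; lower semicontinuity of $\underline{s}$ follows by the symmetric argument on the global obstacle supersolutions. Suppose there exist $\epsilon > 0$ and $\nu_k \to \nu_0$ with $\overline{s}(\nu_k) > \overline{s}(\nu_0) + \epsilon$. Fix $q \neq 0$, set $s_\ast := \overline{s}(\nu_0) + \epsilon/2$, and let $u_k := \overline{\mathrm{U}}_{a_k^\infty}$ with $a_k^\infty = (\nu_k, \infty, 0, -|q|\nu_k, s_\ast)$, and $u_0$ analogously for $\nu_0$. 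By Proposition~\ref{the equivalence between domain obstacle semi solutions and global semi solutions in defining head and tail speeds}, $s_\ast > \overline{s}^\infty(\nu_0)$, so Proposition~\ref{the detachment property for global obstacle subsolution in irrational directions} (for $\nu_0 \notin \RR\ZZ^n$) or Proposition~\ref{the detachment property for rational directions} (for $\nu_0 \in \RR\ZZ^n$) provides $\hat{B} > 0$ with the strict linear detachment
\[
u_0(z, t) \geq 0 \;\Longrightarrow\; z \cdot \nu_0 < (s_\ast - \epsilon/4)\, t + \hat{B}, \qquad t \geq 0.
\]
Conversely $s_\ast < \overline{s}^\infty(\nu_k)$, so $u_k$ fails to detach from $\mathrm{O}_{e_k^\infty}$ at the $0$-level; combining Definition~\ref{the definition of detachment} with Lemma~\ref{a simpler description of detachment for subsolution} yields touching points $(x_k, t_k)$ with $t_k \to \infty$, $x_k \in \Upomega(0, r; \nu_k)$ for a fixed $r > \sqrt{n}/2$, and $u_k(x_k, t_k) = 0 = \mathrm{O}_{e_k^\infty}(x_k, t_k)$, i.e.\ $x_k \cdot \nu_k = s_\ast t_k$.

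Next, reduce to a compact region using the Birkhoff property for the global obstacle subsolution (Proposition~\ref{the birkhoff property for subsolution in an expanding domain} applied with $R = \infty$): one has $u_k(x + \Delta z,\, t + \Delta z \cdot \nu_k/s_\ast) \leq u_k(x, t)$ for $\Delta z \in \ZZ^n$ with $\Delta z \cdot \nu_k > 0$, while $\mathrm{O}_{e_k^\infty}$ is invariant under the same shift. By the Diophantine density of the map $z \mapsto (z - (z \cdot \nu_k)\nu_k,\, z \cdot \nu_k)$ from $\ZZ^n$ into $\nu_k^\perp \times \RR$, in the spirit of Proposition~\ref{a lattice point that is close to a hyperplane}, we choose $\Delta z_k \in \ZZ^n$ so that $\tau_k := t_k - \Delta z_k \cdot \nu_k / s_\ast$ lies in a fixed interval $[T_0 + 1, T_0 + 2]$ with $T_0 := 4\hat{B}/\epsilon$, and simultaneously the perpendicular part $y_k - (y_k \cdot \nu_k)\nu_k$ of $y_k := x_k - \Delta z_k$ stays in a fixed bounded set. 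When $\nu_0 \in \RR\ZZ^n$ we first reduce to irrational $\nu_k$ using density of $\mathbb{S}^{n-1} \setminus \RR\ZZ^n$. Since $y_k \cdot \nu_k = s_\ast \tau_k$ is bounded, $(y_k, \tau_k)$ lies in a compact subset of $\RR^n \times [T_0 + 1, T_0 + 2]$, and the touching relation $u_k(y_k, \tau_k) = 0 = \mathrm{O}_{e_k^\infty}(y_k, \tau_k)$ is preserved by the Birkhoff inequality and the invariance of the obstacle.

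Finally, extract a subsequence with $(y_k, \tau_k) \to (y_\infty, \tau_\infty)$; then $y_\infty \cdot \nu_0 = s_\ast \tau_\infty$. Since $g$ is fixed and $\mathrm{O}_{e_k^\infty} \to \mathrm{O}_{e_0^\infty}$ locally uniformly, classical stability of viscosity subsolutions under half-relaxed upper limits shows that $u_\infty := \limsup\nolimits^{\ast} u_k$ is a viscosity subsolution of the curvature PDE with $u_\infty \leq \mathrm{O}_{e_0^\infty}$, hence $u_\infty \leq u_0$ by maximality of $u_0$. Upper semicontinuity of $u_\infty$ and the preserved touching pass to the limit: $u_0(y_\infty, \tau_\infty) \geq u_\infty(y_\infty, \tau_\infty) \geq 0$. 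Inserting $y_\infty \cdot \nu_0 = s_\ast \tau_\infty$ into the detachment bound yields $(\epsilon/4)\tau_\infty < \hat{B}$, i.e.\ $\tau_\infty < T_0$, contradicting $\tau_\infty \geq T_0 + 1$. The main obstacle is the Birkhoff/Diophantine reduction, which must simultaneously pin $\tau_k$ to a window above $T_0$ and keep $y_k$ spatially bounded; the rest of the plan (equivalence, global detachment, half-relaxed stability) is direct from earlier sections, but this combinatorial step requires careful setup and is where the irrationality of $\nu_k$ is essential.
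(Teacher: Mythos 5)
Your proof takes a genuinely different route from the paper's. The paper argues directly: given $\delta > 0$ and $s = \overline{s}(\nu_0) + \delta$, it builds a tilted cylinder around the $\nu_0$-cylinder, compares the $\nu$-obstacle subsolution (for $\nu$ near $\nu_0$) against the $\nu_0$-one inside that tilted cylinder, and uses Lemma~\ref{a simpler description of detachment for subsolution} to transport detachment from $\nu_0$ to $\nu$ at the same speed $s$. Your argument is by contradiction and compactness: assuming a jump up, you take touching points for the global obstacle subsolutions $u_k$, renormalize them into a fixed compact set via Birkhoff shifts, and pass to the half-relaxed upper limit to contradict the linear detachment of $u_0$. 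This is closer in spirit to the paper's proof of full \emph{continuity} (Proposition~\ref{the continuity of head and tail speeds}) than to its proof of semicontinuity. The outline is sound, and the ordering $u_\infty := \lim\sup^\ast u_k \leq u_0$ by maximality, the limiting touching inequality, and the final contradiction against the linear detachment are all correctly set up. But two steps in the middle need to be made precise, and one of your worries is a red herring.

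The Birkhoff/Diophantine step is actually much easier than you suggest, and your closing caveat that ``irrationality of $\nu_k$ is essential'' is wrong. You do not need Proposition~\ref{a lattice point that is close to a hyperplane} at all: to place $\Delta z_k\cdot\nu_k$ near a prescribed $a_k$ with bounded transverse component, simply round each coordinate of $a_k\nu_k$ to the nearest integer. Then $|\Delta z_k - a_k\nu_k| \leq \sqrt{n}/2$, so $|\Delta z_k\cdot\nu_k - a_k| \leq \sqrt{n}/2$ and $|\Delta z_k - (\Delta z_k\cdot\nu_k)\nu_k| \leq \sqrt{n}/2$, uniformly in $\nu_k$, rational or not. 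The only casualty is your window $[T_0+1, T_0+2]$, which is too tight when $m_0 < \sqrt{n}$; replace it by $[T_0+1,\, T_0+1+\sqrt{n}/m_0]$ and nothing else changes.

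The genuine gap is the assertion that $u_k$ ``fails to detach at the $0$-level''. Non-detachment, as defined in Definition~\ref{the definition of detachment}(2), only guarantees failure at \emph{some} level $\mu_k$, which a priori could escape to $\pm\infty$; then $y_k\cdot\nu_k = s_\ast\tau_k - \mu_k/|q|$ is unbounded and your compactness argument collapses. This is fixable by an exact periodicity property of the global obstacle subsolution that the paper uses implicitly but never states: for every $z\in\ZZ^n$, $v\mapsto v(\cdot + z, \cdot) + |q|\,z\cdot\nu_k$ is a bijection of $\overline{\mathscr{S}}_{a_k^\infty}$ onto itself, and hence the maximal element obeys $u_k(x+z,t) = u_k(x,t) - |q|\,z\cdot\nu_k$ for all $(x,t)$. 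Since the same identity holds trivially for $\mathrm{O}_{e_k^\infty}$, a touching point at level $\mu_k$ translates, after rounding $(-\mu_k/|q|)\nu_k$ to a lattice point, into a touching point at the \emph{same} time, at a level in $[-|q|\sqrt{n}/2,\, |q|\sqrt{n}/2]$, and with the transverse coordinate moved by at most $\sqrt{n}/2$. With this normalization the levels $\mu_k$ are uniformly bounded, $(y_k,\tau_k)$ is precompact, and the final contradiction goes through after enlarging $T_0$ by an additive $O(\sqrt{n}/\epsilon)$ to absorb the level error in the detachment bound. Once these two repairs are made, your proof is correct and gives a clean, self-contained alternative to the paper's geometric construction.
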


\begin{proof}
    \begin{figure}[h]
    \centering
    \includegraphics[width=0.7\linewidth]{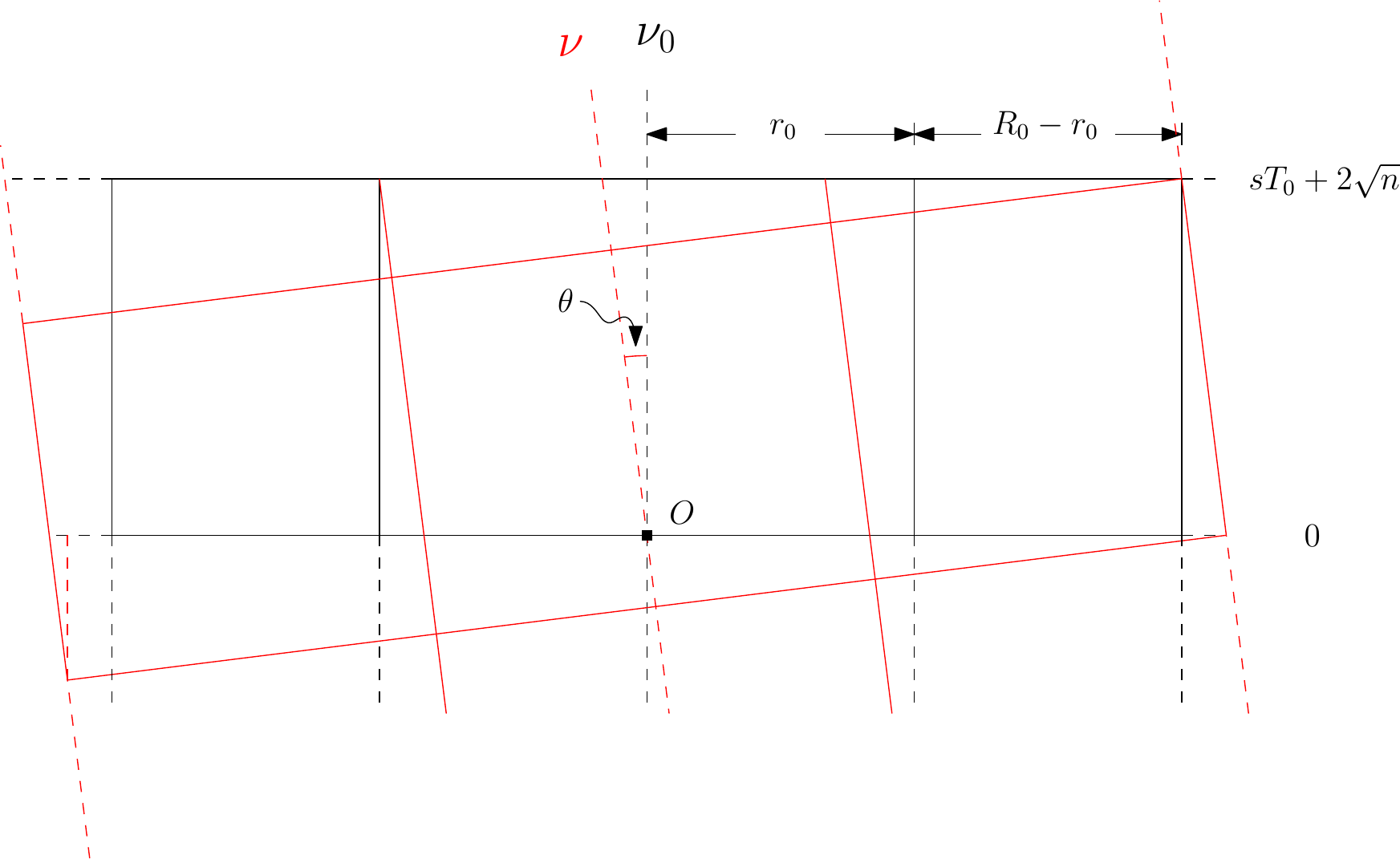}
    \caption{The upper semicontinuity of $\overline{s}(\nu): \mathbb{S}^{n-1} \rightarrow \left[ m_0, M_0\right] $}
    \label{fig:3}
    \end{figure}
	Fix any $\nu_{0} \in \mathbb{S}^{n-1}$, $\delta > 0$ and set $s := \overline{s}(\nu_{0}) + \delta$. To prove the upper semicontinuity of $\overline{s}(\nu_{0})$, it suffices to show the statement: there exists a neighborhood $\mathcal{N}(\nu_{0})$ of $\nu_{0}$ in $\mathbb{S}^{n-1}$, such that for any $\nu \in \mathcal{N}(\nu_{0})$, there exists $R > 0$, such that $\overline{\mathrm{U}}_{a}(x,t)$ detaches from $\mathrm{O}_{e}(x,t)$, where
	\begin{eqnarray*}
	a := (\nu, R, 0, q, s) &\text{and}& e := (\nu, q, s) \in \mathbb{E}.
	\end{eqnarray*}
    For the simplicity of the notation, we shall only prove the detachment at the zero level set. The case of general $\mu$ level set can be established similarly.\\
    \underline{Step 1.} As $s > \overline{s}(\nu_{0})$, there exist $R_{0} > r_{0} > 2\sqrt{n}$ and $T_{0} > 0$, such that
    \begin{eqnarray*}
    \overline{\mathrm{U}}_{a_{0}}(\cdot, t) \prec_{\left(\Upomega(0,r_{0};\nu_{0}); 0 \right)} \mathrm{O}_{e_{0}}(\cdot,t), && t \geq T_{0},
    \end{eqnarray*}
    where
    \begin{eqnarray*}
    a_{0} := (\nu_{0}, R_{0}, 0, q, s) \in \mathbb{A} &\text{and}& e_{0} := (\nu_{0}, q, s) \in \mathbb{E}.
    \end{eqnarray*}
    By Proposition \ref{the detachment lemma subsolution}, we can adjust $R_{0}$ and $T_{0}$ if necessary, such that
    \begin{equation}\label{the big detachment for the initial obstacle solution in proving semicontinuity}
    \dist\left( L_{0}^{+}\left( \overline{\mathrm{U}}_{a_{0}}(\cdot, t); \Upomega(0,r_{0};\nu_{0})\right), L_{0}^{-}\left( \mathrm{O}_{e_{0}}(\cdot, t)\right); \Upomega(0,r_{0};\nu_{0})\right) > 2\sqrt{n},
    \end{equation}
    for any $t \in \left[ T_{0}, T_{0} + \frac{2\sqrt{n}}{s}\right]$. \\
    \underline{Step 2.} Let us define a set of angles
    \begin{equation}\label{the neighborhood of the angle in proving semicontinuity}
    \Theta := \left\lbrace \theta \in \left( 0, \frac{\pi}{2}\right) \Big | \begin{cases}
    r_{0}\cos\theta - \left( sT_{0} + 2\sqrt{n}\right)\sin\theta > 2\sqrt{n}\\
    R_{0}\sin\theta\cos\theta + \left( sT_{0} + 2\sqrt{n}\right)\sin^{2}\theta < \frac{\sqrt{n}}{4} 
    \end{cases}  \right\rbrace .
    \end{equation}
    Then we consider the neighborhood of direction $\nu_{0}$.
    \begin{equation*}
    \mathcal{N}(\nu_{0}) := \left\lbrace \nu \in \mathbb{S}^{n-1} \Big| \theta \in \Theta, \text{ where } \theta \text{ is the angle between } \nu \text{ and } \nu_{0}\right\rbrace. 
    \end{equation*}
    Our aim is to construct a larger cylinder of the form $\Upomega(0, R; \nu)$ that includes the interesting part of the above cylinder, i.e., 
    \begin{equation*}
    \left\lbrace x \in \Upomega(0, R_{0}; \nu_{0}) \big| 0 \leq x\cdot \nu_{0} \leq sT_{0} + 2\sqrt{n}\right\rbrace. 
    \end{equation*}
    Let us take $R$ and $r$ as follows, where $\nu \in \mathcal{N}(\nu_{0})$.
    \begin{eqnarray*}
    R_{1} := R_{0}\cos\theta + \left( sT_{0} + 2\sqrt{n}\right) \sin\theta, && r_{1} := r_{0}\cos\theta - \left( sT_{0} + 2\sqrt{n}\right) \sin\theta.
    \end{eqnarray*}
   The cylinder and the obstacle are as follows.
   \begin{eqnarray*}
   \mathrm{C} := \Upomega(-\left( R_{1}\tan\theta\right)  \nu, R_{1}; \nu), && \mathrm{O}(x,t) := \left[ \left( x \cdot \nu\right) + R_{1}\sin\theta - st \right] \left( - |q|\right).
   \end{eqnarray*}
   Then we have properties
   \begin{equation*}
   \left\lbrace x \in \Upomega(0, R_{0}; \nu_{0}) \big| 0 \leq x \cdot \nu_{0} \leq sT_{0} + 2\sqrt{n}\right\rbrace \subseteq \Upomega(0, R_{1}; \nu),
   \end{equation*}     
   \begin{eqnarray*}
   \mathrm{O}(x, t) \prec_{(\Upomega(0, R_{0}; \nu_{0}), 0)} \mathrm{O}_{e}(x, t), \hspace{2mm} x \in \Upomega(0, R_{0}; \nu_{0}), \hspace{2mm} 0 \leq t \leq T_{0} + \frac{2\sqrt{n}}{s}.
   \end{eqnarray*}  
  Similar to Definition \ref{the definitions of obstacle solutions}, let us consider the set of subsolutions bounded from above by $\mathrm{O}(x,t)$.
  \begin{equation*}
  \overline{\mathscr{S}} := \left\lbrace u \in \USC(\Upomega(0, R_{1}; \nu) \times (0, \infty))\big | \hspace{1mm} u_{t} \leq \mathscr{F}\left(D^{2}u, Du, x \right), \hspace{1mm} u(x,t) \leq \mathrm{O}(x,t) \right\rbrace. 
  \end{equation*}  
  And in particular, we have the associated largest subsolution $\mathrm{U}(x,t)$ as follows.  
  \begin{equation*}
  \mathrm{U}(x,t) := \left( \sup\left\lbrace u(x,t) \big| u \in \overline{\mathscr{S}}\right\rbrace \right)^{*}.
  \end{equation*}
  \underline{Step 3.} Since $\mathrm{U}(x,t)$ is a subsolution in $\Upomega(0, R; \nu)$, so is $\mathrm{Z}(x,t)$ defined as follows:
  \begin{equation*}
  \mathrm{Z}(x,t) := \begin{cases}
  0, & x \in L_{0}^{+}\left( \mathrm{U}(\cdot,t); \Upomega(-(R_{1}\tan\theta)\nu, R_{1}; \nu)\right), \\
  -\infty, & x \in \Upomega(-(R_{1}\tan\theta)\nu, R_{1}; \nu) \diagdown L_{0}^{+}\left( \mathrm{U}(\cdot,t); \Upomega(-(R_{1}\tan\theta)\nu, R_{1}; \nu)\right).
  \end{cases}
  \end{equation*}
  Consider the modified super zero level set $\mathscr{L}_{0}^{+}$ in $\Upomega(0, R_{0}; \nu_{0})$
  \begin{equation*}
  \mathscr{L}_{0}^{+} := \left\lbrace x \in \Upomega(0, R_{0}; \nu_{0}) \big| x \cdot \nu \leq - R_{1}\tan\theta + \frac{m_0 t}{\cos\theta}\right\rbrace \cup \left\lbrace x \in \Upomega(0, R_{0}; \nu_{0}) \big| \mathrm{U}(x,t) \geq 0 \right\rbrace.
  \end{equation*}
  Then we can define the modified characteristic function.
  \begin{equation*}
  \mathscr{Z}(x,t) := \begin{cases}
  0, & x \in \mathscr{L}_{0}^{+}\\
  -\infty, & x \in \Upomega(0, R_{0}; \nu_{0}) \diagdown \mathscr{L}_{0}^{+}
  \end{cases} \in \overline{\mathscr{S}}_{a}.
  \end{equation*}
  Therefore, by the maximality of $\overline{\mathrm{U}}_{a_{0}}(x,t)$, we conclude that
  \begin{eqnarray*}
  \mathscr{Z}(x,t) \leq \overline{\mathrm{U}}_{a_{0}}(x,t), && x \in \Upomega(0, R_{0}; \nu_{0}), \hspace{2mm} 0 \leq t \leq T_{0} + \frac{2\sqrt{n}}{s}.
  \end{eqnarray*}
  In particular, we have that
  \begin{eqnarray*}
  \mathscr{Z}(\cdot, t) \prec_{\left( \Upomega(0, R_{0}; \nu_{0}), 0\right)} \overline{\mathrm{U}}_{a_{0}}(\cdot, t), && 0 \leq t \leq T_{0} + \frac{2\sqrt{n}}{s}.
  \end{eqnarray*}
  From (\ref{the big detachment for the initial obstacle solution in proving semicontinuity}) and (\ref{the neighborhood of the angle in proving semicontinuity}), we conclude for any $T_{0} \leq t \leq T_{0} + \frac{2\sqrt{n}}{s}$ that
  \begin{eqnarray*}
  &&\dist\left( \mathscr{L}_{0}^{+}, L_{0}^{-}\left( \mathrm{O}(\cdot,t); \Upomega(0, r_{0}; \nu_{0})\right) \right)\\
  &\geq& \dist\left( L_{0}^{+}\left( \overline{\mathrm{U}}_{a_{0}}(\cdot,t); \Upomega(0, r_{0}; \nu_{0})\right), L_{0}^{-}\left( \mathrm{O}(\cdot,t); \Upomega(0, r_{0}; \nu_{0})\right)\right) \\
  &\geq& \dist\left( L_{0}^{+}\left( \overline{\mathrm{U}}_{a_{0}}(\cdot, t); \Upomega(0,r_{0};\nu_{0})\right), L_{0}^{-}\left( \mathrm{O}_{e_{0}}(\cdot, t)\right); \Upomega(0,r_{0};\nu_{0})\right) - 2R_{1}\tan\theta > \frac{3\sqrt{n}}{2}
  \end{eqnarray*}
  \underline{Step 4.} Now let us set
  \begin{eqnarray*}
  R = R_{1} + \sqrt{n}, \hspace{2mm} r := r_{1} - \sqrt{n} &\text{and}& \xi_{0} := \argmin\limits\limits_{\xi \in \ZZ^{n}, \hspace{1mm} \xi \cdot \nu \geq R\tan\theta} |\xi|.
  \end{eqnarray*}
  Then $0 < \xi_{0} \cdot \nu < \frac{3\sqrt{n}}{2}$, we shall compare the standard obstacle subsolution $\overline{\mathrm{U}}_{a}(x,t)$ and $\mathrm{U}(x,t)$ at the zero level set in $\Upomega(0, r; \nu)$. Therefore,
  \begin{eqnarray*}
  &&\dist\left( L_{0}^{+}\left( \overline{\mathrm{U}}_{a}(\cdot,t); \Upomega(0, r; \nu)\right),  L_{0}^{-}\left( \mathrm{O}_{e}(\cdot, t); \Upomega(0, r; \nu)\right) \right) \\
  &\geq& \dist\left( L_{0}^{+}\left( \mathrm{U}(\cdot - \xi_{0}, t); \Upomega(0, r; \nu)\right), L_{0}^{-}\left(\mathrm{O}(\cdot - \xi_{0}, t); \Upomega(0, r; \nu) \right)  \right) \\
  &>& \frac{3\sqrt{n}}{2} - \xi_{0}\cdot \nu > 0, \hspace{2cm} T_{0} \leq t \leq T_{0} + \frac{2\sqrt{n}}{s}.
  \end{eqnarray*}
  By Lemma \ref{a simpler description of detachment for subsolution}, we then conclude that $\overline{\mathrm{U}}_{a}(x,t)$ detaches from $\mathrm{O}_{e}(x,t)$. Hence,
  \begin{eqnarray*}
  \overline{s}(\nu_{0}) + \delta \geq \overline{s}(\nu), &\text{for any}& \nu \in \mathcal{N}(\nu_{0}).
  \end{eqnarray*}
  In other words, $\overline{s}(\nu): \mathbb{S}^{n-1} \rightarrow \left[m_0, M_0 \right]$ is upper semicontinuous.\\
  Similarly, we can show that $\underline{s}(\nu) : \mathbb{S}^{n-1} \rightarrow \left[ m_0, M_0 \right]$ is lower semicontinuous.
\end{proof}

\subsubsection{The continuity of head and tail speeds}

\begin{proposition}[Continuity]\label{the continuity of head and tail speeds}
	The functions $\overline{s}(\nu), \underline{s}(\nu): \mathbb{S}^{n-1} \rightarrow \left[m_0, M_0\right] $ are both continuous.
\end{proposition}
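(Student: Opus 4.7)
By Proposition \ref{the semicontinuity of the head and tail speed}, $\overline{s}$ is upper semicontinuous and $\underline{s}$ is lower semicontinuous on $\mathbb{S}^{n-1}$, so it only remains to establish lower semicontinuity of $\overline{s}$ and upper semicontinuity of $\underline{s}$. The two arguments are mirror images (swapping sub-/supersolutions and using the supersolution versions of the Birkhoff and detachment Propositions), so I present only the argument for $\overline{s}$.

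I argue by contradiction using a half-relaxed limit of global obstacle subsolutions. Assume there is a sequence $\nu_k \to \nu_0$ with $\sigma := \lim_k \overline{s}(\nu_k) < \overline{s}(\nu_0)$, and pick $s \in (\sigma, \overline{s}(\nu_0))$, so that $s > \overline{s}(\nu_k)$ for $k$ large. Put $q_k := -\nu_k$, $a_k^\infty := (\nu_k, \infty, 0, q_k, s) \in \mathbb{A}$, $e_k^\infty := (\nu_k, q_k, s) \in \mathbb{E}$, and analogously $a_0^\infty, e_0^\infty$ for $\nu_0$. By Proposition \ref{the equivalence between domain obstacle semi solutions and global semi solutions in defining head and tail speeds} each global obstacle subsolution $\overline{\mathrm{U}}_{a_k^\infty}$ detaches from $\mathrm{O}_{e_k^\infty}(x,t) = -x \cdot \nu_k + st$, and Propositions \ref{the detachment property for global obstacle subsolution in irrational directions} and \ref{the detachment property for rational directions} give the quantitative detachment, for every $\mu \in \RR$ and $t \geq 0$,
\[
\overline{\mathrm{U}}_{a_k^\infty}\bigl(x - (\gamma_k t - \hat{B}_k)\nu_k,\, t\bigr) \,\prec_{(\RR^n,\mu)}\, \mathrm{O}_{e_k^\infty}(x, t),
\]
with uniform rate $\gamma_k := (s - \overline{s}(\nu_k))/2 \geq (s-\sigma)/2 =: \gamma_\infty > 0$ and shifts $\hat{B}_k \geq 0$. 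Form the upper half-relaxed limit $\overline{U} := \limsup\nolimits^{\ast}_{k} \overline{\mathrm{U}}_{a_k^\infty}$. By the standard stability of viscosity subsolutions under such limits and the local uniform convergence $\mathrm{O}_{e_k^\infty} \to \mathrm{O}_{e_0^\infty}$, $\overline{U}$ is an upper semicontinuous viscosity subsolution of $u_t = \mathscr{F}(D^2 u, Du, x)$ with $\overline{U} \leq \mathrm{O}_{e_0^\infty}$, so by maximality $\overline{U} \leq \overline{\mathrm{U}}_{a_0^\infty}$; on the other hand, $s < \overline{s}(\nu_0)$ together with Proposition \ref{the equivalence between domain obstacle semi solutions and global semi solutions in defining head and tail speeds} forces $\overline{\mathrm{U}}_{a_0^\infty}$ \emph{not} to detach from $\mathrm{O}_{e_0^\infty}$.

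\textbf{The main obstacle} is to pass the quantitative detachment to the limit without losing the uniform rate $\gamma_\infty$, because the shifts $\hat{B}_k$ may in principle blow up. My plan is a blow-up/Birkhoff renormalization: since $g$ is $\ZZ^n$-periodic and the equation is autonomous in $u$, for each $k$ the translated-and-shifted function $\hat{U}_k(x,t) := \overline{\mathrm{U}}_{a_k^\infty}(x + z_k,\, t + \tau_k) + z_k \cdot \nu_k - s\tau_k$ is still a viscosity subsolution bounded above by $\mathrm{O}_{e_k^\infty}$, whose detachment shift relative to $\mathrm{O}_{e_k^\infty}$ becomes $\hat{B}_k - z_k \cdot \nu_k - \gamma_k \tau_k$. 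Choosing lattice vectors $z_k \in \ZZ^n$ and times $\tau_k \geq 0$ so that this shifted quantity lies in a bounded interval --- possible via Proposition \ref{a lattice point that is close to a hyperplane} in the irrational case and the exact periodicity of $\overline{\mathrm{U}}_{a_k^\infty}$ along $\nu_k$ in the rational case, combined with the Birkhoff monotonicity of Proposition \ref{the birkhoff property for subsolution in an expanding domain} --- and extracting a subsequence with limits $\hat{B}_k - z_k \cdot \nu_k - \gamma_k \tau_k \to \hat{B}_\infty \in \RR$, one finds that the $\mu$-superlevel set of $\limsup\nolimits^{\ast} \hat{U}_k$ is contained in $\{y \cdot \nu_0 < (s-\gamma_\infty) t + \hat{B}_\infty - \mu\}$ for every $\mu$. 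Hence the limit detaches from $\mathrm{O}_{e_0^\infty}$ with rate $\gamma_\infty$, and by a further $\ZZ^n$-translation together with the maximality of $\overline{\mathrm{U}}_{a_0^\infty}$ and the Birkhoff property (Proposition \ref{the birkhoff property for subsolution in an expanding domain}), this detachment transfers to $\overline{\mathrm{U}}_{a_0^\infty}$ itself, contradicting $s < \overline{s}(\nu_0)$. This yields lower semicontinuity of $\overline{s}$; the symmetric argument for upper semicontinuity of $\underline{s}$ is obtained by replacing everywhere subsolutions by supersolutions and Propositions \ref{the detachment lemma subsolution}, \ref{uniform detachment for subsolution with a sub strict detached speed}, \ref{the birkhoff property for subsolution in an expanding domain} by Propositions \ref{the detachment lemma supersolution}, \ref{uniform detachment for supersolution with a super strict detached speed}, \ref{the birkhoff property for supersolution in an expanding domain}.
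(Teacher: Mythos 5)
You correctly reduce to lower semicontinuity of $\overline{s}$ (and upper for $\underline{s}$) via Proposition \ref{the semicontinuity of the head and tail speed}, and you correctly identify the boundedness of the shifts $\hat{B}_k$ as the crux; but the limit you build points the wrong way to close the contradiction. You form the upper half-relaxed limit of the $\hat{U}_k$, a subsolution bounded above by $\mathrm{O}_{e_0^\infty}$, hence lying \emph{below} $\overline{\mathrm{U}}_{a_0^\infty}$ by maximality. Even if your renormalization makes this limit detach linearly, that is perfectly consistent with $\overline{\mathrm{U}}_{a_0^\infty}$ staying glued to the obstacle: neither maximality nor the Birkhoff property (which only compares $\overline{\mathrm{U}}_{a_0^\infty}$ with integer shifts of itself or with smaller subsolutions) supplies the needed lower bound $\overline{\mathrm{U}}_{a_0^\infty} \leq \overline{U} + C$ that would transfer detachment. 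Your closing step ``this detachment transfers to $\overline{\mathrm{U}}_{a_0^\infty}$'' is therefore unsupported. (A secondary slip: since $z_k \in \ZZ^n$, $\overline{\mathrm{U}}_{a_k^\infty}(\cdot + z_k, \cdot) + z_k \cdot \nu_k$ coincides with $\overline{\mathrm{U}}_{a_k^\infty}$ by uniqueness of the maximal subsolution, so the renormalized shift is $\hat{B}_k - \gamma_k\tau_k$, not $\hat{B}_k - z_k\cdot\nu_k - \gamma_k\tau_k$; only the time shift $\tau_k$ moves the intercept.)

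The paper resolves this by flipping the limit to a $\liminf$: after $\overline{\mathrm{U}}_{a_\ell^\infty}$ detaches it is a genuine solution by Proposition \ref{a semi obstacle solution becomes a solution when detachment happens}, so the lower half-relaxed limit $\mathrm{U}_\star$ of $\left(\overline{\mathrm{U}}_{a_\ell^\infty}\right)_*$ (for $t > T_0$) is a \emph{super}solution detaching linearly from the obstacle, and the comparison principle (Proposition \ref{the usual comparison principle}) then bounds the subsolution $\overline{\mathrm{U}}_{a^\infty}$ for direction $\vartheta$ from above by $\mathrm{U}_\star + sT_0|q|$, forcing $\overline{\mathrm{U}}_{a^\infty}$ itself to detach linearly. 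The paper also sidesteps your renormalization entirely: it takes $s := \overline{s}^\infty(\vartheta) + \sigma > \overline{s}^\infty(\vartheta)$ rather than $s < \overline{s}(\nu_0)$, so the $\vartheta$-subsolution does detach in finite time and the boundedness of the $\hat{B}_\ell$ follows from the upper semicontinuity of detachment time established in the proof of Proposition \ref{the semicontinuity of the head and tail speed}. The contradiction then comes from showing that the global obstacle subsolution $\overline{\mathrm{U}}_{\hat{a}^\infty}$ at the smaller speed $\hat{s} := s - 2\sigma < \overline{s}^\infty(\vartheta)$ also detaches.
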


\begin{proof}
	Let us only prove the continuity of $\overline{s}$, since the case of $\underline{s}$ can be argued similarly. By Proposition \ref{the semicontinuity of the head and tail speed}, it suffice to show that for any $\vartheta \in \mathbb{S}^{n-1}$ and $\nu_{\ell} \in \mathbb{S}^{n-1}$, such that $\nu_{\ell} \rightarrow \vartheta$, then $\liminf_{\ell \rightarrow \infty}\overline{s}(\nu_{\ell}) \geq \overline{s}(\vartheta)$. Assume this is not true, then according to Proposition \ref{the semicontinuity of the head and tail speed} and Proposition \ref{the equivalence between domain obstacle semi solutions and global semi solutions in defining head and tail speeds}, we have that (up to a subsequence if necessary)
	\begin{eqnarray*}
	\overline{s}^{\infty}(\vartheta) = \overline{s}(\vartheta) = \lim_{\ell \rightarrow \infty} \overline{s}(\nu_{\ell}) + \delta = \lim_{\ell \rightarrow \infty} \overline{s}^{\infty}(\nu_{\ell}) + \delta, &\text{with}& \delta > 0.
	\end{eqnarray*}
    Fix $0 < \sigma < \frac{\delta}{5}$ and $s := \overline{s}^{\infty}(\vartheta) + \sigma$. Then from Proposition \ref{the detachment property for global obstacle subsolution in irrational directions}, we have ($\mu \in \RR$) that
	\begin{eqnarray*}
		\overline{\mathrm{U}}_{a_{\ell}^{\infty}}\left( \cdot - \left(\frac{s - \overline{s}^{\infty}(\nu_{\ell})}{2}t - \hat{B}_{\ell}\right)\nu_{\ell}, t \right) \prec_{\left( \RR^{n}, \mu\right)} \mathrm{O}_{e_{\ell}^{\infty}}(\cdot, t), &&  t > \frac{\sqrt{n}}{m_0},
	\end{eqnarray*}
	where
	\begin{eqnarray*}
		a_{\ell}^{\infty} := (\nu_{\ell}, \infty, 0, q, s) \in \mathbb{A} &\text{and}& e_{\ell}^{\infty} := (\nu_{\ell}, q, s) \in \mathbb{E}.
	\end{eqnarray*}    
    Then similar to the argument of Proposition \ref{the semicontinuity of the head and tail speed}, we have the upper semicontinuity of the detachment time with respect to the direction. Since the obstacle subsolution associated to the speed $s$ and direction $\vartheta$ detaches at a finite time. Then we have that $\limsup_{\ell \rightarrow \infty} \hat{B}_{\ell} \leq \hat{B} < \infty$ for some number $\hat{B}$. Then, we have that
    \begin{equation*}
    \liminf_{\ell \rightarrow \infty} \left( \frac{s - \overline{s}^{\infty}(\nu_{\ell})}{2}t - \hat{B}_{\ell}\right) \geq \frac{\delta + \sigma}{2}t - \hat{B} > 3\sigma t - \hat{B}.
    \end{equation*}
    Let us set $T_{0} := \frac{2(\hat{B} + 1)}{\delta + \sigma}$, recalling Proposition \ref{a semi obstacle solution becomes a solution when detachment happens}, there exists $\ell_{0}$, such that for any $t > T_{0}$ and any $\ell > \ell_{0}$, $\overline{\mathrm{U}}_{a_{\ell}^{\infty}}(x,t)$ is a global solution. Denote
    \begin{eqnarray*}
    \mathrm{U}_{\star}(x,t) := \liminf_{\eta \rightarrow 0} \left\lbrace \left( \overline{\mathrm{U}}_{a_{\ell}^{\infty}}\right)_{*}(y,s)\Big| |y - x| + |s - t| + \frac{1}{\ell} < \eta \right\rbrace, && t > T_{0},
    \end{eqnarray*} 
    which is a global supersolution detached from (at each $\mu$ level set) the obstacle by at least $3\sigma t - \hat{B}$. Next, we consider the obstacle subsolution
    \begin{eqnarray*}
    \overline{\mathrm{U}}_{a^{\infty}}(x,t), &\text{with}& a^{\infty} := (\vartheta, \infty, 0, q, s) \in \mathbb{A}.
    \end{eqnarray*}
   Then by comparison principle (c.f. Proposition \ref{the usual comparison principle}), we have that
   \begin{eqnarray*}
   \overline{\mathrm{U}}_{a^{\infty}}(x,t) - \mathrm{U}_{\star}(x,t) \leq \sup_{y \in \RR^{n}}\left( \overline{\mathrm{U}}_{a^{\infty}}(y,T_{0}) - \mathrm{U}_{\star}(y,T_{0}) \right) \leq sT_{0}|q| < \infty, && t > T_{0}.
   \end{eqnarray*}
  On the other hand, due to the ordering relation
  \begin{eqnarray*}
  \mathrm{U}_{\star}(x,t) - \mathrm{O}_{e^{\infty}}(x,t) \leq - \left( 3\sigma t - \hat{B}\right) |q|, && e^{\infty} := (\vartheta, q, s) \in \mathbb{E}, \hspace{2mm} t > T_{0}.
  \end{eqnarray*}
  We then have
  \begin{eqnarray*}
  \overline{\mathrm{U}}_{a^{\infty}}(x,t) - \mathrm{O}_{e^{\infty}}(x,t) \leq - \left( 3\sigma t - \hat{B}\right) |q| + sT_{0}|q|, && t > T_{0}, 
  \end{eqnarray*}
   which implies that
   \begin{eqnarray*}
   \overline{\mathrm{U}}_{a^{\infty}}(\cdot - \left( 3\sigma t - \hat{B} - sT_{0} \right)\vartheta, t ) \prec_{(\RR^{n}, \mu)} \mathrm{O}_{e^{\infty}}(\cdot, t), && t > T_{0}.
   \end{eqnarray*}
   Finally, let us consider $\hat{a}^{\infty} := (\vartheta, \infty, 0, q, \hat{s}) \in \mathbb{A}$ and $\hat{e}^{\infty} := (\vartheta, q, \hat{s})$ with $\hat{s} = s - 2\sigma = \overline{s}^{\infty}(\vartheta) - \sigma$, then
   \begin{eqnarray*}
   	\overline{\mathrm{U}}_{\hat{a}^{\infty}}\left( \cdot - (\sigma t - \hat{B} - sT_{0})\vartheta, t\right) \leq \overline{\mathrm{U}}_{a^{\infty}}(\cdot - (\sigma t - \hat{B} - sT_{0})\vartheta, t) \prec_{\left( \RR^{n}, \mu\right)} \mathrm{O}_{e^{\infty}}(\cdot + 2\sigma t, t) = \mathrm{O}_{\hat{e}^{\infty}}(\cdot, t). 
   \end{eqnarray*}
   By Definition \ref{the global head speed in an rational direction}, we must have that $\overline{s}^{\infty}(\vartheta) \leq \hat{s}$, which is a contradiction. 
\end{proof}

\subsubsection{The ordering relation in all directions}

\begin{proposition}[Ordering]\label{the ordering relation in all directions}
	For any $\nu \in \mathbb{S}^{n-1}$, we have $M_0 \geq \overline{s}(\nu) \geq \underline{s}(\nu) \geq m_0$.
\end{proposition}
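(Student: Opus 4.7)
The plan is to bootstrap the inequality from irrational directions to all of $\mathbb{S}^{n-1}$ using density of irrationals together with the continuity already proved in Proposition \ref{the continuity of head and tail speeds}. First recall that for any $\nu \in \mathbb{S}^{n-1}\setminus \RR\ZZ^n$ the two-sided bound $m_0 \leq \underline{s}(\nu),\overline{s}(\nu) \leq M_0$ is given by the Lemma following Definition \ref{the tail speed in an irrational direction}, and the middle inequality $\underline{s}(\nu)\leq \overline{s}(\nu)$ is exactly Proposition \ref{the ordering relation in irrational directions}. Thus the full chain $m_0\leq \underline{s}(\nu)\leq \overline{s}(\nu)\leq M_0$ already holds on the dense subset $\mathbb{S}^{n-1}\setminus \RR\ZZ^n$.

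For an arbitrary direction $\nu\in\mathbb{S}^{n-1}$, pick any sequence $\{\nu_k\}\subset\mathbb{S}^{n-1}\setminus\RR\ZZ^n$ with $\nu_k\to\nu$; such a sequence exists because $\RR\ZZ^n\cap\mathbb{S}^{n-1}$ is countable, so in particular has empty interior in $\mathbb{S}^{n-1}$, and irrational directions are dense. Continuity of $\overline{s}$ and $\underline{s}$ on all of $\mathbb{S}^{n-1}$ (Proposition \ref{the continuity of head and tail speeds}) yields $\overline{s}(\nu_k)\to \overline{s}(\nu)$ and $\underline{s}(\nu_k)\to \underline{s}(\nu)$, and passing to the limit in
\[
m_0\leq \underline{s}(\nu_k)\leq \overline{s}(\nu_k)\leq M_0
\]
gives the desired ordering in the general direction.

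There is essentially no hidden obstacle here: all the work was already done, either in establishing the strict ordering in irrational directions (via the shrinking-domain Birkhoff property, the recurrence of lattice points near irrational hyperplanes, and the third local comparison principle in Proposition \ref{the ordering relation in irrational directions}) or in extending continuity to rational directions (Proposition \ref{the continuity of head and tail speeds}, which itself relied on the equivalence with the global obstacle description via Proposition \ref{the equivalence between domain obstacle semi solutions and global semi solutions in defining head and tail speeds} and the detachment property for rational directions of Proposition \ref{the detachment property for rational directions}). The present statement is a clean corollary of those two ingredients, serving mainly to record that the qualitative picture of head/tail speeds — bounded by the forcing extremes and correctly ordered — persists through the extension from irrational to rational directions.
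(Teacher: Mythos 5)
Your proof is correct and follows essentially the same route as the paper: reduce to the irrational case (Proposition \ref{the ordering relation in irrational directions}) and pass to the limit along a sequence of irrational directions converging to $\vartheta$. The one cosmetic difference is that you invoke the full continuity of Proposition \ref{the continuity of head and tail speeds}, whereas the paper's proof only needs the one-sided semicontinuity from Proposition \ref{the semicontinuity of the head and tail speed}, writing the chain $\overline{s}(\vartheta)\geq\limsup_k\overline{s}(\nu_k)\geq\liminf_k\underline{s}(\nu_k)\geq\underline{s}(\vartheta)$; since continuity is already available at this point in the paper and does not depend on the present proposition, your stronger invocation is harmless and introduces no circularity.
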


\begin{proof}
	From Proposition \ref{the ordering relation in irrational directions}, we already have the ordering relation for all irrational directions. Then according to the upper semicontinuity of $\overline{s}(\cdot)$ and the lower semicontinuity of $\underline{s}(\cdot)$, we can prove the relation for all rational directions. More precisely, let $\vartheta\in\mathbb{S}^{n-1}\cap \RR\ZZ^{n}$ and let $\left\lbrace \nu_{k}\right\rbrace_{k \geq 1} \subseteq \mathbb{S}^{n-1}\diagdown \RR\ZZ^{n}$ such that $\lim_{k \rightarrow 0}|\nu_{k} - \vartheta| = 0$, then
	\begin{equation*}
	\overline{s}(\vartheta) \geq \limsup_{k \rightarrow \infty}\overline{s}(\nu_{k}) \geq \limsup_{k\rightarrow \infty} \underline{s}(\nu_{k}) \geq \liminf_{k\rightarrow \infty} \underline{s}(\nu_{k}) \geq \underline{s}(\vartheta)
	\end{equation*}
\end{proof}

\section{Homogenization}\label{the section of homogenization}

\begin{definition}\label{half relaxed limits}
	For $0 < \varepsilon < 1$, let $u^{\varepsilon}(x,t)$ be the solution in (\ref{the scaled forced mean curvature flow}), for any $(x, t) \in \RR^{n} \times (0, \infty)$, let us denote the upper and lower half relaxed limits:
	\begin{eqnarray*}
		u^{\star}(x,t) &:=& \limsup_{\eta \rightarrow 0}\left\lbrace u^{\varepsilon}(y,s) \big| |y - x| + |s - t| + \varepsilon < \eta \right\rbrace, \\
		u_{\star}(x,t) &:=& \liminf_{\eta \rightarrow 0}\left\lbrace u^{\varepsilon}(y,s) \big| |y - x| + |s - t| + \varepsilon < \eta \right\rbrace. 
	\end{eqnarray*}
\end{definition}

\begin{proposition}\label{the sub equation satisfied by the limit}
	Let $\nu \in \mathbb{S}^{n-1}$ and $\overline{s}(\nu)$ be the head speed in the $\nu$ direction. Let $\phi(x,t)$ be a $C^{2,1}$ function, assume $u^{\star}(x,t) - \phi(x,t)$ obtains a strict local maximum at $(x_{0},t_{0}) \in \RR^{n} \times (0, \infty)$, denote $q_{0} := D\phi(x_{0},t_{0})$, then
	\begin{eqnarray*}
		\begin{cases}
			\phi_{t}(x_{0},t_{0}) \leq \overline{s}(\nu)|D\phi(x_{0},t_{0})|, & q_{0} \neq 0, \hspace{2mm} \nu := - \frac{q_{0}}{|q_{0}|},\\
			\phi_{t}(x_{0},t_{0}) \leq 0, & q_{0} = 0.
		\end{cases}
	\end{eqnarray*}
\end{proposition}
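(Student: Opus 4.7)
I would argue by contradiction through a perturbed-test-function strategy in which the role of the usual (non-existent) corrector is played by the global obstacle subsolution of Section~\ref{the subsection of irrational directions}. First deal with the case $q_{0}:=D\phi(x_{0},t_{0})\neq 0$, and set $\nu:=-q_{0}/|q_{0}|$. Supposing $\phi_{t}(x_{0},t_{0})>\overline{s}(\nu)|q_{0}|$, fix $s\in\bigl(\overline{s}(\nu),\,\phi_{t}(x_{0},t_{0})/|q_{0}|\bigr)$ and put $\sigma:=(s-\overline{s}(\nu))/2$. Propositions~\ref{the detachment property for global obstacle subsolution in irrational directions} and~\ref{the detachment property for rational directions} (depending on whether $\nu$ is irrational or rational) applied to $U:=\overline{\mathrm{U}}_{a^{\infty}}$ with $a^{\infty}:=(\nu,\infty,0,q_{0},s)$ produce $B=B(\nu,s)>0$ such that
\begin{equation*}
U(y,\tau)\le q_{0}\cdot y+(s-\sigma)|q_{0}|\tau+B|q_{0}|,\qquad (y,\tau)\in\RR^{n}\times[0,\infty),
\end{equation*}
while the trivial bound $U(y,\tau)\ge q_{0}\cdot y$ holds since the linear function $q_{0}\cdot y$ is a subsolution below $\mathrm{O}_{e^{\infty}}$. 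The macroscopic rescaling $U^{\varepsilon}(x,t):=\varepsilon U(x/\varepsilon,t/\varepsilon)$ is then a viscosity subsolution of~\eqref{the scaled forced mean curvature flow} on the macro scale, satisfying $U^{\varepsilon}(x,t)\le q_{0}\cdot x+(s-\sigma)|q_{0}|t+B|q_{0}|\varepsilon$.

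Using the strict maximality of $u^{\star}-\phi$ at $(x_{0},t_{0})$, fix $r>0$ small enough that $u^{\star}-\phi\le-\eta$ on $\partial_{p}Q_{r}$ for some $\eta>0$, where $Q_{r}:=B_{r}(x_{0})\times(t_{0}-r,t_{0}]$; by upper semicontinuity of the relaxed limit, $u^{\varepsilon}\le\phi-\eta/2$ on $\partial_{p}Q_{r}$ for all sufficiently small $\varepsilon$. The heart of the argument is to exhibit a ``pseudo-supersolution'' of~\eqref{the scaled forced mean curvature flow} of the form
\begin{equation*}
\Psi^{\varepsilon}(x,t):=\phi(x,t)-\bigl[\mathrm{O}_{e^{\infty}}(x-x_{0},t-t_{0})-U^{\varepsilon}(x-x_{0},t-t_{0})\bigr],
\end{equation*}
in which the bracket is non-negative by $U\le\mathrm{O}_{e^{\infty}}$ and, by the detachment estimate, grows at macroscopic rate $\sigma|q_{0}|$ in $t-t_{0}$. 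Consequently $\Psi^{\varepsilon}\le\phi$ with $\Psi^{\varepsilon}(x,t)\le\phi(x,t)-\sigma|q_{0}|(t-t_{0})+O(\varepsilon)$ for $t>t_{0}-r$ close to $t_{0}$. Once $\Psi^{\varepsilon}$ is verified to be a viscosity supersolution (up to an $O(\varepsilon)$ error), the comparison principle (Proposition~\ref{the usual comparison principle}) forces $u^{\varepsilon}\le\Psi^{\varepsilon}+o(1)$ on a shrinking subset, and evaluating at a sequence $(x_{\varepsilon},t_{\varepsilon})\to(x_{0},t_{0})$ with $u^{\varepsilon}(x_{\varepsilon},t_{\varepsilon})\to u^{\star}(x_{0},t_{0})=\phi(x_{0},t_{0})$ contradicts the strict gap at those nearby points. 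The case $q_{0}=0$ is reduced to the previous one by the standard convexification $\phi\rightsquigarrow\phi+\delta\bigl(|x-x_{0}|^{4}+(t-t_{0})^{2}\bigr)$ and $\delta\to 0^{+}$, sending the perturbed gradient to $0$ while preserving the bound $\phi_{t}\le\overline{s}(\cdot)|\,\cdot\,|$.

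\textbf{Main obstacle.} The delicate issue is the verification that $\Psi^{\varepsilon}$ truly is a viscosity supersolution: since $U^{\varepsilon}$ is only a \emph{subsolution}, the algebraic identity $\mathrm{O}_{e^{\infty}}-U^{\varepsilon}$ does not inherit a super-equation in a direct way, and the standard Evans-style closure via a cell problem is unavailable. The required estimate must be teased out of the detachment itself: at a smooth test function touching $\Psi^{\varepsilon}$ from below at an interior point $(x_{1},t_{1})$, one translates this into a subsolution inequality for $U^{\varepsilon}$ at the corresponding geometric point, then uses the geometric (i.e.\ level-set) nature of $\mathscr{F}$ together with $|D\phi(x_{0},t_{0})|\neq 0$ to back-propagate a supersolution inequality for $\psi$ against the linearization of $\phi$. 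I expect the relevant algebra to mirror the computations in Proposition~\ref{the evolution law of inf convolution}, where a similar adjustment handles the $\infty$-Laplacian and curvature corrections; in effect, the construction of $\Psi^{\varepsilon}$ is a ``macroscopic inf-convolution'' of $\phi$ by the microscale detachment profile $\mathrm{O}_{e^{\infty}}-U^{\varepsilon}$, and its supersolution property should be a consequence of the same perturbative estimates, at the cost of potentially shrinking $r$ to ensure $|D\Psi^{\varepsilon}|$ stays close to $|q_{0}|$ inside $Q_{r}$.
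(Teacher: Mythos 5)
Your approach is genuinely different from the paper's and contains a gap at the step you yourself flag as the ``main obstacle.'' The supersolution property of $\Psi^{\varepsilon}=\phi-\bigl[\mathrm{O}_{e^{\infty}}-U^{\varepsilon}\bigr]$ is not merely delicate to verify; the claim is false in general. The operator $\mathscr{F}$ is nonlinear in $Du$ (through $\widehat{Du}\otimes\widehat{Du}$ and $|Du|$), so subtracting the subsolution $U^{\varepsilon}$ from the affine $\mathrm{O}_{e^{\infty}}$ and adding the result to a smooth $\phi$ does not close into any one-sided viscosity inequality: the gradient of $\Psi^{\varepsilon}$ is $D\phi+DU^{\varepsilon}-q_{0}$, which need not point in the direction $-\nu$ nor have magnitude close to $|q_{0}|$, and the curvature term does not split. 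The analogy to Proposition~\ref{the evolution law of inf convolution} is superficial: the inf-convolution there shifts level sets by a sup over a moving ball, an operation that is geometric (level-set preserving) and whose correction terms cancel at a touching point; your $\Psi^{\varepsilon}$ is a pointwise subtraction, and there is no touching-point cancellation to appeal to. Also, your construction needs $U^{\varepsilon}(x-x_{0},t-t_{0})$ for $t<t_{0}$, where the obstacle subsolution is simply not defined (it lives on $t\geq 0$); a time shift repairs this but then the bracket vanishes on the bottom of $Q_{r}$ rather than at $(x_{0},t_{0})$, which must be reworked into the comparison.

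The paper's argument is more direct and avoids any corrector or supersolution construction. After assuming the strict inequality $\phi_{t}>(\overline{s}(\nu)+3\delta)|D\phi|$ on a small neighborhood $U(\delta)$ and extracting approximating maxima $(x_{\varepsilon},t_{\varepsilon})\to(x_{0},t_{0})$ of $u^{\varepsilon}-\phi$, the quadratic Taylor expansion of $\phi$ at $(x_{\varepsilon},t_{\varepsilon})$ yields, on a microscopic cylinder of macroscopic radius $h=H\varepsilon$, the pointwise bound
\begin{equation*}
u^{\varepsilon}(x,t)-u^{\varepsilon}(x_{\varepsilon},t_{\varepsilon})<|q_{0}|\bigl(\hat{s}(t-\tau_{\varepsilon})-(x-y_{\varepsilon})\cdot\nu\bigr),
\end{equation*}
i.e.\ $u^{\varepsilon}$ is a subsolution trapped below a moving affine obstacle centered at a well-chosen lattice point $y_{\varepsilon}$ and past time $\tau_{\varepsilon}=t_{\varepsilon}-h$. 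Rescaling to the unit scale and invoking the \emph{finite-domain, finite-time} detachment lemma (Proposition~\ref{the detachment lemma subsolution}, not the global Propositions~\ref{the detachment property for global obstacle subsolution in irrational directions}--\ref{the detachment property for rational directions}) then forces $\operatorname{dist}(y_{\varepsilon}+h\hat{s}\nu,\,x_{\varepsilon})>\delta h-B\varepsilon$ because the zero level set must have detached linearly over the window $[0,H]$; but $y_{\varepsilon}$ was constructed precisely so that this distance is $<\delta h-B\varepsilon$. The contradiction is purely geometric and requires no Evans-style closure. If you want to salvage your route you would have to replace $\Psi^{\varepsilon}$ by a function that tracks \emph{level sets} rather than function values (e.g.\ a shifted characteristic function as in the comparison-of-level-sets arguments elsewhere in the paper), at which point you are essentially reconstructing the paper's localization.
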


\begin{proof}
	\underline{Case 1:} $D\phi(x_{0},t_{0}) \neq 0$. To prove the statement, let us derive a contradiction from the following contrary hypothesis. Assume the existence of $\delta > 0$, such that
	\begin{eqnarray*}
	\phi_{t}(x,t) > \left( \overline{s}(\nu) + 3\delta\right) |D\phi(x,t)|, && (x,t) \in U(\delta) := C(x_{0}, \delta;\nu) \times (t_{0} - \delta, t_{0} + \delta),
	\end{eqnarray*}
    where
    \begin{equation*}
    C(x_{0}, \delta; \nu) := \left\lbrace x \in \RR^{n} \big| |(x - x_{0}) - ((x - x_{0})\cdot\nu)\nu| < \delta, \hspace{2mm}|(x - x_{0})\cdot \nu| < 2\delta\right\rbrace. 
    \end{equation*}
    We can choose $\delta$ so small that
    \begin{eqnarray*}
    \text{if} \hspace{2mm} y \in \left\lbrace x \in C(x_{0}, \delta; \nu) \big| u^{\star}(x,t_{0}) = 0 \text{ or } \phi(x,t_{0}) = 0\right\rbrace, && \text{then} \hspace{2mm} (y - x_{0}) \cdot \nu < \delta.
    \end{eqnarray*}
    Let us also assume without loss of generality that $u^{\star}(x_{0},t_{0}) = 0$. Then there exists $\left\lbrace (x_{\varepsilon},t_{\varepsilon})\right\rbrace_{0 < \varepsilon \ll 1} \subseteq U(\delta)$, such that $u^{\varepsilon}(x,t) - \phi(x,t)$ obtains a strict local maximum in $U(\delta)$ at $(x_{\varepsilon},t_{\varepsilon})$. Moreover,
    \begin{eqnarray*}
    \lim_{\varepsilon\rightarrow 0}\left(\left| u^{\varepsilon}\left( x_{\varepsilon}, t_{\varepsilon}\right) - u^{\star}(x_{0}, t_{0}) \right| +  \left|x_{\varepsilon} - x_{0}\right| + \left|t_{\varepsilon} - t_{0}\right|\right) = 0. 
    \end{eqnarray*}
    Apply perturbations if necessary, suppose that in $U(\delta)$, $\phi(x,t)$ has only linear term in $t$ and quadratic terms in $x$. Denote $\phi_{t}(x_{0},t_{0}) = \hat{s}|D\phi(x_{0},t_{0})|$, where we have $\hat{s} > \overline{s}(\nu) + 3\delta$. Then the following holds (where $A := \frac{D^{2}\phi(x_{0},t_{0})}{2}$):
    \begin{eqnarray*}
    \phi(x,t) = |q_{0}|\hat{s}(t - t_{0}) + q_{0}\cdot\left(x - x_{0} \right) + (x - x_{0})\cdot A(x - x_{0})^{T}, && (x,t) \in U(\delta).
    \end{eqnarray*}
    Let us also denote $q_{\varepsilon} := D\phi(x_{\varepsilon},t_{\varepsilon})$ for small $\varepsilon$ and $(x,t) \in U(\delta)$, then
    \begin{eqnarray*}
    \phi(x,t) - \phi(x_{\varepsilon}, t_{\varepsilon}) &=& \phi_{t}(x_{\varepsilon},t_{\varepsilon}) (t - t_{\varepsilon}) + D\phi(x_{\varepsilon}, t_{\varepsilon})\cdot (x - x_{\varepsilon}) + (x - x_{\varepsilon}) \cdot A(x - x_{\varepsilon})^{T}\\
    &\leq& |q_{0}|\hat{s}(t - t_{\varepsilon}) + q_{0} \cdot (x - x_{\varepsilon}) + |q_{0} - q_{\varepsilon}||x - x_{\varepsilon}| + \lVert A\rVert_{\infty}|x - x_{\varepsilon}|^{2}.
    \end{eqnarray*}
    Let $B := B(\nu, 3\delta)$ be the constant from Proposition \ref{the detachment lemma subsolution}. We shall select a small constant $h$ as follows,
    \begin{eqnarray*}
    h := H\varepsilon &\text{with}& H := \frac{B + 2\sqrt{n} + 1}{\delta} + R,
    \end{eqnarray*}
    where $R := R\left(\nu, 3\delta, \sqrt{n}, \frac{B + 2\sqrt{n} + 1}{\delta}\right)$ is the radius from Proposition \ref{the detachment lemma subsolution}, associated to the time range $0 \leq t \leq \frac{B + 2\sqrt{n} + 1}{\delta}$. Then we have that $0 < \varepsilon R < h$. Next, we shall shift $(x_{\varepsilon},t_{\varepsilon})$ backwards as follows:
    \begin{eqnarray*}
    \tilde{y}_{\varepsilon} &:=& x_{\varepsilon} - h\hat{s}\nu + \left( \frac{\lVert A\rVert_{\infty}h^{2} + |q_{\varepsilon} - q_{0}|h}{|q_{0}|} + \sqrt{n}\varepsilon \right) \nu, \\
    y_{\varepsilon} &\in& \argmin\limits\limits_{x \in \varepsilon \ZZ^{n}}|x - \tilde{y}_{\varepsilon}|, \hspace{1cm} \tau_{\varepsilon} := t_{\varepsilon} - h.
    \end{eqnarray*}
    A direct calculation shows that
    \begin{eqnarray*}
    \phi(x,t) - \phi(x_{\varepsilon},t_{\varepsilon}) < |q_{0}|\left( \hat{s}(t - \tau_{\varepsilon}) - (x - y_{\varepsilon})\cdot \nu\right), && (x,t) \in U(h).
    \end{eqnarray*}
   Moreover, we have the estimates
   \begin{eqnarray*}
   \dist\left( y_{\varepsilon} + h\hat{s}\nu, x_{\varepsilon}\right) \leq |y_{\varepsilon} - \tilde{y}_{\varepsilon}| + |\tilde{y}_{\varepsilon} + h\hat{s}\nu - x_{\varepsilon}| \leq 2\sqrt{n}\varepsilon + \frac{\lVert A\rVert_{\infty}h^{2} + |q_{\varepsilon} - q_{0}|h}{|q_{0}|}.
   \end{eqnarray*}
   Let us consider $\varepsilon$ so small that
   \begin{eqnarray*}
   \frac{\lVert A\rVert_{\infty} H^{2}\varepsilon + |q_{\varepsilon} - q_{0}|H}{|q_{0}|} < 1 &\text{and}& 0 < h < \delta.
   \end{eqnarray*}
   Then on one hand, we have that (by rescalling $(x,t)$ to $(\varepsilon x, \varepsilon t)$ in Proposition \ref{the detachment lemma subsolution})
   \begin{equation*}
   \dist\left( y_{\varepsilon} + h\hat{s}\nu, x_{\varepsilon}\right) < \delta h - B\varepsilon.
   \end{equation*}
   On the other hand, based on the above calculations, we get (when $0 < \varepsilon \ll 1$) that
   \begin{eqnarray*}
   u^{\varepsilon}(x,t) - u^{\varepsilon}(x_{\varepsilon},t_{\varepsilon}) < |q_{0}| \left( \hat{s}(t - \tau_{\varepsilon}) - (x - y_{\varepsilon})\cdot \nu\right), && (x,t) \in U(h). 
   \end{eqnarray*}
   Because at the moment $t_{\varepsilon}$, the center of zero level set of the obstacle is $y_{\varepsilon} + h\hat{s}\nu$. We shall shift the above relation by $(-y_{\varepsilon}, -\tau_{\varepsilon})$ and rescale it to the unit scale, then apply the Proposition \ref{the detachment lemma subsolution} with the time range $0 \leq t \leq H$, finally, we scale it back to the $\varepsilon$ scale and shift it by $(y_{\varepsilon}, \tau_{\varepsilon})$, this process indicates that
   \begin{equation*}
   \dist\left( y_{\varepsilon} + h\hat{s}\nu, x_{\varepsilon}\right) > \left( 1.5\delta H - B\right) \varepsilon > \delta h - B\varepsilon,
   \end{equation*}
   which is the desired contradiction.\\
   \underline{Case 2:} $D\phi(x_{0},t_{0}) = 0$. Let us assume on the contrary that $\phi_{t}(x_{0},t_{0}) > 0$. Since $u^{\star}(x,t) - \phi(x,t)$ has a strict local maximum at $(x_{0},t_{0})$, there exist small numbers $r, \sigma > 0$, and the following hold, where $V_{r}(x_{0},t_{0}) := B_{r}(x_{0}) \times (t_{0} - r, t_{0} + r)$.
   \begin{eqnarray}\label{the assumption on the strict local maximum in proving sub solution inequality of the upper half relaxed limit}
   V_{r}(x_{0},t_{0}) \subseteq \RR^{n} \times (0, \infty), && \max_{\partial_{p} V_{r}}\left( u^{\star} - \phi\right) < \max_{V_{r}}\left( u^{\star} - \phi\right),
   \end{eqnarray}
   \begin{eqnarray*}
   \min_{(x,t) \in V_{r}}\phi_{t}(x,t) > \sigma, && \sup_{(x,t) \in V_{r}}|D\phi(x,t)| < \frac{\sigma}{2M_0}.
   \end{eqnarray*}
   Since $D^{2}\phi(x,t)$ is bounded on $V_{r}(x_{0},t_{0})$, so is $\tr \left\lbrace  D^{2}\phi\left( I - \nu\otimes \nu\right) \right\rbrace$ with $\nu \in \mathbb{S}^{n-1}$, therefore, if $\varepsilon$ is small, we have that
   \begin{equation*}
   \sup_{\nu \in \mathbb{S}^{n-1}}\varepsilon \tr \left\lbrace  D^{2}\phi\left( I - \nu\otimes \nu\right) \right\rbrace < \frac{\sigma}{2}.
   \end{equation*}
   Hence
   \begin{eqnarray*}
   \mathscr{F}^{*}\left( \varepsilon D^{2}\phi(x,t), D\phi(x,t), \frac{x}{\varepsilon}\right) < \sigma < \phi_{t}(x,t), && (x,t) \in V_{r}(x_{0},t_{0}),
   \end{eqnarray*}
   which means that $\phi(x,t)$ is a (classical) supersolution of (\ref{the scaled forced mean curvature flow}), then the Proposition \ref{the usual comparison principle} indicates that
   \begin{equation*}
   \max_{V_{r}}\left( u^{\varepsilon}(x,t) - \phi(x,t)\right) \leq \max_{\partial_{p}V_{r}}\left( u^{\varepsilon}(x,t) - \phi(x,t)\right).
   \end{equation*}
   Let us apply the upper half relaxed limit operator (Definition \ref{half relaxed limits}) on both sides and derive that
  \begin{equation*}
  \max_{V_{r}}\left( u^{\star}(x,t) - \phi(x,t)\right) \leq \max_{\partial_{p}V_{r}}\left( u^{\star}(x,t) - \phi(x,t)\right), 
  \end{equation*}
  which contradicts (\ref{the assumption on the strict local maximum in proving sub solution inequality of the upper half relaxed limit}).
\end{proof}

\begin{proposition}\label{the super equation satisfied by the limit}
	Let $\nu \in \mathbb{S}^{n-1}$ and $\underline{s}(\nu)$ be the tail speed in the $\nu$ direction. Let $\psi(x,t)$ be a $C^{2,1}$ function, assume $u_{\star}(x,t) - \psi(x,t)$ obtains a strict local minimum at $(x_{0},t_{0}) \in \RR^{n} \times (0, \infty)$, denote $q_{0} := D\psi(x_{0},t_{0})$, then
	\begin{eqnarray*}
		\begin{cases}
			\psi_{t}(x_{0},t_{0}) \geq \underline{s}(\nu)|D\psi(x_{0},t_{0})|, & q_{0} \neq 0, \hspace{2mm} \nu := - \frac{q_{0}}{|q_{0}|},\\
			\psi_{t}(x_{0},t_{0}) \geq 0, & q_{0} = 0.
		\end{cases}
	\end{eqnarray*}
\end{proposition}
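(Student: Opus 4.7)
The plan is to mirror the proof of Proposition \ref{the sub equation satisfied by the limit}, swapping the role of subsolutions with supersolutions, $\overline{s}$ with $\underline{s}$, and the detachment property of Proposition \ref{the detachment lemma subsolution} with its counterpart Proposition \ref{the detachment lemma supersolution}. The two cases $q_0 \neq 0$ and $q_0 = 0$ are handled separately.

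\textbf{Case 1: $q_0 \neq 0$.} Set $\nu := -q_0/|q_0|$ and argue by contradiction: suppose $\psi_t(x_0,t_0) < (\underline{s}(\nu) - 3\delta)|D\psi(x_0,t_0)|$ for some $\delta > 0$. By continuity, the strict inequality $\psi_t(x,t) < (\underline{s}(\nu) - 3\delta)|D\psi(x,t)|$ holds on a small space-time box $U(\delta) := C(x_0,\delta;\nu) \times (t_0-\delta,t_0+\delta)$, as in Proposition \ref{the sub equation satisfied by the limit}. By standard properties of the lower half-relaxed limit, there exist $(x_\varepsilon,t_\varepsilon) \in U(\delta)$ with $(x_\varepsilon,t_\varepsilon) \to (x_0,t_0)$ at which $u^\varepsilon - \psi$ attains a strict local minimum. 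Perturbing $\psi$ to a purely quadratic-in-space, linear-in-time polynomial if needed, write $\psi_t(x_0,t_0) = \hat{s}|q_0|$ with $\hat{s} < \underline{s}(\nu) - 3\delta$. Then for $(x,t) \in U(\delta)$,
\[
\psi(x,t) - \psi(x_\varepsilon,t_\varepsilon) \geq |q_0|\hat{s}(t-t_\varepsilon) + q_0\cdot(x-x_\varepsilon) - |q_0-q_\varepsilon||x-x_\varepsilon| - \|A\|_\infty |x-x_\varepsilon|^2,
\]
where $q_\varepsilon = D\psi(x_\varepsilon,t_\varepsilon)$ and $A = \tfrac12 D^2\psi(x_0,t_0)$.

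\textbf{Key step: shift the obstacle supersolution underneath $u^\varepsilon$.} Let $B := B(\nu,3\delta)$ be the constant from Proposition \ref{the detachment lemma supersolution}, and choose $h := H\varepsilon$ with $H = \frac{B+2\sqrt{n}+1}{\delta} + R$, where $R$ is the radius produced by Proposition \ref{the detachment lemma supersolution} on the time range $[0,H]$. Define the forward-shifted base point
\[
\tilde{y}_\varepsilon := x_\varepsilon - h\hat{s}\nu - \Bigl(\tfrac{\|A\|_\infty h^2 + |q_\varepsilon-q_0|h}{|q_0|} + \sqrt{n}\varepsilon\Bigr)\nu,\qquad y_\varepsilon \in \argmin_{x\in\varepsilon\ZZ^n}|x-\tilde{y}_\varepsilon|,\qquad \tau_\varepsilon := t_\varepsilon - h,
\]
so that $\psi(x,t) - \psi(x_\varepsilon,t_\varepsilon) \geq |q_0|(\hat{s}(t-\tau_\varepsilon) - (x-y_\varepsilon)\cdot\nu)$ on $U(h)$, and hence the same inequality holds for $u^\varepsilon$ at local minima. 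Placing the rescaled obstacle supersolution $\underline{\mathrm{U}}_a$ (with speed $s = \hat{s}$ and slope $q_0$) from Proposition \ref{the detachment lemma supersolution} at base point $(y_\varepsilon,\tau_\varepsilon)$ in the $\varepsilon$-scale, the detachment estimate forces its zero level set to move ahead of $x_\varepsilon$ by at least $(1.5\delta H - B)\varepsilon > \delta h - B\varepsilon$ in the $\nu$ direction by time $t_\varepsilon$. On the other hand, the direct bound $\mathrm{dist}(y_\varepsilon + h\hat{s}\nu,x_\varepsilon) < \delta h - B\varepsilon$ holds for $\varepsilon$ small. Comparing $u^\varepsilon$ with the $\varepsilon$-rescaled obstacle supersolution (using Proposition \ref{the usual comparison principle} on the cylinder domain, since $u^\varepsilon \geq \underline{\mathrm{U}}_a$ on the parabolic boundary by the lower bound above) yields the reverse inequality, producing the desired contradiction. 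The main obstacle in this step is carefully justifying that, after the shift into a lattice point and the rescaling, $u^\varepsilon$ indeed dominates the rescaled obstacle supersolution on the parabolic boundary of the relevant cylinder; this is where the strict local minimum at $(x_\varepsilon,t_\varepsilon)$ together with the quantitative polynomial expansion is crucial.

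\textbf{Case 2: $q_0 = 0$.} Assume for contradiction that $\psi_t(x_0,t_0) < 0$. By continuity there exist $r,\sigma > 0$ with $\psi_t < -\sigma$ and $|D\psi| < \sigma/(2M_0)$ on $V_r(x_0,t_0) = B_r(x_0) \times (t_0-r,t_0+r)$, and with $u_\star - \psi$ attaining a strict minimum in $V_r$ not on $\partial_p V_r$. For $\varepsilon$ small the curvature term $\varepsilon\tr\{D^2\psi(I - \nu\otimes\nu)\}$ is bounded by $\sigma/2$ uniformly in $\nu$, so
\[
\mathscr{F}_*(\varepsilon D^2\psi, D\psi, x/\varepsilon) > -\sigma > \psi_t \quad \text{on } V_r,
\]
making $\psi$ a classical subsolution of \eqref{the scaled forced mean curvature flow}. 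Proposition \ref{the usual comparison principle} then gives $\min_{V_r}(u^\varepsilon - \psi) \geq \min_{\partial_p V_r}(u^\varepsilon - \psi)$, and passing to the lower half-relaxed limit contradicts the strictness of the local minimum at $(x_0,t_0)$. This case is routine and is essentially the standard viscosity-solution argument; the substantive content of the proposition lies entirely in Case 1.
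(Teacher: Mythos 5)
Your proof is essentially correct and carries out exactly the mirroring the paper intends: the paper's own proof of Proposition \ref{the super equation satisfied by the limit} reads only ``It is similar to that of Proposition \ref{the sub equation satisfied by the limit}, we omit it here,'' so the substantive work was to supply the details, which you do. The sign changes are handled properly: you flip the contradiction hypothesis to $\psi_t < (\underline{s}(\nu)-3\delta)|D\psi|$, swap the Taylor expansion to a lower bound, reverse the sign of the correction term in $\tilde{y}_\varepsilon$ so that $(x_\varepsilon - y_\varepsilon)\cdot\nu = h\hat{s} + \frac{\|A\|_\infty h^2 + |q_\varepsilon - q_0|h}{|q_0|} + \sqrt{n}\varepsilon$ (putting $x_\varepsilon$ slightly \emph{ahead} of the obstacle rather than behind), and replace Proposition \ref{the detachment lemma subsolution} with Proposition \ref{the detachment lemma supersolution}. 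Case 2 is the standard argument, also correctly adapted.

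One small point of framing in your Case 1: the step where you get $u^\varepsilon - u^\varepsilon(x_\varepsilon,t_\varepsilon) \geq \underline{\mathrm{U}}_a$ (rescaled) is cleaner as a \emph{minimality} argument than as a boundary comparison. By Definition \ref{the definitions of obstacle solutions}, $\underline{\mathrm{U}}_a$ is the \emph{smallest} supersolution in $\mathrm{C}_d$ that dominates the obstacle $\mathrm{O}_e$; once you know the rescaled, shifted $u^\varepsilon - u^\varepsilon(x_\varepsilon,t_\varepsilon)$ restricted to the cylinder is a supersolution and lies above $\mathrm{O}_e$ (which is exactly your quantitative lower bound), it belongs to $\underline{\mathscr{S}}_a$ and hence dominates $\underline{\mathrm{U}}_a$ pointwise --- no boundary data for Proposition \ref{the usual comparison principle} is needed. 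So the ``main obstacle'' you flag is actually absent; the extremality of the obstacle solution does the work directly, just as the paper's subsolution proof uses the maximality of $\overline{\mathrm{U}}_a$ implicitly when it says $u^\varepsilon$ shifted and rescaled satisfies the hypotheses of Proposition \ref{the detachment lemma subsolution}.
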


\begin{proof}
	It is similar to that of Proposition \ref{the sub equation satisfied by the limit}, we omit it here.
\end{proof}

\begin{definition}
Consider the equation \eqref{the homogenized equation} as follows, where
\begin{equation}\label{the homogenized equation}
\begin{cases}
u_{t} = s\left( -\widehat{Du}\right) \left| Du\right|, & (x,t) \in \RR^{n} \times (0,\infty),\\
u(x,0) = u_{0}(x), & x \in \RR^{n}.
\end{cases} \tag{$\overline{\text{E}}$}
\end{equation}

\begin{enumerate}
	\item [(a)] Let $s(\cdot) = \overline{s}(\cdot)$, an upper semicontinuous function $u(x,t): \RR^{n} \times (0,\infty) \rightarrow \RR$ is called a viscosity subsolution of (\ref{the homogenized equation}), if the following hold.
	\begin{enumerate}
	\item [(i)] Let $\phi(x,t)$ be a $C^{2,1}$ function, assume $u(x,t) - \phi(x,t)$ obtains a local maximum at $(x_{0},t_{0}) \in \RR^{n} \times (0, \infty)$, denote $q_{0} := D\phi(x_{0},t_{0})$, then
\begin{eqnarray*}
	\begin{cases}
		\phi_{t}(x_{0},t_{0}) \leq \overline{s}(\nu)|D\phi(x_{0},t_{0})|, & q_{0} \neq 0, \hspace{2mm} \nu := - \frac{q_{0}}{|q_{0}|},\\
		\phi_{t}(x_{0},t_{0}) \leq 0, & q_{0} = 0.
	\end{cases}
\end{eqnarray*}
	\item [(ii)] $u(x,0) \leq u_{0}(x)$, $x \in \RR^{n}$.
	\end{enumerate}
	\item [(b)] Let $s(\cdot) = \underline{s}$, a lower semicontinuous function $v(x,t): \RR^{n} \times (0,\infty) \rightarrow \RR$ is called a viscosity supersolution of (\ref{the homogenized equation}), if the following hold.
	\begin{enumerate}
	\item [(i)] Let $\psi(x,t)$ be a $C^{2,1}$ function, assume $v(x,t) - \psi(x,t)$ obtains a local minimum at $(x_{0},t_{0}) \in \RR^{n} \times (0, \infty)$, denote $q_{0} := D\psi(x_{0},t_{0})$, then
	\begin{eqnarray*}
	\begin{cases}
	\psi_{t}(x_{0},t_{0}) \geq s(\nu)|D\psi(x_{0},t_{0})|, & q_{0} \neq 0, \hspace{2mm} \nu := - \frac{q_{0}}{|q_{0}|},\\
	\psi_{t}(x_{0},t_{0}) \geq 0, & q_{0} = 0.
	\end{cases}
	\end{eqnarray*}
	\item [(ii)] $v(x,0) \geq u_{0}(x)$, $x \in \RR^{n}$.
	\end{enumerate}
	\item[(c)] If $s(\cdot) = \overline{s}(\cdot) = \underline{s}(\cdot)$, then a continuous function $w(x,t)$ is called a viscosity solution of (\ref{the homogenized equation}) if $w(x,0) = u_{0}(x)$, and that $w$ is both a viscosity subsolution and a viscosity supersolution of (\ref{the homogenized equation}).
	\end{enumerate}
\end{definition}

\begin{proposition}\label{homogenization occurs when the head speed and tail speed coincide}
	If $\overline{s}(\cdot) \equiv \underline{s}(\cdot)$, we denote it by $s(\cdot)$. Let $u^{\varepsilon}(x,t)$ be the unique viscosity solution of (\ref{the scaled forced mean curvature flow}), then $u^{\varepsilon}(x,t)$ converges locally uniformly, as $\varepsilon \rightarrow 0$, to a continuous function $\overline{u}(x,t)$ in $\RR^{n} \times (0, \infty)$, which is the unique viscosity solution of (\ref{the homogenized equation}).
\end{proposition}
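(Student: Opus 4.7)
The plan is to run the method of half-relaxed limits, using the two viscosity inequalities already established in Propositions \ref{the sub equation satisfied by the limit} and \ref{the super equation satisfied by the limit}. Define $u^{\star}$ and $u_{\star}$ as in Definition \ref{half relaxed limits}. When $\overline{s}(\cdot) \equiv \underline{s}(\cdot) \equiv s(\cdot)$, those two propositions say precisely that $u^{\star}$ is a viscosity subsolution and $u_{\star}$ is a viscosity supersolution of the single limit equation $w_t = s(-\widehat{Dw})|Dw|$ on $\RR^n \times (0,\infty)$, including the correct relaxed inequalities $\phi_t \le 0$ and $\psi_t \ge 0$ at points where the test gradient vanishes.

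The next step is to identify the initial trace, namely $u^{\star}(\cdot,0) = u_{\star}(\cdot,0) = u_0(\cdot)$. Since $u_0 \in \UC(\RR^n)$, for each $x_0 \in \RR^n$ and each $\eta>0$ I would build affine barriers of the form $\phi^{\pm}(x,t) = u_0(x_0) \pm \eta \pm L|x-x_0| \pm M_0 L\, t$ with $L$ sufficiently large. Because affine spatial profiles have zero mean curvature and $g \le M_0$, these are respectively a classical supersolution and subsolution of \eqref{the scaled forced mean curvature flow}; Proposition \ref{the usual comparison principle} then sandwiches $u^{\varepsilon}$ near $(x_0,0)$, and taking relaxed limits together with $\eta\to 0$ yields the trace identification.

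With matching initial data in hand, the final ingredient is a comparison principle for the first-order geometric equation $w_t = s(-\widehat{Dw})|Dw|$ on $\RR^n\times(0,\infty)$ for uniformly continuous data. The Hamiltonian $H(p) := s(-\widehat{p})|p|$ is positively 1-homogeneous, bounded by $M_0|p|$, and continuous on $\RR^n\setminus\{0\}$ by Proposition \ref{the continuity of head and tail speeds}, while the relaxed inequalities at $p=0$ from Propositions \ref{the sub equation satisfied by the limit} and \ref{the super equation satisfied by the limit} match the upper/lower envelopes $H^{*}$, $H_{*}$ needed for the standard geometric-level-set comparison framework. Applying that comparison yields $u^{\star} \le u_{\star}$, and since $u_{\star} \le u^{\star}$ by definition, both coincide with a continuous function $\overline{u}$. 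Standard half-relaxed-limit theory then upgrades this pointwise equality to locally uniform convergence $u^{\varepsilon} \to \overline{u}$, and $\overline{u}$ is the unique viscosity solution of \eqref{the homogenized equation}.

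The only genuinely delicate step is the comparison principle for the limit equation: $s(\nu)$ is only continuous, not a priori Lipschitz, on $\mathbb{S}^{n-1}$, so a direct doubling-of-variables argument must be carried out carefully. The cleanest route is either to cite the general comparison for geometric first-order Hamilton--Jacobi equations (Barles--Souganidis, Evans--Spruck) which accommodates continuous, positively homogeneous Hamiltonians, or to approximate $s$ uniformly by Lipschitz functions $s_k$ producing $\overline{u}_k \to \overline{u}$ by stability and concluding comparison in the limit. Either way, the substantive novelty of the result is entirely absorbed into the preceding machinery, and this proposition is the stability-and-comparison harvest of it.
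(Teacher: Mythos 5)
Your plan is correct and structurally the same as the paper's argument: Propositions~\ref{the sub equation satisfied by the limit} and \ref{the super equation satisfied by the limit} give that $u^{\star}$ and $u_{\star}$ are respectively a viscosity sub- and supersolution of the single limit equation, the relaxed inequalities at $D\phi = 0$ match the upper/lower envelopes of the geometric Hamiltonian, one compares to get $u^{\star}\le u_{\star}$, and half-relaxed-limit theory upgrades this to locally uniform convergence to the unique solution.

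Two remarks worth noting. First, the paper's written proof is silent about identifying the initial trace $u^{\star}(\cdot,0)=u_{\star}(\cdot,0)=u_0$, a step genuinely required before one can invoke comparison; your barrier argument near $t=0$ fills that gap (with the minor caveat that the raw cone $L|x-x_0|$ should be mollified or replaced by a smooth sublinear profile so that the curvature term does not blow up at the vertex). Second, you identify the comparison principle for the limit equation as the delicate point and propose to outsource it or approximate $s$. The paper instead uses the clean device $w(x,t):=e^{-t}u(x,t)$, which turns $u_t = s(-\widehat{Du})|Du|$ into the \emph{proper} equation $w_t + w = s(-\widehat{Dw})|Dw|$; the strictly monotone zeroth-order term is what drives uniqueness. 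The concern you raise about $s$ being merely continuous, not Lipschitz, is in fact not an obstruction here: since the Hamiltonian $H(p)=s(-\widehat p)|p|$ is $x$-independent, the doubling-of-variables argument produces the \emph{same} momentum $p_\alpha=\alpha(x_\alpha-y_\alpha)$ in both the sub- and supersolution inequalities, so the difference $H(p_\alpha)-H(p_\alpha)$ vanishes identically and no modulus of continuity of $H$ in $p$ is ever needed. The genuine issue the exponential weighting addresses is the lack of strict monotonicity in $u$, not the regularity of $s$. With that adjustment, your argument is complete and is a somewhat more explicit rendition of what the paper records tersely.
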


\begin{proof}
	The uniqueness, if $u(x,t)$ is a solution of (\ref{the homogenized equation}), then $w(x,t) := e^{-t}u(x,t)$ is a solution of the following equation, which has a unique solution.
	\begin{equation*}
	\begin{cases}
	w_{t} + w = s\left( - \frac{Dw}{|Dw|}\right)|Dw|, & (x,t) \in \RR^{n} \times (0, \infty),\\
	w(x,0) = u_{0}(x), & x \in \RR^{n}.
	\end{cases}
	\end{equation*}
    Therefore, the equation (\ref{the homogenized equation}) has a unique solution. On the other hand, by Proposition \ref{the sub equation satisfied by the limit} and the Proposition \ref{the super equation satisfied by the limit}, we have that $u^{\star}(x,t) \leq u_{\star}(x,t)$, $(x,t) \in \RR^{n} \times (0,\infty)$. Clearly, by Definition \ref{half relaxed limits}, we have $u_{\star}(x,t) \leq u^{\star}(x,t)$. Therefore, $u_{\star}(x,t) = u^{\star}(x,t)$, let us denote it by $\overline{u}(x,t)$. By Definition \ref{half relaxed limits} again, we have that
    \begin{equation*}
    \lim_{\varepsilon\rightarrow 0}u^{\varepsilon}(x,t) = \overline{u}(x,t) \hbox{ locally uniformly in } \RR^{n} \times (0,\infty).
    \end{equation*}

\end{proof}

\section{Nonhomogenization}\label{the section of nonhomogenization}

In this section, we study the case that the head speed is not identically equal to the tail speed. i.e., there exists $\nu_{0} \in \mathbb{S}^{n-1}$, with $\underline{s}(\nu_{0}) < \overline{s}(\nu_{0})$. It turns out that in this case we can find ``long fingers", growing linealy in time in the $\nu_{0}$ direction, in certain level set of the real solution.

\subsection{An ordering relation}
\begin{definition}
	For any $q \in \RR^{n}$, let $u := u(x,t;q)$ and $u^{\varepsilon} := u^{\varepsilon}(x, t; q)$ be the unique solution of the following equation (\ref{the forced mean curvature flow with linear initial data}) and (\ref{the forced mean curvature flow with linear initial data in epsilon scale}) (c.f. (\ref{the differential operator regarding space derivatives})), respectively.
   \begin{equation}\label{the forced mean curvature flow with linear initial data}
   \begin{cases}
   u_{t} = \mathscr{F}\left( D^{2}u, Du, x\right), & (x,t) \in \RR^{n} \times (0,\infty),\\
   u(x,0;q) := q\cdot x, & x \in \RR^{n}.
   \end{cases}
   \end{equation}
	\begin{equation}\label{the forced mean curvature flow with linear initial data in epsilon scale}
	\begin{cases}
	u^{\varepsilon}_{t} = \mathscr{F}\left( \varepsilon D^{2}u^{\varepsilon}, Du^{\varepsilon}, \frac{x}{\varepsilon}\right), & (x,t) \in \RR^{n} \times (0,\infty),\\
	u^{\varepsilon}(x,0;q) := q\cdot x, & x \in \RR^{n}.
	\end{cases}
	\end{equation}
\end{definition}

\begin{lemma}\label{comparison between the real solution and the obstacle solutions}
	Let $a := (\nu, R, 0, q, \overline{s}(\nu) + \sigma) \in \mathbb{A}$ (resp. $a := (\nu, R, 0, q, \underline{s}(\nu) - \sigma) \in \mathbb{A}$) with $\sigma > 0$, then there exists $A = A(\nu,\sigma) > 0$, with $\xi_{A} \in \argmin\limits\limits_{\xi \in \ZZ^{n}, \hspace{1mm} \xi \cdot \nu \geq A}|\xi|$, we have for any $\mu$ and $t \geq 0$ that
	\begin{eqnarray*}
	u(\cdot, t, q) \prec_{\left( \Upomega(0, R; \nu), \mu\right) } \overline{\mathrm{U}}_{a}(\cdot - \xi_{A},t) && (\text{resp. } \underline{\mathrm{U}}_{a}(\cdot + \xi_{A},t) \prec_{(\Upomega(0,R;\nu), \mu)} u(\cdot, t, q)).
	\end{eqnarray*}
\end{lemma}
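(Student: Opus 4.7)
The plan is to reduce the $\prec$-ordering to a pointwise bound on a $\ZZ^n$-translate of $u$, and then to exploit the maximality defining $\overline{\mathrm{U}}_a$ together with the linearly growing detachment from Proposition \ref{the detachment lemma subsolution}. I will sketch only the subsolution case; the supersolution case is parallel, using Proposition \ref{the detachment lemma supersolution} and the Birkhoff properties for supersolutions.

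First I would record a translation identity. Since $\xi_A \in \ZZ^n$ and $g$ is $\ZZ^n$-periodic, both $(x,t) \mapsto u(x+\xi_A, t; q)$ and $(x,t) \mapsto u(x, t; q) - |q|(\xi_A \cdot \nu)$ are viscosity solutions of \eqref{the forced mean curvature flow with linear initial data} with the common initial datum $q \cdot (x+\xi_A)$, using $\nu = -q/|q|$ so that $q \cdot \xi_A = -|q|(\xi_A \cdot \nu)$. Uniqueness yields
\begin{equation*}
u(x + \xi_A, t; q) = u(x, t; q) - |q|(\xi_A \cdot \nu).
\end{equation*}
Set $\hat{u}(x,t) := u(x+\xi_A, t; q)$. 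After the change of variables $y = x - \xi_A$, the desired ordering becomes the pointwise bound $\hat{u}(\cdot, t) < \overline{\mathrm{U}}_a(\cdot, t)$ on an appropriate subset of $\mathrm{C}_d$ for each fixed $t$; the infinity-extension condition (iii) of Definition \ref{the strict ordering relation between level sets of two functions} follows from the planar behavior of both functions as $x \cdot \nu \to \pm \infty$ (by comparison with planar sub/supersolutions of speeds $m_0$ and $M_0$). Since $\hat{u}$ is a viscosity subsolution on $\mathrm{C}_d$ (being a $\ZZ^n$-translate of a global solution), the maximality in Definition \ref{the definitions of obstacle solutions} reduces the task to verifying $\hat{u}(\cdot,t) \leq \mathrm{O}_e(\cdot, t)$ pointwise on $\mathrm{C}_d$ for every $t \geq 0$; at $t=0$ this holds with a uniform buffer $|q|(\xi_A \cdot \nu) \geq |q|A$.

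The principal obstacle is that the naive planar supersolution bound $u(x,t;q) \leq q \cdot x + M_0 t |q|$ preserves the buffer only on the short time window $t \lesssim \xi_A \cdot \nu /(M_0 - s)$, with $s = \overline{s}(\nu) + \sigma$; for larger $t$ the initial advantage can a priori be exhausted. The cure is to absorb the excess growth of $\hat{u}$ into the detachment gap supplied by Proposition \ref{the detachment lemma subsolution}, which produces, for some $B = B(\nu, \sigma)$, a separation of $\overline{\mathrm{U}}_a$ from $\mathrm{O}_e$ by at least $(\tfrac{\sigma}{2} t - B)|q|$ inside the cylinder. I would then iterate on time windows of length $T_0 \asymp (M_0 - s) B / \sigma$: within each window, the detachment accumulated on the previous window is converted, via the Birkhoff property (Proposition \ref{the birkhoff property for subsolution in an expanding domain} or its static analogue Proposition \ref{the birkhoff property for subsolutions in two static domains}), into an integer shift of $\overline{\mathrm{U}}_a$ that restores the buffer. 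Choosing $A = A(\nu, \sigma)$ of order $(M_0 - s) T_0 + \sqrt{n}$ synchronizes the $\ZZ^n$-structure of $g$ with the available detachment and closes the iteration for all $t \geq 0$. The strict inequality in condition (i) of $\prec$ is inherited from the strict initial gap via the strong maximum principle, and the level-set disjointness (ii) is then automatic.
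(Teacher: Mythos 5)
Your overall architecture --- translate $u$ by $\xi_A$, reduce the $\prec$-ordering to a pointwise bound $\hat u \leq \mathrm{O}_e$ on the cylinder, and then invoke the maximality defining $\overline{\mathrm{U}}_a$ --- is a legitimate alternative to the paper's route, and the translation identity $u(x+\xi_A,t;q)=u(x,t;q)-|q|(\xi_A\cdot\nu)$ is correct. But the engine you propose for establishing the pointwise bound does not close as written. With fixed window length $T_0 \asymp (M_0-s)B/\sigma$ and $A$ of size $(M_0-s)T_0+\sqrt n$, the buffer you burn per window is $(M_0-s)T_0$, while the gap you harvest at the \emph{end} of the first window from Proposition \ref{the detachment lemma subsolution} is only $\tfrac{\sigma}{2}T_0-B$. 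These are comparable only when $\sigma \gtrsim M_0-s$, i.e.\ when $\overline{s}(\nu)$ is already close to $M_0$; for generic $\nu$ and small $\sigma$ the fixed-window scheme exhausts the buffer. What actually saves the iteration is that the detachment gap at time $T_k$ is $\tfrac{\sigma}{2}T_k-B$, which grows linearly in $T_k$, so the admissible window lengths $T_{k+1}-T_k = (\tfrac{\sigma}{2}T_k-B)/(M_0-s)$ grow geometrically and $T_k \to \infty$ provided $A > 2B(M_0-s)/\sigma$. This geometric bootstrapping is the missing idea; ``synchronizing the $\ZZ^n$-structure via Birkhoff'' plays no essential role in closing the rate count, and as described it is a red herring.

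There are two further issues. First, Proposition \ref{the detachment lemma subsolution} and Lemma \ref{the expansion of detachment in a static domain for the obstacle subsolution} give detachment of $\overline{\mathrm{U}}_a$ in $\Upomega(0,r;\nu)$ only when the domain radius $R=R(\nu,\mu,\sigma,r)$ is taken large relative to $r$ (and, for the former, relative to the time horizon); for a \emph{fixed} $R$, as in the statement, the detachment used in each iteration step is not available in the full cylinder. The paper resolves this precisely by sending $R\to\infty$ to form $\mathrm{U}_\infty$ and then observing $\mathrm{U}_\infty \leq \overline{\mathrm{U}}_a(\cdot-\xi_A,\cdot)$ for the given finite $R$ via the static Birkhoff monotonicity; your sketch never passes to the global obstacle subsolution. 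Second, after obtaining the non-strict pointwise bound $\hat u \leq \overline{\mathrm{U}}_a$, you invoke a ``strong maximum principle'' to upgrade to the strict ordering and the level-set disjointness required by Definition \ref{the strict ordering relation between level sets of two functions}. For the degenerate geometric operator $\mathscr{F}$ no such strong maximum principle is available in the form you need, particularly at points where $\overline{\mathrm{U}}_a$ touches its obstacle. The paper sidesteps this entirely by formulating every step (the fast-obstacle bound on $[0,T+1]$, the detachment at time $T$, and the comparison with the detached supersolution $\mathrm{U}_\infty$ for $t\geq T$) directly in terms of $\prec$, so strictness never has to be recovered after the fact. You would need to replace the final pointwise step by a level-set version of the same argument.

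For comparison, the paper's proof avoids iteration altogether: it combines the uniform bound $u(\cdot,t;q) \prec \mathrm{O}_{(\nu,q,m_0+M_0)}(\cdot,t)$ for $0\leq t\leq T+1$ with the fact that, for $t>T$, the lower envelope $\left(\overline{\mathrm{U}}_a\right)_*$ is a supersolution once detached (Proposition \ref{a semi obstacle solution becomes a solution when detachment happens}), so a single application of the comparison principle from time $T$ onward suffices; the choice $A=(m_0+M_0)(T+1)+1$ makes these two regimes match. That two-regime structure is structurally simpler than the bootstrapping you envision and is what you should aim for if you want to repair the proposal.
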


\begin{proof}
	Because sub-strict-detachment (c.f. Definition \ref{sub or super strict detached speeds}) implies uniform detachment, without loss of generality, we can take $\mu = 0$. By Lemma \ref{the expansion of detachment in a static domain for the obstacle subsolution}, for any $r > 0$, there exists $R := R(\nu, \sigma) > 0$ and $T = T(\nu, \sigma) > 0$, such that for any $t > T$, we have
	\begin{eqnarray*}
	\overline{\mathrm{U}}_{a}(\cdot,t) \prec_{(\Upomega(0,r;\nu), 0)} \mathrm{O}_{e}(\cdot, t), \hspace{2mm} t > T, &\text{with}& e := (\nu, q, \overline{s}(\nu) + \sigma) \in \mathbb{E}.
	\end{eqnarray*}
    Moreover, we have that
	\begin{eqnarray}\label{the real solution is below a shift of obstacle during an initial period of time}
	u(\cdot, t; q) \prec_{(\Upomega(0, r; \nu), 0)} \mathrm{O}_{(\nu, q, m_0 + M_0)}(\cdot, t), && 0 \leq t \leq T + 1.
	\end{eqnarray}
    Since the above $T$ is independent of $r$, we have $r \rightarrow \infty$ if we send $R \rightarrow \infty$. Let us set $A := \left(m_0 + M_0\right)  \left( T(\nu,\sigma) + 1\right) + 1$ and denote $\mathrm{U}_{\infty}$ as follows, which is a supersolution as $\overline{\mathrm{U}}_{a}(x,t)$ is a solution in $\Upomega(0,r;\nu)$: 
    \begin{eqnarray*}
    \mathrm{U}_{\infty}(x,t) := \lim_{R\rightarrow \infty} \overline{\mathrm{U}}_{a}(x - \xi_{A},t) = \inf_{R > 0} \overline{\mathrm{U}}_{a}(x - \xi_{A},t).
    \end{eqnarray*}
    By the above choice of $T$ and the comparison principle, we get that
    \begin{eqnarray*}
    u(\cdot, t; q) \prec_{(\Upomega(0,\infty; \nu),0)} \mathrm{U}_{\infty}(\cdot, t), && t \geq T.
    \end{eqnarray*}
    Recalling (\ref{the real solution is below a shift of obstacle during an initial period of time}), we conclude that
    \begin{eqnarray*}
    	u(\cdot, t; q) \prec_{(\Upomega(0,\infty; \nu),0)} \mathrm{U}_{\infty}(\cdot, t), && 0 \leq t < \infty.
    \end{eqnarray*}
    Then the desired result is valid due to the following inequality.
    \begin{eqnarray*}
    \mathrm{U}_{\infty}(x, t) \leq \overline{\mathrm{U}}_{a}(x - \xi_{A}, t), && (x,t) \in \Upomega(0, R; \nu) \times [0,\infty).
    \end{eqnarray*}

\end{proof}

\subsection{A closeness property}
If the obstacle speed is below (resp. above) the head (resp. tail) speed, it is necessary to describe the closeness of the obstacle subsolution (resp. supersolution) to the associated obstacle function.

\subsubsection{Irrational directions}

If $\nu \in \mathbb{S}^{n-1} \diagdown \RR\ZZ^{n}$, the Proposition \ref{the detachment lemma subsolution} (resp. the Proposition \ref{the detachment lemma supersolution}) shows that the detachment is equivalent to sub-strict-detached (super-strict-detached) obstacle speed. Therefore, if the obstacle speed is strictly smaller (resp. larger) than the head speed (resp. tail speed), the obstacle subsolution (resp. supersolution) touches the obstacle very frequently. In addition, the Birkhoff properties indicate repeated pattern of this kind of touching.

\begin{lemma}\label{the meaning of no detachment}
	Let $a := (\nu, R, 0, q, s) \in \mathbb{A}$ and set $e := (\nu, q, s) \in \mathbb{E}$, where $\nu \in \mathbb{S}^{n-1}\diagdown \RR\ZZ^{n}$ and $s := \overline{s}(\nu) - \sigma$ (resp. $s := \underline{s} + \sigma$) with $\sigma > 0$. Then for any $R > r > \frac{\sqrt{n}}{2}$, $\mu \in \RR$ and $T > 0$, there exists $(x,t) \in \Upomega(0, r; \nu) \times (T, \infty)$, such that
	\begin{eqnarray*}
	\overline{\mathrm{U}}_{a}(x, t) = \mathrm{O}_{e}(x, t) = \mu && \left( \text{resp.} \hspace{2mm} \underline{\mathrm{U}}_{a}(x, t) = \mathrm{O}_{e}(x, t) = \mu\right). 
	\end{eqnarray*}
\end{lemma}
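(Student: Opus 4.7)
The plan is to argue by contradiction in the subsolution case; the supersolution statement is symmetric. Suppose the conclusion fails: there exist $r_0 \in (\sqrt{n}/2, R)$, $\mu_0 \in \RR$, and $T_0 > 0$ such that no $(x, t) \in \Upomega(0, r_0; \nu) \times (T_0, \infty)$ satisfies $\overline{\mathrm{U}}_a(x, t) = \mathrm{O}_e(x, t) = \mu_0$. Since $\overline{\mathrm{U}}_a \le \mathrm{O}_e$ pointwise, this forces $\overline{\mathrm{U}}_a(x, t) < \mu_0$ on the entire portion of the moving $\mu_0$-hyperplane $\{y \cdot \nu = st - \mu_0/|q|\}$ that lies inside $\Upomega(0, r_0; \nu)$ for $t > T_0$. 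Combined with $\overline{\mathrm{U}}_a \le \mathrm{O}_e < \mu_0$ above this hyperplane and the unboundedness of the level sets along $\pm\nu$ inherited from the obstacle, this already produces the detachment relation $\overline{\mathrm{U}}_a(\cdot, t) \prec_{(\Upomega(0, r_0; \nu), \mu_0)} \mathrm{O}_e(\cdot, t)$ at level $\mu_0$ for $t > T_0$.

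The heart of the argument is to upgrade this single-level detachment to detachment at every $\mu \in \RR$, which by Definition \ref{the head speed in an irrational direction} will contradict $s = \overline{s}(\nu) - \sigma < \overline{s}(\nu)$. The $\ZZ^n$-periodicity of $g$ and the maximality of $\overline{\mathrm{U}}_a$ yield, in the spirit of Proposition \ref{the birkhoff property for subsolution in an expanding domain}, the Birkhoff inequality $\overline{\mathrm{U}}_a(x + \Delta z, t + \Delta t) \le \overline{\mathrm{U}}_a(x, t)$ for $\Delta z \in \ZZ^n$ with $x, x + \Delta z \in \Upomega(0, R; \nu)$ and $0 < s\Delta t \le \Delta z \cdot \nu$, together with the reverse inequality $\overline{\mathrm{U}}_a(x, t) \le \overline{\mathrm{U}}_a(x + \Delta z, t + \Delta t)$ when $\Delta z \cdot \nu \le s\Delta t$, derived by applying maximality to the backward shift $\overline{\mathrm{U}}_a(x - \Delta z, t - \Delta t)$. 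First, compactness of cross-sections of $\Upomega(0, r_0; \nu)$, upper semicontinuity of $\overline{\mathrm{U}}_a$, and Birkhoff shifts along the hyperplane (those with $s\Delta t = \Delta z \cdot \nu$, translating the hyperplane to itself via Proposition \ref{a lattice point that is close to a hyperplane}) produce a uniform quantitative gap $\varepsilon_0 > 0$ with $\overline{\mathrm{U}}_a(x, t) \le \mu_0 - \varepsilon_0$ on the moving $\mu_0$-hyperplane in $\Upomega(0, r_1; \nu) \times (T_1, \infty)$ for some $r_1 > \sqrt{n}/2$ and $T_1 > T_0$. Second, for a target level $\mu$ near $\mu_0$, an integer shift $\Delta z$ from Proposition \ref{a lattice point that is close to a hyperplane} with $\Delta z \cdot \nu - s\Delta t = \pm(\mu_0 - \mu)/|q|$ and bounded $|\Delta z^\top|$ transfers the $\varepsilon_0$-gap from the $\mu_0$-hyperplane onto the $\mu$-hyperplane via the appropriate Birkhoff inequality; chaining such shifts, together with the density of $\{\Delta z \cdot \nu : \Delta z \in \ZZ^n\}$ in $\RR$ guaranteed by the irrationality of $\nu$, extends detachment to every $\mu \in \RR$ in some common sub-cylinder $\Upomega(0, r_*; \nu)$.

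Once detachment holds at every $\mu$, Definition \ref{the head speed in an irrational direction} forces $\overline{s}(\nu) \le s = \overline{s}(\nu) - \sigma$, the desired contradiction. The supersolution case runs verbatim upon replacing Proposition \ref{the birkhoff property for subsolution in an expanding domain} by Proposition \ref{the birkhoff property for supersolution in an expanding domain} and Definition \ref{the head speed in an irrational direction} by Definition \ref{the tail speed in an irrational direction}. The main technical obstacle is the first step: extracting a \emph{uniform} quantitative gap $\varepsilon_0$ from only upper semicontinuity along cross-sections of a temporally unbounded cylinder, and then controlling the degradation of this gap through the iterated Birkhoff transfer across infinitely many levels. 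This is precisely where the irrationality of $\nu$ and the discrepancy estimate of Proposition \ref{a lattice point that is close to a hyperplane} become indispensable: they supply the fine-scale recurrence that compactifies the argument and prevents the gap from shrinking to zero under iteration.
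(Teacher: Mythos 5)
The paper's own proof of this lemma is a single sentence: it treats the statement as the literal negation of the definition of detachment for speeds below $\overline{s}(\nu)$. Your proposal attempts something more ambitious — to show that failure of touching at one level $\mu_0$ bootstraps to detachment at every level, and hence contradicts $s < \overline{s}(\nu)$. This is a reasonable instinct, because the bare negation of "$\overline{\mathrm{U}}_a$ detaches" only gives failure at \emph{some} level, whereas the lemma asserts touching at \emph{every} level; but the route you sketch has concrete problems that leave the argument open.

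First, your claim in the initial paragraph that no-touching at $\mu_0$ "already produces" $\overline{\mathrm{U}}_a(\cdot,t) \prec_{(\Upomega(0,r_0;\nu),\mu_0)} \mathrm{O}_e(\cdot,t)$ overstates what you have. Conditions (ii) and (iii) of Definition \ref{the strict ordering relation between level sets of two functions} do follow, but condition (i), $\overline{\mathrm{U}}_a < \mathrm{O}_e$ on all of $\Upomega(0,r_0;\nu)$, does not: strict inequality holds on the $\mu_0$-hyperplane and behind it, but ahead of the hyperplane — where $\mathrm{O}_e < \mu_0$ — touching is precisely what you have not ruled out. Second, and more seriously, the level-transfer in your second paragraph is not supported by the Birkhoff propositions as stated. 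With $\mathscr{R} = 0$, Proposition \ref{the birkhoff property for subsolution in an expanding domain} requires $|\Delta z - (\Delta z\cdot\nu)\nu| = 0$, which for irrational $\nu$ forces $\Delta z = 0$; Proposition \ref{the birkhoff property for subsolutions in two static domains} converts transverse shift into a loss of radius $R_2 - R_1 \geq |\Delta z^\top|$. Every shift supplied by Proposition \ref{a lattice point that is close to a hyperplane} has a small but nonzero transverse component, so each application of the transfer shrinks the working cylinder, and chaining across infinitely many target levels $\mu \in \RR$ exhausts any finite $R$. You flag this yourself as "the main technical obstacle," but the proposal leaves it unresolved — so the key step remains unjustified. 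Third, even if detachment at every level for the speed $s$ were established, you would still owe the final step: by Definition \ref{the head speed in an irrational direction}, $\overline{s}(\nu)$ is characterized via detachment at speeds $s_0 + \delta$ for all $\delta > 0$, so a contradiction requires passing from detachment at $s$ to detachment at each $s+\delta$ (with a possibly different $R$), and that monotonicity is not argued.
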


\begin{proof}
	Since $s < \overline{s}(\nu)$ (resp. $s > \underline{s}(\nu)$), the Lemma is the negation of detachment.
\end{proof}

\begin{proposition}\label{the periodic distribution of touching points in obstacle}
	Let $a := (\nu, R, 0, q, s) \in \mathbb{A}$, where $\nu \in \mathbb{S}^{n-1}\diagdown \RR\ZZ^{n}$, $R > \frac{\sqrt{n}}{2}$, $0 < s < \overline{s}(\nu)$ (resp. $s > \underline{s}(\nu)$) and set $e = (\nu, q, s) \in \mathbb{E}$. Then for any $\mu\in \RR$ and $h > 0$, there exists $\xi \in [0,1)^{n}$, such that $\overline{\mathrm{U}}_{a}(x_{1},t_{1}) = \mathrm{O}_{e}(x_{1},t_{1}) = \mu$ (resp. $\underline{\mathrm{U}}_{a}(x_{1},t_{1}) = \mathrm{O}_{e}(x_{1},t_{1}) = \mu$) at any $(x_{1},t_{1})$ satisfying
    \begin{eqnarray*}
    x_{1} \in \Upomega(0,R; \nu) \cap \left( \xi + \ZZ^{n}\right), \hspace{2mm} x_{1} \cdot \nu + \frac{\mu}{|q|} \in \left[ 0, h\right] &\text{and}& t_{1} = \frac{x_{1} \cdot \nu}{s}.
    \end{eqnarray*}
\end{proposition}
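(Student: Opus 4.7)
The plan is to establish a $\ZZ^{n}$-diagonal invariance of $\overline{\mathrm{U}}_{a}$ on the cylinder $\mathrm{C}_{d}$, and then extract the claimed periodic touching pattern by combining this invariance with the single touching point supplied by Lemma \ref{the meaning of no detachment}.

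I would first record that, for any $\Delta z \in \ZZ^{n}$, the diagonal map $\Phi_{\Delta z}(x, t) := (x + \Delta z,\, t + (\Delta z \cdot \nu)/s)$ preserves the obstacle, $\mathrm{O}_{e} \circ \Phi_{\Delta z} = \mathrm{O}_{e}$, because using $\nu = -q/|q|$ one checks
\begin{equation*}
\mathrm{O}_{e}(x + \Delta z, t + (\Delta z \cdot \nu)/s) = \mathrm{O}_{e}(x, t) + \Delta z \cdot q + |q|\,(\Delta z \cdot \nu) = \mathrm{O}_{e}(x, t).
\end{equation*}
By the $\ZZ^{n}$-periodicity of $g$, if $u$ is any viscosity subsolution of the equation then so is $u \circ \Phi_{\Delta z}^{-1}$ on the translated domain $\Phi_{\Delta z}(\mathrm{C}_{d})$. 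Hence each $\overline{\mathrm{U}}_{a} \circ \Phi_{\Delta z}^{-1}$ is a subsolution in $\Phi_{\Delta z}(\mathrm{C}_{d})$ still bounded above by $\mathrm{O}_{e}$.

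Next, I form the symmetrized envelope
\begin{equation*}
\widetilde{U}(x, t) := \sup \Bigl\{ \overline{\mathrm{U}}_{a}\bigl(x - \Delta z,\, t - (\Delta z \cdot \nu)/s\bigr) : \Delta z \in \ZZ^{n},\ \bigl(x - \Delta z,\, t - (\Delta z \cdot \nu)/s\bigr) \in \mathrm{C}_{d}\Bigr\},
\end{equation*}
and its upper semicontinuous envelope $\widetilde{U}^{*}$. The standard sup-of-subsolutions principle shows $\widetilde{U}^{*}$ is a viscosity subsolution in $\mathrm{C}_{d}$; the $\Phi_{\Delta z}$-invariance of $\mathrm{O}_{e}$ gives $\widetilde{U}^{*} \leq \mathrm{O}_{e}$; and the choice $\Delta z = 0$ gives $\widetilde{U}^{*} \geq \overline{\mathrm{U}}_{a}$. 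The maximality built into Definition \ref{the definitions of obstacle solutions} then forces $\widetilde{U}^{*} \equiv \overline{\mathrm{U}}_{a}$ on $\mathrm{C}_{d}$. Reading this off yields the monotonicity
\begin{equation*}
\overline{\mathrm{U}}_{a}(y, \tau) \leq \overline{\mathrm{U}}_{a}\bigl(y + \Delta z,\, \tau + (\Delta z \cdot \nu)/s\bigr)
\end{equation*}
whenever both points lie in $\mathrm{C}_{d}$; applying this with $-\Delta z$ at the shifted point produces the reverse inequality, hence equality. Thus $\overline{\mathrm{U}}_{a}$ is constant on every diagonal $\ZZ^{n}$-orbit inside $\mathrm{C}_{d}$.

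To conclude, I invoke Lemma \ref{the meaning of no detachment} (applicable since $s < \overline{s}(\nu)$) to produce a touching point $(x_{*}, t_{*}) \in \mathrm{C}_{d}$ with $\overline{\mathrm{U}}_{a}(x_{*}, t_{*}) = \mathrm{O}_{e}(x_{*}, t_{*}) = \mu$, and set $\xi \in [0,1)^{n}$ to be the representative of $x_{*}$ modulo $\ZZ^{n}$. Any $(x_{1}, t_{1})$ as in the statement then satisfies $\Delta z := x_{1} - x_{*} \in \ZZ^{n}$ and $t_{1} - t_{*} = (\Delta z \cdot \nu)/s$, both points lying in $\mathrm{C}_{d}$ by the strip condition. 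The diagonal invariance gives $\overline{\mathrm{U}}_{a}(x_{1}, t_{1}) = \overline{\mathrm{U}}_{a}(x_{*}, t_{*}) = \mu$, and $\Phi$-invariance of $\mathrm{O}_{e}$ gives $\mathrm{O}_{e}(x_{1}, t_{1}) = \mu$. The supersolution case is handled symmetrically, replacing suprema by infima and maximality of $\overline{\mathrm{U}}_{a}$ by minimality of $\underline{\mathrm{U}}_{a}$. The main technical obstacle is justifying that $\widetilde{U}^{*}$ is a viscosity subsolution in $\mathrm{C}_{d}$: the admissible set of shifts depends on the base point, and while the transverse part $|\Delta z - (\Delta z \cdot \nu)\nu|$ is bounded by $2R$ the parallel part $\Delta z \cdot \nu$ ranges through all of $\ZZ$. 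This is accommodated by the standard stability theory under pointwise sup, since each shifted subsolution is defined on an open translate of $\mathrm{C}_{d}$ and the whole family is uniformly bounded above by the continuous function $\mathrm{O}_{e}$.
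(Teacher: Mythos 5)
Your argument hinges on a claim that the paper does not make and that your proof does not establish: that $\overline{\mathrm{U}}_{a}$ is exactly invariant under the diagonal shifts $\Phi_{\Delta z}$ on the \emph{fixed} cylinder $\mathrm{C}_{d}$. The step where this enters is the assertion that the symmetrized envelope $\widetilde{U}^{*}$ is a viscosity subsolution in $\mathrm{C}_{d}$, which you attribute to ``standard stability theory under pointwise sup.'' That theory applies to a family of subsolutions on a \emph{common} open set. Here each shift $\overline{\mathrm{U}}_{a}\circ\Phi_{\Delta z}^{-1}$ is a subsolution only on $\Phi_{\Delta z}(\mathrm{C}_{d})$, and since $\nu\notin\RR\ZZ^{n}$, every $\Delta z\in\ZZ^{n}\setminus\{0\}$ has a nonzero transverse component, so $\Phi_{\Delta z}(\mathrm{C}_{d})$ is a \emph{proper} translate of $\mathrm{C}_{d}$. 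At lateral boundary points of $\Phi_{\Delta z}(\mathrm{C}_{d})$ lying in the interior of $\mathrm{C}_{d}$, the shifted function is defined only on one side, its extension by $-\infty$ is not upper semicontinuous, and it carries no viscosity information; if the envelope is governed by such a shift near such a point, the subsolution property for $\widetilde{U}^{*}$ does not follow. The shape of the paper's Birkhoff properties (Propositions \ref{the birkhoff property for subsolution in an expanding domain}--\ref{the birkhoff property for supersolutions in two static domains}) is telling: they all give only one-sided inequalities, and for static cylinders they require $R_{2}-R_{1}\geq|\Delta z-(\Delta z\cdot\nu)\nu|$, i.e.\ a domain enlargement to absorb the transverse part of the shift. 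Exact invariance on a fixed $R$ is precisely what those tools cannot give, and you should not expect it to hold.

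The paper's proof routes around this. It applies Lemma \ref{the meaning of no detachment} inside a strictly \emph{larger} cylinder, setting $b:=(\nu,3R,0,q,s)$ and $T:=h/s$, to produce a touching point $(x_{0},t_{0})\in\Upomega(0,R;\nu)\times(T,\infty)$ with $\overline{\mathrm{U}}_{b}(x_{0},t_{0})=\mathrm{O}_{e}(x_{0},t_{0})=\mu$. The constraints on $(x_{1},t_{1})$ then force $\Delta z_{1}:=x_{0}-x_{1}$ to satisfy $\Delta z_{1}\cdot\nu>0$ (hence $\Delta t_{1}:=\Delta z_{1}\cdot\nu/s>0$) and $|\Delta z_{1}-(\Delta z_{1}\cdot\nu)\nu|\leq 2R$, so that $\mathrm{U}(x,t):=\overline{\mathrm{U}}_{b}(x+\Delta z_{1},t+\Delta t_{1})$ is well-defined and a subsolution on all of $\Upomega(0,R;\nu)\times[0,\infty)$ and $\mathrm{U}\leq\mathrm{O}_{e}$; maximality of $\overline{\mathrm{U}}_{a}$ then gives only the one-sided $\mathrm{U}\leq\overline{\mathrm{U}}_{a}$, and the conclusion comes from pinching against the obstacle, $\mu=\overline{\mathrm{U}}_{b}(x_{0},t_{0})=\mathrm{U}(x_{1},t_{1})\leq\overline{\mathrm{U}}_{a}(x_{1},t_{1})\leq\mathrm{O}_{e}(x_{1},t_{1})=\mathrm{O}_{e}(x_{0},t_{0})=\mu$. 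The ingredients you share with the paper --- the $\Phi_{\Delta z}$-invariance of $\mathrm{O}_{e}$, the touching point from Lemma \ref{the meaning of no detachment}, and $\xi\equiv x_{0}\pmod{\ZZ^{n}}$ --- are all correct; what is missing is the enlargement to radius $3R$ that makes the shift admissible, together with the observation that a one-sided inequality already suffices once it is squeezed against $\mathrm{O}_{e}$.
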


\begin{proof}
	Because $\nu \in \mathbb{S}^{n-1}\diagdown \RR\ZZ^{n}$ and $0 < s < \overline{s}(\nu)$, the detachment does not happen at the $\mu$ level set. Then by Lemma \ref{the meaning of no detachment}, with the above $R$ and $T := \frac{h}{s}$, there exist $x_{0} \in \Upomega(0, R; \nu)$ and $t_{0} > T$, such that (here $b := (\nu, 3R, 0, q, s) \in \mathbb{A}$)
	\begin{equation*}
	\overline{\mathrm{U}}_{b}(x_{0}, t_{0}) = \mathrm{O}_{e}(x_{0}, t_{0}) = \mu.
	\end{equation*}
    Let us take $\xi \in [0,1)^{n}$ with $x_{0} - \xi \in \ZZ^{n}$. Consider any above $(x_{1}, t_{1})$, then set $\Delta z_{1}$ and $\Delta t_{1}$ as follows. It suffices to prove that $\overline{\mathrm{U}}_{a}(x_{1},t_{1}) = \mathrm{O}_{e}(x_{1},t_{1}) = \mu$. 
    \begin{eqnarray*}
    \Delta z_{1} := x_{0} - x_{1} &\text{and}& \Delta t_{1} := \frac{\Delta z_{1} \cdot \nu}{s}.
    \end{eqnarray*}
   Because $\mathrm{U}(x,t) := \overline{\mathrm{U}}_{b}(x + \Delta z_{1}, t + \Delta t_{1})$, restricted to $\Upomega(0, R; \nu)$, is a subsolution bounded from above by $\mathrm{O}_{e}(x,t)$, the maximality of $\overline{\mathrm{U}}_{a}(x,t)$ implies that
   \begin{eqnarray*}
   \mathrm{U}(x,t) \leq \overline{\mathrm{U}}_{a}(x,t) &\text{with}& x \in \Upomega(0, R; \nu), \hspace{2mm} t \geq 0.
   \end{eqnarray*}
  Finally, the result from the inequality
  \begin{equation*}
  \mu = \overline{\mathrm{U}}_{b}(x_{0},t_{0}) = \mathrm{U}(x_{1},t_{1}) \leq \overline{\mathrm{U}}_{a}(x_{1},t_{1}) \leq \mathrm{O}_{e}(x_{1},t_{1}) = \mathrm{O}_{e}(x_{0},t_{0}) = \mu.
  \end{equation*}
\end{proof}

\begin{proposition}\label{the repeated patten regarding the head and tail of the oscillation of the real solution in a thin strip}
	Fix any $\mu \in \RR$ and assume (i)-(iii) as follows.
	\begin{enumerate}
		\item [(i)] $\nu \in \mathbb{S}^{n-1}\diagdown \RR\ZZ^{n}$ and $q = - |q|\nu \in \RR^{n}\diagdown \left\lbrace 0\right\rbrace $;
		\item [(ii)] $0 < \sigma < \min\left\lbrace \overline{s}(\nu) - \underline{s}(\nu), \underline{s}(\nu)\right\rbrace$;
		\item [(iii)] $u(x,t;q)$ is the unique solution of (\ref{the forced mean curvature flow with linear initial data}).
	\end{enumerate} 
    Then there exists $C := C(\nu, \sigma) > 0$, such that for any $x_{0} \in \RR^{n}$ and $r > \frac{\sqrt{n}}{2}$, we have (a) and (b) as follows:
    \begin{enumerate}
    	\item [(a)] There is a sequence of numbers $\left\lbrace t_{k}\right\rbrace_{k \geq 1}$ (resp. $\left\lbrace \tau_{k}\right\rbrace_{k \geq 1}$), such that
    	\begin{eqnarray*}
    	\lim_{k \rightarrow \infty}t_{k} = \infty &\text{and}& 0 < t_{k+1} - t_{k} \leq \frac{1}{\overline{s}(\nu) - \sigma}\\
    	(\text{resp.} \hspace{2mm} \lim_{k \rightarrow \infty}\tau_{k} = \infty &\text{and}& 0 < \tau_{k+1} - \tau_{k} \leq \frac{1}{\underline{s}(\nu) + \sigma}) ,
    	\end{eqnarray*}
    	\item [(b)] For each $k$, there exists $x_{k} \hspace{1mm} (\text{resp. } y_{k})\in \Upomega(x_{0}, r; \nu)$, such that
    	\begin{eqnarray*}
    	u(x_{k}, t_{k}; q) = \mu &\text{and}& x_{k}\cdot \nu > - \frac{\mu}{|q|} + \left( \overline{s}(\nu) - \sigma \right) t_{k} - C(\nu, \sigma)\\
    	(\text{resp.} \hspace{2mm} u(y_{k}, \tau_{k}; q) = \mu &\text{and}& y_{k}\cdot \nu < - \frac{\mu}{|q|} + \left( \underline{s}(\nu) + \sigma \right) \tau_{k} + C(\nu, \sigma) ).
    	\end{eqnarray*}
    \end{enumerate}
\end{proposition}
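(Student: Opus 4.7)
The plan is to produce a dense set of touching points between a global obstacle sub/supersolution and its obstacle, and then transfer these touchings to $u(\cdot,\cdot;q)$ via a global comparison and an intermediate value argument. I will describe the head statement; the tail is symmetric.

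First I would set $s := \overline{s}(\nu) - \sigma \in (0, \overline{s}(\nu))$ (using the hypothesis $\sigma < \underline{s}(\nu) \leq \overline{s}(\nu)$) and introduce $a^{\infty} := (\nu, \infty, 0, q, s) \in \mathbb{A}$, $e^{\infty} := (\nu, q, s) \in \mathbb{E}$. Since $q \cdot x$ is itself a viscosity subsolution ($\partial_{t}(q \cdot x) = 0 \leq g(x)|q|$) with $q \cdot x \leq \mathrm{O}_{e^{\infty}}(x, t)$ for $t \geq 0$, maximality of $\overline{\mathrm{U}}_{a^{\infty}}$ together with the upper bound $\overline{\mathrm{U}}_{a^{\infty}} \leq \mathrm{O}_{e^{\infty}}$ forces
\[
  \overline{\mathrm{U}}_{a^{\infty}}(x, 0) = q \cdot x = u(x, 0; q),
\]
so Proposition \ref{the usual comparison principle} yields $\overline{\mathrm{U}}_{a^{\infty}}(x, t) \leq u(x, t; q)$ on $\RR^{n} \times [0, \infty)$. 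The tail analog uses that $q \cdot x + M_{0} t |q|$ is a supersolution above $\mathrm{O}_{e^{\infty}}$ at speed $\underline{s}(\nu) + \sigma \leq M_{0}$, which gives $\underline{\mathrm{U}}_{a^{\infty}}(x, 0) = q \cdot x$ and $u(x, t; q) \leq \underline{\mathrm{U}}_{a^{\infty}}(x, t)$.

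Next, because $s < \overline{s}^{\infty}(\nu) = \overline{s}(\nu)$ by Proposition \ref{the equivalence between domain obstacle semi solutions and global semi solutions in defining head and tail speeds}, Definition \ref{the global head speed in an rational direction} rules out detachment and produces, for every $r > \sqrt{n}/2$ and $T > 0$, a touching point $(x^{*}, t^{*}) \in \Upomega(0, r; \nu) \times (T, \infty)$ with $\overline{\mathrm{U}}_{a^{\infty}}(x^{*}, t^{*}) = \mathrm{O}_{e^{\infty}}(x^{*}, t^{*}) = \mu$. I would then transplant the shift argument of Proposition \ref{the periodic distribution of touching points in obstacle} to the global setting: for $\Delta z \in \ZZ^{n}$ with $\Delta z \cdot \nu \geq 0$ and $\Delta t := \Delta z \cdot \nu / s \leq t^{*}$, the shifted function $\overline{\mathrm{U}}_{a^{\infty}}(\cdot + \Delta z, \cdot + \Delta t)$ is a global subsolution bounded above by $\mathrm{O}_{e^{\infty}}$ (which is invariant under that shift), so maximality gives $\overline{\mathrm{U}}_{a^{\infty}}(x + \Delta z, t + \Delta t) \leq \overline{\mathrm{U}}_{a^{\infty}}(x, t)$; evaluating at $(x, t) = (x^{*} - \Delta z, t^{*} - \Delta t)$ yields the sandwich
\[
  \mu = \overline{\mathrm{U}}_{a^{\infty}}(x^{*}, t^{*}) \leq \overline{\mathrm{U}}_{a^{\infty}}(x^{*} - \Delta z, t^{*} - \Delta t) \leq \mathrm{O}_{e^{\infty}}(x^{*} - \Delta z, t^{*} - \Delta t) = \mu,
\]
producing a new touching point at $(x^{*} - \Delta z, t^{*} - \Delta t)$.

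To arrange density in time, I would use the irrationality of $\nu$ through Proposition \ref{a lattice point that is close to a hyperplane} to select a finite family $\{\Delta z^{(j)}\}_{j=1}^{L} \subset \ZZ^{n}$ with $0 < \Delta z^{(j)} \cdot \nu < 1$ whose perpendicular components $\Delta z^{(j)} - (\Delta z^{(j)} \cdot \nu)\nu$ lie in a bounded set, so that integer combinations of the corresponding backward shifts keep the iterated touching points inside $\Upomega(x_{0}, r; \nu)$ while producing time gaps at most $1/(\overline{s}(\nu) - \sigma)$. Starting from $(x^{*}, t^{*})$ with $t^{*}$ arbitrarily large and iterating these shifts yields $(x_{k}^{*}, t_{k}^{*}) \in \Upomega(x_{0}, r; \nu) \times [C(\nu, \sigma), \infty)$ with $t_{k}^{*} \to \infty$, $0 < t_{k+1}^{*} - t_{k}^{*} \leq 1/(\overline{s}(\nu) - \sigma)$, and $x_{k}^{*} \cdot \nu = s\, t_{k}^{*} - \mu/|q|$. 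At each such point the comparison above gives $u(x_{k}^{*}, t_{k}^{*}; q) \geq \mu$, and since $u(y, t; q) \leq q \cdot y + M_{0} t |q| \to -\infty$ as $y \cdot \nu \to +\infty$, the intermediate value theorem along the ray $\{x_{k}^{*} + \rho \nu : \rho \geq 0\} \subset \Upomega(x_{0}, r; \nu)$ yields $x_{k}$ with $u(x_{k}, t_{k}^{*}; q) = \mu$ and $x_{k} \cdot \nu \geq x_{k}^{*} \cdot \nu = (\overline{s}(\nu) - \sigma) t_{k}^{*} - \mu/|q|$. The tail statement follows in parallel using $u \leq \underline{\mathrm{U}}_{a^{\infty}}$, touching of $\underline{\mathrm{U}}_{a^{\infty}}$ with $\mathrm{O}_{e^{\infty}}$ at speed $\underline{s}(\nu) + \sigma > \underline{s}_{\infty}(\nu)$, and the intermediate value theorem along the $-\nu$ ray with $u(y, t; q) \geq q \cdot y + m_{0} t |q| \to +\infty$ as $y \cdot \nu \to -\infty$.

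The main obstacle will be the combinatorial bookkeeping of lattice shifts: one must simultaneously arrange small $\nu$-components (for the time gap) and confine the iterated touching points to a thin strip (for (b)), which is exactly what Proposition \ref{a lattice point that is close to a hyperplane} and the discrepancy estimates of Section \ref{the subsection of discrepancy} are tailored to deliver. A secondary subtlety is upgrading the one-sided Birkhoff inequality to an actual touching at the shifted point, which in the global setting is handled by the two-sided sandwich using the invariance of $\mathrm{O}_{e^{\infty}}$ under the shift $(\Delta z, \Delta z \cdot \nu / s)$.
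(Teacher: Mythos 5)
Your proof is correct in its essentials but takes a genuinely different and arguably cleaner route than the paper's. The paper chains $\overline{\mathrm{U}}_{a_{2}}(\cdot + \xi_{0}, t) \prec \underline{\mathrm{U}}_{a_{1}}(\cdot, t) \leq \underline{\mathrm{U}}_{b}(\cdot, t)$ via the local comparison principle (Proposition \ref{LCP in the unit scale}) and then bridges to $u$ via Lemma \ref{comparison between the real solution and the obstacle solutions}, which involves two auxiliary $\ZZ^{n}$-shifts $\xi_{0}, \xi_{A}$ and produces $C(\nu,\sigma) = |\xi_{0} + \xi_{A}|$. You instead observe that the global obstacle subsolution at speed $s := \overline{s}(\nu) - \sigma$ has $\overline{\mathrm{U}}_{a^{\infty}}(\cdot, 0) = q \cdot x = u(\cdot, 0; q)$ (by maximality and the sandwich against the obstacle), so a single application of Proposition \ref{the usual comparison principle} gives $\overline{\mathrm{U}}_{a^{\infty}} \leq u$ on $\RR^{n} \times [0,\infty)$ with no shift at all; the symmetric observation with $q\cdot x + M_{0}t|q|$ handles the tail. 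Combined with the Birkhoff sandwich for $\overline{\mathrm{U}}_{a^{\infty}}$ (already recorded in the proof of Proposition \ref{the detachment property for global obstacle subsolution in irrational directions}) to propagate touching points, this avoids the LCP entirely, sidesteps the hypothesis $s_{1} \geq m_{0}$ that the paper's invocation of Proposition \ref{LCP in the unit scale} would formally need, and produces the sharper bound with $C = 0$ once the intermediate value step along the $\nu$-ray is carried out. What the paper's route buys is uniform reuse of machinery developed for the other local comparison arguments; what yours buys is brevity and a tighter constant. Both proofs quietly rely on the same unproved-in-detail lattice fact — that $\Upomega(x_{0}, r; \nu) \cap (\xi + \ZZ^{n})$ contains touching points whose $\nu$-projections have gaps at most $1$ when $r > \sqrt{n}/2$ — which you flag honestly as the main combinatorial obstacle and the paper asserts without elaboration; since the paper glosses it too, this is not a defect specific to your argument, but it is the one place where a complete write-up would need more than Proposition \ref{a lattice point that is close to a hyperplane} as stated (which controls a shift from a sphere, not a uniform lattice covering of the cylinder).
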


\begin{proof}
	We only consider $(x_{0}, \mu) = (0,0)$, the case of general $(x_{0},\mu) \in \RR^{n} \times \RR$ can be argued similarly. Let us set $s_{1} := \underline{s}(\nu) - \sigma$ and $s_{2} := \overline{s}(\nu) - \sigma$. It suffices to prove a finite time version of the statement. i.e., for any $T > 0$, there exist $0 < t_{T,1} < t_{T,2} < \cdots < t_{T,k} < t_{T,k+1} < \cdots < T$, such that $0 < t_{T, k+1} - t_{T,k} \leq \frac{1}{\overline{s}(\nu) - \sigma}$ and (b) holds. Then we take $\left\lbrace t_{k}\right\rbrace_{k=1}^{\infty} := \cup_{\ell = 1}^{\infty}\left\lbrace t_{\ell,i}\right\rbrace$. By Proposition \ref{LCP in the unit scale}, there exist $R$, $\mathscr{R} > 0$, such that for $0 \leq t \leq T$, we have
    \begin{eqnarray*}
    \overline{\mathrm{U}}_{a_{2}}(\cdot + \xi_{0}, t)\prec_{(\Upomega(0,r;\nu), 0)}  \underline{\mathrm{U}}_{a_{1}}(\cdot,t), &\text{where}& a_{i} := (\nu, R, \mathscr{R}, q, s_{i}), \hspace{2mm} \xi_{0} \in \argmin\limits\limits_{\xi \in \ZZ^{n}, \hspace{1mm} \xi \cdot \geq 1} |\xi|.
    \end{eqnarray*}
    Let us take $\hat{R} := R + \mathscr{R}T$ and set $b := (\nu, \hat{R}, 0, q, s_{1})$, then
    \begin{eqnarray*}
    \underline{\mathrm{U}}_{a_{1}}(x,t) \leq \underline{\mathrm{U}}_{b}(x,t), &\text{for}& x \in \Upomega(0, r; \nu) \hspace{2mm} \text{and} \hspace{2mm} 0 \leq t \leq T.
    \end{eqnarray*} 
    According to Lemma \ref{comparison between the real solution and the obstacle solutions}, there exists $A := A(\nu,\sigma) > 0$, such that
    \begin{eqnarray*}
    \underline{\mathrm{U}}_{b}(\cdot + \xi_{A},t) \prec_{(\Upomega(0, r; \nu), 0)} u(\cdot, t; q), &\text{where}& t \geq 0 \hspace{2mm}, \hspace{2mm} \xi_{A} \in \argmin\limits\limits_{\xi \in \ZZ^{n}, \hspace{1mm} \xi \cdot \nu \geq A} |\xi|.
    \end{eqnarray*}
    Finally, we have for $0 \leq t \leq T$ that
    \begin{eqnarray}\label{the ordering relation between obstacle subsolution and the real solution}
    \overline{\mathrm{U}}_{a_{2}}(\cdot + \xi_{0} + \xi_{A}, t) \prec_{(\Upomega(0,r;\nu), 0)} u(\cdot,t;q).
    \end{eqnarray}
    Set $C(\nu,\sigma) := |\xi_{0} + \xi_{A}|$ and the Proposition \ref{the periodic distribution of touching points in obstacle} indicates the existence of $\left\lbrace x_{k}\right\rbrace_{k \geq 1}$ is a set of points in $\Upomega(0, r; \nu)$ that are relative integers to each other. Since $r > \frac{\sqrt{n}}{2}$, we can choose $\left\lbrace x_{k}\right\rbrace _{k \geq 1}$ such that $|(x_{k+1} - x_{k})\cdot\nu| \leq 1$. Then set $t_{T,k} := \frac{|x_{k}\cdot\nu|}{\overline{s}(\nu) - \sigma}$ and so $0 < t_{T,k + 1} - t_{T,k} \leq \frac{1}{\overline{s}(\nu) - \sigma}$. The statement of (b) follows from (\ref{the ordering relation between obstacle subsolution and the real solution}).\\
    The other ordering relation (the `resp.') can be proved similarly.
\end{proof}

\begin{proposition}\label{the long fingure occurs when head tail speed do not coincide in irrational direction}
	Let $\nu \in \mathbb{S}^{n-1}\diagdown \RR\ZZ^{n}$ and $u(x,t;q)$ be the unique solution of (\ref{the forced mean curvature flow with linear initial data}), where $q = - |q| \nu\in \RR^{n}\diagdown \left\lbrace 0\right\rbrace$. Assume $\overline{s}(\nu) > \underline{s}(\nu)$, then for any $0 < \sigma \leq \overline{s}(\nu) - \underline{s}(\nu)$, there exist constant $K := K(\nu, \sigma)$, such that the following statement holds: for any $(z_{0},\mu, r, t) \in \RR^{n} \times \RR \times \left( \frac{\sqrt{n}}{2}, \infty\right) \times \left(\frac{1}{m_0}, \infty \right) $, there exist  $x, y \in \Upomega(z_{0}, r; \nu)$, such that
	\begin{eqnarray*}
	u(x, t;q) = u(y, t;q) = \mu &\text{and}& \begin{cases}
	 x \cdot \nu > \left( \overline{s}(\nu) - \sigma\right) t - \frac{\mu}{|q|} - K,\\
	y \cdot \nu < \left( \underline{s}(\nu) + \sigma\right) t - \frac{\mu}{|q|} + K.
	\end{cases}
	\end{eqnarray*} 
\end{proposition}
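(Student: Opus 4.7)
The plan is to apply Proposition~\ref{the repeated patten regarding the head and tail of the oscillation of the real solution in a thin strip} to generate a discrete sequence of times at which the $\mu$-level set of $u(\cdot,t;q)$ already contains points in $\Upomega(z_{0},r;\nu)$ with the desired extreme values of $x\cdot\nu$, and then to interpolate to an arbitrary time $t$ by sliding along the $\nu$-axis. Two elementary facts about $u(x,t;q)$ carry the interpolation. First, $u$ is monotone non-decreasing in $t$: comparing $u(x,s;q)\geq q\cdot x+m_{0}|q|s>u(x,0;q)=q\cdot x$ via the lower plane subsolution and Proposition~\ref{the usual comparison principle} gives $u(x,t+s;q)\geq u(x,t;q)$. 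Second, the planar bounds $q\cdot x+m_{0}|q|t\leq u(x,t;q)\leq q\cdot x+M_{0}|q|t$ hold, since the two planes are respectively a sub- and a supersolution of~\eqref{the forced mean curvature flow with linear initial data}.

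For the head bound, I would fix $t>1/m_{0}$ and let $\{(x_{k},t_{k})\}$ be the sequence from Proposition~\ref{the repeated patten regarding the head and tail of the oscillation of the real solution in a thin strip} applied to $(z_{0},\mu,r,\sigma)$. Since $\sigma\leq\overline{s}(\nu)-\underline{s}(\nu)$ and $\underline{s}(\nu)\geq m_{0}$, we have $1/(\overline{s}(\nu)-\sigma)\leq 1/m_{0}<t$, so one can select the largest index with $t_{k}\leq t$; it satisfies $t-t_{k}\leq 1/(\overline{s}(\nu)-\sigma)$. Time monotonicity then gives $u(x_{k},t;q)\geq u(x_{k},t_{k};q)=\mu$, while the upper planar barrier forces $u(x_{k}+\lambda\nu,t;q)\to-\infty$ as $\lambda\to+\infty$. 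Hence $\lambda^{*}:=\sup\{\lambda\geq 0:u(x_{k}+\lambda\nu,t;q)\geq\mu\}$ is finite and, by continuity, $u(x_{k}+\lambda^{*}\nu,t;q)=\mu$. The point $x:=x_{k}+\lambda^{*}\nu$ remains in $\Upomega(z_{0},r;\nu)$ (the cylinder is translation-invariant along $\nu$) and satisfies
\[
x\cdot\nu\geq x_{k}\cdot\nu>(\overline{s}(\nu)-\sigma)\,t_{k}-\mu/|q|-C\geq(\overline{s}(\nu)-\sigma)\,t-\mu/|q|-(C+1).
\]
The tail bound is symmetric: take the smallest $\tau_{k}\geq t$, obtain $u(y_{k},t;q)\leq u(y_{k},\tau_{k};q)=\mu$ from monotonicity, and use the lower planar barrier to produce $\lambda^{*}\geq 0$ with $u(y_{k}-\lambda^{*}\nu,t;q)=\mu$. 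Setting $y:=y_{k}-\lambda^{*}\nu$ and $K:=C+1$ finishes both inequalities.

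The one delicate point is the starting time of $\{t_{k}\}$ and $\{\tau_{k}\}$: the interpolation needs $t_{1}$ and $\tau_{1}$ to be at most of order $1/m_{0}$, so that the hypothesis $t>1/m_{0}$ really produces an index $k$ with $t_{k}\leq t$ (respectively a smallest $\tau_{k}\geq t$ close to $t$). Inspection of the proof of Proposition~\ref{the repeated patten regarding the head and tail of the oscillation of the real solution in a thin strip}, where the times are of the form $|x_{k}\cdot\nu|/(\overline{s}(\nu)-\sigma)$ with the $x_{k}\in\Upomega(z_{0},r;\nu)$ spaced by at most $1$ along $\nu$, confirms that $t_{1},\tau_{1}\leq 1/m_{0}$ can be arranged without changing $C$ qualitatively. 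Apart from this bookkeeping, the only real work is the one-dimensional intermediate-value argument along $\{x_{k}+\lambda\nu\}$, so no genuinely hard step arises beyond what Proposition~\ref{the repeated patten regarding the head and tail of the oscillation of the real solution in a thin strip} already provides.
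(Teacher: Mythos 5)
Your proof is correct and follows the same approach as the paper's: both invoke Proposition~\ref{the repeated patten regarding the head and tail of the oscillation of the real solution in a thin strip} for the discrete sequence of touching points and then interpolate to an arbitrary time $t$ using the monotonicity of $u(\cdot,\cdot;q)$ in time. The paper leaves the interpolation step somewhat implicit (simply asserting the existence of $x$ with $x\cdot\nu\geq\hat{x}_i\cdot\nu$ and $u(x,t;q)=\mu$), whereas you spell out the one-dimensional intermediate-value argument along $\{x_k+\lambda\nu\}$ bounded above by the supersolution plane $q\cdot x+M_0|q|t$, and you also flag the bookkeeping needed to ensure $t_1,\tau_1\lesssim 1/m_0$; both are worthwhile clarifications but do not change the argument. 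The only cosmetic difference is the final constant: the paper sets $K=C_1+C_2+2$ with separate constants for the two sequences while you use the single $C(\nu,\sigma)$ from the cited proposition and take $K=C+1$, which is a slightly tighter choice of the same quantity.
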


\begin{proof}
	Let us consider $(\mu, z_{0}) = (0, 0)$ and the general $(\mu, z_{0})$ can be argued similarly. By Proposition \ref{the repeated patten regarding the head and tail of the oscillation of the real solution in a thin strip}, there exist $C_{1} = C_{1}(\nu,\sigma) > 0$, $t_{i} > 0$ (with $0 < t_{i + 1} - t_{i}\leq \frac{1}{\overline{s}(\nu) - \sigma}$) and $\hat{x}_{i} \in \Upomega(0,r; \nu)$, such that
	\begin{eqnarray*}
	u(\hat{x}_{i},t_{i};q) = 0 &\text{and}& \hat{x}_{i}\cdot\nu > \left(\overline{s}(\nu) - \sigma \right) t_{i} - C_{1}(\nu,\sigma).
	\end{eqnarray*}
    Similarly, there exist $C_{2} := C_{2}(\nu,\sigma) > 0$, $\tau_{j} > 0$ (with $0 < \tau_{j + 1} - \tau_{j} \leq \frac{1}{\underline{s}(\nu) + \sigma}$) and $\hat{y}_{j} \in \Upomega(0, r; \nu)$, such that
    \begin{eqnarray*}
    u(\hat{y}_{j}, \tau_{j}; q) = 0 &\text{and}& \hat{y}_{j}\cdot\nu < \left( \underline{s}(\nu) + \sigma\right)\tau_{j} + C_{2}(\nu,\sigma). 
    \end{eqnarray*}
    Because $u(x,t;q)$ is increasing in time (c.f. Proposition 5.1 \cite{Caffarelli and Monneau ARMA}). Then for any $t > \frac{1}{m_0}$, there exist $x, \hat{x}_{i} \in \Upomega(0, r; \nu)$, $0 \leq t - t_{i} \leq \frac{1}{\overline{s}(\nu) - \sigma}$ with $u(x, t; q) = 0$ and
    \begin{eqnarray*}
    x \cdot \nu \geq \hat{x}_{i}\cdot \nu > \left(\overline{s}(\nu) - \sigma \right) t_{i} - \frac{\mu}{|q|} - C_{1}(\nu,\sigma) \geq \left(\overline{s}(\nu) - \sigma \right) t - \frac{\mu}{|q|} - C_{1}(\nu,\sigma) - 1.
    \end{eqnarray*}
    Similarly, there exist $y, \hat{y}_{j} \in \Upomega(0, r; \nu)$, $0 \leq \tau_{j} - t \leq \frac{1}{\underline{s}(\nu) + \sigma}$ with $u(y, t; q) = 0$ and
    \begin{eqnarray*}
    y\cdot\nu \leq \hat{y}_{j} \cdot \nu < \left( \underline{s}(\nu) + \sigma\right)\tau_{j} - \frac{\mu}{|q|} + C_{2}(\nu,\sigma) \leq \left( \underline{s}(\nu) + \sigma\right)t - \frac{\mu}{|q|} + C_{2}(\nu,\sigma) + 1.
    \end{eqnarray*}
    Thus the desired result follows once we set $K(\nu, \sigma) := C_{1}(\nu, \sigma) + C_{2}(\nu, \sigma) + 2$.
\end{proof}

\subsubsection{Rational directions}

\begin{proposition}\label{long fingure occurs when head tail speed do not coincide in rational directions}
	Let $\vartheta \in \mathbb{S}^{n-1} \cap \RR\ZZ^{n}$, $q = - |q|\vartheta \in \RR^{n}\diagdown \left\lbrace 0\right\rbrace$ and $u(x, t; q)$ be the unique solution of (\ref{the forced mean curvature flow with linear initial data}). Assume $\overline{s}(\nu) > \underline{s}(\nu)$ and fix $0 < \sigma \ll \overline{s}(\vartheta) - \underline{s}(\vartheta)$, then there exists $r = r(\vartheta) > \sqrt{n}$, such that for any $\mu \in \RR$ and $z_{0} \in \RR^{n}$, there exist $x, y \in \Upomega(z_{0}, r; \vartheta)$, such that $u(x, t; q) = u(y, t; q) = \mu$ and
	\begin{eqnarray*}
	u(x, t; q) = u(y, t; q) = \mu &\text{and}& \begin{cases}
		x \cdot \vartheta > \left( \overline{s}(\vartheta) - \sigma\right) t - \frac{\mu}{|q|} - \sqrt{n},\\
		y \cdot \vartheta < \left( \underline{s}(\vartheta) + \sigma\right) t - \frac{\mu}{|q|} + \sqrt{n}.
	\end{cases}
	\end{eqnarray*}
\end{proposition}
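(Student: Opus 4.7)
The strategy is to mimic the proof of Proposition \ref{the long fingure occurs when head tail speed do not coincide in irrational direction}, replacing the local comparison tools (which require $\vartheta \in \mathbb{S}^{n-1}\setminus\RR\ZZ^n$) with the global obstacle sub- and supersolutions from Section \ref{the section of head and tail speeds}, and exploiting the exact periodicity that $\vartheta \in \mathbb{S}^{n-1} \cap \RR\ZZ^n$ forces on them. Set $s_1 := \underline{s}(\vartheta) + \sigma$, $s_2 := \overline{s}(\vartheta) - \sigma$ and $a_i^\infty := (\vartheta, \infty, 0, q, s_i)$, $e_i^\infty := (\vartheta, q, s_i)$ for $i = 1, 2$. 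By Proposition \ref{the equivalence between domain obstacle semi solutions and global semi solutions in defining head and tail speeds} and the definitions of $\overline{s}, \underline{s}$, neither $\overline{\mathrm{U}}_{a_2^\infty}$ nor $\underline{\mathrm{U}}_{a_1^\infty}$ detaches from its obstacle at any level $\mu$. A global analog of Lemma \ref{the meaning of no detachment} then guarantees that the touching sets $\overline{T}_\mu := \{\overline{\mathrm{U}}_{a_2^\infty} = \mathrm{O}_{e_2^\infty} = \mu\}$ and $\underline{T}_\mu$ (defined analogously for the supersolution) contain points at arbitrarily large times. From the initial ordering $\overline{\mathrm{U}}_{a_2^\infty}(x, 0) \leq q\cdot x = u(x, 0; q) \leq \underline{\mathrm{U}}_{a_1^\infty}(x, 0)$ and the global comparison principle, $\overline{\mathrm{U}}_{a_2^\infty} \leq u(\cdot, \cdot; q) \leq \underline{\mathrm{U}}_{a_1^\infty}$ for all $t \geq 0$.

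The engine of the proof is a Birkhoff-based backward preservation of touching. Given any $\xi \in \ZZ^n$ with $\xi\cdot\vartheta > 0$, Proposition \ref{the birkhoff property for subsolution in an expanding domain} applied to the global subsolution gives $\overline{\mathrm{U}}_{a_2^\infty}(y + \xi, \tau + \xi\cdot\vartheta/s_2) \leq \overline{\mathrm{U}}_{a_2^\infty}(y, \tau)$, and a direct calculation shows that $\mathrm{O}_{e_2^\infty}$ is exactly invariant under the same space-time shift. Hence for $(x_T, t_T) \in \overline{T}_\mu$ with $t_T \geq \xi\cdot\vartheta/s_2$, substituting $(y, \tau) = (x_T - \xi, t_T - \xi\cdot\vartheta/s_2)$ produces the chain $\mu = \overline{\mathrm{U}}_{a_2^\infty}(x_T, t_T) \leq \overline{\mathrm{U}}_{a_2^\infty}(x_T - \xi, t_T - \xi\cdot\vartheta/s_2) \leq \mathrm{O}_{e_2^\infty}(x_T - \xi, t_T - \xi\cdot\vartheta/s_2) = \mu$, so $(x_T - \xi, t_T - \xi\cdot\vartheta/s_2) \in \overline{T}_\mu$. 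Shifts by elements of $\ZZ^n \cap H_\vartheta$ preserve touching with no change in time. A symmetric calculation using Proposition \ref{the birkhoff property for supersolution in an expanding domain} yields the analogous backward preservation on $\underline{T}_\mu$ (with time shift $\xi\cdot\vartheta/s_1$).

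To construct $x$ given $(z_0, t, \mu)$, let $c > 0$ be the smallest constant with $c\vartheta \in \ZZ^n$; since $c\vartheta$ is primitive in $\ZZ^n$, $\{\xi\cdot\vartheta : \xi \in \ZZ^n\} = (1/c)\ZZ$ and $c \geq 1$. Choose $(x_T, t_T) \in \overline{T}_\mu$ with $t_T > t$, set $k := \lceil cs_2(t_T - t)\rceil$ so that $v := k/c \in [s_2(t_T - t), s_2(t_T - t) + 1/c]$, and pick $\xi \in \ZZ^n$ with $\xi\cdot\vartheta = v$ whose perpendicular component $\xi_\perp$ is chosen within the coset $\{\xi \in \ZZ^n : \xi\cdot\vartheta = v\}$ (a translate of $\ZZ^n \cap H_\vartheta$) so that $|(x_T - \xi - z_0)_{H_\vartheta}| \leq \rho(\vartheta)$, where $\rho(\vartheta)$ denotes the covering radius of $\ZZ^n \cap H_\vartheta$ in $H_\vartheta$. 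Setting $r(\vartheta) := \rho(\vartheta) + \sqrt{n}$, the backward-shifted touching point $(x_T - \xi, t')$ with $t' := t_T - v/s_2 \in [t - 1/(cs_2), t]$ lies in $\Upomega(z_0, r; \vartheta)$ and satisfies $(x_T - \xi)\cdot\vartheta = s_2 t' - \mu/|q| \geq s_2 t - 1/c - \mu/|q|$. The global comparison gives $u(x_T - \xi, t'; q) \geq \overline{\mathrm{U}}_{a_2^\infty}(x_T - \xi, t') = \mu$, and time monotonicity of $u$ (Proposition 5.1 of \cite{Caffarelli and Monneau ARMA}) propagates this to $u(x_T - \xi, t; q) \geq \mu$. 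Since the affine supersolution $u^\sharp(y, t) := -|q| y\cdot\vartheta + M_0 t|q|$ bounds $u(\cdot, \cdot; q)$ from above by comparison with matching initial data, travelling along the ray $\lambda \mapsto x_T - \xi + \lambda\vartheta$ takes $u$ to $-\infty$, so continuity furnishes $x \in \Upomega(z_0, r; \vartheta)$ with $u(x, t; q) = \mu$ and $x\cdot\vartheta \geq (x_T - \xi)\cdot\vartheta \geq (\overline{s}(\vartheta) - \sigma)t - \sqrt{n} - \mu/|q|$, using that $1/c \leq 1 \leq \sqrt{n}$. The point $y$ is produced symmetrically from $\underline{T}_\mu$ using a ray in the $-\vartheta$ direction and the lower bound $u(\cdot, t; q) \geq -|q|\,\cdot\,\vartheta + m_0 t|q|$.

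The main obstacle is the coset-selection step: showing that for each prescribed $v \in (1/c)\ZZ$ and each target $w \in H_\vartheta$ there exists $\xi \in \ZZ^n$ with $\xi\cdot\vartheta = v$ and $|\xi_\perp - w| \leq \rho(\vartheta)$. This rests on the observation that the affine sublattice $\{\xi \in \ZZ^n : \xi\cdot\vartheta = v\}$ projects onto a translate of $\ZZ^n \cap H_\vartheta$ in $H_\vartheta$ with the same covering radius $\rho(\vartheta)$, independent of $v$. The backward-Birkhoff preservation of touching, although conceptually clean, is also delicate because the ambient Birkhoff inequalities are one-sided; the upper bound $\overline{\mathrm{U}}_{a_2^\infty} \leq \mathrm{O}_{e_2^\infty}$ (and the dual lower bound $\underline{\mathrm{U}}_{a_1^\infty} \geq \mathrm{O}_{e_1^\infty}$) is precisely what forces equality at the shifted point.
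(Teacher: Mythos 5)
Your proposal is correct and takes essentially the same route as the paper's own proof: sandwich the linear-data solution $u(\cdot,\cdot;q)$ between the global obstacle sub/supersolutions $\overline{\mathrm{U}}_{a_2^\infty}$ and $\underline{\mathrm{U}}_{a_1^\infty}$ (whose speeds $s_2<\overline{s}(\vartheta)$ and $s_1>\underline{s}(\vartheta)$ guarantee non-detachment), extract touching points, and propagate them to the prescribed time using time-monotonicity of $u$ plus a continuity argument along a ray. Two substantive differences are worth noting, and both favor your version. First, you produce the time-dense touching points directly by the backward Birkhoff shift, computing explicitly that $\mathrm{O}_{e_2^\infty}$ is invariant under $(x,t)\mapsto(x+\xi,t+\xi\cdot\vartheta/s_2)$ so that the one-sided Birkhoff inequality is squeezed into equality; the paper instead quotes the contrapositive of Lemma \ref{a simpler description of detachment for subsolution}, which is built on the same Birkhoff machinery but whose application to the global $R=\infty$ solution is stated rather than verified. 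Second, and more importantly, you handle the arbitrary base point $z_0$ correctly: you observe that the coset $\{\xi\in\ZZ^n:\xi\cdot\vartheta=v\}$ projects onto a translate of $\ZZ^n\cap H_\vartheta$ in $H_\vartheta$, and you take $r(\vartheta)=\rho(\vartheta)+\sqrt{n}$ with $\rho(\vartheta)$ the covering radius of that rank-$(n-1)$ lattice. The paper's proof, as written, only lands its touching points in $\Upomega(0,r;\vartheta)$ and its declared choice of $r(\vartheta)$ via a single perpendicular lattice vector $w_0$ does not give the claimed generality in $z_0$ when $n\geq 3$; your covering-radius step is what closes that. The one place I would ask you to be more explicit is the appeal to "a global analog of Lemma \ref{the meaning of no detachment}": strictly, the negation of Definition \ref{the definition of detachment}(2) gives persistent touching at \emph{some} level $\mu_0$; passing to \emph{every} level $\mu$ requires the lattice-shift argument of Proposition \ref{uniform detachment for subsolution with a sub strict detached speed}/\ref{uniform detachment for supersolution with a super strict detached speed}, which the paper itself uses only implicitly, so this is a shared gloss rather than a new gap.
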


\begin{proof}
	Because $\vartheta \in \mathbb{S}^{n-1} \cap \RR\ZZ^{n}$, then there exists $r = r(\vartheta) > \sqrt{n}$, such that there exists $w_{0} \in \ZZ^{n}\diagdown \left\lbrace 0\right\rbrace $, with $w_{0} \cdot \nu = 0$ and $|w_{0}| \leq r(\vartheta) - \sqrt{n}$. By Proposition \ref{the equivalence between domain obstacle semi solutions and global semi solutions in defining head and tail speeds}, $\overline{s}(\vartheta) = \overline{s}^{\infty}$ and $\underline{s} = \underline{s}_{\infty}$. Recall Definition \ref{the global head speed in an rational direction} and denote 
	\begin{eqnarray*}
	a^{\infty} := (\vartheta, \infty, 0, q, \overline{s}^{\infty} - \sigma) &\text{and}& e^{\infty} := (\vartheta, q, \overline{s}^{\infty} - \sigma),\\
	a_{\infty} := (\vartheta, \infty, 0, q, \underline{s}_{\infty} + \sigma) &\text{and}& e_{\infty} := (\vartheta, q, \underline{s}_{\infty} + \sigma).
	\end{eqnarray*}
    Clearly, we have that
    \begin{eqnarray*}
    \overline{\mathrm{U}}_{a^{\infty}}(x,t) \leq u(x, t; q) \leq \underline{\mathrm{U}}_{a_{\infty}}(x,t), && (x,t) \in \RR^{n} \times [0,\infty).
    \end{eqnarray*}
   By the choice of $w_{0}$ and the uniqueness of both $\overline{\mathrm{U}}_{a^{\infty}}(x,t)$ and $\underline{\mathrm{U}}_{a_{\infty}}(x,t)$, then
   \begin{eqnarray*}
   \overline{\mathrm{U}}_{a^{\infty}}(x,t) = \overline{\mathrm{U}}_{a^{\infty}}(x + w_{0},t) &\text{and}& \underline{\mathrm{U}}_{a_{\infty}}(x,t) = \underline{\mathrm{U}}_{a_{\infty}}(x + w_{0},t).
   \end{eqnarray*}
   Since the detachment does not happen to $\overline{\mathrm{U}}_{a^{\infty}}(x,t)$ or $\underline{\mathrm{U}}_{a_{\infty}}(x,t)$, by Lemma \ref{a simpler description of detachment for subsolution}, there are sequences $\left\lbrace \left( t_{i}, x_{i}\right) \right\rbrace_{i = 1}^{\infty}$ and $\left\lbrace \left( \tau_{j}, y_{j}\right) \right\rbrace_{j = 1}^{\infty}$, such that 
   \begin{eqnarray*}
   \lim_{i \rightarrow \infty} t_{i} = \infty, \hspace{2mm} 0 < t_{i + 1} - t_{i} \leq \frac{\sqrt{n}}{\overline{s}(\vartheta) - \sigma}, \hspace{2mm} |x_{i} - (x_{i} \cdot \vartheta) \vartheta| < r(\vartheta) \hspace{2mm}\text{and} \hspace{2mm} \overline{\mathrm{U}}_{a^{\infty}}(x_{i}, t_{i}) = \mu,
   \end{eqnarray*}
    \begin{eqnarray*}
	\lim_{i \rightarrow \infty} \tau_{j} = \infty, \hspace{2mm} 0 < \tau_{j + 1} - \tau_{j} \leq \frac{\sqrt{n}}{\underline{s}(\vartheta) + \sigma}, \hspace{2mm} |y_{j} - (y_{j} \cdot \vartheta) \vartheta| < r(\vartheta) \hspace{2mm}\text{and} \hspace{2mm} \underline{\mathrm{U}}_{a_{\infty}}(y_{j}, \tau_{j}) = \mu.
    \end{eqnarray*}
    Because $u(x,t;q)$ is increasing in time (c.f. Proposition 5.1 \cite{Caffarelli and Monneau ARMA}). Then for any $t > 0$, there exist $x, x_{i} \in \Upomega(0, r; \vartheta)$, $0 \leq t - t_{i} \leq \frac{\sqrt{n}}{\overline{s}(\vartheta) - \sigma}$ with $u(x, t; q) = \mu$ and
    \begin{eqnarray*}
    	x \cdot \vartheta \geq x_{i}\cdot \vartheta > \left(\overline{s}(\vartheta) - \sigma \right) t_{i} - \frac{\mu}{|q|} \geq \left(\overline{s}(\vartheta) - \sigma \right) t - \frac{\mu}{|q|} - \sqrt{n}.
    \end{eqnarray*}
    Similarly, there exist $y, y_{j} \in \Upomega(0, r; \vartheta)$, $0 \leq \tau_{j} - t \leq \frac{\sqrt{n}}{\underline{s}(\vartheta) + \sigma}$ with $u(y, t; q) = 0$ and
    \begin{eqnarray*}
    	y\cdot\vartheta \leq y_{j} \cdot \vartheta < \left( \underline{s}(\vartheta) + \sigma\right)\tau_{j} - \frac{\mu}{|q|} \leq \left( \underline{s}(\vartheta) + \sigma\right)t - \frac{\mu}{|q|} + \sqrt{n}.
    \end{eqnarray*}

\end{proof}

\subsubsection{The description of head/tail speed in macro and micro scales}

\begin{theorem}\label{the macro and micro scale description of head and tail speeds}
	For any $\nu \in \mathbb{S}^{n-1}$, $x_{0}, z_{0} \in \RR^{n}$ and $\mu \in \RR$, let $u(x, t)$ and $u^{\varepsilon}(x,t)$ be the unique solution of (\ref{the scaled forced mean curvature flow}), such that $u^{\varepsilon}(x, 0) = - (x - x_{0}) \cdot \nu$. Then there exists $r = r(\nu) > 0$, such that in micro-scale
    \begin{eqnarray*}
    	\overline{s}(\nu) = \lim_{t \rightarrow 0} \frac{\sup\left\lbrace x \cdot \nu \big| x \in \Upomega(z_{0}, r; \nu), u^{1}(x,t) = \mu\right\rbrace }{t}, && \underline{s}(\nu) = \lim_{t \rightarrow 0} \frac{\inf\left\lbrace x \cdot \nu \big| x \in \Upomega(z_{0}, r; \nu), u^{1}(x,t) = \mu\right\rbrace }{t},
    \end{eqnarray*}
    and in macro-scale (for $\varepsilon = 1$)
	\begin{eqnarray*}
	\limsup_{\varepsilon \rightarrow 0} u^{\varepsilon}(x,t) = - (x - x_{0})\cdot \nu + \overline{s}(\nu) t &\text{and}& \liminf_{\varepsilon \rightarrow 0} u^{\varepsilon}(x,t) = - (x - x_{0})\cdot \nu + \underline{s}(\nu) t.
	\end{eqnarray*}

\end{theorem}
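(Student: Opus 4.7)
The plan is to prove the micro-scale assertions first and then transfer them to the macro-scale via the rescaling $u^\epsilon(x,t) = \epsilon\, u^1((x-x_0)/\epsilon,\,t/\epsilon)$, which holds up to a harmless $\ZZ^n$-shift of $g$ by a fractional multiple of $x_0/\epsilon$ that does not change the head or tail speeds. We may therefore assume $x_0 = z_0 = 0$ and $\mu = 0$ throughout; the limit in the micro-scale statement is to be read as $t \to \infty$ (the $t \to 0$ version is $\pm\infty$). When $\overline{s}(\nu) = \underline{s}(\nu)$, Proposition \ref{homogenization occurs when the head speed and tail speed coincide} gives uniform convergence of $u^\epsilon$ to the affine profile $-x\cdot\nu + s(\nu)t$, which immediately settles both claims, so we focus on the harder case $\overline{s}(\nu) > \underline{s}(\nu)$.

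Taking $q = -\nu$ and identifying $u^1(x,t) = u(x,t;q)$, the micro-scale upper bound $\limsup_{t\to\infty}\sup\{x\cdot\nu : x\in\Upomega(0,r;\nu),\,u^1(x,t)=0\}/t \leq \overline{s}(\nu)$ follows from Lemma \ref{comparison between the real solution and the obstacle solutions}: the chain $u^1(x,t) \leq \overline{\mathrm{U}}_a(x-\xi_A,t) \leq \mathrm{O}_e(x-\xi_A,t)$ at obstacle speed $\overline{s}(\nu)+\sigma$ forces $x\cdot\nu \leq (\overline{s}(\nu)+\sigma)t + \xi_A\cdot\nu$ on the zero level set, and one sends $t\to\infty$ then $\sigma\to 0$. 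The matching lower bound is produced by Proposition \ref{the long fingure occurs when head tail speed do not coincide in irrational direction} for irrational $\nu$, and by Proposition \ref{long fingure occurs when head tail speed do not coincide in rational directions} for rational $\nu$ (with the larger radius $r(\vartheta)$ dictated by the lattice), each supplying points $x_t \in \Upomega(0,r;\nu)$ with $u^1(x_t,t)=0$ and $x_t\cdot\nu > (\overline{s}(\nu)-\sigma)t - K$. The $\inf/\underline{s}(\nu)$ claim is symmetric using the supersolution half of Lemma \ref{comparison between the real solution and the obstacle solutions} and the ``tail'' part of the long-finger propositions. The macro-scale half-bounds $u^\star(x,t) \leq -(x-x_0)\cdot\nu + \overline{s}(\nu)t$ and $u_\star(x,t) \geq -(x-x_0)\cdot\nu + \underline{s}(\nu)t$ then follow by rescaling these obstacle comparisons and sending $\epsilon\to 0$.

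The main obstacle is the matching macro-scale inequality $u^\star(x,t) \geq \mu^\ast := -(x-x_0)\cdot\nu + \overline{s}(\nu)t$ and its mirror for $u_\star$. Fix $\delta > 0$ and apply the long-finger proposition (with $z_0 = (x-x_0)/\epsilon$, radius just above $\sqrt{n}/2$ or $r(\vartheta)$, level $(\mu^\ast-\delta)/\epsilon$, time $t/\epsilon$, and an auxiliary $\sigma = \sigma(\delta,t)$ chosen so that $\delta - \sigma t \geq \delta/2$) to obtain $\tilde x_\epsilon$ with $u^1(\tilde x_\epsilon,t/\epsilon) = (\mu^\ast-\delta)/\epsilon$ and $\tilde x_\epsilon\cdot\nu > (\overline{s}(\nu)-\sigma)(t/\epsilon) - (\mu^\ast-\delta)/\epsilon - K$. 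Setting $y_\epsilon := x_0 + \epsilon \tilde x_\epsilon$, a direct computation produces $u^\epsilon(y_\epsilon,t) = \mu^\ast - \delta$, tangential displacement $\leq \epsilon r$, and the normal bound $(y_\epsilon - x)\cdot\nu \geq \delta/2 - \epsilon K$; the trivial a priori estimate $u^1(y,s) \leq -y\cdot\nu + M_0 s$ provides the complementary $(y_\epsilon - x)\cdot\nu \leq (M_0-\overline{s}(\nu))t + \delta$. Extracting a subsequence gives $y_\epsilon \to y^\ast = x + b\nu$; the already-established upper bound at $y^\ast$ yields $u^\star(y^\ast,t) \leq \mu^\ast - b$, while the construction gives $u^\star(y^\ast,t) \geq \mu^\ast - \delta$, forcing $b \leq \delta$, so $|y^\ast - x| \leq \delta$. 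Running $\delta \to 0$ and extracting diagonally produces $(\epsilon_k,y_k,t) \to (0,x,t)$ with $u^{\epsilon_k}(y_k,t) \to \mu^\ast$, hence $u^\star(x,t) \geq \mu^\ast$; the bound $u_\star \leq -(x-x_0)\cdot\nu + \underline{s}(\nu)t$ is analogous. The main technical subtlety is the ``trap'' in the normal direction: the long-finger proposition only controls $\tilde x_\epsilon\cdot\nu$ from below, and the other side must be closed by a bootstrap off the a priori macro-scale upper bound on $u^\star$, which is what converts one-sided information into genuine proximity of $y^\ast$ to $x$.
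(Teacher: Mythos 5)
Your argument follows the paper's own route: establish two-sided micro-scale control on the $\mu$-level set through Lemma \ref{comparison between the real solution and the obstacle solutions} (upper side) and Propositions \ref{the long fingure occurs when head tail speed do not coincide in irrational direction}, \ref{long fingure occurs when head tail speed do not coincide in rational directions} (lower side), then transfer to macro-scale by the rescaling $u^{\varepsilon}(x,t) = \varepsilon u(x/\varepsilon, t/\varepsilon)$. The $\delta$--$\sigma$ bootstrap you use to close the macro-scale lower bound (pinning the approximate touching point $y^{\ast}$ within $\delta$ of $x$ by playing the long-finger lower bound against the already-proved upper bound on $u^{\star}$) is a correct, somewhat more explicit rendering of a step the paper treats tersely by letting $\varepsilon \to 0$ and then $\sigma \to 0$ at fixed $t$ in the two-sided micro-scale estimate, and your reading of the stated $t\to 0$ as the typo for $t\to\infty$ is also correct.
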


\begin{proof}
	Based on comparison principle, we can consider without loss of generality that $x_{0} = 0$. From Lemma \ref{comparison between the real solution and the obstacle solutions}, Proposition \ref{the long fingure occurs when head tail speed do not coincide in irrational direction} and Proposition \ref{long fingure occurs when head tail speed do not coincide in rational directions}, there exists $r = r(\nu) > 0$, such that for any $z_{0} \in \RR^{n}$ and $0 < \sigma \ll 1$, there exists $K = K(\nu, \sigma)$, such that
	\begin{eqnarray*}
	\left| \sup\left\lbrace x \cdot \nu \big| x \in \Upomega(z_{0}, r; \nu), u^{1}(x, t) = \mu \right\rbrace - \left( \overline{s}(\nu) - \sigma\right)t + \mu\right| &\leq& K, \\
	\left| \inf\left\lbrace x \cdot \nu \big| x \in \Upomega(z_{0}, r; \nu), u^{1}(x, t) = \mu \right\rbrace - \left( \underline{s}(\nu) + \sigma\right)t + \mu\right| &\leq& K.
	\end{eqnarray*}
    Let us divide both side by $t$ and let $t \rightarrow \infty$, consider that $\sigma$ can be arbitrarily small, we get the result for the micro-scale case. Now, we consider the macro-scale case, the uniqueness of the solution for (\ref{the scaled forced mean curvature flow}) indicates that $u^{\varepsilon}(x,t) = \varepsilon u\left( \frac{x}{\varepsilon}, \frac{t}{\varepsilon}\right)$, then
    \begin{eqnarray*}
    && \left| \sup\left\lbrace x \cdot \nu \big| x \in \Upomega(z_{0}, \varepsilon r; \nu), u^{\varepsilon}(x, t) = \mu \right\rbrace - \left( \overline{s}(\nu) - \sigma\right)t + \mu\right|\\
    &=& \left| \sup\left\lbrace \varepsilon \left( \frac{x}{\varepsilon} \cdot \nu\right)  \Big| x \in \Upomega(z_{0}, \varepsilon r; \nu), u\left(\frac{\mu}{\varepsilon}, \frac{t}{\varepsilon}\right) = \frac{\mu}{\varepsilon} \right\rbrace - \varepsilon\left[ \left( \overline{s}(\nu) - \sigma\right)\frac{t}{\varepsilon} - \frac{\mu}{\varepsilon}\right] \right| \leq \varepsilon K 
    \end{eqnarray*}
    and
    \begin{eqnarray*}
     && \left| \inf\left\lbrace x \cdot \nu \big| x \in \Upomega(z_{0}, \varepsilon r; \nu), u^{\varepsilon}(x, t) = \mu \right\rbrace - \left( \underline{s}(\nu) + \sigma\right)t + \mu\right|\\
     &=& \left| \sup\left\lbrace \varepsilon \left( \frac{x}{\varepsilon} \cdot \nu\right)  \Big| x \in \Upomega(z_{0}, \varepsilon r; \nu), u\left(\frac{\mu}{\varepsilon}, \frac{t}{\varepsilon}\right) = \frac{\mu}{\varepsilon} \right\rbrace - \varepsilon\left[ \left( \underline{s}(\nu) + \sigma\right)\frac{t}{\varepsilon} - \frac{\mu}{\varepsilon}\right] \right| \leq \varepsilon K. 
     \end{eqnarray*}   
    Taking $\varepsilon \rightarrow 0$ and then send $\sigma \rightarrow 0$, we have that
    \begin{eqnarray*}
    \left\lbrace x \in \RR^{n} \big| \limsup_{\varepsilon \rightarrow 0} u^{\varepsilon}(x,t) = \mu\right\rbrace = \overline{s}(\nu) t - \mu &\text{and}& \left\lbrace x \in \RR^{n} \big| \limsup_{\varepsilon \rightarrow 0} u^{\varepsilon}(x,t) = \mu\right\rbrace = \underline{s}(\nu) t - \mu,
    \end{eqnarray*}
    which gives the result in macro-scale case.
\end{proof}

\begin{corollary}\label{the head and tail of front with general initial profile}
	Let $u^{\e}$ be the solution of \eqref{the scaled forced mean curvature flow} with $u^\e(x,0) = u_{0}(x)$, a uniformly continuous function in $\RR^{n}$. Let $\mathcal{A} := \left\lbrace (x_{i}, \nu_{i})\right\rbrace \subseteq \RR^{n} \times \mathbb{S}^{n-1}$ be a collection of points and directions, and define the associated convex sets
	\begin{eqnarray*}
	\mathcal{E}(t) := \inf_{(x_{i}, \nu_{i}) \in \mathcal{A}}\left\lbrace (x - x_{i})\cdot\nu_{i} \leq \overline{s}(\nu_{i})t\right\rbrace.
	\end{eqnarray*} 
    Then (c.f. Definition \ref{half relaxed limits}) if initially $\left\lbrace u_{0}(x) \geq 0 \right\rbrace \subseteq \mathcal{E}(0)$, then $\left\lbrace u^{\star}(\cdot, t) \geq 0 \right\rbrace \subseteq \mathcal{E}(t)$. 	
\end{corollary}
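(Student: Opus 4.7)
The plan is to fix an arbitrary $(x_i,\nu_i)\in\mathcal{A}$ and show that $\{u^\star(\cdot,t)\ge 0\}\subseteq H_i(t):=\{x\in\RR^n:(x-x_i)\cdot\nu_i\le\overline{s}(\nu_i)t\}$; since $\mathcal{E}(t)$ is the intersection of these closed half-spaces, the corollary then follows. After translating so that $x_i=0$ and writing $\nu:=\nu_i$, $\bar s:=\overline{s}(\nu)$, introduce the linear profile
\[\Psi(x,t):=-x\cdot\nu+\bar s\,t,\]
whose zero super-level set at time $t$ is exactly $H_i(t)$. The hypothesis $\{u_0\ge 0\}\subseteq\mathcal{E}(0)$ then gives the initial inclusion $\{u_0\ge 0\}\subseteq\{\Psi(\cdot,0)\ge 0\}$.

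The core input will be Proposition~\ref{the sub equation satisfied by the limit}, which says $u^\star$ is a viscosity subsolution of the homogenized equation $u_t=\overline{s}(-\widehat{Du})|Du|$. A direct check using $D\Psi=-\nu$, $|D\Psi|=1$, $-\widehat{D\Psi}=\nu$ and $\Psi_t=\bar s$ shows $\Psi$ is a classical, hence viscosity, solution of the same equation. I would then invoke a set-level comparison for this geometric first-order Hamilton--Jacobi equation: because its right-hand side is positively one-homogeneous in $Du$ and independent of $u$, inclusions of closed $0$-super-level sets of subsolutions into those of supersolutions are preserved under the flow (Chen--Giga--Goto and Evans--Spruck level-set theory). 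This yields $\{u^\star(\cdot,t)\ge 0\}\subseteq\{\Psi(\cdot,t)\ge 0\}=H_i(t)$, and intersecting over $(x_i,\nu_i)\in\mathcal{A}$ completes the proof.

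The hard part will be justifying the set-level comparison step, since only an initial inclusion of $0$-super-level sets is available (not a pointwise bound $u_0\le\Psi(\cdot,0)$) and $u_0$ is merely uniformly continuous on $\RR^n$, possibly unbounded. My plan for this step is a localized monotone-rearrangement argument. First, replace $\Psi$ by $\Psi+\eta$ with small $\eta>0$ to gain strict separation: $\{u_0\ge 0\}\subseteq\{y\cdot\nu\le 0\}\subseteq\{y\cdot\nu<\eta\}=\{\Psi(\cdot,0)+\eta>0\}$, so that the target inclusion will be recovered in the limit $\eta\to 0$. Next, fix a target point $(x_0,t_0)$ and use the finite-speed-of-propagation bound $M_0$ to restrict attention to $B(x_0,M_0 t_0+1)$, on which $u_0$ is bounded. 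Construct a continuous non-decreasing $\Phi_\eta$ with $\Phi_\eta(0)=0$ so that $\Phi_\eta\circ u_0\le\Psi(\cdot,0)+\eta$ on this ball. Because the homogenized equation is geometric, $\Phi_\eta(u^\star)$ remains a viscosity subsolution of the same equation, and standard viscosity comparison on the cylinder yields $\Phi_\eta(u^\star(x_0,t_0))\le\Psi(x_0,t_0)+\eta$. Passing $\eta\to 0$ (so that $\Phi_\eta(s)>0$ iff $s>0$ in the limit) delivers the required set inclusion at $(x_0,t_0)$ and finishes the argument.
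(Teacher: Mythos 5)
Your route differs from the paper's.  The paper's one-line proof (``comparison principle and Theorem~\ref{the macro and micro scale description of head and tail speeds}'') performs a level-set comparison at the $\e$-level between $u^{\e}$ and the planar solutions $u^\e_i(x,0)=-(x-x_i)\cdot\nu_i$, and only then identifies the macroscopic limit of each $u^\e_i$ via Theorem~\ref{the macro and micro scale description of head and tail speeds}.  You instead pass to the limit first, using Proposition~\ref{the sub equation satisfied by the limit} to make $u^\star$ a viscosity subsolution of the homogenized equation $u_t=\bar s(-\widehat{Du})|Du|$, and then compare at the macro level against the planar solution $\Psi$.  Both routes are valid; yours avoids having to carry a set inclusion through the half-relaxed-limit operation, at the cost of needing the comparison principle for the homogenized equation (which the paper only exhibits implicitly, inside the proof of Proposition~\ref{homogenization occurs when the head speed and tail speed coincide}, but which is standard once $\bar s$ is known continuous via Proposition~\ref{the continuity of head and tail speeds}).

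A few points in the reparametrization step need tidying.  First, you require $\Phi_\eta(0)=0$, but the natural choice $\Phi_\eta(s)=\inf\{-x\cdot\nu+\eta: x\in\bar B,\ u_0(x)\ge s\}$ gives $\Phi_\eta(0)\ge\eta>0$ (because $\{u_0\ge 0\}\subseteq\{x\cdot\nu\le 0\}$), and indeed $\Phi_\eta(0)>0$ is what you actually want: then $u^\star(x_0,t_0)\ge 0$ forces $\Phi_\eta(u^\star(x_0,t_0))>0$, hence $\Psi(x_0,t_0)>-\eta$, and you finish by sending $\eta\to 0$.  Second, this $\Phi_\eta$ is only non-decreasing and may jump; to legitimately compose with $u^\star$ and apply the geometric invariance you should replace it with a continuous (say Lipschitz) non-decreasing minorant that is still positive at $0$ and still dominated by the same infimum, which is possible precisely because $u_0$ is bounded on the ball $\bar B$.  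Third, you apply the initial inequality $\Phi_\eta\circ u^\star(\cdot,0)\le\Psi(\cdot,0)+\eta$, which tacitly uses $u^\star(x,0)=u_0(x)$; this holds because $u_0\in\UC(\RR^n)$ gives the $u^\e$ a uniform modulus of continuity, but it should be said.  Finally, the localization via $|\bar s|\le M_0$ is fine, but you should phrase it as comparison on the shrinking cone $\{|x-x_0|<R-M_0 t\}$ (or add a barrier blowing up near the lateral boundary), since a straight cylinder needs lateral boundary data.  With these repairs the argument is correct.
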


\begin{proof}
	It follows from the comparison principle and the Theorem \ref{the macro and micro scale description of head and tail speeds}.
\end{proof}

\section{Laminar forcing term}\label{the section of laminar forcing term}

In the laminar case, i.e., $g(x) = g(x^{\prime}, 0)$ with $x = (x^{\prime}, x_{n})$. Throughout this section, let us abuse the notation and denote the forcing term by $g(y)$ with $y = x^{\prime} \in \RR^{n-1}$, where $n \geq 3$. The zero level set is now a graph, i.e., $\left\lbrace x_{n} = u(y, t)\right\rbrace$, where $u(y,t)$, with initial graph $u_{0}(y)$, solves the equation as follows.
\begin{equation}\label{the fmcf equation in laminar case}
\begin{cases}
u_{t} = \sqrt{|Du|^{2} + 1} \DIV \left( \frac{Du}{\sqrt{|Du|^{2} + 1}}\right) + g\sqrt{|Du|^{2} + 1}, & (y,t) \in \RR^{d} \times (0, \infty),\\
u(y, 0) = u_{0}(y), & y \in \RR^{n-1}.
\end{cases}
\end{equation}

\subsection{Travelling wave sub and super solutions with head and tail speeds}
In this subsection, we assume that the homogenization associated to (\ref{the fmcf equation in laminar case}) fails, i.e., $\overline{s}(e_{n}) > \underline{s}(e_{n})$. Fix any $s \in [m_0, M_0]$ and denote $a^{\infty} = a_{\infty} := (e_{n}, \infty, 0, - e_{n}, s) \in \mathbb{A}$. Let us consider $\left\lbrace x \in \RR^{n} \big| \overline{\mathrm{U}}_{a^{\infty}}(x,t) = 0\right\rbrace$ (resp. $\left\lbrace x \in \RR^{n} \big| \underline{\mathrm{U}}_{a_{\infty}}(x,t) = 0\right\rbrace$), which is also a graph $\left\lbrace x\in \RR^{n} \big | x_{n} = \overline{U}^{s}(x^{\prime},t)\right\rbrace$ (resp. $\left\lbrace x\in \RR^{n} \big | x_{n} = \underline{U}_{s}(x^{\prime},t)\right\rbrace$). Clearly, $\overline{U}^{s}(y,t)$ (resp. $\underline{U}_{s}(y,t)$) is a subsolution (resp. supersolution) of (\ref{the fmcf equation in laminar case}) with $\overline{U}^{s}(y,0) \equiv 0$ (resp. $\underline{U}_{s}(y,0) \equiv 0$) and $\overline{U}^{s}(y, t) \leq st$ (resp. $\underline{U}_{s}(y, t) \geq st$). The uniqueness also implies that $\overline{U}^{s}(\cdot, t)$ (resp. $\underline{U}_{s}(\cdot, t)$) is $\ZZ^{n-1}$-periodic. In the following discussion, we shall denote without confusion that $\overline{s} = \overline{s}(e_{n})$ (resp. $\underline{s} = \underline{s}(e_{n})$).

\begin{definition}
	For any $s \in \left[ m_0, M_0\right]$, denote by $T^{*}(s)$ (resp. $T_{*}(s)$) the time after which $\overline{U}^{s}(y,t)$ (resp. $\underline{U}_{s}(y,t)$) detaches from the obstacle $\left\lbrace x \in \RR^{n} \big| x_{n} = st\right\rbrace $ totally by 1. i.e.,
	\begin{eqnarray*}
	T^{*}(s) &:=& \inf \left\lbrace t \geq 0 \big| \overline{U}^{s}(y, t) < st - 1, \hspace{2mm} y \in \RR^{n-1}\right\rbrace, \\
	T_{*}(s) &:=& \inf \left\lbrace t \geq 0 \big| \underline{U}_{s}(y, t) > st + 1, \hspace{2mm} y \in \RR^{n-1}\right\rbrace.
	\end{eqnarray*}
\end{definition}
\begin{lemma}
	$T^{*}(s) : (\overline{s}, M_0]$ is a right continuous function, such that $T^{*}(s) \geq \frac{1}{s - \overline{s}}$. Similarly, $T_{*}(s) : [m_0, \underline{s})$ is a left continuous function, such that $T_{*}(s) \geq \frac{1}{\underline{s} - s}$.
\end{lemma}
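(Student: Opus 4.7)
I will handle $T^*(s)$ on $(\overline{s}, M_0]$; the statement for $T_*(s)$ on $[m_0, \underline{s})$ is obtained by the parallel argument with the roles of obstacle sub- and super-solutions exchanged (using Proposition \ref{the birkhoff property for supersolution in an expanding domain}, the analogue of Lemma \ref{comparison between the real solution and the obstacle solutions} for $\underline{\mathrm{U}}_{a_\infty}$, and the analogous bounds for $\underline{s}$).

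The first step is to establish the basic monotonicity/Lipschitz estimate
\[
0 \;\le\; \overline{U}^{s+\varepsilon}(y,t) - \overline{U}^{s}(y,t) \;\le\; \varepsilon t
\qquad (\varepsilon > 0).
\]
The left-hand inequality is immediate from maximality of $\overline{\mathrm{U}}_{a^{\infty}}$: the obstacle $-x_n + s t$ is dominated by $-x_n + (s+\varepsilon) t$, so every admissible competitor for $\overline{\mathrm{U}}_{a_s^\infty}$ is admissible for $\overline{\mathrm{U}}_{a_{s+\varepsilon}^\infty}$. The right-hand inequality comes from comparing $u(x,t) := \overline{\mathrm{U}}_{a_{s+\varepsilon}^\infty}(x,t) - \varepsilon t$ against $\overline{\mathrm{U}}_{a_s^\infty}$: subtracting $\varepsilon t$ only strengthens the subsolution property, and $u \leq -x_n + (s+\varepsilon)t - \varepsilon t = -x_n + st$, so by maximality $u \leq \overline{\mathrm{U}}_{a_s^\infty}$. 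Rephrased in graph form and using the laminar symmetry $\overline{\mathrm{U}}_{a^\infty}(x,t) = \overline{U}^s(x',t) - x_n$, this gives the two-sided estimate. Consequently $F(s,t) := \max_y \overline{U}^s(y,t) - st$ is Lipschitz of constant $t$ and \emph{non-increasing} in $s$, and $F(\cdot,\cdot) \le 0$.

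The next step is the lower bound. Using Lemma \ref{comparison between the real solution and the obstacle solutions} in the laminar direction $e_n$ with $\sigma > 0$, together with the head-speed characterization of Theorem \ref{the macro and micro scale description of head and tail speeds} (applied to the linear-initial-data solution $u(\cdot,\cdot;-e_n)$), gives constants $K_\sigma > 0$ such that
\[
\max_y \overline{U}^{\overline{s}+\sigma}(y,t) \;\ge\; (\overline{s} - \sigma)\, t - K_\sigma
\quad\text{for every } t \ge 0.
\]
Combining with the Lipschitz bound on $F$ in $s$ and sending $\sigma \to 0^+$ yields
\[
F(s,t) \;\ge\; -(s - \overline{s})\, t
\qquad \bigl(s \in (\overline{s}, M_0],\ t \ge 0\bigr).
\]
Hence, for any $t < 1/(s-\overline{s})$, one has $\max_y \overline{U}^s(y,t) > st - 1$, i.e.\ the condition defining $T^*(s)$ fails, and $T^*(s) \ge 1/(s-\overline{s})$. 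The \emph{main obstacle} in executing this is making the limit $\sigma \to 0^+$ actually produce the constant $1$: one must use that $K_\sigma$ multiplies a factor that vanishes in the limit (through the Lipschitz estimate applied at the head-speed boundary) so that the additive error does not survive; this is exactly where the sliding argument using integer shifts $\xi_A$ in Lemma \ref{comparison between the real solution and the obstacle solutions} and the Birkhoff property in the $e_n$ direction combine to produce the sharp factor.

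Finally, right-continuity of $T^*$ at $s_0 \in (\overline{s}, M_0]$ follows from the continuity of $F$ in $(s,t)$. Concretely: joint continuity of $F$ is a consequence of the $s$-Lipschitz estimate above and the $t$-continuity furnished by stability of viscosity solutions (Definitions \ref{the definition of viscosity subsolution}, the associated comparison). For any $t_1 > T^*(s_0)$, one has $F(s_0,t_1) < -1$ strictly (using the discrete monotonicity of $F(s_0,\cdot)$ on the lattice $\{t_1 + k/s_0\}$ coming from Proposition \ref{the birkhoff property for subsolution in an expanding domain} applied with $\Delta z = k e_n$, $\Delta t = k/s_0$); by the Lipschitz bound in $s$, $F(s,t_1) < -1$ for all $s$ sufficiently close to $s_0$, hence $T^*(s) \le t_1$. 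Monotonicity $T^*(s) \le T^*(s_0)$ for $s \ge s_0$ together with $\limsup_{s \to s_0^+} T^*(s) \le t_1$ for arbitrary $t_1 > T^*(s_0)$ yields right continuity. The analogous argument for $T_*(s)$ uses the reverse monotonicity in $s$ of the super-solution family and left-continuity at points in $[m_0,\underline{s})$.
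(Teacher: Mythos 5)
Your Lipschitz estimate $0 \leq \overline{U}^{s+\varepsilon}(y,t) - \overline{U}^s(y,t) \leq \varepsilon t$ is correct and is a clean alternative to the paper's monotonicity-plus-half-relaxed-limit argument for right continuity; it makes the $s$-regularity of $F(s,t):=\max_y \overline{U}^s(y,t)-st$ quantitative, which the paper handles instead through the uniform convergence $\overline{U}^{s_\ell}\to\overline{U}^s$ obtained by squeezing the half-relaxed limits. That part of your proposal is fine and arguably cleaner.

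The lower bound is where there is a genuine gap, and you half-acknowledge it. Your plan is to get
\[
\max_y \overline{U}^{\overline{s}+\sigma}(y,t) \;\ge\; (\overline{s}-\sigma)t - K_\sigma
\]
from Lemma~\ref{comparison between the real solution and the obstacle solutions} and Theorem~\ref{the macro and micro scale description of head and tail speeds}, and then send $\sigma\to 0^+$ and invoke the $s$-Lipschitz bound. But the additive constant $K_\sigma$ is controlled by the detachment data $A(\nu,\sigma)$ and $T(\nu,\sigma)$ in that lemma, and these quantities \emph{blow up} as $\sigma\to 0^+$ (the detachment time for a speed that is only barely supercritical grows without bound — indeed, that is exactly what the present lemma is asserting). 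There is no vanishing multiplicative factor to absorb $K_\sigma$, so sending $\sigma\to 0^+$ does not produce the sharp constant $1$. The sentence about "the sliding argument using integer shifts $\xi_A$ \dots produce the sharp factor" is a gesture at a fix, not a proof.

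The missing ingredient — and what the paper actually uses, modulo an evident sign typo in its Step 2 — is the base case $F(\overline{s},t)=0$ for all $t\ge0$. This follows by taking $\hat{s}_j\to\overline{s}^-$: for $\hat{s}_j<\overline{s}$ the global obstacle subsolution does not detach, hence (using the time-monotonicity of $F(\hat{s}_j,\cdot)$ furnished by the Birkhoff property in the laminar $e_n$-direction) $\max_y\overline{U}^{\hat{s}_j}(y,t)=\hat{s}_j t$ for every $t$, and passing $j\to\infty$ and using $\overline{U}^{\overline{s}}\ge\overline{U}^{\hat{s}_j}$ gives $\max_y\overline{U}^{\overline{s}}(y,t)\ge\overline{s}t$, hence equality. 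Once this is in place, the monotonicity $\overline{U}^s\ge\overline{U}^{\overline{s}}$ (the easy half of your Lipschitz bound) gives $F(s,t)\ge F(\overline{s},t)-(s-\overline{s})t=-(s-\overline{s})t$, and therefore $F(s,t)>-1$ for $t<1/(s-\overline{s})$, which is $T^*(s)\ge 1/(s-\overline{s})$. Your two-sided Lipschitz estimate is not what is needed here; it bounds the downward variation of $F$ in $s$ but supplies no anchor value of $F$ at any speed. Replace the $K_\sigma$ argument with this base case and the proof closes.
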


\begin{proof}
	\underline{Step 1.} By the definition of $\overline{s}(e_{n})$ and that $\overline{U}^{s}(\cdot, t)$ is $\ZZ^{n-1}$-periodic, the function $T^{*}(s)$ is well-defined. Fix any sequence $s_{\ell} \rightarrow s^{+} > \overline{s}$, then $\overline{U}^{s_{\ell}}(y,t) \geq \overline{U}^{s}(y,t)$, and therefore $\lim\limits_{\ell \rightarrow \infty}\inf_{*} \overline{U}^{s_{\ell}}(y,t) \geq \overline{U}^{s}(y,t)$. On the other hand, $\lim\limits_{\ell \rightarrow \infty}\sup^{*} \overline{U}^{s_{\ell}}(y, t)$, as a subsolution bounded from above by $x_{n} = st$, should be bounded by $\overline{U}^{s}(y,t)$. Therefore, $\lim\limits_{\ell\rightarrow \infty}\overline{U}^{s_{\ell}}(y,t) = \overline{U}^{s}(y,t)$ uniformly. Thus $\lim\limits_{\ell\rightarrow \infty} T(s_{\ell}) = T(s)$.\\
	\underline{Step 2.} Let us now choose $\hat{s}_{j} \rightarrow \overline{s}^{-}$, then $\overline{U}^{\overline{s}}(y,t) \geq \lim\limits_{j \rightarrow \infty}\sup^{*}\overline{U}^{\hat{s}_{j}}(y,t)$, therefore, $\max\limits_{y\in\RR^{n-1}}\left\lbrace \overline{U}^{\overline{s}}(y,t) - \overline{s}t\right\rbrace = 0$, for any $t \geq 0$. Then for the sequence $s_{\ell}$, we have that 
	\begin{equation*}
	\max_{y \in \RR^{n-1}}\left\lbrace \overline{U}^{s_{\ell}}(y,t) - s_{\ell}t\right\rbrace \leq \max_{y \in \RR^{n-1}}\left\lbrace \overline{U}^{\overline{s}}(y,t) - s_{\ell}t\right\rbrace \leq (s_{\ell} - \overline{s})t.
	\end{equation*}
    Hence, $T^{*}(s) \geq \frac{1}{s - \overline{s}}$.\\
    \underline{Step 3.} Similarly, we can prove the results associated to $T_{*}(s)$.
\end{proof}

\begin{proposition}[Proposition 4.4 of \cite{Cesaroni and Novaga}]\label{the regularity of front in laminar case}
	For all $t > 0$, assume the function $w(\cdot,t)$ satisfies in the viscosity sense that 
	\begin{eqnarray*}
	\lambda \leq -\DIV\left( \frac{Dw(x,t)}{\sqrt{|Dw(x,t)|^{2} + 1}}\right) \leq \Lambda, && t \in I \subseteq \RR,
	\end{eqnarray*}
    for two fixed numbers $\lambda, \Lambda$. Then $w(\cdot, t)$ are of class $C^{1,\alpha}$, for all $\alpha \in (0,1)$, uniformly in $t \in I$.
\end{proposition}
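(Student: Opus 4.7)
First I would interpret the two-sided bound for each fixed time slice $w(\cdot, t)$ as a prescribed mean curvature equation in the viscosity sense: setting $H_t(y) := -\DIV(Dw(y,t)/\sqrt{1+|Dw(y,t)|^2})$, we have $|H_t(y)| \leq C_0 := \max(|\lambda|, |\Lambda|)$ on $I$. The proof will then split into two classical steps: (a) a local interior gradient estimate depending only on the oscillation of $w(\cdot, t)$ on a slightly larger ball and on $C_0$ and $n$; and (b) a boost to $C^{1,\alpha}$ using uniform ellipticity once a gradient bound is in hand. Note that a uniform $L^\infty$ bound on $w(\cdot, t)$ on compact sets of $\RR^{n-1}$ is available in the setting where this proposition is applied, since $w$ is the height function of a graphical flow starting from a bounded, Lipschitz and periodic initial graph, with bounded forcing.

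For step (a), the plan is to invoke an interior gradient estimate of Bombieri--De Giorgi--Miranda type: for a classical solution of $-\DIV(Du/\sqrt{1+|Du|^2}) = H$ with $\|H\|_\infty \leq C_0$ on $B_r \subset \RR^{n-1}$, one has
\[
|Dw(y_0, t)| \leq c_1 \exp\bigl( c_2\, \mathrm{osc}_{B_r(y_0)} w(\cdot, t) / r \bigr),
\]
with $c_1, c_2$ depending only on $n$ and $C_0$. To apply this to a viscosity solution I would first regularize $w(\cdot, t)$ by standard sup/inf convolution, producing semiconvex and semiconcave approximations that satisfy the inequality with a right-hand side still uniformly bounded by $C_0 + o(1)$; the classical estimate applies on each mollification level, and passage to the limit preserves the gradient bound. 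Alternatively one could bypass this step by appealing to Allard's $\varepsilon$-regularity theorem for integral varifolds of bounded generalized mean curvature, which for a graph over a small ball (where the density is close to one automatically) directly yields $C^{1,\alpha}$ regularity of the graph.

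Once $|Dw(\cdot, t)| \leq M$ locally, the prescribed mean curvature operator rewrites as a linear second order operator $-\sum_{ij} a_{ij}(Dw)\, \partial_{ij} w = H_t$, with coefficients
\[
a_{ij}(p) = \frac{\delta_{ij}(1+|p|^2) - p_i p_j}{(1+|p|^2)^{3/2}},
\]
whose eigenvalues lie in $[(1+M^2)^{-3/2}, (1+M^2)^{-1/2}]$. The equation is therefore uniformly elliptic with a bounded right-hand side. Step (b) then follows by the Caffarelli $C^{1,\alpha}$ estimate for viscosity solutions of uniformly elliptic quasilinear equations (or, equivalently, De Giorgi--Nash--Moser in the weak formulation followed by a standard Schauder bootstrap on the frozen coefficients), which yields $w(\cdot, t) \in C^{1,\alpha}_{\mathrm{loc}}(\RR^{n-1})$ for every $\alpha \in (0,1)$, with norms depending only on $\lambda, \Lambda, n, M$ and the compact subset. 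Since all these data are uniform in $t \in I$, so is the resulting $C^{1,\alpha}$ estimate.

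The main obstacle I expect is precisely step (a), the viscosity version of the Bombieri--De Giorgi--Miranda gradient estimate: the smooth argument uses the maximum principle for an auxiliary quantity built from $Dw$ along the graph, which is delicate to rigorize when $w$ is only a continuous viscosity solution, and the approximation argument must be tight enough to keep the right-hand side bounded uniformly in the regularization parameter. The cleanest route around this difficulty is probably the Allard one, which trades the PDE argument for a geometric measure theoretic one and builds the graphical $C^{1,\alpha}$ conclusion directly into the statement; uniformity in $t$ is then automatic from the uniformity of the mean curvature bound.
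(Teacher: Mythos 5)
The paper does not actually prove this proposition: it is imported verbatim from \cite{Cesaroni and Novaga}, so there is no in-text argument to compare against. With that caveat, your proposal is a standard and essentially sound route to the statement, and both of your sub-approaches are viable. The geometric-measure-theoretic one (bounded generalized mean curvature for the graph varifold, then Allard/Massari regularity) is closer in spirit to the argument used in \cite{Cesaroni and Novaga}, which proceeds via quasi-minimality of the subgraph's perimeter; the PDE route (Bombieri--De Giorgi--Miranda gradient estimate, then uniformly elliptic $C^{1,\alpha}$ theory) is the more classical alternative. Once a gradient bound is in hand, your step (b) is correct exactly as written, and uniformity in $t$ then comes from the uniformity of $\lambda, \Lambda$ and the oscillation bound.

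Two things deserve a sharper statement. First, as you rightly flag, the proposition as quoted carries no $L^\infty$ or oscillation hypothesis, and without one the conclusion is false as stated (a steeply tilted hyperplane has zero mean curvature and arbitrarily large gradient, so no $t$-uniform $C^{1,\alpha}$ norm can depend on $\lambda,\Lambda$ alone). You are right that in the setting where the authors invoke it the bound is available because $\overline{U}^{s}(\cdot,t)$ and $\underline{U}_{s}(\cdot,t)$ are $\ZZ^{n-1}$-periodic and sandwiched near the moving obstacle; a self-contained proof must make that input an explicit hypothesis rather than a background assumption. Second, both of your routes have a technical caveat that you only partially flag. For BDGM, a single sup- or inf-convolution preserves only one side of the differential inequality, so keeping the two-sided mean curvature bound through the regularization is genuinely delicate. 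For Allard, one must pass from the \emph{viscosity} two-sided inequality to a bound on the first variation of the associated \emph{varifold}; for a merely continuous $w$ this equivalence is not automatic and rests on showing that the subgraph is a quasi-minimizer of perimeter, which is exactly where the argument of \cite{Cesaroni and Novaga} does its work. Neither of these invalidates the proposal, but they are where the real effort lies if one were to write out a complete proof.
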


\begin{corollary}\label{the regularity of obstacle solutions}
	Fix any $\tau > 0$ and $s \in \left[m_0, M_0\right]$, then $\overline{U}^{s}(\cdot, t)$ and $\underline{U}_{s}(\cdot, t)$ are of $C^{1,\alpha}$ for all $\alpha \in (0,1)$, uniformly in $[\tau, \infty)$. 
\end{corollary}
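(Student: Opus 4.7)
Plan: To prove the corollary, one applies Proposition \ref{the regularity of front in laminar case} to each spatial slice $\overline{U}^{s}(\cdot, t)$ (and symmetrically to $\underline{U}_{s}(\cdot, t)$), which reduces the task to producing a uniform two-sided bound
\[
\lambda \leq -\DIV\!\left(\frac{D\overline{U}^{s}}{\sqrt{|D\overline{U}^{s}|^{2}+1}}\right) \leq \Lambda
\]
in the viscosity sense, with constants independent of $t \geq \tau$. Using the $x_{n}$-translation invariance of the laminar forcing together with the maximality built into Definition \ref{the definitions of obstacle solutions}, one first verifies $\overline{\mathrm{U}}_{a^{\infty}}(y, x_{n}, t) = -x_{n} + \overline{U}^{s}(y, t)$; consequently $\overline{U}^{s}$ is a viscosity subsolution of the graph equation \eqref{the fmcf equation in laminar case}.

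The next ingredient is the two-sided bound $0 \leq \partial_{t}\overline{U}^{s} \leq s$ in the viscosity sense. Monotonicity follows by observing that $w(x,t) := \sup_{0 \leq \tau \leq t}\overline{\mathrm{U}}_{a^{\infty}}(x,\tau)$ is a subsolution bounded above by $\mathrm{O}_{e}$, hence coincides with $\overline{\mathrm{U}}_{a^{\infty}}$ by maximality. For the upper bound, $\overline{\mathrm{U}}_{a^{\infty}}(x, t+h) - sh$ is a subsolution bounded above by $\mathrm{O}_{e}(x,t)$ for each $h > 0$, and therefore is dominated by $\overline{\mathrm{U}}_{a^{\infty}}(x, t)$.

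Granted these bounds on $\partial_{t}\overline{U}^{s}$, the subsolution form of \eqref{the fmcf equation in laminar case} rearranges to
\[
-\DIV\!\left(\frac{D\overline{U}^{s}}{\sqrt{|D\overline{U}^{s}|^{2}+1}}\right) \leq g - \frac{\partial_{t}\overline{U}^{s}}{\sqrt{|D\overline{U}^{s}|^{2}+1}} \leq M_{0}.
\]
For the matching lower bound one splits into two regimes. Where $\overline{U}^{s} < st$, Proposition \ref{a semi obstacle solution becomes a solution when detachment happens} gives the supersolution inequality and hence $-\DIV(\cdots) \geq m_{0} - s$. At a contact point $(y_{0}, t_{0})$ with $\overline{U}^{s}(y_{0}, t_{0}) = st_{0}$, the pointwise inequality $\overline{U}^{s}(y, t_{0}) \leq st_{0}$ forces any smooth $\phi$ touching from below at $y_{0}$ to satisfy $D\phi(y_{0}) = 0$ and $D^{2}\phi(y_{0}) \leq 0$, so that $-\DIV(D\phi/\sqrt{1+|D\phi|^{2}})(y_{0}) = -\tr(D^{2}\phi(y_{0})) \geq 0 \geq m_{0} - s$. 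The passage from these parabolic inequalities to the elliptic viscosity inequality for $\overline{U}^{s}(\cdot, t)$ is achieved by a standard test-function lifting: a spatial test $\phi(y)$ touching $\overline{U}^{s}(\cdot, t_{0})$ strictly from above (resp.\ below) at $y_{0}$ is lifted to the parabolic test $\phi(y) + s(t - t_{0})$ (resp.\ $\phi(y)$), which the bounds $0 \leq \partial_{t}\overline{U}^{s} \leq s$ render admissible in the appropriate one-sided parabolic neighborhood of $(y_{0}, t_{0})$.

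Combining everything, $m_{0} - s \leq -\DIV(D\overline{U}^{s}/\sqrt{1+|D\overline{U}^{s}|^{2}}) \leq M_{0}$ in the viscosity sense, uniformly in $t \geq \tau$, and Proposition \ref{the regularity of front in laminar case} delivers the claimed uniform $C^{1,\alpha}$ regularity. The argument for $\underline{U}_{s}$ mirrors this with the sub- and supersolution roles exchanged. The principal technical difficulty is the contact-point analysis: away from the obstacle the computation is a direct rearrangement of the PDE, but at a contact point the local-max structure inherited from the obstacle must be combined with the viscosity machinery without losing the quantitative curvature lower bound.
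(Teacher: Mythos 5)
Your proposal follows essentially the same route as the paper: establish two-sided bounds on $\partial_t\overline{U}^s$ via comparison/maximality, rearrange the graph equation to bound $-\DIV(D\overline{U}^s/\sqrt{1+|D\overline{U}^s|^2})$ away from the obstacle, handle the contact set by noting the hyperplane touches from above, and then invoke Proposition~\ref{the regularity of front in laminar case}. Your contact-set analysis is in fact spelled out slightly more carefully than the paper's one-line remark, and your weaker lower bound $\partial_t\overline{U}^s\geq 0$ (the paper obtains $\geq m_0$ by comparing against the vertically translating subsolution $m_0\tau$) is still sufficient for the two-sided divergence bound.

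One small slip: the time-lift directions you state are reversed. For a spatial test $\phi$ touching $\overline{U}^s(\cdot,t_0)$ from \emph{above}, the admissible parabolic test on $\{t\leq t_0\}$ is the constant-in-time lift $\Phi(y,t):=\phi(y)$, admissible because monotonicity gives $\overline{U}^s(y,t)\leq\overline{U}^s(y,t_0)\leq\phi(y)$; the sloped lift $\phi(y)+s(t-t_0)$ does \emph{not} dominate $\overline{U}^s$ for $t<t_0$, since $\overline{U}^s(y,t)\geq\overline{U}^s(y,t_0)-s(t_0-t)$ is a lower bound in the wrong direction. Conversely, for a test $\phi$ touching from \emph{below}, the correct lift is $\Phi(y,t):=\phi(y)+s(t-t_0)$, which stays below $\overline{U}^s$ on $\{t\leq t_0\}$ by $\partial_t\overline{U}^s\leq s$. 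With the roles exchanged as above the lifting works as you intended, so this is a correctable slip rather than a structural gap; the rest of the argument goes through as in the paper.
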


\begin{proof}
	The standard comparison principle yields
	\begin{eqnarray*}
		\overline{U}^{s}(y, t) - s\Delta t \leq \overline{U}^{s}(y, t - \Delta t) &\text{and}& \overline{U}^{s}(y, t) + m_0\Delta t \leq \overline{U}^{s}(y, t + \Delta t),
	\end{eqnarray*}
	and thus
	\begin{eqnarray}\label{the boundedness of time derivative of the graph in laminar case}
	m_0 \leq \partial_{t}\overline{U}^{s}(y,t) \leq s \leq M_0, && (y,t) \in \RR^{n-1} \times (0, \infty).
	\end{eqnarray}
	Recall Proposition \ref{a semi obstacle solution becomes a solution when detachment happens}, $\overline{U}^{s}(x,t)$ is a solution of the equation \eqref{the fmcf equation in laminar case} away the obstacle $\overline{s}t$, therefore we have on this set that
	\begin{eqnarray*}
		m_0 - M_0 \leq -\DIV\left( \frac{D\overline{U}^{s}(x,t)}{\sqrt{|D\overline{U}^{s}(x,t)|^{2} + 1}}\right) = - \frac{\partial_{t}\overline{U}^{s}}{\sqrt{|D\overline{U}^{s}|^{2} + 1}} + g \leq 2M_0.
	\end{eqnarray*}	
	On the obstacle, since $\overline{U}^{s}(\cdot, t)$ is touched from above by a hyperplane, we have in viscosity sense that  
    \begin{eqnarray*}
	0 \leq -\DIV\left( \frac{D\overline{U}^{s}(x,t)}{\sqrt{|D\overline{U}^{s}(x,t)|^{2} + 1}}\right) \leq 2 M_0.
    \end{eqnarray*}
    Therefore, the Proposition {the regularity of front in laminar case} applies. Similarly, we also have the regularity for $\underline{U}_{s}(\cdot, t)$.
\end{proof}

\begin{theorem}\label{the existence of a travelling wave subsolution and its properties}
	If $\overline{s}(e_{n}) > \underline{s}(e_{n})$, then there is an open, nonempty set $E^{\infty} \subset \mathbb{T}^{n-1}$ and functions $U^{\infty}(y): E^{\infty} \rightarrow (-\infty, 0]$, such that the following are true:
	\begin{enumerate}
		\item [(a)] The function $U^{\infty}(y) + \overline{s}t$ is a viscosity subsolution of (\ref{the fmcf equation in laminar case});
 		\item [(b)] $U^{\infty}(y) \rightarrow -\infty$ as $y \rightarrow \partial E^{\infty}$;
 		\item [(c)] The function $U^{\infty}(y) + \overline{s}t$ is a solution of (\ref{the fmcf equation in laminar case}) away from $x_{n} = \overline{s}t$;
		\item [(d)] The set $\partial E^{\infty} \times (-\infty, \infty)$ is a stationary solution of (\ref{the scaled forced mean curvature flow}) with $\varepsilon = 1$.
	\end{enumerate}
\end{theorem}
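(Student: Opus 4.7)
I will construct $U^\infty$ as the long-time limit of the obstacle subsolution $\overline{U}^{\overline{s}}$ after subtracting the linear drift. Set $W(y,t):=\overline{U}^{\overline{s}}(y,t)-\overline{s}\,t$. For any $\tau>0$, the function $\overline{U}^{\overline{s}}(y,t+\tau)-\overline{s}\,\tau$ is a viscosity subsolution of \eqref{the fmcf equation in laminar case} still bounded above by the obstacle $x_n=\overline{s}\,t$, so the maximality characterization of $\overline{U}^{\overline{s}}$ (the graph reading of Definition \ref{the definitions of obstacle solutions} applied to $a^\infty=(e_n,\infty,0,-e_n,\overline{s})$) forces $\overline{U}^{\overline{s}}(y,t+\tau)-\overline{s}\,\tau \leq \overline{U}^{\overline{s}}(y,t)$, i.e., $W(y,t+\tau)\leq W(y,t)$. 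Since $W(\cdot,0)\equiv 0$, the pointwise limit
\[
W^\infty(y):=\lim_{t\to\infty}W(y,t)\in[-\infty,0]
\]
exists on $\mathbb{T}^{n-1}$. I set $E^\infty:=\{y\in \mathbb{T}^{n-1}:W^\infty(y)>-\infty\}$ and $U^\infty:=W^\infty|_{E^\infty}$.

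\textbf{Openness, nonemptiness, and properties (a)--(c).} Corollary \ref{the regularity of obstacle solutions} supplies local $C^{1,\alpha}$ estimates for $\overline{U}^{\overline{s}}(\cdot,t)$ whose constants depend only on $m_0, M_0$ and the local $L^\infty$ size; on any ball $B\subset E^\infty$ pointwise monotonicity gives $\sup_t \|W(\cdot,t)\|_{L^\infty(B)}<\infty$, so the family $\{W(\cdot,t)\}$ is equicontinuous there, $W^\infty$ is continuous on $E^\infty$, and $E^\infty$ is open. To show $E^\infty\neq\emptyset$, first note that $\overline{U}^{\overline{s}}$ cannot detach from its obstacle: if it detached by $1$ at some time $T_0$, then the monotone dependence of $\overline{U}^{s}$ on $s$ (larger obstacle speed yields larger subsolution) would transfer the detachment (with a loss of at most $(\overline{s}-s)T_0$ in the gap) to $\overline{U}^{s}$ for $s<\overline{s}$ close enough to $\overline{s}$, contradicting Proposition \ref{the equivalence between domain obstacle semi solutions and global semi solutions in defining head and tail speeds}. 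Hence $\max_y W(\cdot,t)=0$ for every $t\geq 0$; extracting $y_{t_k}\to y^*$ from $\arg\max W(\cdot,t_k)$ and invoking equicontinuity forces $W^\infty(y^*)=0$, so $y^*\in E^\infty$ and $\sup_{E^\infty} U^\infty=0$. Property (b) is immediate from the definition of $E^\infty$. For (a), the function $V_T(y,t):=\overline{U}^{\overline{s}}(y,t+T)-\overline{s}\,T=W(y,t+T)+\overline{s}\,t$ is a viscosity subsolution of \eqref{the fmcf equation in laminar case} for each $T>0$ and converges pointwise to $U^\infty(y)+\overline{s}\,t$ on $E^\infty\times[0,\infty)$ as $T\to\infty$; the $C^{1,\alpha}$ estimates upgrade this to locally uniform convergence, and stability of viscosity subsolutions under locally uniform limits yields (a). For (c), Proposition \ref{a semi obstacle solution becomes a solution when detachment happens} gives that each $V_T$ is in fact a viscosity solution on the open set $\{(y,t):V_T(y,t)<\overline{s}\,t\}$; passing to the limit, $U^\infty+\overline{s}\,t$ is a viscosity solution on $\{(y,t):U^\infty(y)<0\}$.

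\textbf{Property (d); the main difficulty.} The last claim is that $\partial E^\infty\times\RR$ is a stationary solution of \eqref{the normal velocity of moving front in macro scale} with $\varepsilon=1$, equivalently that the hypersurface $\partial E^\infty\subset \mathbb{T}^{n-1}$ satisfies $\kappa_{\partial E^\infty}=g$ in the viscosity sense (the $x_n$-direction contributes nothing to the mean curvature of the cylinder). The heuristic is that dividing the graph equation
\[
\overline{s}=\sqrt{|DU^\infty|^2+1}\,\Bigl(\DIV\Bigl(\tfrac{DU^\infty}{\sqrt{|DU^\infty|^2+1}}\Bigr)+g\Bigr),
\]
valid on $E^\infty$ by (c), by $\sqrt{|DU^\infty|^2+1}$ and letting $|DU^\infty|\to\infty$ at $\partial E^\infty$ yields $\DIV(DU^\infty/|DU^\infty|)+g=0$, the stationarity condition for the vertical cylinder. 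Turning this into a viscosity statement is the main technical obstacle: I plan to work in the level-set framework for the singular ``graph $\cup$ cylinder'' set and test at cylinder points against smooth surfaces separating from either side. The subsolution property (a) gives $\kappa\leq g$ on the cylinder, and a blow-up of the solution property (c) along sequences where $|DU^\infty|\to\infty$ supplies the reverse inequality, yielding stationarity. Finally, under the hypothesis $\overline{s}>\underline{s}$, a parallel construction of the supersolution travelling wave at speed $\underline{s}$ combined with strict comparison forces $E^\infty\subsetneq \mathbb{T}^{n-1}$, so $\partial E^\infty\neq\emptyset$ and statements (b), (d) are substantive rather than vacuous.
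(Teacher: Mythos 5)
Your construction is correct in spirit and takes a genuinely different, arguably cleaner, route than the paper. The paper constructs $U^\infty$ via a diagonal family $s_\ell := \overline{s} + 1/\ell^2$, $t_\ell := \ell$ (chosen so that $T^*(s_\ell)\geq t_\ell$), forming $\tilde{U}^\ell(y,t):=\overline{U}^{s_\ell}(y,t+t_\ell)-s_\ell t_\ell$ on $[-K,0]$ and then passing to the $\limsup^*$ as $\ell\to\infty$; the travelling-wave structure $U^\infty(y,t)=U^\infty(y)+\overline{s}t$ is then extracted by playing the two Birkhoff inequalities against each other. You instead work directly at the critical speed $s=\overline{s}$, note that $W(y,t):=\overline{U}^{\overline{s}}(y,t)-\overline{s}t$ is monotone non-increasing in $t$ by the laminar Birkhoff property (Remark \ref{a remark on the birkhoff property in laminar case}, applied with a continuous vertical shift), and take the pointwise limit $W^\infty$. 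This skips the diagonal bookkeeping entirely and makes the travelling-wave structure automatic. Your argument that $\overline{U}^{\overline{s}}$ never detaches (and hence $\max_y W(\cdot,t)\equiv 0$) is a valid reformulation of Step 2 of the Lemma preceding Proposition \ref{the regularity of front in laminar case}; the paper proves the same fact by approaching $\overline{s}$ from below. Two constructions give the same object up to the usual $\limsup^*$ technicalities.

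Two caveats. First, the passage from $\max_y W(\cdot,t_k)=0$ to $W^\infty(y^*)=0$ via ``equicontinuity'' needs a local barrier or Harnack-type estimate: the $C^{1,\alpha}$ bound of Corollary \ref{the regularity of obstacle solutions} is inherited from the BDM gradient estimate and a priori depends on the local oscillation of the graph, so to make the equicontinuity uniform in $k$ near the maximum points $y_{t_k}$ you need first to bound $\sup_t\|W(\cdot,t)\|_{L^\infty(B_\rho(y_{t_k}))}$, which follows from the bounded-mean-curvature condition by a comparison-with-paraboloid argument. The paper hides the same step behind ``the regularity of the hypersurface along with the fact that $\tilde{U}^\ell(y,0)$ have uniformly bounded maximum implies that $E_M$ contains a unit neighborhood.''

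Second, and more substantively, your treatment of (d) is a plan rather than a proof: you identify the correct heuristic (divide the graph equation by $\sqrt{|DU^\infty|^2+1}$ and let $|DU^\infty|\to\infty$) and flag the passage to a viscosity statement on the cylinder as ``the main technical obstacle,'' but you do not carry it out. The paper's own argument for (d) is short --- pick $y_i\to y^*\in\partial E^\infty$ with $y_i\in E^\infty$, plug into the interior equation, and use $|DU^\infty(y_i)|\to\infty$ to pass to the limit $g(y^*)=\kappa(y^*)$ in the viscosity sense --- but it at least commits to a computation at the boundary, whereas your proposal defers. To make (d) complete you should actually test the level-set function of $\{x_n< U^\infty(y)\} \cup (E^\infty)^c\times\RR$ at a point of $\partial E^\infty\times\RR$ with a smooth test function, show the subsolution inequality $\kappa\leq g$ there from (a), and then use the interior equation from (c) together with the blow-up $|DU^\infty|\to\infty$ to get $\kappa\geq g$; both directions can be run as in the paper's last display.
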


\begin{proof}
	Fix $K > 0$, let $s_{\ell} := \overline{s} + \frac{1}{\ell^{2}}$ and $t_{\ell} := \ell$, then $T(s_{\ell}) \geq \ell^{2} \geq t_{\ell}$. Next, we define the following function, which is spatially $\ZZ^{n-1}$-periodic.
	\begin{eqnarray}
	\tilde{U}^{\ell}(y, t) := \overline{U}^{s_{\ell}}(y, t + t_{\ell}) - s_{\ell} t_{\ell}, && (y, t) \in \RR^{n-1} \times [-K, 0].
	\end{eqnarray}
   Hence the highest point of $\tilde{U}^{\ell}$ is bounded by $-1$ and $-M_0K$.  By a comparison between $\overline{U}^{s_{\ell}}$ and $\underline{U}_{\underline{s}}$, the lowest point of $\tilde{U}^{\ell}$ is bounded from above by $- \left( s_{\ell} - \underline{s}\right)t_{\ell}$. Furthermore, based on Corollary \ref{the regularity of obstacle solutions}, one can show that the hypersurface $\tilde{U}^{\ell}(y,t)$ is spatially $C^{1,\alpha}$ hypersurface in $\mathbb{T}^{n-1}$, uniformly for $t > 0$. Let us define the set $E_{\ell, K}$, whose measure is neither 0 nor 1, due to $\overline{s}(e_{n}) > \underline{s}(e_{n})$.
   \begin{eqnarray*}
   E_{\ell, K} := \left\lbrace y \in \mathbb{T}^{n-1} \big| \tilde{U}^{\ell}(y,0) > - 2 M_0 K\right\rbrace &\text{and}& E_{K} := \liminf_{\ell\rightarrow\infty} E_{\ell, K}.
   \end{eqnarray*}
   The regularity of the hypersurface along with the fact that $\tilde{U}^{\ell}(y,0)$ have uniformly bounded maximum over $\mathbb{T}^{n-1}$ implies that $E_{M}$ contains a unit neighborhood of some point in $\mathbb{T}^{n-1}$. Let us now define
   \begin{eqnarray*}
   U^{\infty}(y,t) := \lim_{\ell \rightarrow \infty}\sup^{*} \tilde{U}^{\ell}(y,t), && (y, t) \in E_{M} \times [-M, 0].
   \end{eqnarray*}
   The limit is uniform due to Arzela- Ascoli theorem. Now let us define
   \begin{equation*}
   E^{\infty} := \cup_{M > 0} E_{M}.
   \end{equation*}
   By the Birkhoff property (c.f. Proposition \ref{the birkhoff property for subsolution in an expanding domain} and Remark \ref{a remark on the birkhoff property in laminar case}), then
   \begin{eqnarray*}
   \overline{U}^{s_{\ell}}(y, t + k) \leq \overline{U}^{s_{\ell}}(y, t) + s_{\ell}k, &\text{for any}& (y,t) \in \RR^{n-1} \times (0,\infty), \hspace{2mm} k > 0.
   \end{eqnarray*}
   And then
   \begin{eqnarray*}
	\tilde{U}^{\ell}(y, t + k) \leq \tilde{U}^{\ell}(y, t) + s_{\ell}k, &\text{for any}& (y,t) \in \RR^{n-1} \times (0,\infty), \hspace{2mm} k > 0.
   \end{eqnarray*}
   Hence
   \begin{eqnarray*}
   U^{\infty}(y, t + k) \leq U^{\infty}(y, t) + s_{\ell} k, &\text{for any}& (y,t) \in E^{\infty} \times (-\infty, 0), \hspace{2mm} k > 0.
   \end{eqnarray*}
   On the other hand, by the choice of $(s_{\ell}, t_{\ell})$, we have that $\lim_{\ell\rightarrow \infty }(s_{\ell + 1} - s_{\ell})t_{\ell + 1} = 0$. By the ordering relation $\overline{U}^{s_{\ell + 1}} \leq \overline{U}^{s_{\ell}}$ and the Birkhoff property, we have that
   \begin{eqnarray*}
   \overline{U}^{s_{\ell + 1}}(y, t + t_{\ell + 1}) \leq \overline{U}^{s_{\ell}}(y, t + t_{\ell + 1}) \leq \overline{U}^{s_{\ell}}(y, t + t_{\ell} + k) + s_{\ell} (t_{\ell + 1} - t_{\ell}) - s_{\ell}k.
   \end{eqnarray*}
   Or, equivalently,
   \begin{equation*}
   \tilde{U}^{\ell + 1}(y, t) + (s_{\ell + 1} - s_{\ell})t_{\ell + 1} + s_{\ell}k \leq \tilde{U}^{\ell}(y, t + k).
   \end{equation*}
   Sending $\ell \rightarrow \infty$, we get the other inequality
   \begin{eqnarray*}
   U^{\infty}(y,t) + s_{\ell}k \leq U^{\infty}(y, t + k), &\text{for any}& (y, t) \in E^{\infty} \times (-\infty, 0), \hspace{2mm} k > 0.
   \end{eqnarray*}
   Thus we can define $U^{\infty}(y) := U^{\infty}(y,0)$ and have that $U^{\infty}(y,t) = U^{\infty}(y) + \overline{s}t$ is a travelling wave viscosity subsolution over $E^{\infty}$. Shift $U^{\infty}(y)$ by its maximum value if necessary, we have $\max_{y} U^{\infty}(y,0) = 0$, and thus (a) is established. The part (b) follows from the definition of $U^{\infty}(y,t)$. The part (c) is basically a restatment of Proposition \ref{a semi obstacle solution becomes a solution when detachment happens}. Let us now consider the part (d). Fix any $y^{*} \in \partial E^{\infty}$ and let $E^{\infty} \ni y_{i} \rightarrow y^{*}$, then $U^{\infty}(y_{i}) < 0$ as $i \rightarrow \infty$. Plugging $U^{\infty}(y) + \overline{s}t$ into (\ref{the fmcf equation in laminar case}), we have that
   \begin{eqnarray*}
   \frac{\overline{s}}{\sqrt{|DU^{\infty}(y_{i})|^{2} + 1}} - \DIV\left(\frac{DU^{\infty}(y_{i})}{\sqrt{|DU^{\infty}(y_{i})|^{2} + 1}} \right) - g(y) = 0, && (y,t) \in E^{\infty} \times (-\infty, \infty).
   \end{eqnarray*}
   Because $\lim_{i\rightarrow\infty}|DU^{\infty}(y_{i})| = \infty$, let us send $i \rightarrow \infty$ and get (in viscosity sense) $g(y^{*}) = \kappa(y^{*})$, which is the curvature of $\partial E^{\infty}$ at $y^{*}$.
\end{proof}

\begin{proposition}\label{the existence of a travelling wave supersolution and its properties}
	If $\overline{s}(e_{n}) > \underline{s}(e_{n})$, then there is an open, nonempty set $E_{\infty} \subset \mathbb{T}^{n-1}$ and functions $U_{\infty}(y): E_{\infty} \rightarrow [0, \infty)$, such that the following are true:
	\begin{enumerate}
		\item [(a)] The function $U_{\infty}(y) + \underline{s}t$ is a viscosity supersolution of (\ref{the fmcf equation in laminar case});
		\item [(b)] $U_{\infty}(y) \rightarrow +\infty$ as $y \rightarrow \partial E_{\infty}$;
		\item [(c)] The function $U_{\infty}(y) + \underline{s}t$ is a solution of (\ref{the fmcf equation in laminar case}) away from $x_{n} = \underline{s}t$;
		\item [(d)] The set $\partial E_{\infty} \times (-\infty, \infty)$ is a stationary solution of (\ref{the scaled forced mean curvature flow}) with $\varepsilon = 1$.
	\end{enumerate}
\end{proposition}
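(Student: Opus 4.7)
The plan is to run the argument of Theorem \ref{the existence of a travelling wave subsolution and its properties} dually, with the obstacle supersolution $\underline{U}_{s}$ playing the role of $\overline{U}^{s}$ and with lower (rather than upper) half-relaxed limits. More precisely, pick $s_\ell := \underline{s} - \tfrac{1}{\ell^2}$ and $t_\ell := \ell$, so that by the dual bound $T_\ast(s_\ell)\ge \ell^2 \ge t_\ell$ the supersolution $\underline{U}_{s_\ell}$ has not yet detached from its obstacle $x_n = s_\ell t$ by unit amount at time $t_\ell$. Define the rescaled, $\ZZ^{n-1}$-periodic functions
\begin{equation*}
\tilde{U}_\ell(y,t) := \underline{U}_{s_\ell}(y, t + t_\ell) - s_\ell t_\ell, \qquad (y,t) \in \RR^{n-1}\times[-K,0],
\end{equation*}
so that $0 \le \min_y \tilde{U}_\ell(y,0) \le 1$ while the maximum can, and in fact must, be very large because $\overline{s}(e_n) > \underline{s}(e_n)$ (the dual of the argument in Theorem \ref{the existence of a travelling wave subsolution and its properties}, comparing with $\overline{U}^{\overline{s}}$, forces the detachment to create high fingers). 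By the supersolution analogue of Corollary \ref{the regularity of obstacle solutions}, the graphs of $\tilde{U}_\ell(\cdot,t)$ are uniformly $C^{1,\alpha}$ on compacta away from the set where they blow up.

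Next I would define the "good" sets
\begin{equation*}
E_{\ell,K} := \{y\in\mathbb{T}^{n-1}: \tilde{U}_\ell(y,0) < 2M_0 K\},\qquad E_K := \liminf_{\ell\to\infty} E_{\ell,K},
\end{equation*}
and take $E_\infty := \bigcup_{K>0} E_K$, which is open and nonempty (nonemptiness comes from the fact that the minimum of $\tilde U_\ell(\cdot,0)$ stays in $[0,1]$, combined with uniform $C^{1,\alpha}$ regularity giving a unit neighborhood of some base point in $\mathbb{T}^{n-1}$ on which $\tilde{U}_\ell(\cdot,0)$ is uniformly bounded). Then set
\begin{equation*}
U_\infty(y,t) := \lim_{\ell\to\infty}{\inf}_\ast \tilde{U}_\ell(y,t)\qquad\text{on }E_\infty\times(-\infty,0],
\end{equation*}
which by Arzela--Ascoli is the locally uniform limit of a subsequence on each $E_K\times[-K,0]$. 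The travelling wave structure $U_\infty(y,t) = U_\infty(y) + \underline{s}\, t$ then follows from two applications of the Birkhoff property for supersolutions (Proposition \ref{the birkhoff property for supersolution in an expanding domain} with Remark \ref{a remark on the birkhoff property in laminar case}): the monotonicity $\underline{U}_{s_\ell}(y,t+k) \ge \underline{U}_{s_\ell}(y,t) + s_\ell k$ gives one inequality, while comparing consecutive $s_\ell, s_{\ell+1}$ and using $(s_{\ell+1}-s_\ell)t_{\ell+1}\to 0$ gives the other, exactly as in Theorem \ref{the existence of a travelling wave subsolution and its properties} but with inequalities reversed. Normalizing by $\min_y U_\infty(y) = 0$ gives (a), and (b) is immediate from the definition of $E_\infty$ via the cutoff $2M_0 K$. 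Part (c) is just Proposition \ref{a semi obstacle solution becomes a solution when detachment happens} applied to $\underline{U}_{s_\ell}$ and preserved in the limit.

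The main obstacle, as in the subsolution case, is part (d): identifying $\partial E_\infty\times\RR$ as a stationary solution of \eqref{the scaled forced mean curvature flow} with $\e=1$. Fix $y^\ast \in \partial E_\infty$ and take $E_\infty \ni y_i \to y^\ast$; by (b) we have $U_\infty(y_i)\to +\infty$, and the uniform $C^{1,\alpha}$ bound combined with the divergence to $+\infty$ forces $|DU_\infty(y_i)| \to \infty$. Substituting $U_\infty(y) + \underline{s}t$ into \eqref{the fmcf equation in laminar case} and dividing by $\sqrt{|DU_\infty|^2+1}$ yields
\begin{equation*}
\frac{\underline{s}}{\sqrt{|DU_\infty(y_i)|^2+1}} - \DIV\!\left(\frac{DU_\infty(y_i)}{\sqrt{|DU_\infty(y_i)|^2+1}}\right) - g(y_i) = 0
\end{equation*}
in the appropriate viscosity sense; sending $i\to\infty$ the first term vanishes and $-\DIV(DU_\infty/\sqrt{|DU_\infty|^2+1})$ converges to the mean curvature $\kappa(y^\ast)$ of $\partial E_\infty$ at $y^\ast$, giving $\kappa(y^\ast) = g(y^\ast)$ in the viscosity sense, which is precisely the stationarity of $\partial E_\infty\times\RR$ under \eqref{the scaled forced mean curvature flow}. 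The delicate point to justify is the passage to the limit in the viscosity sense for the divergence term as $|DU_\infty|\to\infty$, which I would handle by the standard argument of rewriting the PDE in geometric form for the level sets (so that the degeneracy is removed) and then invoking stability of viscosity solutions under uniform convergence of the associated hypersurfaces.
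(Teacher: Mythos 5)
Your proposal is correct and carries out exactly the dualization the paper alludes to in its one-line proof ("It is parallel to [the subsolution case]"): the same $s_\ell = \underline{s} - \ell^{-2}$, $t_\ell = \ell$ rescaling, the same $\liminf$ set construction with cutoff $2M_0 K$, the same two Birkhoff inequalities yielding the travelling-wave structure, and the same limiting argument for $(d)$. The only detail to note is that the lower bound $\max_y \tilde U_\ell(y,0) \ge (\overline{s}-s_\ell)t_\ell$ you invoke should be made explicit via $\underline{U}_{s_\ell} \ge \overline{U}^{\overline{s}}$ together with $\max_y(\overline{U}^{\overline{s}}(y,t)-\overline{s}t)=0$ (the dual of Step 2 in Lemma 8.1), but this is exactly the intended argument.
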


\begin{proof}
	It is parallel to that of Proposition \ref{the existence of a travelling wave supersolution and its properties}, we omit it here.
\end{proof}

\subsection{More discussion of travelling wave sub/super solution}\label{discussion of construction of travelling waves}

In this part, we investigate the properties of the laminar forcing term $g(y)$ that could induce the failing of the homogenization. i.e., the existence of travelling wave subsolution with head speed $\overline{s}$ and travelling wave supersolution with tail speed $\underline{s}$, such that $0 < \underline{s} < \overline{s} < \infty$. The idea is partially motivated by the example by \cite{Caffarelli and Monneau ARMA}. 

\medskip

Let us consider $0<r_1 < r_2 < \frac{1}{2}$ so that $B(y_1, r_1)$ and $\mathbb{T}^{n-1}\diagdown B(y_2, r_2)$ are two disjoint sets in $\mathbb{T}^{n-1}$ for some $y_1, y_2$.  Define a decreasing function $\zeta: (0,r_1) \to \mathbb{R}$  such that $\zeta(r_1^-) =-\infty$, and an increasing function  $\eta:(r_2, \infty)\to \mathbb{R}$ such that $\eta(r_2^+) = -\infty$ and $\eta(r) \equiv 0$ if $r\geq R$ for some $r_2 < R < \frac{1}{2}$. Let us next choose two sets $E_1 := B(y_1, r_1)$ and $E_2 := \mathbb{T}^{n-1} \setminus B(y_2, r_2)$, and construct travelling wave sub and supersolutions $U_1:E_1\to (-\infty, 0)$ and $U_2: E_2 \to (0, \infty)$ by 
\begin{eqnarray*}
U_{1}(y) := \int_{0}^{r}\zeta(\tau)d\tau, \hspace{2mm} r := |y - y_1| &\text{and}&
U_{2}(y) := \int_{r}^{\infty}\eta(\tau)d\tau, \hspace{2mm} r := |y - y_2|.
\end{eqnarray*}
Then $U_{1}(y) + \overline{s} t$ is a subsolution of (\ref{the fmcf equation in laminar case}) if 
\begin{equation}\label{sub}
g(y) \geq \frac{\overline{s}}{\sqrt{\zeta^{2}(r) + 1}} - \left( \left( \frac{\zeta(r)}{\sqrt{\zeta^{2}(r) + 1}}\right)^{\prime}  + \frac{n-2}{r}\cdot \frac{\zeta(r)}{\sqrt{\zeta^{2}(r) + 1}}\right) \hbox{ with }
	r = |y - y_1| \in [0, r_1).
\end{equation}
Similarly, $U_{2}(y) + \underline{s} t$ is a supersolution of (\ref{the fmcf equation in laminar case}) if
\begin{equation}\label{super}
g(y) \leq \frac{\underline{s}}{\sqrt{\eta^{2}(r) + 1}} - \left( \left( \frac{\eta(r)}{\sqrt{\eta^{2}(r) + 1}}\right)^{\prime}  + \frac{n-2}{r}\cdot \frac{\eta(r)}{\sqrt{\eta^{2}(r) + 1}}\right) \hbox{ with } r = |y - y_2| \in (r_2, R].
\end{equation}
\medskip
Let us choose $\zeta(r) := \frac{r}{r - r_1}$ and $\eta(r) := \min[\frac{r - R}{r - r_2},0]$. Then \eqref{sub} is written as
\begin{eqnarray*}
g(y) \geq \frac{\overline{s}(r_1-r)}{\sqrt{r^{2} + (r_1 - r)^{2}}} + \frac{r_1(r_1 - r)}{\left[ r^{2} + (r_1 - r)^{2}\right]^{\frac{3}{2}}} + \frac{n-2}{\sqrt{r^{2} + (r_1 - r)^{2}}}, && y \in E_{1},
\end{eqnarray*}
which is satisfied if we define $\overline{s}$ as 
\begin{equation}\label{upper}
  \overline{s} := \min_{y\in E_1} g(y)  - \frac{\sqrt{2}n}{r_1}.
\end{equation}
Next, \eqref{super} is written as
$$
g(y) \leq J := \frac{\underline{s}(r - r_2)}{\sqrt{(R - r)^{2} + (r - r_2)^{2}}} - \frac{(R - r_{2})(r - r_2)}{\left[ (R - r)^{2} + (r - r_2)^{2}\right]^{\frac{3}{2}}} + \frac{(n-2)(R - r)}{r\sqrt{(R - r)^{2} + (r - r_2)^{2}}}.
$$
We will show that
\begin{equation}\label{lower}
\underline{s}:= \frac{2}{R-r_2} + \sigma \hbox{ with } \sigma>0
\end{equation} satisfies \eqref{super} if  $\max_{E_2} g(y) < \min\left\lbrace \sigma, n-2\right\rbrace$.
This is because
\begin{equation*}
   J \geq \frac{r - r_2}{R-r_2}\sigma + \frac{R-r}{R - r_{2}}(n-2) > \min\left\lbrace \sigma, n-2\right\rbrace \hbox{ for } r_2<r<R.
\end{equation*}
We have shown the corollary  
\begin{corollary}
Homogenization fails if, for $0 < r_1 < r_2 < R < \frac{1}{2}$, $E_1$ and $E_2$ are disjoint, and there exists  $\sigma>0$ such that 
\begin{equation}
0 < \sigma < \min_{y\in E_1} g(y)  - \left(\frac{\sqrt{2}n}{r_1}+ \frac{2}{R-r_2} \right)\quad \hbox{ and }\quad  \max_{y \in E_2} g(y) < \min\left\lbrace \sigma, n-2\right\rbrace.
\end{equation} 
\end{corollary}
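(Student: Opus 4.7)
The plan is to verify the algebraic conditions \eqref{sub} and \eqref{super} for the explicit radial profiles $\zeta$ and $\eta$ fixed above the statement, and then to infer $\overline{s}(e_n)\geq\overline{s}>\underline{s}\geq\underline{s}(e_n)$ by comparison against the global obstacle sub/super-solutions. Theorem \ref{the macro and micro scale description of head and tail speeds} then exhibits linear initial data for which $\limsup u^\e$ and $\liminf u^\e$ differ, so homogenization fails. For \eqref{sub} with $\zeta(r)=r/(r-r_1)$, the first of the three RHS terms is bounded by $\overline{s}$ (attained at $r=0$), and the other two are bounded using $r^2+(r_1-r)^2\geq r_1^2/2$ by $2\sqrt 2/r_1$ and $(n-2)\sqrt 2/r_1$ respectively. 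Summing gives the uniform upper bound $\overline{s}+\sqrt 2\,n/r_1$, which equals $\min_{E_1}g$ by \eqref{upper}; hence \eqref{sub} holds on $E_1$.

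For \eqref{super} on $\{r\geq R\}$, the profile $\eta\equiv 0$ reduces the inequality to $g\leq\underline{s}$, which follows from $\max_{E_2}g<\sigma<\underline{s}=2/(R-r_2)+\sigma$. The substantive case is $r_2<r<R$, where the RHS equals the excerpt's expression $J$. Writing $\tilde h:=\sqrt{(R-r)^2+(r-r_2)^2}$, two elementary scalar bounds are needed: expanding $(R-r_2)^2=[(R-r)+(r-r_2)]^2$ yields $\tilde h\leq R-r_2$, and $R<1/2<1$ yields $r\tilde h\leq R(R-r_2)\leq R-r_2$. Combined with $\tilde h^2\geq(R-r_2)^2/2$, the three pieces of $J$ assemble into the lower bound $J\geq \tfrac{r-r_2}{R-r_2}\sigma+\tfrac{R-r}{R-r_2}(n-2)$, which is a strict convex combination of $\sigma$ and $n-2$ and therefore strictly exceeds $\min\{\sigma,n-2\}>\max_{E_2}g$. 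This convex-combination manipulation is the main delicate point; the subsolution estimate above and the comparison argument below are routine.

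To close, $U_1(y_1)=0$ implies that the subsolution $U_1(y)+\overline{s}t$ agrees with the planar obstacle $\{x_n=\overline{s}t\}$ at $y_1$ for every $t\geq 0$, so by maximality the global obstacle subsolution $\overline{\mathrm{U}}_{a^\infty}$ at speed $\overline{s}$ in direction $e_n$ also touches its obstacle there and hence fails to detach; Definition \ref{the global head speed in an rational direction} together with Proposition \ref{the equivalence between domain obstacle semi solutions and global semi solutions in defining head and tail speeds} then give $\overline{s}(e_n)\geq\overline{s}$. The analogous argument at $y_2$, using $U_2(y_2)=0$ and minimality of $\underline{\mathrm{U}}_{a_\infty}$, yields $\underline{s}(e_n)\leq\underline{s}$. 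The corollary's first hypothesis rearranges to $\overline{s}>\underline{s}$, so $\underline{s}(e_n)<\overline{s}(e_n)$, and Theorem \ref{the macro and micro scale description of head and tail speeds} applied to $u^\e(\cdot,0)=-x\cdot e_n$ gives $\limsup_\e u^\e\neq\liminf_\e u^\e$, i.e., homogenization fails.
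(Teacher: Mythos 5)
Your proof takes the same route as the paper: verify \eqref{sub} and \eqref{super} for the explicit radial profiles, observe that the first hypothesis rearranges to $\overline{s}>\underline{s}$ for the speeds in \eqref{upper}, \eqref{lower}, and conclude via Theorem \ref{the macro and micro scale description of head and tail speeds}. The paper's proof of the corollary itself is a single sentence, leaning entirely on the preceding discussion and leaving the ``touching sub/supersolution implies $\overline{s}(e_n)\geq\overline{s}$, $\underline{s}(e_n)\leq\underline{s}$'' step implicit; you make that step precise via non-detachment, Definition \ref{the global head speed in an rational direction}, and Proposition \ref{the equivalence between domain obstacle semi solutions and global semi solutions in defining head and tail speeds}, which is a genuine and correct addition. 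Your assembly of the lower bound for $J$ is also correct; the non-obvious order of operations is: $\tilde h^2\geq(R-r_2)^2/2$ gives $\frac{(R-r_2)(r-r_2)}{\tilde h^3}\leq\frac{2(r-r_2)}{(R-r_2)\tilde h}$, so the first two terms of $J$ together are at least $\frac{r-r_2}{\tilde h}\bigl(\underline{s}-\frac{2}{R-r_2}\bigr)=\frac{(r-r_2)\sigma}{\tilde h}$, and only then does $\tilde h\leq R-r_2$ close the estimate.

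One slip in the final paragraph: you invoke ``$U_2(y_2)=0$,'' but $y_2$ is the centre of the deleted ball $B(y_2,r_2)$ and therefore lies outside $E_2=\mathbb{T}^{n-1}\setminus B(y_2,r_2)$, so $U_2(y_2)$ is not defined. What is true is that $\eta\equiv 0$ for $r\geq R$, hence $U_2\equiv 0$ on the non-empty open set $\{R\leq|y-y_2|\}\subseteq E_2$; the contact with the obstacle, the non-detachment, and hence $\underline{s}(e_n)\leq\underline{s}$ should be argued at such points (with the $\ZZ^{n-1}$-periodicity of $\underline{\mathrm{U}}_{a_\infty}$ guaranteeing a contact point in any cylinder $\Upomega(0,r;e_n)$ with $r>\sqrt{n}/2$). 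The subsolution side is stated correctly, since $y_1$ is the centre of $E_1=B(y_1,r_1)$ and $U_1(y_1)=0$. With that correction the argument is complete.
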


\begin{proof}
It remains to observe that if first condition holds, then $\bar{s}$ and $\underline{s}$ given in \eqref{upper} and \eqref{lower} satisfy $0<\underline{s} < \bar{s}$.
\end{proof}

\section{Appendix}\label{The appendix}
\subsection{Some calculations}
In this subsection, we carry out calculations regarding two functions such that
\begin{eqnarray*}
	\tilde{d}(x) = d(\underbrace{x + r\varphi(x)e_{1}}_{y}) &\text{ for }& x \in U,
\end{eqnarray*}
where $r$ is a constant and $\varphi(x)$ is a positive smooth functions defined in some region $U\subseteq\RR^{n}$. Let us choose $\left\lbrace e_{1},e_{2},\cdots, e_{n} \right\rbrace $ as the orthonormal coordinate system for $\RR^{n}$, fix $x_{0}\in U$ and denote $y_{0} = x_{0} + r\varphi(x_{0})e_{1}$. Moreover, we assume that $D\varphi(x_{0}) = \alpha e_{1} + \beta e_{2}$, where $\alpha, \beta$ are two fixed real numbers. Furthermore, let us also assume the following:
  \begin{enumerate}
  	\item [(i)] $|Dd(y)| = 1$ in a neighborhood of $y_{0}$;
  	\item [(ii)] $\frac{\partial d}{\partial y_{1}}(y_{0}) = -1$ and $\frac{\partial d}{\partial y_{k}}(y_{0}) = 0$, $k \neq 1$;
  	\item [(iii)] $\frac{\partial^{2} d}{\partial y_{1}\partial y_{k}}(y_{0}) = 0$, $k = 1, 2, \cdots, n$.
  \end{enumerate}
Our goal is to compute the term $\Delta_{\infty}\tilde{d} (x_{0})$, i.e., the second derivative of $\tilde{d}$ in the gradient direction of $\tilde{d}$ at the point $x_{0}$. First, we have that
\begin{equation*}
	D\tilde{d}(x_{0}) = \left( -1 - r\alpha\right)e_{1} + \left( -r\beta\right)e_{2}.  
\end{equation*}
Then the normal derivative operator of $\tilde{d}$ at $x_{0}$ writes
\begin{equation*}
	\frac{\partial}{\partial n} = \frac{-1 - r\alpha}{\sqrt{(1+r\alpha)^{2} + \left(r \beta\right) ^{2}}} \frac{\partial}{\partial x_{1}} + \frac{-r\beta}{\sqrt{(1+r\alpha)^{2} + \left( r\beta\right) ^{2}}} \frac{\partial}{\partial x_{2}}.
\end{equation*}
The $1^{\text{st}}$ order derivative in the normal direction is
\begin{eqnarray*}
	\frac{\partial\tilde{d}}{\partial n}
	&=& \frac{-1 - r\alpha}{\sqrt{(1+r\alpha)^{2} + \left(r \beta\right) ^{2}}} \left( \frac{\partial d}{\partial y_{1}} + r\frac{\partial d}{\partial y_{1}} \frac{\partial \varphi}{\partial x_{1}}\right) + \frac{-r\beta}{\sqrt{(1+r\alpha)^{2} + \left( r\beta\right) ^{2}}}\left( \frac{\partial d}{\partial y_{2}} + r\frac{\partial d}{\partial y_{1}} \frac{\partial \varphi}{\partial x_{2}}\right).
\end{eqnarray*}
Then the $2^{\text{nd}}$ order directional derivative becomes
\begin{eqnarray*}
	\frac{\partial}{\partial n}\left(  \frac{\partial\tilde{d}}{\partial n}\right) &=& \frac{-1 - r\alpha}{\sqrt{(1+r\alpha)^{2} + \left(r \beta\right) ^{2}}}\underbrace{\left[ \frac{\partial}{\partial n}\left( \frac{\partial d}{\partial y_{1}} + r\frac{\partial d}{\partial y_{1}} \frac{\partial \varphi}{\partial x_{1}}\right)\right]}_{A} + \frac{-r\beta}{\sqrt{(1+r\alpha)^{2} + \left( r\beta\right) ^{2}}}\underbrace{\left[ \frac{\partial}{\partial n}\left( \frac{\partial d}{\partial y_{2}} + r\frac{\partial d}{\partial y_{1}} \frac{\partial \varphi}{\partial x_{2}}\right) \right]}_{B}.
\end{eqnarray*}

\subsubsection{The term $A$}

The term $A$ is the folllowing.
\begin{eqnarray*}
	A &=& \frac{-1 - r\alpha}{\sqrt{(1+r\alpha)^{2} + \left(r \beta\right) ^{2}}}\underbrace{\left[ \frac{\partial}{\partial x_{1}}\left( \frac{\partial d}{\partial y_{1}} + r\frac{\partial d}{\partial y_{1}} \frac{\partial \varphi}{\partial x_{1}}\right)\right]}_{A_{1}} + \frac{- r\beta}{\sqrt{(1+r\alpha)^{2} + \left(r \beta\right) ^{2}}}\underbrace{\left[ \frac{\partial}{\partial x_{2}}\left( \frac{\partial d}{\partial y_{1}} + r\frac{\partial d}{\partial y_{1}} \frac{\partial \varphi}{\partial x_{1}}\right)\right]}_{A_{2}}, 
\end{eqnarray*}
where 
\begin{eqnarray*}
	A_{1} &=& \frac{\partial}{\partial x_{1}}\left( \frac{\partial d}{\partial y_{1}}\right) + r\frac{\partial}{\partial x_{1}}\left( \frac{\partial d}{\partial y_{1}} \frac{\partial \varphi}{\partial x_{1}}\right)\\
	&=& \frac{\partial^{2} d}{\partial y_{1}^{2}}\left( 1 + r\frac{\partial \varphi}{\partial x_{1}}\right) + r\frac{\partial^{2} d}{\partial y_{1}^{2}}\left( 1 + r\frac{\partial \varphi}{\partial x_{1}}\right)\cdot \frac{\partial \varphi}{\partial x_{1}} + r\frac{\partial d}{\partial y_{1}}\cdot \frac{\partial^{2}\varphi}{\partial x_{1}^{2}}\\
	&=& \frac{\partial^{2} d}{\partial y_{1}^{2}}\left( 1 + r\frac{\partial \varphi}{\partial x_{1}}\right)^{2} + r\frac{\partial d}{\partial y_{1}}\cdot \frac{\partial^{2}\varphi}{\partial x_{1}^{2}} = -r\frac{\partial^{2}\varphi}{\partial x_{1}^{2}}(x_{0})
\end{eqnarray*}
and
\begin{eqnarray*}
	A_{2} &=& \frac{\partial}{\partial x_{2}}\left( \frac{\partial d}{\partial y_{1}}\right)  + r\frac{\partial}{\partial x_{2}}\left( \frac{\partial d}{\partial y_{1}} \frac{\partial \varphi}{\partial x_{1}}\right)\\
	&=& \left( \frac{\partial^{2}d}{\partial y_{2}\partial y_{1}} + \frac{\partial^{2}d}{\partial y_{1}^{2}}\cdot r\frac{\partial\varphi}{\partial x_{2}}\right) + r\left( \frac{\partial^{2}d}{\partial y_{2}\partial y_{1}} + \frac{\partial^{2}d}{\partial y_{1}^{2}}\cdot r\frac{\partial\varphi}{\partial x_{2}}\right)\frac{\partial \varphi}{\partial x_{1}} + r\frac{\partial d}{\partial y_{1}}\frac{\partial^{2}\varphi}{\partial x_{2}\partial x_{1}}\\ 
	&=& \left( \frac{\partial^{2}d}{\partial y_{2}\partial y_{1}} + r\frac{\partial^{2}d}{\partial y_{1}^{2}}\cdot \frac{\partial \varphi}{\partial x_{2}}\right)\left( 1 + r\frac{\partial\varphi}{\partial x_{1}}\right)  +  r\frac{\partial d}{\partial y_{1}}\frac{\partial^{2}\varphi}{\partial x_{2}\partial x_{1}} = -r\frac{\partial^{2}\varphi}{\partial x_{2}\partial x_{1}}(x_{0}).
\end{eqnarray*}
So
\begin{eqnarray*}
	A &=&\frac{\left(1 + r\alpha\right) r}{\sqrt{(1+r\alpha)^{2} + \left(r \beta\right) ^{2}}} \cdot\frac{\partial^{2}\varphi}{\partial x_{1}^{2}}(x_{0}) + \frac{r^{2}\beta}{\sqrt{(1+r\alpha)^{2} + \left(r \beta\right)^{2}}}\cdot\frac{\partial^{2}\varphi}{\partial x_{2}\partial x_{1}}(x_{0}).
\end{eqnarray*}
\subsubsection{The term $B$}
The term $B$ is the folllowing:
\begin{eqnarray*}
	B &=& \frac{-1 - r\alpha}{\sqrt{(1+r\alpha)^{2} + \left(r \beta\right) ^{2}}}\underbrace{\left[ \frac{\partial}{\partial x_{1}}\left( \frac{\partial d}{\partial y_{2}} + r\frac{\partial d}{\partial y_{1}} \frac{\partial \varphi}{\partial x_{2}}\right)\right]}_{B_{1}} + \frac{- r\beta}{\sqrt{(1+r\alpha)^{2} + \left(r \beta\right) ^{2}}}\underbrace{\left[ \frac{\partial}{\partial x_{2}}\left( \frac{\partial d}{\partial y_{2}} + r\frac{\partial d}{\partial y_{1}} \frac{\partial \varphi}{\partial x_{2}}\right)\right]}_{B_{2}}, 
\end{eqnarray*}
where
\begin{eqnarray*}
	B_{1} &=& \frac{\partial}{\partial x_{1}}\left( \frac{\partial d}{\partial y_{2}}\right)  + r\frac{\partial}{\partial x_{1}}\left( \frac{\partial d}{\partial y_{1}} \frac{\partial \varphi}{\partial x_{2}}\right)\\
	&=& \frac{\partial^{2}d}{\partial y_{1}\partial y_{2}}\left( 1 + r\frac{\partial \varphi}{\partial x_{1}}\right) + r\frac{\partial^{2}d}{\partial y_{1}^{2}}\left( 1 + r\frac{\partial \varphi}{\partial x_{1}}\right)\frac{\partial \varphi}{\partial x_{2}} + r\frac{\partial d}{\partial y_{1}}\frac{\partial^{2}\varphi}{\partial x_{1}\partial x_{2}}\\
	&=& \left( \frac{\partial^{2}d}{\partial y_{1}\partial y_{2}} + r\frac{\partial^{2}d}{\partial y_{1}^{2}}\cdot\frac{\partial \varphi}{\partial x_{2}} \right) \left( 1 + r\frac{\partial \varphi}{\partial x_{1}}\right) + r\frac{\partial d}{\partial y_{1}}\frac{\partial^{2}\varphi}{\partial x_{1}\partial x_{2}} = -r\frac{\partial^{2}\varphi}{\partial x_{1}\partial x_{2}}(x_{0})
\end{eqnarray*}
and
\begin{eqnarray*}
	B_{2} &=& \frac{\partial}{\partial x_{2}}\left( \frac{\partial d}{\partial y_{2}}\right)  + r\frac{\partial}{\partial x_{2}}\left( \frac{\partial d}{\partial y_{1}} \cdot\frac{\partial \varphi}{\partial x_{2}}\right)\\
	&=& \left( \frac{\partial^{2}d}{\partial y_{1}\partial y_{2}}\cdot r\frac{\partial \varphi}{\partial x_{2}} + \frac{\partial^{2}d}{\partial y_{2}^{2}}\right) + r\left( \frac{\partial^{2}d}{\partial y_{1}^{2}}\cdot r\frac{\partial \varphi}{\partial x_{2}} + \frac{\partial^{2}d}{\partial y_{2}\partial y_{1}}\right)\cdot\frac{\partial \varphi}{\partial x_{2}} + r\frac{\partial d}{\partial y_{1}}\cdot\frac{\partial^{2}\varphi}{\partial x_{2}^{2}} \\
	&=& \left( \frac{\partial^{2}d}{\partial y_{2}^{2}} + 2r\frac{\partial^{2} d}{\partial y_{1}\partial y_{2}}\cdot \frac{\partial \varphi}{\partial x_{2}} + r^{2}\frac{\partial^{2}d}{\partial y_{1}^{2}}\left(\frac{\partial \varphi}{\partial x_{2}} \right)^{2} \right) + r\frac{\partial d}{\partial y_{1}} \cdot \frac{\partial^{2}\varphi}{\partial x_{2}^{2}}\\
	&=& \frac{\partial^{2}d}{\partial y_{2}^{2}}(y_{0}) - r\frac{\partial^{2}\varphi}{\partial x_{2}^{2}}(x_{0}).
\end{eqnarray*}
So
\begin{eqnarray*}
	B &=& \frac{\left( 1 + r\alpha\right) r}{\sqrt{(1+r\alpha)^{2} + \left(r \beta\right) ^{2}}}\cdot \frac{\partial^{2}\varphi}{\partial x_{1}\partial x_{2}}(x_{0}) + \frac{- r\beta}{\sqrt{(1+r\alpha)^{2} + \left(r \beta\right) ^{2}}}\cdot\left( \frac{\partial^{2}d}{\partial y_{2}^{2}}(y_{0}) - r\frac{\partial^{2}\varphi}{\partial x_{2}^{2}}(x_{0})\right).
\end{eqnarray*}
Finally, we have the conclusion as follows.
\begin{flalign}\label{the fomula of the infinity laplacian term}
\begin{aligned}
\frac{\partial}{\partial n}\left(  \frac{\partial\tilde{d}}{\partial n}\right)(x_{0})
&=& \frac{(r\beta)^{2}}{(1 + r\alpha)^{2} + (r\beta)^{2}}\cdot\frac{\partial^{2}d}{\partial y_{2}^{2}}(y_{0}) - \frac{(1 + r\alpha)^{2}r}{(1 + r\alpha)^{2} + \left( r\beta\right)^{2}}\cdot \frac{\partial^{2}\varphi}{\partial x_{1}^{2}}\\
&-& \frac{2(1 + r\alpha)\left( r\beta\right)r}{(1 + r\alpha)^{2} + \left( r\beta\right)^{2}}\cdot  \frac{\partial^{2}\varphi}{\partial x_{1}\partial x_{2}} - \frac{\left( r\beta\right)^{2}r}{(1 + r\alpha)^{2} + \left( r\beta\right) ^{2} }\cdot \frac{\partial^{2}\varphi}{\partial x_{2}^{2}}.
\end{aligned}
\end{flalign}

\subsection{A comparison principle}
In this subsection, we prove a variant version of the comparison principle regarding a pseudo viscosity subsolution and a pseudo viscosity supersolution. The idea here is partially motivated by \cite{Caffarelli and Monneau ARMA} and \cite{Kim and Kwon}.
\begin{proposition}\label{the comparison principle regarding pseudo viscosity solutions}
	Fix $\nu \in \mathbb{S}^{n-1}$, $x_{0} \in \RR^{n}$, $R > 0$, $0 \leq \alpha < \beta < \infty$. Let $U(x,t), V(x,t): \Upomega(x_{0},R;\nu) \times [\alpha, \beta] \rightarrow \RR$ satisfy the following (i), (ii) and (iii):
	\begin{enumerate}
		\item [(i)] $U(x,t)$ is a pseudo viscosity subsolution (c.f. Definition \ref{pseudo viscosity subsolution});
		\item [(ii)] $V(x,t)$ is a pseudo viscosity supersolution (c.f. Definition \ref{pseudo viscosity supersolution});
		\item [(iii)] $U(x, t) < V(x, t)$, if $(x,t) \in \left( \Upomega(x_{0},R; \nu) \times \left\lbrace \alpha\right\rbrace\right) \cup \left( \partial\Upomega(x_{0},R; \nu) \times [\alpha, \beta]\right)$,
	\end{enumerate}
	then
	\begin{eqnarray*}
		U(x,t) < V(x,t), && (x,t) \in \Upomega(x_{0},R;\nu) \times (\alpha, \beta).
	\end{eqnarray*}
\end{proposition}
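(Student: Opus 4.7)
The plan is to argue by contradiction, combining a parabolic time-penalization with the Crandall--Ishii doubling-of-variables, while handling separately the degenerate case where the doubled gradient vanishes. Suppose the conclusion fails. By hypothesis (iii) and the semicontinuity of $U$ and $V$, the set of times $t \in [\alpha,\beta]$ for which $U(x,t) \ge V(x,t)$ at some $x \in \Upomega(x_0,R;\nu)$ is closed, and its infimum $t_0$ satisfies $t_0 > \alpha$; moreover the touching point $\hat{x}$ lies in the interior of $\Upomega(x_0,R;\nu)$, and $U < V$ on the whole parabolic boundary of $\Upomega(x_0,R;\nu) \times [\alpha, t_0]$. Subtract the penalty $\eta/(\beta - t)$ from $U$ for small $\eta > 0$; for $\eta$ sufficiently small, the touching still occurs at some interior point at time $t_0^\eta \le t_0$, but now any limit argument will pick up a strictly positive lower bound $\eta/(\beta - t_0^\eta)^2$ on the time-derivative defect.

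Next I would double the variables with a quartic space penalty,
\[
\Phi_\varepsilon(x,y,t,s) := \bigl(U(x,t) - \tfrac{\eta}{\beta - t}\bigr) - V(y,s) - \frac{|x - y|^4}{4\varepsilon^4} - \frac{(t - s)^2}{2\varepsilon^2},
\]
choose an interior maximizer $(x_\varepsilon, y_\varepsilon, t_\varepsilon, s_\varepsilon)$, and use the standard estimates to obtain $(x_\varepsilon, y_\varepsilon) \to (\hat{x},\hat{x})$, $(t_\varepsilon, s_\varepsilon) \to (t_0^\eta, t_0^\eta)$ and $|x_\varepsilon - y_\varepsilon|^4/\varepsilon^4 \to 0$, $(t_\varepsilon - s_\varepsilon)^2/\varepsilon^2 \to 0$. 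Pass to a subsequence. If $x_\varepsilon \ne y_\varepsilon$, the common gradient $p_\varepsilon := |x_\varepsilon - y_\varepsilon|^2 (x_\varepsilon - y_\varepsilon)/\varepsilon^4$ is nonzero; Crandall--Ishii produces matrices $X_\varepsilon \le Y_\varepsilon$ consistent with the penalty Hessian, and the pseudo-viscosity inequalities (Definitions \ref{pseudo viscosity subsolution}, \ref{pseudo viscosity supersolution}) are both applicable because $|p_\varepsilon| > 0$. Subtracting them, using $\mathscr{F}(X_\varepsilon,p_\varepsilon,x_\varepsilon) - \mathscr{F}(Y_\varepsilon,p_\varepsilon,y_\varepsilon) \le L_0|x_\varepsilon - y_\varepsilon|\,|p_\varepsilon|$ from hypothesis (\ref{the hypothesis}) together with $X_\varepsilon \le Y_\varepsilon$, and sending $\varepsilon \to 0$, yields $\eta/(\beta - t_0^\eta)^2 \le 0$, a contradiction.

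The main obstacle is the degenerate case $x_\varepsilon = y_\varepsilon$ along the subsequence: then the penalty has zero gradient \emph{and} zero Hessian at the maximizer, so the pseudo viscosity inequalities cannot be triggered at all. The resolution exploits the fact that $\mathscr{F}$ is geometric: I would perturb the penalty by a linear term $\sigma\, p \cdot (x - y)$ with small $\sigma > 0$ and a unit vector $p$, so that the gradient seen by both $U$ and $V$ becomes $p_\varepsilon + \sigma p$, which is strictly nonzero; the Crandall--Ishii argument of the previous paragraph then goes through verbatim, and one may send $\sigma \to 0^+$ after $\varepsilon \to 0^+$, using continuity of $\mathscr{F}^*$ and $\mathscr{F}_*$ along nonvanishing gradients (and the fact that any Hessian contribution scales out under the geometric invariance $\mathscr{F}(\lambda X, \lambda p, x) = \lambda \mathscr{F}(X, p, x)$ for $\lambda > 0$). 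One subtlety is to ensure that the shifted maximizer still lies in the interior of the domain and close to $\hat{x}$; this is handled by a localization with a cutoff times $|x - \hat{x}|^2 + |t - t_0^\eta|^2$, which does not interfere with the gradient analysis since its derivatives vanish at the limit. In this way the same contradiction $\eta/(\beta - t_0^\eta)^2 \le 0$ is reached, completing the proof.
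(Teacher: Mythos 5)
Your overall framework (first crossing time, time penalty, Crandall--Ishii doubling with a quartic space penalty, subtraction to reach $\eta/(\beta-t_0)^2 \le 0$) matches the paper's skeleton, and you correctly identify the one genuinely delicate point: the doubled maximizer may have $x_\varepsilon = y_\varepsilon$, in which case the penalty has zero gradient and zero Hessian and the pseudo viscosity definitions give nothing. But your proposed fix for that case does not work, and the paper's mechanism for avoiding it is different and worth understanding.

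The flaw in the linear perturbation: if you replace the penalty by $\frac{|x-y|^4}{4\varepsilon^4} + \sigma p\cdot(x-y)$, the gradient that $U$ and $V$ both see at the doubled maximizer is $|x_\varepsilon-y_\varepsilon|^2(x_\varepsilon-y_\varepsilon)/\varepsilon^4 + \sigma p$, not $\sigma p$. This vector vanishes whenever $x_\varepsilon - y_\varepsilon = -(\sigma\varepsilon^4)^{1/3}\,p$, so the perturbation does \emph{not} rule out a zero-gradient maximizer; you have merely moved the bad set, not removed it. The subsequent discussion of sending $\sigma \to 0^+$ via ``geometric invariance scaling out the Hessian'' is also not right: the matrix $X$ produced by Crandall--Ishii is not the Hessian of the penalty (which is zero when $x_\varepsilon = y_\varepsilon$), so scaling the gradient to unit length would force $X/\sigma$ to blow up, and there is no control to prevent that. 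The cutoff $|x-\hat{x}|^2 + |t-t_0^\eta|^2$ would also contribute a nontrivial Hessian unless the maximizer converges exactly to $(\hat{x},t_0^\eta)$, which is not guaranteed for merely semicontinuous $U,V$.

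What the paper does instead is tailored to the geometric structure of $\mathscr{F}$: it passes to the $\{0,1\}$-valued characteristic functions $\mathrm{Z}$ of $\{U \ge \mu\}$ and $\mathrm{W}$ of $\{V > \mu\}$ at the critical level $\mu$ (these are again pseudo viscosity sub/supersolutions because the operator is geometric), multiplies by $e^{-t}$, and chooses $t_0$ as the first time some $x$ has $\mathrm{Z}(x,t)=1 > 0 = \mathrm{W}(x,t)$. With the time penalty $\eta/(t_0-t)$ one has $t_{\varepsilon,\eta} < t_0$, while the maximizer value converges to $e^{-t_0}>0$; since $z,w$ take only the values $\{0, e^{-t}\}$, this forces $z(x_{\varepsilon,\eta},t_{\varepsilon,\eta}) = e^{-t_{\varepsilon,\eta}}$ and $w(y_{\varepsilon,\eta},t_{\varepsilon,\eta}) = 0$. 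If $x_{\varepsilon,\eta}=y_{\varepsilon,\eta}$, that would be exactly a crossing at a time $t_{\varepsilon,\eta} < t_0$, contradicting the minimality of $t_0$. So $x_{\varepsilon,\eta}\neq y_{\varepsilon,\eta}$ is forced automatically, the penalty gradient is nonzero, and the standard doubling estimates and Lipschitz bound on $g$ close the argument. The lesson is that the degeneracy cannot be waved away by a small penalty perturbation; it is sidestepped structurally by the level-set reduction together with the first-crossing time.
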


\begin{proof}
	Let us assume on the contrary that
	\begin{equation}\label{the contrary assumption in proving comparison principle in the pseudo sense}
	\sup_{(x,t)\in\Upomega\left( x_{0}, R; \nu\right) \times (\alpha, \beta)}\left( U(x,t) - V(x,t)\right) \geq 0,
	\end{equation}
	without loss of generality, we can assume the existence of the $\tau_{0} \in (\alpha, \beta)$, such that $U(x_{0},\tau_{0}) = \mu \geq V(x_{0},\tau_{0})$. Then we consider two characteristic functions.
	\begin{eqnarray*}
		\mathrm{Z}(x,t) := \begin{cases}
			1, & x \in \Upomega\left( x_{0}, R; \nu\right), U(x,t) \geq \mu,\\
			0, & x \in \Upomega\left( x_{0}, R; \nu\right), U(x,t) < \mu,
		\end{cases}
		&&
		\mathrm{W}(x,t) := \begin{cases}
			1, & x \in \Upomega\left( x_{0}, R; \nu\right), V(x,t) > \mu,\\
			0, & x \in \Upomega\left( x_{0}, R; \nu\right), V(x,t) \leq \mu.
		\end{cases}
	\end{eqnarray*}
	Then $\mathrm{Z}(x_{0},\tau_{0}) = 1 > 0 = \mathrm{W}(x_{0}, \tau_{0})$. Let us choose
	\begin{equation*}
	t_{0} := \min\left\lbrace  \alpha \leq t \leq \beta \big| \mathrm{Z}(x, t) = 1 > 0 = \mathrm{W}(x,t) \text{ for some } x \in \Upomega(0, R; \nu) \right\rbrace.
	\end{equation*}
     Then by the assumptions, we have $t_{0} \in (\alpha, \beta)$ and $\mathrm{Z}(x,t_{0}) = 1 > 0 = \mathrm{W}(x,t_{0})$ for some $x \notin \partial\Upomega(0, R; \nu)$. It is well-known that $\mathrm{Z}(x,t)$ (resp. $\mathrm{W}(x,t)$) is a pseudo viscosity subsolution (resp. pseudo viscosity supersolution) of the forced mean curvature flow. Let us also set
	\begin{eqnarray*}
		z(x,t) := e^{-t}\mathrm{Z}(x,t),\hspace{2mm} w(x,t) := e^{-t}\mathrm{W}(x,t), \hspace{2mm} \tilde{\mathscr{F}}(X,p,x,t) := e^{-t} \mathscr{F}\left(e^{t}X, e^{t}p, x \right). 
	\end{eqnarray*}
	Then we have inequalities in the pseudo viscosity sense:
	\begin{eqnarray*}
		z_{t} + z \leq \tilde{\mathscr{F}}\left(D^{2}z, Dz, x, t \right), && (x,t) \in \Upomega\left( x_{0}, R; \nu\right) \times (\alpha, \beta),\\
		w_{t} + w \geq \tilde{\mathscr{F}}\left(D^{2}w, Dw, x, t \right),&& (x,t) \in \Upomega\left( x_{0}, R; \nu\right) \times (\alpha, \beta).
	\end{eqnarray*}
	The assumption (\ref{the contrary assumption in proving comparison principle in the pseudo sense}) indicates that
	\begin{equation*}
	\sup_{(x,t) \in \Upomega\left( x_{0}, R; \nu\right)  \times [\alpha, t_{0}]} \left( z(x,t) - w(x,t)\right) \geq e^{-t_{0}} > 0.
	\end{equation*}
	Next, for any $\varepsilon, \eta > 0$, let us consider the function.
	\begin{eqnarray*}
		\Phi_{\varepsilon,\eta}(x,y,t) := z(x,t) - w(y,t) - \frac{|x - y|^{4}}{4\varepsilon} - \frac{\eta}{t_{0} - t},
	\end{eqnarray*}   
	where $(x, y, t) \in \Upomega\left( x_{0}, R; \nu\right)  \times  \Upomega\left( x_{0}, R; \nu\right)  \times \left(\alpha, t_{0}\right)$. Since $\Phi_{\varepsilon,\eta}(x,y,t)$ is bounded by 1 from above and upper semicontinuous, there exists $(x_{\varepsilon,\eta}, y_{\varepsilon,\eta}, t_{\varepsilon,\eta})$, at which $\Phi_{\varepsilon,\eta}(\cdot,\cdot,\cdot)$ achieves its finite maximum. Clearly, 
	\begin{eqnarray*}
		0 < t_{\varepsilon,\eta} < t_{0} - O\left( \eta\right)  &\text{and}& |x_{\varepsilon,\eta} - y_{\varepsilon,\eta}| = O\left(\varepsilon^{\frac{1}{4}}\right).
	\end{eqnarray*}
	Even though $\Upomega\left( x_{0}, R; \nu\right) $ is unbounded, we can actually find above $x_{\varepsilon, \eta}$ and $y_{\varepsilon, \eta}$ in the bounded domain $\left\lbrace x\in\Upomega\left( x_{0}, R; \nu\right)  \big| 0 \leq (x - x_{0}) \cdot \nu \leq M_0T\right\rbrace$. Up to extracting a subsequence, we have as $(\varepsilon, \eta) \rightarrow (0, 0)$
	\begin{equation*}
	\Phi_{\varepsilon,\eta}(x_{\varepsilon,\eta}, y_{\varepsilon,\eta}, t_{\varepsilon,\eta}) \rightarrow e^{-t_{0}} > 0.
	\end{equation*}
	Then the fact
	\begin{eqnarray*}
		\Phi_{2\varepsilon,\frac{\eta}{2}}(x_{2\varepsilon, \frac{\eta}{2}}, y_{2\varepsilon, \frac{\eta}{2}}, t_{2\varepsilon, \frac{\eta}{2}}) &\geq& \Phi_{2\varepsilon, \frac{\eta}{2}}(x_{\varepsilon,\eta}, y_{\varepsilon, \eta}, t_{\varepsilon,\eta})\\
		&=& \Phi_{\varepsilon, \eta}(x_{\varepsilon,\eta}, y_{\varepsilon, \eta}, t_{\varepsilon,\eta}) + \frac{|x_{\varepsilon, \eta} - y_{\varepsilon, \eta}|^{4}}{8\varepsilon} + \frac{\eta}{2\left( t_{0} - t_{\varepsilon, \eta}\right) }
	\end{eqnarray*}
	implies that (up to a subsequence)
	\begin{eqnarray*}
		\lim_{(\varepsilon, \eta) \rightarrow (0,0)} \frac{|x_{\varepsilon,\eta} - y_{\varepsilon,\eta}|^{4}}{\varepsilon} = 0, && \lim_{(\varepsilon,\eta) \rightarrow (0,0)}\frac{\eta}{t_{0} - t_{\varepsilon,\eta}} = 0.
	\end{eqnarray*}
	Then we have that
	\begin{eqnarray*}
		x_{\varepsilon, \eta} \neq y_{\varepsilon,\eta}, \hspace{2mm} z(x_{\varepsilon, \eta}, t_{\varepsilon,\eta}) = e^{-t_{\varepsilon,\eta}} &\text{and}& w(y_{\varepsilon, \eta}, t_{\varepsilon,\eta}) = 0.
	\end{eqnarray*} 
	If for a subsequence of $(\varepsilon, \eta) \rightarrow (0,0)$, $(x_{\varepsilon,\eta}, y_{\varepsilon,\eta}) \in \left( \partial\Upomega\left( x_{0}, R; \nu\right)  \times \Upomega\left( x_{0}, R; \nu\right) \right) \cup \left( \Upomega\left( x_{0}, R; \nu\right)  \times \partial\Upomega\left( x_{0}, R; \nu\right) \right) $, then we have a contradiction as follows:
	\begin{eqnarray*}
		e^{-t_{0}} &\leq& \sup_{(x,t) \in \Upomega\left( x_{0}, R; \nu\right)  \times [ \alpha, t_{0}]}\left( z(x,t) - w(x,t)\right) \\
		&\leq& \lim_{\varepsilon\rightarrow 0}\sup_{(x,t) \in \Upomega\left( x_{0}, R; \nu\right)  \times [\alpha, t_{0}]}\left( z(x,t) - w(y,t) - \frac{|x - y|^{4}}{4\varepsilon}\right)\\
		&\leq& \lim_{\varepsilon\rightarrow 0}\lim_{\eta\rightarrow 0}\Phi_{\varepsilon, \eta}(x_{\varepsilon,\eta}, y_{\varepsilon, \eta}, t_{\varepsilon,\eta})\\
		&\leq& \sup_{(x,t) \in \left( \Upomega\left( x_{0}, R; \nu\right)  \times \left\lbrace \alpha\right\rbrace\right) \cup \partial\Upomega\left( x_{0}, R; \nu\right)  \times [\alpha, t_{0}]}\left( z(x,t) - w(x,t)\right) \leq 0. 
	\end{eqnarray*}
	If both of $x_{\varepsilon,\eta}$ and $y_{\varepsilon,\eta}$ are interior points of $\Upomega\left( x_{0}, R; \nu\right) $, we can derive two viscosity inequalities associated to $z(x,t)$ and $w(x,t)$, respectively. Let us denote
	\begin{equation*}
	\Psi(x, y, t) := \frac{|x - y|^{4}}{4\varepsilon} + \frac{\eta}{t_{0} - t}.
	\end{equation*}
	By the Theorem 8.3 in \cite{Crandall Ishii Lions BAMS}, we deduce that for any $\lambda > 0$, there exists
	\begin{equation*}
	\left( b_{1}, D_{x}\Psi(x_{\varepsilon,\eta}, y_{\varepsilon, \eta}, t_{\varepsilon, \eta}), X\right) \in \overline{\mathscr{P}}^{2,+} z(x_{\varepsilon,\eta}, t_{\varepsilon,\eta}),
	\end{equation*}
	and
	\begin{equation*}
	\left( b_{2}, -D_{y}\Psi(x_{\varepsilon,\eta}, y_{\varepsilon, \eta}, t_{\varepsilon, \eta}), Y\right) \in \overline{\mathscr{P}}^{2,-} w(y_{\varepsilon,\eta}, t_{\varepsilon,\eta}),
	\end{equation*}  
	such that
	\begin{equation*}
	b_{1} - b_{2} = \Psi_{t}(x_{\varepsilon,\eta}, y_{\varepsilon, \eta}, t_{\varepsilon, \eta}),
	\end{equation*}  
	and
	\begin{equation}\label{matrix inequality}
	-\left( \frac{1}{\lambda} + \lVert A\rVert\right) \begin{pmatrix}
	I & 0\\
	0 & I
	\end{pmatrix} \leq \begin{pmatrix}
	X & 0 \\
	0 & -Y
	\end{pmatrix} \leq A + \lambda A^{2}.
	\end{equation}
	where $A = D^{2}\Psi(x_{\varepsilon, \eta}, y_{\varepsilon, \eta}, t_{\varepsilon, \eta})$ and $\lVert A \rVert := \sup_{|\xi| = 1}\left\langle A\xi, \xi\right\rangle $. Because $x_{\varepsilon,\eta} \neq y_{\varepsilon,\eta}$, we have that $|D_{x}\Psi(x_{\varepsilon,\eta}, y_{\varepsilon,\eta}| > 0$ and $|D_{y}\Psi(x_{\varepsilon,\eta}, y_{\varepsilon,\eta}| > 0$. Since $z(x,t)$ is a pseudo viscosity subsolution and $w(x,t)$ is a pseudo viscosity supersolution, then the following viscosity inequalities appear.
	\begin{eqnarray}\label{a viscosity inequality to subsolution in proving pseudo comparison principle}
	b_{1} + z(x_{\varepsilon,\eta}, t_{\varepsilon,\eta}) &\leq& \tilde{\mathscr{F}}\left( X, D_{x}\Psi(x_{\varepsilon,\eta}, y_{\varepsilon,\eta}, t_{\varepsilon,\eta}), x_{\varepsilon,\eta}, t_{\varepsilon,\eta} \right),
	\end{eqnarray}
	\begin{eqnarray}\label{a viscosity inequality to supersolution in proving pseudo comparison principle}
	b_{2} + w(x_{\varepsilon,\eta}, t_{\varepsilon,\eta}) &\geq& \tilde{\mathscr{F}}\left( Y, -D_{y}\Psi(x_{\varepsilon,\eta}, y_{\varepsilon,\eta}, t_{\varepsilon,\eta}), y_{\varepsilon,\eta}, t_{\varepsilon,\eta} \right).  
	\end{eqnarray}
	Note that in order to apply Theorem 8.3 in \cite{Crandall Ishii Lions BAMS}, we need to have the boundedness assumption of $b_{1} \leq C$ and $b_{2} \geq -C$, for general $(b_{1}, p_{1}, X) \in \overline{\mathscr{P}}^{2,+}z(x,t)$ and $(b_{2}, p_{2}, Y) \in \overline{\mathscr{P}}^{2,-}w(y,t)$ for $(x,y,t)$ close to $(x_{\varepsilon,\eta}, y_{\varepsilon,\eta}, t_{\varepsilon,\eta})$, and bounded $p_{1}$, $p_{2}$, $X$, $Y$, $z(x,t)$, $w(y,t)$. This follows naturally from viscosity inequalities similar to (\ref{a viscosity inequality to subsolution in proving pseudo comparison principle}) and (\ref{a viscosity inequality to supersolution in proving pseudo comparison principle}). Then when $\varepsilon$, $\eta$ are sufficiently small, taking the difference of the above two inequalities gives
	\begin{eqnarray*}
		0 < \frac{e^{-t_{0}}}{2} \leq \frac{\eta}{(t_{0} - t_{\varepsilon,\eta})^{2}} + z(x_{\varepsilon,\eta}, t_{\varepsilon,\eta}) - w(y_{\varepsilon,\eta}, t_{\varepsilon,\eta})\leq \tilde{\mathscr{F}}\left( X, p, x_{\varepsilon,\eta}, t_{\varepsilon,\eta} \right) - \tilde{\mathscr{F}}\left( Y, p, y_{\varepsilon,\eta}, t_{\varepsilon,\eta} \right)  
	\end{eqnarray*}
	where
	\begin{eqnarray*}
		p = \delta\left( x_{\varepsilon,\eta} - y_{\varepsilon,\eta}\right), && \delta = \frac{|x_{\varepsilon,\eta} - y_{\varepsilon,\eta}|^{2}}{\varepsilon} 
	\end{eqnarray*}  
	Since we always have $x_{\varepsilon,\eta} \neq y_{\varepsilon,\eta}$, by setting $\hat{p} := \frac{p}{|p|}$, we have that (Since $E^{2} = 2E$, we have that $A^{2} \leq 18\delta^{2} E$)
	\begin{equation*}
	0 \leq A = \delta \begin{pmatrix}
	I + 2\hat{p}\otimes\hat{p} & -I - 2\hat{p}\otimes\hat{p}\\
	-I - 2\hat{p}\otimes\hat{p} & I + 2\hat{p}\otimes\hat{p}
	\end{pmatrix} \leq 3\delta E \hspace{2mm}\text{with}\hspace{2mm} E = \begin{pmatrix}
	I & -I\\
	-I & I
	\end{pmatrix}.
	\end{equation*}
	Because $\lVert A \rVert = 6\delta$, by setting $\lambda = \frac{1}{3\delta}$ in (\ref{matrix inequality}), we get that
	\begin{equation*}
	-9\delta \begin{pmatrix}
	I & 0\\
	0 & I
	\end{pmatrix} \leq \begin{pmatrix}
	X & 0\\
	0 & -Y
	\end{pmatrix} \leq 9\delta \begin{pmatrix}
	I & -I\\
	-I & I
	\end{pmatrix}.
	\end{equation*}  
	For any $\xi \in \RR^{n}$, let us multiply the above inequalities on the left by $(\xi, \xi)$ and on the right by $(\xi, \xi)^{T}$, then
	\begin{eqnarray*}
	\xi (X - Y)\xi^{T} \leq 0, &\text{i.e.,}& X \leq Y.
	\end{eqnarray*}
	Finally, we get the following contradiction, where $L_0$ is from (\ref{the hypothesis}).
	\begin{eqnarray*}
		0 < \frac{e^{-t_{0}}}{2} &\leq& \tilde{\mathscr{F}}\left( X, p, x_{\varepsilon,\eta}, t_{\varepsilon,\eta} \right) - \tilde{\mathscr{F}}\left( Y, p, y_{\varepsilon,\eta}, t_{\varepsilon,\eta} \right) \\
		&=& e^{-t_{\varepsilon,\eta}}\mathscr{F}\left( e^{t_{\varepsilon,\eta}}X, e^{t_{\varepsilon,\eta}}p, x_{\varepsilon,\eta}\right) -  e^{-t_{\varepsilon,\eta}}\mathscr{F}\left( e^{t_{\varepsilon,\eta}}Y, e^{t_{\varepsilon,\eta}}p, y_{\varepsilon,\eta}\right)\\
		&=& \tr \left\lbrace \left( X - Y\right) \left( I - \hat{p}\otimes\hat{p}\right)\right\rbrace + \left( g(x_{\varepsilon,\eta}) - g(y_{\varepsilon,\eta})\right)|p| \\
		&\leq& 0 + L_0|x_{\varepsilon,\eta} - y_{\varepsilon,\eta}| \rightarrow 0, \hspace{1cm} \text{as} \hspace{4mm} (\varepsilon,\eta) \rightarrow (0, 0).
	\end{eqnarray*}
	Therefore, we must have $U(x,t) \leq V(x,t)$ for any $(x,t) \in \Upomega\left( x_{0}, R; \nu\right)  \times (\alpha, \beta)$.

\end{proof}

\bibliographystyle{amsplain}

\end{document}